\documentclass[oneside, 12pt, reqno]{amsart}
\usepackage[T1]{fontenc}
\usepackage[condensed]{tgheros}
\DeclareRobustCommand{\SkipTocEntry}[5]{}
\def\HYPER{\relax}
\ifdefined\HYPER
\usepackage[hidelinks, pdfstartview=FitB, pdfpagemode=UseNone]{hyperref}
\else
\renewcommand{\href}[2]{\relax}
\renewcommand{\url}[1]{#1}
\fi
\usepackage{amssymb,nccmath}
\usepackage{esint}
\usepackage{mathtools}
\usepackage[english]{babel}
\usepackage{epsfig}
\usepackage{mathptmx}
\usepackage{amsmath,amsfonts,amssymb}
\usepackage{mathtools}
\usepackage{enumitem}
\usepackage{mathrsfs}
\usepackage[all]{xy}
\usepackage{graphicx}
\usepackage{latexsym}
\usepackage{verbatim}
\usepackage{xcolor}
\usepackage{xifthen}

\usepackage[normalem]{ulem}
\usepackage{accents}
\usepackage{marvosym}

\numberwithin{equation}{section}

\newcommand{\+}{\nobreak\hspace{.087em}\nopagebreak}

\def\Re{\mathop{\mathrm{Re}}}
\def\Im{\mathop{\mathrm{Im}}}

\def\D{\mathbb D}

\def\H{\mathbb{H}}

\def\R{\mathbb R}
\def\Z{\mathbb Z}
\def\C{\mathbb C}
\def\Hol{{\sf Hol}}

\def\N{\mathbb N}
\def\Q{\mathbb Q}

\def\dist{{\rm dist}}
\def\const{{\rm const}}
\def\id{{\sf id}}
\def\Arg{\mathrm{Arg}}

\newtheorem{theorem}{Theorem}[section]
\newtheorem{lemma}[theorem]{Lemma}
\newtheorem{proposition}[theorem]{Proposition}
\newtheorem{corollary}[theorem]{Corollary}

\newtheorem{addendum}[theorem]{Addendum}

\theoremstyle{definition}
\newtheorem{definition}[theorem]{Definition}
\newtheorem{example}[theorem]{Example}

\theoremstyle{remark}
\newtheorem{remark}[theorem]{Remark}

\numberwithin{equation}{section}

\newcommand{\hd}{\rho}
\newcommand{\phd}{\rho^*}
\newcommand{\phdisk}{\mathrm B_h}

\newcommand{\Zen}{\mathcal{Z}}
\newcommand{\Zn}{\mathcal{Z}_{\scriptscriptstyle\forall}}

\newcommand{\U}{{\sf Uni}}
\newcommand{\UH}{\mathbb{H}}

\let\HR=\UHR
\newcommand{\UD}{\mathbb{D}}
\newcommand{\UC}{\partial\UD}
\newcommand{\Complex}{\mathbb{C}}
\newcommand{\ComplexE}{\overline{\mathbb{C}}}
\newcommand{\Real}{\mathbb{R}}
\newcommand{\Natural}{\mathbb{N}}
\newcommand{\di}{\mathop{\mathrm{d}}\nolimits}
\newcommand{\anglim}{\angle\lim}

\renewcommand{\emptyset}{\varnothing}
\renewcommand{\ge}{\geqslant}
\renewcommand{\le}{\leqslant}
\renewcommand{\geq}{\geqslant}
\renewcommand{\leq}{\leqslant}
\newcommand{\proofof}[1]{{\fontseries{bx}\fontshape{it}\selectfont Proof of #1}}
\newcommand{\Step}[1]{\bigskip\noindent{\textsc{Step #1.}}}
\newcommand{\StepP}[1]{\medskip\noindent{\textsc{Proof of #1.}}}

\newcommand{\StepC}[2]{\medskip\noindent{\textsc{Case}~#1: #2.}}

\newenvironment{ourlist}{\begin{enumerate}[label={\bf (\arabic*)}, ref={\rm (\arabic*)}, left=0em]
\everydisplay{\makeatletter\def\@eqnum{\normalfont(\theequation)}\makeatother}}{\end{enumerate}}
\newenvironment{romlist}{\begin{enumerate}[label={\rm (\roman*)}, ref={\rm (\roman*)}, left=.7em]}{\end{enumerate}}
\newenvironment{equilist}{\begin{enumerate}[label={\rm (\alph*)}, ref={\rm (\alph*)}, left=.7em]}{\end{enumerate}}
\newenvironment{statlist}{\begin{enumerate}[label={\bf (\Alph*)}, ref={(\Alph*)}, left=0em]%
\everydisplay{\makeatletter\def\@eqnum{\normalfont(\theequation)}\makeatother}}{\end{enumerate}}

\newcommand{\dff}[1]{\textsl{#1}}

\newcommand\Wtilde[1]{\stackrel{\sim}{\smash{#1}\rule{0pt}{0.54ex}}}
\newcommand{\mtilde}[1]{\mathchoice{\,\widetilde{\!#1\!}\,}{\,\widetilde{\!#1\!}\,}%
{\raise.25ex\hbox{$\scriptstyle\Wtilde{#1}$}}{\Wtilde{#1}}}

\begin{document}
\title[Simultaneous linearization and centralizers of parabolic self-maps II]{Simultaneous linearization and centralizers of parabolic self-maps II: positive hyperbolic step}

\author[M. D. Contreras]{Manuel D. Contreras $^\dag$}

\author[S. D\'{\i}az-Madrigal]{Santiago D\'{\i}az-Madrigal $^\dag$}
\address{Camino de los Descubrimientos, s/n\\
	Departamento de Matem\'{a}tica Aplicada II and IMUS\\
	Universidad de Sevilla\\
	Sevilla, 41092\\
	Spain.}\email{contreras@us.es} \email{madrigal@us.es}

\author[P. Gumenyuk]{Pavel Gumenyuk$^\ddag$}\address{Pavel Gumenyuk: Department of
Mathematics\\  Politecnico di Milano, via E. Bonardi 9\\ Milan 20133, Italy.}
\email{pavel.gumenyuk@polimi.it}

\thanks{$^\dag$ Partially supported by Ministerio de Innovación y Ciencia, Spain, project PID2022-136320NB-I00. The author thanks IMUS-Maria de Maeztu grant CEX2024-001517-M - Apoyo a Unidades de Excelencia María de Maeztu for supporting this research, funded by MICIU/AEI/ 10.13039/501100011033.}
\thanks{$^\ddag$ Partially supported  by GNSAGA INdAM (\textit{Istituto Nazionale di Alta Matematica ``Francesco Severi''}) Italy}

\date\today

\begin{abstract}
The study of holomorphic self-mappings of the unit disc commuting under the composition  goes back to A.L.\,Shields (1964), W.A.\,Pranger (1970), D.F.\,Behan (1973), and  C.C.\,Cowen (1984). In many situations, the centralizer of a holomorphic self-map ${\varphi:\mathbb  D \to \mathbb  D}$, i.e. the semigroup $\mathcal Z_\forall(\varphi):=\{\psi\in\mathsf{Hol}(\mathbb D):\psi\circ\varphi=\varphi\circ\psi\}$ turns out to be commutative. However, this does not hold for the case of a parabolic self-map $\varphi$ of \textit{positive} hyperbolic step, which is analyzed in detail in this paper. We investigate the relationships among commutativity, simultaneous linearization, and holomorphic models. In particular, we obtain existence and uniqueness results for the simultaneous linearization of commuting pairs $\varphi$, $\psi\in \mathcal Z_\forall(\varphi)$.  Furthermore, extending this notion to arbitrary families of holomorphic self-mappings, we show that a given (finite or infinite) family in the centralizer ${\Delta\subset\mathcal Z_\forall(\varphi)}$ can be simultaneously linearized together with~$\varphi$ if and only if any two elements of~$\Delta$ commute with each other. This gives a far reaching extension of Cowen's result concerning commuting pairs in~$\mathsf{Hol}(\mathbb D)$.
\end{abstract}

\maketitle

\tableofcontents

\section{Introduction}\label{Introduction}
The study of holomorphic self-maps of the unit disc $\UD:=\{z\in\C:|z|<1\}$ and their dynamical properties is a topic of recurrent interest and rich history: from local aspects tracing back to Schr\"oder~(1870) and Koenigs~(1883) to holomorphic models by Cowen~\cite{Cowen} (see also~\cite{Canonicalmodel}) and
the operator-theoretic approach by Sarason~\cite{Sarason1988} (see also~\cite{Frej}), applications to the study of composition operators (see e.g.\,\cite{CM1995}) and to probability theory (see e.g.\,\cite{Goryainov-survey}), as well as many recent developments and extensions to more general contexts, e.g. \cite{Convex,AFGG2024,AFGK2024,MP-operatorTh,BM2004,Filippo-puntosTAMS,BKR2024,GuKouMouRoth,IvrNic,MaxBpro} just to name some.

The class $\Hol(\UD)$ formed by all holomorphic self-maps of~$\UD$ carries a natural structure of a topological semigroup with unity w.r.t. the operation of composition and the usual topology of locally uniform convergence.
Since this semigroup is not commutative, the study of the \dff{centralizers}
$$
  \Zn(\varphi):=\{\psi\in\Hol(\UD):\psi\circ\varphi=\varphi\circ\psi\},\quad \varphi\in\Hol(\UD),
$$
is one of the basic tasks in the analysis of~$\Hol(\UD)$ as a whole.

The structure of~$\Zn(\varphi)$ turns out to depend strongly on the dynamical characteristics of~$\varphi$. For locally injective self-mappings ${\varphi\in\Hol(\UD)}$ possessing a geometrically attracting\footnote{A fixed point~$\tau$ of a holomorphic function~$\varphi$ is called geometrically attracting if ${0<|\varphi'(\tau)|<1}$.} fixed point~$\tau$ in~$\UD$, Pranger~\cite{Pranger} was able to describe centralizers as isomorphic images of a class of topologically closed subsemigroups of ${\{t\in\C:|t|\le1\}}$ endowed with the usual operation of  multiplication in~$\C$. The isomorphism is defined via the \dff{simultaneous linearization}, i.e. by making use of  a non-constant holomorphic solution $h_\varphi:\UD\to\C$ to
the problem
$$
  h_\varphi\circ \varphi\,=\,\varphi'(\tau)h_\varphi,~\quad~ h_\varphi\circ \psi\,=\,th_\varphi \quad\text{for any $\,\psi\in\Zn(\varphi)\,$ and some $\,t=t(\psi)\in\overline{\UD}\,$.}
$$

The situation when $\varphi$ has no fixed point in the unit disc~$\UD$ is much more complicated. A seminal contribution was made by Cowen~\cite{Cowen-comm}, who extended his abstract approach in linearization~\cite{Cowen}, known in the modern literature as the machinery of \dff{holomorphic models}, to the study of commuting pairs of holomorphic self-maps.  For the fixed-point free case (a reformulation of) Cowen's result relates commutativity of ${\varphi,\psi}\in\Hol(\UD)$ to the simultaneous linearization problem of the form
\begin{equation}\label{EQ_SL-Cowen}
   h\circ \varphi\,=\,h+1,~\quad~ h\circ \psi\,=\,h+c \quad\text{for a suitable $\,c=c_{\varphi,\psi}\in\C$},
\end{equation}
where $h=h_{\varphi,\psi}$ depends, in general, on~$\psi\in\Zn(\varphi)$.

A question fundamental for understanding the structure of~$\Zn(\varphi)$ is whether the simultaneous linearization problem~\eqref{EQ_SL-Cowen} admits a \emph{common} solution for the whole centralizer.

The answer depends on the dynamical type of the self-map~$\varphi$ (see Section~\ref{SS_PRE-holo-selfmaps} for the relevant definitions). It is positive for all hyperbolic self-maps, as follows already from Cowen's results in~\cite{Cowen-comm}. Several attempts were made \cite{BisGen01,Gentili-Vlacci,Vlacci} to extend this to parabolic self-mappings, but each time a certain additional condition, with slight variations, was needed for the proof.

In~\cite{CDG-zero-h.step} we have recently resolved this long-standing problem by showing that~--- similarly to the hyperbolic case~--- for parabolic self-maps~$\varphi$ of \emph{zero} hyperbolic step, the Koenigs function of~$\varphi$ (see Section~\ref{Sec:modelos} for the definition) provides the simultaneous linearization for the whole centralizer.

In this paper we analyse the centralizers and simultaneous linearization for the remaining case of parabolic self-maps of \emph{positive} hyperbolic step. This case is the most interesting and complicated. It exhibits phenomena absent in the hyperbolic and parabolic zero-step settings. For example, in this case the Koenigs function of~$\varphi$ generally fails to linearize the whole centralizer and the simultaneous linearization problem~\eqref{EQ_SL-Cowen} may have no solution~$h$ that does not depend on the choice of~${\psi\in\Zn(\varphi)}$. When this happens, the centralizer is not abelian. Moreover, a univalent parabolic self-map~$\varphi$ of positive hyperbolic step can have non-univalent elements in the centralizer. This is possible neither for hyperbolic self-maps~$\varphi$, nor for parabolic self-maps of zero hyperbolic step, see \cite[Corollary~4.9]{Cowen-comm} and \cite[Theorem~4.5]{CDG-zero-h.step}.

The relation we establish in this paper between the simultaneous linearization and commutativity is two-fold.
Firstly, we show that two self-maps $\varphi,\psi\in\Hol(\UD)$ with the same Denjoy\,--\,Wolff point on the boundary (see Section~\ref{SS_PRE-holo-selfmaps} for the definition) commute if and only if they admit a simultaneous linearization by a holomorphic function $h$ satisfying some kind of local univalence condition, see Theorem~\ref{TH_SL-vs-COMM} and Addendum~\ref{ADD_1}. Moreover, we are able to show that in most of the cases the pair ${(h,c)}$ solving the simultaneous linearization problem for commuting parabolic self-maps is essentially unique, see Corollary~\ref{Thm:StrongUniqueness} and Remark~\ref{RM_StrongUniqueness} for precise statements.

The second aspect of the interplay between the commutativity and simultaneous linearization we discover in this paper concerns elements of the centralizer~$\Zn(\varphi)$: we prove that any two self-maps in a family $\Delta\subset\Zn(\varphi)$ commute if and only if there is a common (holomorphic) solution~$h$ to~\eqref{EQ_SL-Cowen} for all~${\psi\in\Delta}$, see Theorems~\ref{TH_abelian-part0} and~\ref{TH_abelian-part}. In this way, the paper shows that although global linearization of the centralizer is no longer available in the parabolic positive-step setting, the simultaneous linearization machinery still provides an exact description of its abelian substructures.

It is worth mentioning that results of this spirit are known in other contexts, see e.g. \cite{DeWitt,SL,FKh2009,SLJ1,SLJ2} and references therein. However, our proofs are based on methods specific for holomorphic dynamical systems. Necessary preliminaries and some useful results from holomorphic dynamics are collected in Section~\ref{S_prelim}. For readers' convenience, we also collect main results from~\cite{CDG-zero-h.step} concerning centralizers of parabolic self-maps in case of zero hyperbolic step. These are placed in Section~\ref{S_when-zero}.

The new results mentioned above are developed in Section~\ref{S_simmultaneous-lin}.
Furthermore, in Section~\ref{S_SLC}, we establish various identities that allow one to determine~--- without solving the simultaneous linearization problem~\eqref{EQ_SL-Cowen}~--- the unique value ${c_{\varphi,\psi}}$ of the constant~$c$ for which this problem admits a holomorphic solution~$h$. Finally, in Section~\ref{S_realSLC}, we analyze the case when~$c_{\varphi,\psi}$ is real. We show that under this condition, the two commuting parabolic self-mappings share many properties, see Theorem~\ref{TH_real-SLC=common-properties}. Moreover, for the case of $\varphi$ having positive hyperbolic step we show that ${c_{\varphi,\psi}\in\Real}$ if and only if the composition~$\varphi\circ\psi$ is also of positive hyperbolic step. This allows us to deduce Cowen's main result~\cite[Theorem~3.1]{Cowen-comm} as a corollary.

An essential role in our arguments is played by the properties of holomorphic self-maps commuting with a parabolic automorphism. We study them in Appendix~\hyperlink{AppendixA}{A}.

The paper is concluded by two examples of parabolic self-maps of positive hyperbolic step which illustrate two opposite extreme situations. In the first example, the centralizer is literally as small as possible: it coincides with the set of all natural iterates~${\{\varphi^{\circ n}:n=0,1,2,\ldots\}}$.
In the second example, the centralizer is ``almost as big as possible''. In a sense, it is comparable to the centralizer of a parabolic automorphism, although the self-map in that example is not even univalent. These examples can be found in Appendix~\hyperlink{AppendixB}{B}.

\section{Preliminaries and auxiliary results}\label{S_prelim}
Below we introduce some notations and basic theory used further in the paper. For  more details and for the proofs of the previously known results presented in this section without proof, we refer the interested readers to the recent monograph~\cite{Abate2}.

\addtocontents{toc}{\SkipTocEntry}
\subsection{Notation}\label{Notation}
As usual, we denote by $\Natural$ and $\Natural_0$ the sets of all positive and all non-negative integers, respectively. We denote the unit disc by
${\UD:=\{z\in\C:|z|<1\}}$ and the punctured unit disc by ${\UD^*:=\UD\setminus\{0\}}$. We write $\UH:={\{w\in\C:\Im w>0\}}$ for the upper half-plane and $\HR:={\{w\in\C:\Re w>0\}}$ for the right half-plane. For $w\in\C\setminus\{0\}$, $\Arg\, w$ will stand for the principal value of the argument of~$w$, i.e. the unique value of $\arg w$ belonging to ${(-\pi,\pi]}$.

Furthermore, denote by $\Hol(D,E)$ the class of all holomorphic mappings of a domain $D\subset\C$ into a set $E\subset\C$,
and let $\U(D,E)$ stand for the class of all \textit{univalent} (i.e. injective holomorphic) mappings from $D$ to~$E$.  As usual, we endow $\Hol(D,E)$ and $\U(D,E)$ with the topology of locally uniform convergence. In case $E=D$, we will write  $\Hol(D)$ and $\U(D)$ instead of $\Hol(D,D)$ and $\U(D,D)$, respectively.

For a self-map $\varphi:D\to D$ of a domain $D\subset\C$ and ${n\in\Natural}$ we denote by $\varphi^{\circ n}$ the $n$-th iterate of~$\varphi$, and let $\varphi^{\circ0}:=\id_D$, the identity map in $D$. Moreover, if $\varphi$ is an automorphism of~$D$, then for every $n\in\N$, we denote by $\varphi^{\circ-n}$ the $n$-th iterate of~$\varphi^{-1}$.

If $D$ is a hyperbolic domain in the complex plane, we denote by $\hd_{D}$ (resp. $\phd_D$) the hyperbolic distance (resp. pseudohyperbolic distance) in $D$. We write $\phdisk^{D}(z,r)$ for the pseudohyperbolic disc in~$D$ of radius~$r$ centered at~$z$. When $D$ is the unit disc, we simply write $\phdisk(z,r)$ to denote $\phdisk^{\D}(z,r)$.

\addtocontents{toc}{\SkipTocEntry}
\subsection{Holomorphic self-maps of the unit disc}\label{SS_PRE-holo-selfmaps}
The study of the dynamics of a generic holomorphic self-map $\varphi$ of the unit disc $\mathbb{D}$, different from the identity map,  is a classical and well-established branch of Complex Analysis.

The central result in the area is the Denjoy\,--\,Wolff Theorem, which states that if $\varphi$ is different from an elliptic automorphism (i.e. not an automorphism of~$\UD$ possessing a fixed point in~$\UD$), then the sequence of the iterates $(\varphi ^{\circ n})$ converges locally uniformly in~$\UD$ to a certain point~${\tau\in\overline{\mathbb{D}}}$.  This point  is called the \dff{Denjoy\,--\,Wolff point\/} of $\varphi$. Moreover, if $\tau\in \partial \D$, it is the unique boundary fixed point at which the angular derivative $\varphi'(\tau)$ is finite and belongs to $(0,1]$.

According to the position of the Denjoy\,--\,Wolff point~$\tau$ and to the value of the \textsl{multiplier} $\varphi'(\tau)$, holomorphic self-maps $\varphi\in\Hol(\UD)$ different from elliptic automorphisms are divided into three categories. Namely, $\varphi$ is called:
\begin{itemize}
\item[(a)] \dff{elliptic\/} if $\tau\in\UD$,

\item[(b)] \dff{hyperbolic\/} if $\tau\in \partial \D$ and $\varphi'(\tau )<1$, and

\item[(c)] \dff{parabolic\/} if $\tau
\in \partial \D$ such that $\varphi'(\tau )=1$.
\end{itemize}
All elliptic automorphisms of~$\UD$, including the identity mapping~$\id_\UD$,  are conventionally incorporated into the category~(a) of elliptic self-maps.
Similarly, for an elliptic automorphism different from~$\id_\UD$, its Denjoy\,--\,Wolff point is defined to be its unique fixed point in~$\UD$.

Parabolic self-maps can have very different dynamical properties depending on the so-called \textit{hyperbolic step}.
Let $\varphi\in\Hol(\UD)$ be non-elliptic. Thanks to the Schwarz\,--\,Pick Lemma, for the orbit $\big(z_n\big):=\big(\varphi^{\circ n}(z_0)\big)$ of any point ${z_0\in\UD}$, there exists a finite limit $q(z_0):=\lim_{n\to+\infty} \hd_\D(z_{n},z_{n+1})$. 
 It is known, see e.g. \cite[Corollary\,4.6.9]{Abate2}, that  either $q(z_0)>0$ for all~${z_0\in\UD}$, or $q\equiv0$ in~$\UD$.  The self-map~$\varphi$ is said to be of \dff{positive} or of \dff{zero hyperbolic step} depending on whether the former or the latter alternative occurs.
If $\varphi$ is
hyperbolic, then it is always of positive hyperbolic step. However, there exist parabolic self-maps of zero as well as of positive hyperbolic step.

\addtocontents{toc}{\SkipTocEntry}
\subsection{Commuting holomorphic self-maps}\label{SS_commuting}
It is clear that if two  holomorphic self-maps $\varphi,\psi\in\Hol(\UD)\setminus\{\id_\UD\}$ commute, i.e. ${\varphi\circ\psi}={\psi\circ\varphi}$, and if one of them is elliptic, then the other is also elliptic and they share the Denjoy\,--\,Wolff point. The situation is not so evident when we consider non-elliptic self-maps, but thanks to the contributions due to Heins \cite[Lemma~2.1]{Heins},  Behan~\cite[Theorem~6]{Behan}, and Cowen~\cite[Corollary~4.1]{Cowen-comm}, one can state the following.
\begin{theorem}[{\cite{Heins,Behan,Cowen-comm}}]\label{TH_Behan-and-Ko}
Let $\varphi\in\Hol(\UD)$ be non-elliptic. Suppose ${\psi\in\Hol(\UD)\setminus\{\id_\UD\}}$ commutes with~$\varphi$. Then:
\begin{ourlist}
\item if $\varphi$ is a hyperbolic automorphism, then $\psi$ is also a hyperbolic automorphism and has the same fixed points in~$\UC$ as~$\varphi$;
\item if $\varphi$ is not a hyperbolic automorphism, then $\psi$ and~$\varphi$ have the same Denjoy\,--\,Wolff point;
\item if $\varphi$ is hyperbolic (resp., parabolic), then $\psi$ is also hyperbolic (resp., parabolic).
\end{ourlist}
\end{theorem}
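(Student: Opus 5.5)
We prove the three items in order, taking the Denjoy\,--\,Wolff theorem and Julia's lemma as given. Let $\tau\in\UC$ denote the Denjoy\,--\,Wolff point of~$\varphi$.

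\emph{Item (1).} Suppose $\varphi$ is a hyperbolic automorphism with fixed points $\tau,\sigma\in\UC$. From $\varphi\circ\psi=\psi\circ\varphi$ we get $\psi=\varphi^{\circ -n}\circ\psi\circ\varphi^{\circ n}$ for every $n\in\N$. Conjugate to the strip model $S=\{|\Im w|<\pi/2\}$, in which $\varphi$ becomes $w\mapsto w+a$ with $a>0$. Then $\psi$ commutes with a real translation. Consequently the function $g(w):=\psi(w)-w$ satisfies $g(w+a)=g(w)$.
\begin{itemize}
\item Using $\varphi^{\circ n}\to\tau$ and $\varphi^{\circ -n}\to\sigma$, together with the Schwarz\,--\,Pick contraction along the orbits, one checks that $\psi$ cannot push points off to infinity at a different hyperbolic rate.
\item A cleaner route is to compare $\hd_\UD(\psi(z),\psi(w))$ with $\hd_\UD(z,w)$ for $z=\varphi^{\circ n}(z_0)$ and $w=\varphi^{\circ n}(w_0)$. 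Since $\varphi^{\circ \pm n}$ are isometries, the quantity $\hd_\UD(\psi(\varphi^{\circ n}z_0),\psi(\varphi^{\circ n}w_0))$ is constant in~$n$.
\item A normal-families limit of $\varphi^{\circ -n}\circ\psi\circ\varphi^{\circ n}$ along subsequences shows that $\psi$ coincides with a map sending the geodesic $(\sigma,\tau)$ into itself isometrically.
\end{itemize}
From this we conclude that $\psi$ preserves hyperbolic distances at some interior point, so $\psi$ is an automorphism by Schwarz\,--\,Pick. Since it commutes with~$\varphi$, it permutes $\{\tau,\sigma\}$. The only automorphisms commuting with $w\mapsto w+a$ on~$S$ are translations $w\mapsto w+b$ (a swap $w\mapsto -w+b$ does not commute). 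This means $\psi$ is a hyperbolic automorphism with the same fixed points, given that $\psi\ne\id_\UD$.

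\emph{Item (2).} If $\psi$ were elliptic, then $\varphi$ would map the fixed point set of~$\psi$ into itself. Hence $\varphi$ would be elliptic, or $\psi$ would be an elliptic automorphism whose fixed point $p$ satisfies $\varphi(p)=p$. Either case is a contradiction, so $\psi$ is non-elliptic with Denjoy\,--\,Wolff point~$\sigma\in\UC$. Applying $\psi$ to the orbit $\varphi^{\circ n}(z)\to\tau$ gives $\psi(\varphi^{\circ n}(z))=\varphi^{\circ n}(\psi(z))\to\tau$. Thus $\psi$ maps a sequence converging to~$\tau$ onto a sequence converging to~$\tau$, and $\tau$ is a fixed point of~$\psi$ in this sequential sense. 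The argument in the other direction likewise shows $\sigma$ is fixed by~$\varphi$. The main obstacle is to upgrade this to an \emph{angular} statement with finite angular derivative. The plan is to use that $\varphi$-orbits converge to~$\tau$ with bounded hyperbolic step, so $\hd_\UD(z_n,z_{n+1})$ is bounded. This lets us apply Julia's lemma / Lindel\"of along the orbit to get $\psi'(\tau)\le1$, and symmetrically $\varphi'(\sigma)\le 1$. By uniqueness of the boundary fixed point with multiplier $\le 1$, we get $\sigma=\tau$, unless $\varphi$ is an automorphism with two such points, which is exactly case~(1).

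\emph{Item (3).} By the chain rule for angular derivatives at the common point~$\tau$, the multipliers satisfy $\varphi'(\tau)\psi'(\tau)=(\varphi\circ\psi)'(\tau)=(\psi\circ\varphi)'(\tau)$; this by itself is symmetric and gives no information. Instead, suppose $\varphi$ is parabolic and $\psi$ is hyperbolic. Then $\psi$ has positive hyperbolic step and its orbits converge non-tangentially, with $\hd_\UD(w,\psi^{\circ n}w)$ growing linearly, at rate $-\tfrac12\log\psi'(\tau)$. Since $\varphi$ commutes with~$\psi$ and does not increase $\hd_\UD$, we have $\hd_\UD(\varphi^{\circ m}(z),\psi^{\circ n}(\varphi^{\circ m}(z)))\le \hd_\UD(z,\psi^{\circ n}z)$. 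We then compare with the hyperbolic case via Cowen's model: in the half-plane model for~$\psi$ ($\psi\sim w\mapsto \lambda w$ with $\lambda>1$), $\varphi$ induces an automorphism of the model commuting with $w\mapsto\lambda w$. Such an automorphism is $w\mapsto \mu w$ with $\mu>0$, and it fixes $\tau$ with multiplier $\mu^{-1}$. Parabolicity forces $\mu=1$, so $\varphi$ would be the identity on the model, and hence the identity map, a contradiction. Swapping the roles of $\varphi$ and $\psi$ gives the converse. I expect the transfer to the model, i.e.\ that the commuting map descends to the model automorphism, to be the step requiring Cowen's machinery.
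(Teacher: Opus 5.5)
\paragraph{Review.} The paper does not prove this theorem; it quotes it from Heins, Behan and Cowen. Your sketch therefore has to stand on its own, and it has two genuine gaps.

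\emph{Item (1).} None of your bullet points shows that $\psi$ is an automorphism. Since $\psi$ commutes with $\varphi$, the sequence $\varphi^{\circ-n}\circ\psi\circ\varphi^{\circ n}$ is constantly equal to $\psi$, so its normal-family limits carry no information. The constancy of $\rho_\UD\big(\psi(\varphi^{\circ n}(z_0)),\psi(\varphi^{\circ n}(w_0))\big)$ is just isometry invariance.

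The missing idea is the displacement function of $\varphi$. Let $\gamma$ be the axis of $\varphi$ (the geodesic joining $\sigma$ and $\tau$) and $\ell>0$ its translation length. Then $\rho_\UD(z,\varphi(z))\ge\ell$ for every $z\in\UD$, with equality exactly on $\gamma$. For $z_0\in\gamma$ this gives $\ell\le\rho_\UD\big(\psi(z_0),\varphi(\psi(z_0))\big)=\rho_\UD\big(\psi(z_0),\psi(\varphi(z_0))\big)\le\rho_\UD(z_0,\varphi(z_0))=\ell$. So equality holds in the Schwarz--Pick inequality at two distinct points, hence $\psi\in\mathsf{Aut}(\UD)$, and your strip argument then finishes item (1).

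\emph{Item (3).} This item inherits the gap. Lemma~\ref{Le:morphism} only gives a holomorphic self-map of the base space commuting with $w\mapsto\lambda w$, not an automorphism. It is precisely Heins' lemma (item (1)) that makes this map a dilation. You would also still have to justify that the dilation factor equals $\varphi'(\tau)^{-1}$, which depends on the boundary behaviour of the Koenigs map of $\psi$ at $\tau$.

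\emph{Item (2).} This is the more serious gap. Your exclusion of elliptic $\psi$ is fine. The trouble comes afterwards. You have $\rho_\UD\big(\varphi^{\circ n}(z_0),\psi(\varphi^{\circ n}(z_0))\big)\le\rho_\UD(z_0,\psi(z_0))$ and $\psi(\varphi^{\circ n}(z_0))\to\tau$. From these, Julia's lemma and the Julia--Wolff--Carath\'eodory theorem give only that $\psi$ has non-tangential limit $\tau$ at $\tau$ with \emph{finite} angular derivative. Nothing yields $\psi'(\tau)\le1$.

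In fact no local argument of this kind can work, because it never uses the hypothesis that $\varphi$ is not a hyperbolic automorphism. Take a hyperbolic automorphism $\varphi$ and $\psi:=\varphi^{\circ-1}$. All your inputs hold, including bounded hyperbolic step along the orbit, yet $\psi'(\tau)=1/\varphi'(\tau)>1$ and the Denjoy--Wolff points differ.

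Your exception clause misdiagnoses the situation. No self-map other than $\id_\UD$ has two distinct boundary fixed points with multiplier $\le1$. The real obstruction is that $\tau$ can be attracting for $\varphi$ while repelling for $\psi$. What is missing is exactly the content of Behan's theorem: distinct Denjoy--Wolff points (so that $\tau$ is a repelling fixed point of $\psi$ and $\sigma$ a repelling fixed point of $\varphi$) can only occur when $\varphi$ is a hyperbolic automorphism. That requires a genuinely global argument, such as Behan's or Cowen's via the holomorphic model, not an estimate along a single orbit.
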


It is worth mentioning that parabolic self-maps of positive hyperbolic step can commute with parabolic self-maps of zero hyperbolic step. Moreover, in contrast to the hyperbolic case, the fact that one of them is an automorphism would not imply that the other must  also be an automorphism. To see this, one can consider the following example: ${\varphi:=h^{-1}\circ(h+1)}$, ${\psi:=h^{-1}\circ(h+i)}$, where $h$ is a conformal map of~$\UD$ onto~$\UH$.

The current state of the art and historical notes concerning the study of commuting holomorphic self-maps of~$\UD$ can be found in~\cite[Section~4.10]{Abate2}. It is also worth mentioning that relations between commutativity and the \emph{embedding problem} (i.e. the problem to embed  discrete iteration into a continuous-time holomorphic dynamical system) have been recently explored in~\cite{CDG-Centralizer,CDG-Embedding}.

\addtocontents{toc}{\SkipTocEntry}
\subsection{Holomorphic models for holomorphic self-maps}\label{Sec:modelos}
An indispensable role in our study is played by the concept of a holomorphic model, which goes back to Pommerenke~\cite{Pom79}, Baker and Pommerenke~\cite{BakerPommerenke}, and Cowen~\cite{Cowen}. The terminology we use is mainly borrowed from~\cite{Canonicalmodel}.

\begin{definition}\label{DF_absirbing} Let  $\varphi\in\Hol(\D)$. A domain $V \subset \D$ is \emph{invariant} for $\varphi$ (or $\varphi$-invariant) if $\varphi(V)\subset V$; it is \emph{absorbing} for $\varphi$ (or $\varphi$-absorbing) if it is $\varphi$-invariant and
$$
\D=\bigcup_{n\in\Natural}\big(\varphi^{\circ n}\big)^{-1}(V).
$$
In other words, a $\varphi$-invariant domain is $\varphi$-absorbing if it eventually contains the orbit of any point of $\D$.
\end{definition}

\begin{definition}\label{DF_holomorphic-model} A \dff{holomorphic semimodel} of $\varphi\in\Hol(\D)$  is a triple $\mathcal M:=(S,h,\alpha)$, where $S$ is a Riemann surface, $\alpha$ is an automorphism of $S$, and $h$ is a holomorphic map from $\UD$ into $S$ satisfying the following two conditions:
	\begin{enumerate}[left=2.5em]
		\item[(HM1)] $h\circ \varphi=\alpha \circ h$  {}~and
		\item[(HM2)] $S\,=\,\bigcup_{n\geq0}  \big(\alpha^{\circ n}\big)^{-1}(h(\D))$.
	\end{enumerate}
		A holomorphic \dff{model} of $\varphi\in \Hol (\D)$ is a semimodel $\mathcal M:=(S,h,\alpha)$ for which
	\begin{enumerate}[left=2.5em]
		\item[(HM3)] there exists a $\varphi$-absorbing domain $V\subset\UD$ in which $h$ is injective.
	\end{enumerate}
The Riemann surface $S$ is called the \dff{base space}, and the map $h$ is called the \dff{intertwining map} of the holomorphic model~$\mathcal M$.
\end{definition}

Given a holomorphic model $\mathcal M$ of~$\varphi$, every holomorphic semimodel of~$\varphi$ factorises via the model~$\mathcal M$ in the following sense.

\begin{lemma}[{\cite[Lemma 3.5.8 and Remark 3.5.6]{Abate2}}]\label{Le:morphism}
Assume that $\varphi\in\Hol(\UD)$ admits a holomorphic model $\mathcal M:=(S,h,\alpha)$.
If $(S_1,h_1,\alpha_1)$ is another holomorphic semimodel for~$\varphi$, then there exists  a surjective holomorphic map $\beta:S\to S_1$ such that ${h_1=\beta\circ h}$ and ${\beta\circ\alpha}={\alpha_1\circ\beta}$.
\end{lemma}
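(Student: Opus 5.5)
The plan is to build $\beta$ on the images of absorbing domains, where $h$ can be inverted, and then push it to all of~$S$ using~(HM2) and the conjugation relations.

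Fix a $\varphi$-absorbing domain $V\subset\UD$ on which $h$ is injective, as provided by (HM3). On the domain $h(V)\subset S$ we set $\beta_0:=h_1\circ(h|_V)^{-1}$, which is holomorphic. To extend $\beta_0$ to all of~$S$, note that $\alpha$ is an automorphism and $S=\bigcup_{n\ge0}\alpha^{\circ -n}(h(\UD))$. We need this in the sharper form $S=\bigcup_{n\ge0}\alpha^{\circ -n}(h(V))$. Given $w\in S$, write $w=\alpha^{\circ -m}(h(z))$. Since $V$ is absorbing, $\varphi^{\circ k}(z)\in V$ for some $k$, and then $h(z)=\alpha^{\circ -k}(h(\varphi^{\circ k}(z)))$. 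On the open set $\alpha^{\circ -n}(h(V))$ we define
$$\beta(w):=\alpha_1^{\circ -n}\bigl(\beta_0(\alpha^{\circ n}(w))\bigr).$$

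Next we check that $\beta$ is well defined. Since $V$ is $\varphi$-invariant and $h\circ\varphi=\alpha\circ h$, we have $\alpha(h(V))\subset h(V)$. It therefore suffices to show that $\beta_0\circ\alpha=\alpha_1\circ\beta_0$ on $h(V)$. For $u=h(v)$ with $v\in V$, we have $\alpha(u)=h(\varphi(v))$ with $\varphi(v)\in V$. Hence $\beta_0(\alpha(u))=h_1(\varphi(v))=\alpha_1(h_1(v))=\alpha_1(\beta_0(u))$. If $w\in\alpha^{\circ-n}(h(V))\cap\alpha^{\circ-m}(h(V))$ with $n\le m$, then $\alpha^{\circ m}(w)=\alpha^{\circ(m-n)}(\alpha^{\circ n}(w))$, and iterating the relation above shows that the two definitions agree. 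Thus $\beta$ is holomorphic on $S$ and satisfies $\beta\circ\alpha=\alpha_1\circ\beta$ by construction.

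We then verify $h_1=\beta\circ h$ on all of~$\UD$. For $z\in\UD$, choose $k$ with $\varphi^{\circ k}(z)\in V$. Then $h(z)\in\alpha^{\circ-k}(h(V))$, and
$$\beta(h(z))=\alpha_1^{\circ-k}\bigl(h_1(\varphi^{\circ k}(z))\bigr)=\alpha_1^{\circ -k}\bigl(\alpha_1^{\circ k}(h_1(z))\bigr)=h_1(z).$$
Surjectivity follows from (HM2) for the semimodel: any $s\in S_1$ equals $\alpha_1^{\circ -n}(h_1(z))=\alpha_1^{\circ-n}(\beta(h(z)))=\beta(\alpha^{\circ-n}(h(z)))$.

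The main point requiring care is well-definedness, that is, the compatibility of the local inverses across different levels $n$. This rests entirely on the forward invariance $\alpha(h(V))\subset h(V)$ and on injectivity of $h$ only on $V$; the bookkeeping above handles it.
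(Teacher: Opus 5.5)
Your proof is correct and is essentially the standard argument behind the cited result \cite[Lemma~3.5.8]{Abate2}; the paper itself gives no proof. That argument sets $\beta:=h_1\circ(h|_V)^{-1}$ on $h(V)$ for an absorbing domain $V$ from (HM3), and extends it to $S=\bigcup_{n}\alpha^{\circ-n}(h(V))$ by $\alpha_1^{\circ-n}\circ\beta\circ\alpha^{\circ n}$ using $\alpha(h(V))\subset h(V)$; surjectivity then follows from (HM2) for the semimodel. Your checks of openness, nestedness and compatibility across levels, $h_1=\beta\circ h$, and the intertwining relation all go through as written.
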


Every
non-elliptic self-map $\varphi\in\Hol(\UD)$ admits an essentially unique holomorphic model. More precisely, the following fundamental theorem holds.

\begin{theorem}[\protect{\cite[Corollary~4.5.5]{Abate2}}] \label{Thm:uniqness} Every
non-elliptic self-map $\varphi\in\Hol(\UD)$ admits a holomorphic model. Moreover such a model is unique up to a model isomorphism; i.e., if $(S_1,h_1,\alpha_1)$ and $(S_2,h_2,\alpha_2)$ are holomorphic models for $\varphi$, then there exists a biholomorphic map $\beta$ of~$S_1$ onto~$S_2$ such that
	$$
	h_2=\beta\circ h_1,\quad \alpha_2=\beta\circ\alpha_1\circ \beta^{-1}.
	$$
\end{theorem}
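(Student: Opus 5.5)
Existence and uniqueness are handled separately; the uniqueness part follows from Lemma~\ref{Le:morphism} by a standard rigidity argument.

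\emph{Existence.} Since $\varphi$ is non-elliptic, its iterates converge to a boundary point~$\tau$. We build the model abstractly as the direct limit of the system
\[
\UD\xrightarrow{\varphi}\UD\xrightarrow{\varphi}\UD\to\cdots .
\]
The first step is to produce a $\varphi$-absorbing domain $V$ on which $\varphi$ is univalent. For this we use that $\varphi$ is injective on suitable hyperbolic discs around orbit points far along the orbit. This holds for hyperbolic and positive-step parabolic maps, where $\rho_\UD(z_n,z_{n+1})$ stays bounded while $(1-|\varphi'|^2)/(1-|z|^2)\cdot(\ldots)$ tends to the isometric regime; it also holds for zero-step maps, where $\rho_\UD(z_n,z_{n+1})\to0$. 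The rigorous version uses the fact that $\varphi$ restricted to pseudohyperbolic discs $\phdisk(z_n,r)$ becomes close to an automorphism, so it is univalent there for large~$n$. We then take $V$ to be a suitable union of forward images of such discs; this union is invariant, and it is absorbing because every orbit eventually enters it.

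Next, let $S$ be the quotient of $\UD\times\N_0$ under the relation $(z,n)\sim(w,m)$ iff $\varphi^{\circ(k-n)}(z)=\varphi^{\circ(k-m)}(w)$ for some large~$k$, restricted to points whose orbits eventually lie in~$V$ (that is, all points). Charts come from the injectivity of $\varphi$ on $V$, which makes $S$ a Riemann surface. On $S$ define $h(z)=[(z,0)]$ and $\alpha[(z,n)]=[(\varphi(z),n)]=[(z,n-1)]$. The map $\alpha$ is an automorphism with inverse $[(z,n)]\mapsto[(z,n+1)]$. The model axioms then check directly: (HM1) is immediate, (HM2) holds by construction, and (HM3) holds on~$V$.

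\emph{Uniqueness.} Given two models, apply Lemma~\ref{Le:morphism} in both directions to obtain surjective holomorphic maps $\beta:S_1\to S_2$ and $\gamma:S_2\to S_1$ with $h_2=\beta\circ h_1$ and $h_1=\gamma\circ h_2$, both intertwining the automorphisms. Then $\gamma\circ\beta$ fixes $h_1(\UD)$ pointwise, and this set has nonempty interior because $h_1$ is injective, hence open, on~$V$. By the identity principle $\gamma\circ\beta=\id$ on~$S_1$, and symmetrically $\beta\circ\gamma=\id$. Hence $\beta$ is biholomorphic, and $\alpha_2=\beta\circ\alpha_1\circ\beta^{-1}$ follows from the intertwining relation.

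\emph{Main obstacle.} The hard step is constructing the absorbing domain of univalence uniformly across all three dynamical types, especially the parabolic zero-step case. There I would first try the Pommerenke/Baker--Pommerenke estimate: normalizing $\varphi$ at $z_n$ by automorphisms, the rescaled maps converge to an automorphism (respectively, to the identity in the zero-step case). Hurwitz's theorem then gives univalence on discs of fixed hyperbolic radius.
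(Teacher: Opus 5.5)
Your uniqueness argument is correct and standard: Lemma~\ref{Le:morphism} in both directions, then the identity principle on the connected surfaces. Non-constancy of $h_1$ already makes $h_1(\UD)$ open. The direct-limit construction also yields a model once you have a $\varphi$-absorbing domain $V$ with $\varphi|_V$ injective, and this is the route behind the cited proof, where existence is reduced to absorbing sets. The gap is in producing $V$, which is where the real content lies, and your sketch does not establish it.

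\emph{Local univalence.} In the positive-step cases the normalized maps $g_n=T_{z_{n+1}}^{-1}\circ\varphi\circ T_{z_n}$ fix $0$. They send a point whose modulus tends to the limiting pseudo-hyperbolic step $\delta>0$ to another such point, so every limit is a rotation by Schwarz's lemma. This is what your formula there should say.

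In the zero-step case, however, $\rho_\UD(z_n,z_{n+1})\to0$ gives nothing by itself. Both points collapse to $0$, and a limit $g$ with $g(0)=0$, $|g'(0)|<1$ is compatible with shrinking steps. You must use that the multiplier is $1$. In $\UH$ with Denjoy--Wolff point $\infty$, write $\Phi(w)=w+p(w)$ with $p(\UH)\subset\UH$; a real constant $p$ would give an automorphism, which has positive step. Schwarz--Pick for $p$ then gives
$$|\Phi'(w_n)-1|\le\Im p(w_n)/\Im w_n\le|w_{n+1}-w_n|/\Im w_n\to0,$$
which forces the normalized limits to be automorphisms.

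\emph{From discs to $V$.} This is the more serious omission. Injectivity of $\varphi$ on each disc does not give injectivity on a union of discs. A union of discs of fixed hyperbolic radius $R$ around one orbit (or their forward images) also need not be absorbing. For a hyperbolic automorphism with $z_0$ on its axis, points at hyperbolic distance $\ge R$ from the axis never enter. Saying it is absorbing ``because every orbit eventually enters it'' only restates the definition.

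One way to close this:
\begin{itemize}
\item Put $q_0:=\rho_\UD(z_0,z_1)$ and $D_R(z):=\{\zeta\in\UD:\rho_\UD(\zeta,z)<R\}$.
\item Pick $R_k\uparrow+\infty$ with $2R_1>q_0$, and $N_k$ such that $\varphi$ is injective on $D_{3R_k+2q_0}(z_n)$ for all $n\ge N_k$. This is possible because the normalized limits are automorphisms of the whole disc.
\item Let $V:=\bigcup_k\bigcup_{n\ge N_k}D_{R_k}(z_n)$.
\end{itemize}
Then $V$ has the required properties:
\begin{itemize}
\item It is connected, since consecutive discs overlap because $\rho_\UD(z_n,z_{n+1})\le q_0$.
\item It is $\varphi$-invariant by Schwarz--Pick.
\item It is absorbing, since $\rho_\UD(\varphi^{\circ n}(z),z_n)\le\rho_\UD(z,z_0)<R_k$ for large $k$.
\item $\varphi|_V$ is injective. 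Suppose $\varphi(a)=\varphi(b)$ with $a\in D_{R_k}(z_m)$, $b\in D_{R_{k'}}(z_{m'})$, $k\le k'$. Then $\rho_\UD(z_{m+1},z_{m'+1})<R_k+R_{k'}$, hence $\rho_\UD(z_m,z_{m'})<R_k+R_{k'}+2q_0$. So both $a$ and $b$ lie in $D_{3R_{k'}+2q_0}(z_{m'})$, where $\varphi$ is injective.
\end{itemize}
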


The proof of the existence of holomorphic models depends strongly on the existence of absorbing sets (see, i.e., \cite[Theorem 3.5.10]{Abate2}).
In the next result we summarize what we need about the models for self-maps of the unit disc.
The type of a self-map is reflected in, and actually can be fully determined from the kind of holomorphic model $\varphi$ admits.  For an open interval $I\subset\Real$, we define
$$
 S_I:=\Real\times I=\{x+iy:x\in\Real,\,y\in I\}.
$$

\begin{theorem}[\protect{\cite{Cowen}, see also \cite[Theorem 4.6.8]{Abate2}}]\label{Thm:model}
Let $\varphi\in\Hol(\UD)$. The following statements hold.
\begin{ourlist}
	\item\label{IT_HM-hyp} $\varphi$ is a hyperbolic self-map with multiplier $\lambda\in(0,1)$ if and only if $\varphi$ admits a holomorphic model of the form $\mathcal M_\varphi:=(S_{I},h,z\mapsto z+1)$, where $I=(a,b)$ is a bounded open interval of length ${b-a=\pi/|\log\lambda|}$.
	\item\label{IT_HM-para-PHS} $\varphi$ is a parabolic self-map of positive hyperbolic step if and only if $\varphi$ admits a holomorphic model of the form ${\mathcal M_\varphi:=(S_{I},h,z\mapsto z+1)}$, where $I$ is an open unbounded interval different from the whole~$\Real$.
	\item\label{IT_HM-para-0HS} $\varphi$ is a parabolic self-map of zero hyperbolic step if and only if $\varphi$ admits a holomorphic model of the form ${\mathcal M_\varphi:=(\C,h,z\mapsto z+1)}$.	
\end{ourlist}	
\end{theorem}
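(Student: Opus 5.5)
The plan is to use Theorem~\ref{Thm:uniqness} to reduce the three equivalences to a classification of the possible base spaces, and then to match the classification with the dynamical type of~$\varphi$. Let $\varphi$ be non-elliptic with holomorphic model $(S,h,\alpha)$. First I identify $S$ and $\alpha$ up to model isomorphism. Let $V$ be an absorbing domain on which $h$ is injective. Then $S=\bigcup_n \alpha^{-n}(h(V))$ is an increasing union of simply connected domains, since $\alpha^{-n}(h(V))$ increases in~$n$ by invariance. Hence $S$ is simply connected, so $S$ is $\C$ or conformally~$\UD$. The sphere is excluded because $S$ is exhausted by images of a domain of~$\UD$.

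Next I classify~$\alpha$. The map $\alpha$ has no fixed point in~$S$. Indeed, a fixed point would pull back, via absorption, to an orbit of~$\varphi$ staying at bounded hyperbolic distance from a point. That is impossible because $\varphi^{\circ n}\to\tau\in\UC$ and $h$ is isometric-contracting. Therefore, if $S=\C$, then $\alpha$ is a translation, which we conjugate to $z\mapsto z+1$. If $S\simeq\UH$, then $\alpha$ is a hyperbolic or parabolic automorphism. In the hyperbolic case, conjugating by a logarithm turns it into $z\mapsto z+1$ on a bounded strip~$S_I$. In the parabolic case we get $z\mapsto z+1$ on $\UH=S_{(0,\infty)}$, or on the lower half-plane.

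Finally I match these cases with the dynamics of~$\varphi$. The base point is that the hyperbolic step is computed in the model: for $z_0\in V$ and large~$n$,
\[
\rho_\UD\bigl(z_n,z_{n+1}\bigr)=\rho_{h(V)}\bigl(h(z_n),h(z_n)+1\bigr)\ \ge\ \rho_S\bigl(h(z_n),h(z_n)+1\bigr),
\]
and a matching upper bound holds along the exhaustion. Hence $q(z_0)$ equals $\lim_n \rho_S(w_n,w_n+1)$, where $w_n=h(z_0)+n$; making this limit identification is the main obstacle. In~$\C$ this limit is $0$, so the step is zero. In a half-plane $S_I$ the quantity $\rho_S(w,w+1)$ is constant and positive, so the step is positive. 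In a bounded strip the step is also positive. For the multiplier, I compare the angular derivative with the strip width using Julia's lemma through the model. In a strip of width~$\ell$, the map $z\mapsto e^{\pi z/\ell}$ sends $S_I$ to a half-plane and the translation to the dilation by $e^{\pi/\ell}$. The standard result that the model dilation equals $1/\varphi'(\tau)$ then gives $\ell=\pi/|\log\lambda|$, and for half-planes and~$\C$ the multiplier is~$1$. The three cases are mutually exclusive and exhaust the non-elliptic maps, so the "if" directions follow once the "only if" directions are established, using the uniqueness in Theorem~\ref{Thm:uniqness}.

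For the limit identification, I would cite the result that $\varphi|_V$ is semiconjugate so that $\rho_{\alpha^{-n}h(V)}\to\rho_S$ locally uniformly, since the domains increase to~$S$. I would combine this with the invariance of $\rho_\UD(z_n,z_{n+1})$ under the shift.
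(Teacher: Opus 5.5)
The paper gives no proof of this statement; it is quoted from \cite{Cowen} and \cite[Theorem~4.6.8]{Abate2}. So your outline has to stand on its own. Your first two steps and the computation of the step in the model are a sound route, and after the repairs below they settle~(3): the base is~$\C$ exactly when the step is zero, and zero step excludes hyperbolicity.

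The genuine gap is the separation of~(1) from~(2). Hyperbolic self-maps and parabolic self-maps of positive step \emph{both} have positive hyperbolic step. So nothing you actually prove distinguishes a strip from a half-plane, and nothing produces the width. At exactly this point you appeal to ``the standard result that the model dilation equals $1/\varphi'(\tau)$'' and assert that the multiplier is~$1$ for half-planes. That is the content of~(1) together with the exclusion of strips in~(2), so the argument is circular at its decisive step. ``Julia's lemma through the model'' does not fill the gap as stated. Via the $1$-Lipschitz horofunction at~$\tau$, Julia's lemma gives only $\rho_\UD(z,\varphi(z))\ge\frac12\log\frac1\lambda$, hence $q\ge\frac12\log\frac1\lambda$ on~$\UD$. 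For a strip of width~$\ell$ this yields $\lambda\ge e^{-\pi/\ell}$, but neither the reverse inequality nor even $\lambda<1$.

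What is missing is a matching upper bound, together with an identification of $\inf_\UD q$ in the model. By Julia--Wolff--Carath\'eodory, $\rho_\UD\bigl(r\tau,\varphi(r\tau)\bigr)\to\frac12\log\frac1\lambda$ as $r\to1^-$. Also $q(z)\le\rho_\UD(z,\varphi(z))$, since $\rho_\UD(z_n,z_{n+1})$ is non-increasing. Hence $\inf_\UD q=\frac12\log\frac1\lambda$, with $\rho$ normalized so that $\rho_\UH(i,i\mu)=\frac12\log\mu$.

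On the model side, fix $z_0\in V$ and write $z_m:=\varphi^{\circ m}(z_0)$, $w_0:=h(z_0)$. Your displayed equality $\rho_\UD(z_n,z_{n+1})=\rho_{h(V)}(\cdot,\cdot)$ is false; only ``$\le$'' holds. The correct squeeze is
\[
\rho_S(w_0,w_0+1)\le\rho_\UD(z_m,z_{m+1})\le\rho_V(z_m,z_{m+1})=\rho_{h(V)-m}(w_0,w_0+1)\longrightarrow\rho_S(w_0,w_0+1)\quad(m\to\infty).
\]
The first inequality is Schwarz--Pick for $h\colon\UD\to S$, using $h(z_m)=w_0+m$ and the fact that translations are isometries of~$S$. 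The limit holds because $h(V)-m$ increases to~$S$; with $\rho_\C:\equiv0$ this also covers~(3).

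This squeeze gives $\inf_\UD q=\inf_{w\in S}\rho_S(w,w+1)$, for three reasons: $q$ is constant along orbits, every orbit enters~$V$, and $w\mapsto\rho_S(w,w+1)$ is $1$-periodic on $S=\bigcup_m(h(V)-m)$. The right-hand side is $0$ for a half-plane. For a strip of width~$\ell$ it is $\pi/(2\ell)$: after a vertical shift, $w\mapsto e^{\pi w/\ell}$ conjugates the translation to $\zeta\mapsto e^{\pi/\ell}\zeta$ on~$\UH$. Thus $\lambda=1$ for a half-plane and $\ell=\pi/|\log\lambda|$ for a strip, which is exactly what (1) and~(2) need. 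Alternatively, run the same squeeze with $w_0+k$ in place of $w_0+1$ and compare divergence rates.

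Two smaller repairs are needed.
\begin{itemize}
\item (HM3) does not make~$V$ simply connected. $S$ is nevertheless simply connected: every loop in~$S$ is $\alpha^{-n}\circ h$ of a loop in~$V$, and a null-homotopy in~$\UD$ is carried by $\alpha^{-n}\circ h$ into~$S$.
\item For the ``if'' directions you must first note that a map admitting a translation model is not elliptic, since a fixed point~$z^*$ would give $h(z^*)=h(z^*)+1$.
\end{itemize}
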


\begin{remark}\label{RM_normalization}
In the above theorem, we may assume that:
\begin{itemize}
 \item[-]in cases~\ref{IT_HM-hyp} and~\ref{IT_HM-para-0HS}, $h(0)=0$;
 \item[-]in case~\ref{IT_HM-para-PHS}, $\Re h(0)=0$ and ${S_I=S_{(0,+\infty)}=\UH}$ or ${S_I=S_{(-\infty,0)}=-\UH}$.
\end{itemize}
Using the uniqueness part of Theorem~\ref{Thm:uniqness}, one can show (see e.g. \cite[Corollary 4.6.12]{Abate2} for details) that the above assumptions play the role of a normalization under which the holomorphic model $\mathcal M_\varphi$ for a given non-elliptic self-map~$\varphi$ is unique. Note that the normalization for cases \ref{IT_HM-hyp} and~\ref{IT_HM-para-0HS} would also work in case~\ref{IT_HM-para-PHS}, but we prefer to use another normalization, so that for parabolic self-maps of positive hyperbolic step,  the base space~$S_I$ of~$\mathcal M_\varphi$ coincides with $\UH$ or~$-\UH$. Moreover, replacing, if necessary, $\varphi$ with $z\mapsto\overline{\varphi(\bar z)}$ we may assume in most of the proofs that ${S_I=\UH}$.
\end{remark}

\begin{definition}\label{DF_canonical}
 The unique holomorphic model $\mathcal M_\varphi$ of a non-elliptic self-map  $\varphi\in\Hol(\UD)$ defined in Theorem~\ref{Thm:model} and normalized as in Remark~\ref{RM_normalization} is called \dff{the canonical (holomorphic) model} for~$\varphi$. The intertwining map~$h$ of the canonical model~$\mathcal M_\varphi$ is called the \dff{Koenigs function}, and we denote it, from now on, by $h_{\varphi}$. Similarly, we write $S_\varphi$ for the base space of the canonical model.
\end{definition}

At some point we will deal also with elliptic self-maps. Therefore, it is worth mentioning that every elliptic self-map ${f\in\Hol(\UD)}$ satisfying ${\lambda:=f'(\tau)\in\UD^*}$, where $\tau\in\UD$ is the Denjoy\,--\,Wolff point of~$f$, admits a holomorphic model of the form ${\big(\C,h_f,z\mapsto\lambda z\big)}$. Under the normalization ${h_f'(\tau)=1}$ such a holomorphic model is unique and will be referred to as the \dff{canonical (holomorphic) model} for the elliptic self-map~$f$. Similarly to the non-elliptic case, the intertwining map~$h_f$ will be called the Koenigs function of~$f$.

\begin{remark}
An immediate consequence of the definition is that in the elliptic and non-elliptic cases, the Koenigs function satisfies
\begin{align*}
  &h_f\circ f = f'(\tau) h_f \tag{Schr\"oder's equation}\\
\intertext{and}
  &h_\varphi\circ \varphi = h_\varphi+1,  \tag{Abel's equation}
\end{align*}
respectively.
\end{remark}

\begin{remark}\label{RM_h.step=h(D)}
Let $\varphi\in\Hol(\UD)$ be a parabolic self-map. It follows immediately from Theorem~\ref{Thm:model} and Definition~\ref{DF_canonical} that the self-map~$\varphi$ is of positive hyperbolic step if and only if $h_\varphi(\UD)$ is contained in a half-plane of the form ${\{z:\Im z>A\}}$ or ${\{z:\Im z<A\}}$,  ${A\in\Real}$.
\end{remark}

\begin{remark}\label{RM_factorization}
Let $\varphi\in\Hol(\UD)$ be a non-elliptic self-map with canonical model $\big(S_\varphi,h_\varphi,z\mapsto z+1\big)$. Suppose that a holomorphic function ${h:\UD\to\C}$ satisfies the functional equation
$$
  h\circ\varphi=h+c,
$$
where~${c\in\C}$ is a constant. Then  $h$ factorizes via the Koenigs map of~$\varphi$. More precisely,
$$
  h=f\circ h_\varphi
$$
for a suitable holomorphic function $f:S_\varphi\,\to\, S[h,c]:=\bigcup_{n\in\Natural}\big(h(\UD)-nc\big)$ satisfying the identity ${f(w+1)=f(w)+c}$ for all~${w\in S_\varphi}$. This follows at once from Lemma~\ref{Le:morphism} if we notice that $\big(S[h,c],\,h,\,z\mapsto z+c\big)$ is a semimodel for~$\varphi$.
\end{remark}

\begin{remark} \label{Re:almostasemigroup}
Let $\varphi\in\Hol(\UD)$ be a non-elliptic self-map with canonical holomorphic model   $(S_\varphi,h_{\varphi},z\mapsto z+1)$. By the very definition of holomorphic model, we have that for any compact set $K$ in~$S_\varphi$, there exists  ${N>0}$ such that for every natural number $n>N$, it holds that $K+n\subset h_{\varphi}(\D)$. In fact, something stronger can be proved: Take $K$ a compact set in $S$, and consider the compact set $$\mtilde K=\{w+s:\, w\in K, s\in [0,2]\}\subset S_\varphi.$$ Then   there is $N>0$ such that $\mtilde K+n \subset h_{\varphi}(\D)$ for all  $n>N$. Therefore,  ${K+t\subset h_{\varphi}(\D)}$ for any real number ${t>N}$. Roughly speaking, this means that asymptotically every Koenigs domain behaves like the Koenigs domain of a non-elliptic semigroup. This fact has been used several times in the literature, see e.g.  \cite[Lemma~7.6]{Bracci-Roth} and   \cite[Lemma~2.2]{Poggi}.
A bit more generally, if ${V\subset\UD}$ is any $\varphi$-absorbing open set, then  repeating the same argument with $\UD$ replaced by~$V$, we see that for each compact ${K\subset S_\varphi}$ there exists $t_0={t_0(K,V)\ge0}$ such that ${K+t\subset h_{\varphi}(V)}$ for all ${t\in[t_0,+\infty)}$.
\end{remark}

\begin{remark} \label{Re:Pommerenke}
	If  $\varphi$ is a parabolic self-map of positive hyperbolic step, $\tau$ is its Denjoy\,--\,Wolff point,  and $h_{\varphi}$ is its Koenigs map, Pommerenke proved that
	$$
	\angle\lim_{z\to \tau} |\Im h_{\varphi}(z)|=+\infty
	$$
	(see \cite[Theorem 3(ii)]{Pom79}).
\end{remark}

\begin{proposition}[\protect{\cite[Proposition 5.6]{CDG-zero-h.step}}] \label{positivo2}
Let $\varphi\in\Hol(\UD)$ be a parabolic self-map with Denjoy\,--\,Wolff point ${\tau\in\partial\D}$ and Koenigs function $h_{\varphi}$. Let $\psi\in\Zn(\varphi)\setminus\{\id_\D\}$.
Then
	$$
	\angle \lim_{z\to\tau}\dfrac{h_\varphi(\psi(z))-h_\varphi(z)}{h^\prime_\varphi(z)(\psi(z)-z)}=1.
	$$
\end{proposition}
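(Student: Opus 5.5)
The idea is to compare the increment $h_\varphi(\psi(z))-h_\varphi(z)$ with the linear term $h_\varphi'(z)(\psi(z)-z)$. This amounts to showing that $h_\varphi$ is nearly affine on the segment joining $z$ and $\psi(z)$, at least when $z\to\tau$ non-tangentially. I would proceed in four steps.

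\emph{Step 1: $\psi(z)$ stays at bounded hyperbolic distance from $z$.} Since $\psi\in\Zn(\varphi)$ and $\psi\ne\id_\UD$, Theorem~\ref{TH_Behan-and-Ko} shows that $\psi$ is parabolic with the same Denjoy\,--\,Wolff point~$\tau$. Fix $z_0\in\UD$ and put $z_n=\varphi^{\circ n}(z_0)$. Then $\psi(z_n)=\varphi^{\circ n}(\psi(z_0))$, so by Schwarz\,--\,Pick
\[
\rho_\UD\big(z_n,\psi(z_n)\big)\le \rho_\UD\big(z_0,\psi(z_0)\big).
\]
To pass from orbits to a full Stolz angle I would rather use $\psi$ itself: Schwarz\,--\,Pick together with the Julia--Wolff lemma at $\tau$, where $\psi'(\tau)=1$, gives $\rho_\UD(z,\psi(z))\le C$ for $z$ in any Stolz angle at~$\tau$. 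The argument is that $\psi(z)-z=o(|z-\tau|)$ non-tangentially, since the angular derivative of $\psi$ at $\tau$ equals~$1$. Consequently
\[
\frac{|\psi(z)-z|}{1-|z|}\to 0 \qquad\text{as } z\to\tau \text{ non-tangentially.}
\]

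\emph{Step 2: rescale and use normality.} Set $r(z)=1-|z|$. Consider the rescaled functions
\[
g_z(\zeta)=\frac{h_\varphi\big(z+\tfrac12 r(z)\zeta\big)-h_\varphi(z)}{\tfrac12 r(z)\,h_\varphi'(z)},\qquad \zeta\in\UD,
\]
which satisfy $g_z(0)=0$ and $g_z'(0)=1$. Write $\delta=\psi(z)-z$ and $\epsilon=2\delta/r(z)\to0$. The quotient in question equals $g_z(\epsilon)/\epsilon$. By Taylor's formula,
\[
\Big|\frac{g_z(\epsilon)}{\epsilon}-1\Big|\le |\epsilon|\sup_{|\zeta|\le 1/2}\frac{|g_z''(\zeta)|}{2}.
\]
So it suffices to bound $g_z''$ uniformly on $|\zeta|\le1/2$ as $z\to\tau$ in a Stolz angle.

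\emph{Step 3 (main obstacle): a uniform bound on $g_z''$.} This requires a univalence-type control of $h_\varphi$ near $\tau$; Koebe distortion would give exactly the bound needed. The relevant fact is (HM3): $h_\varphi$ is injective on a $\varphi$-absorbing domain $V$. What must be shown is that for $z$ near $\tau$ in a Stolz angle, the pseudohyperbolic disc $\phdisk(z,s)$ for a fixed small $s$ lies in $V$, or at least in a domain where $h_\varphi$ is univalent. I would first try to take $V$ to be the canonical absorbing domain from the construction of the model, which contains Stolz-type regions. If that is not available, I would argue with hyperbolic geometry of $h_\varphi(\UD)\subset S_\varphi$: $h_\varphi$ is locally univalent near $\tau$ because $h_\varphi'\ne0$ on absorbing sets, and it is injective on the discs $\phdisk(z_n,s)$ pulled back along orbits.

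Once $h_\varphi$ is univalent on $\phdisk(z,s)$, which is comparable to the Euclidean disc $D(z,c\,r(z))$ for $z$ in a Stolz angle, the Koebe distortion theorem bounds $|g_z''|$ uniformly. Hence $g_z(\epsilon)/\epsilon\to1$, which proves the proposition.
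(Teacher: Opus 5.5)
The paper does not prove this statement; it quotes it from \cite{CDG-zero-h.step}. So I judge your argument on its own merits.

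\textbf{Steps~1 and~2 are fine.} By Theorem~\ref{TH_Behan-and-Ko}, $\psi$ is parabolic with Denjoy--Wolff point $\tau$, so $\psi(z)\neq z$ in $\UD$. Julia--Wolff--Carath\'eodory then gives $|\psi(z)-z|=o(1-|z|)$ in every Stolz angle. Your rescaling and Taylor estimate are correct.

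\textbf{The gap is Step~3, which is the real content of the proposition.} You need $h_\varphi$ univalent on $\phdisk(z,s)$ for a \emph{fixed} $s$ and for \emph{all} $z$ in a Stolz angle close to $\tau$. Here $s=1/2$ suffices, since $D(z,(1-|z|)/2)\subset\phdisk(z,1/2)$. None of your suggested routes delivers this.
\begin{itemize}
\item The domain in (HM3) is only required to be absorbing, and such domains need not contain Stolz angles near $\tau$. For a parabolic automorphism, $h_\varphi$ is a Cayley map onto $\UH$. The preimage of $\{w\in\UH:\Re w>e^{\Im w}\}$ is absorbing and $h_\varphi$ is univalent on it. Yet a Stolz angle near $\tau$ is mapped into a sector $\{|\Re w|\le C\,\Im w\}$ near $\infty$, which this set avoids.
\item Pulling discs back along orbits fails when $\varphi$ has positive hyperbolic step. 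There $\Im h_\varphi(\varphi^{\circ n}(z_0))$ stays constant, while $|\Im h_\varphi|\to+\infty$ non-tangentially (Remark~\ref{Re:Pommerenke}). So every orbit meets each Stolz angle only finitely many times.
\item Knowing $h_\varphi'\neq0$ gives univalence only on discs of non-uniform hyperbolic size, which is useless for Koebe.
\end{itemize}

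\textbf{How to close it.} Use Theorem~\ref{Thm:V-sets}, parts \ref{IT_V-sets(3)} and \ref{IT_V-sets(4)}. For every parabolic $\varphi$, whatever its hyperbolic step, $V:=V_\varphi(1/3)$ is isogonal at $\tau$ and $h_\varphi$ is univalent on $V$; absorption plays no role.
\begin{itemize}
\item Isogonality implies, by a short contradiction argument with sequences, that every Stolz angle at $\tau$, truncated close enough to $\tau$, lies in $V$.
\item Near $\tau$, Stolz angles are comparable to hyperbolic neighbourhoods of the radius ending at $\tau$. Hence for $z$ in a Stolz angle $A$, the disc $\phdisk(z,1/2)$ lies in a wider Stolz angle $A'$, and its Euclidean diameter tends to $0$ as $z\to\tau$. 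So $\phdisk(z,1/2)\subset V$ for $z\in A$ near $\tau$.
\item Then $g_z$ is univalent in $\UD$ with $g_z(0)=0$ and $g_z'(0)=1$; in particular $h_\varphi'(z)\neq0$. Koebe distortion bounds $|g_z''|$ on $\{|\zeta|\le 1/2\}$ by an absolute constant.
\item Your Step~2 estimate $|g_z(\epsilon)/\epsilon-1|\le C|\epsilon|\to0$ then finishes the proof.
\end{itemize}
With this insertion your argument is complete.
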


\begin{theorem} [\protect{\cite[Theorem 5.1]{CDP}}]
\label{Thm:V-sets}
Let $\varphi \in \Hol(\D)$ be
parabolic  with Denjoy\,--\,Wolff point $\tau\in \partial \D$ and Koenigs function $h_{\varphi}$.
Given $0<r <1,$ write\begin{equation*}
V_{\varphi}(r )=\left\{ z\in \mathbb{D}:\phd_\UD(z,\varphi(z))<r \right\}.
\end{equation*}Then,
\begin{ourlist}
\item\label{IT_V-sets(1)}
     $V_{\varphi}(r )$ is  $\varphi$-invariant, that is, $\varphi (V_{\varphi}(r ))\subset V_{\varphi}(r )$ for all $0<r <1.$

\item\label{IT_V-sets(2)}
     $V_{\varphi}(r)$ is a simply connected domain.

\item\label{IT_V-sets(3)} $V_{\varphi}(r )$ is isogonal at $\tau$, that is, for every sequence ${(z_n)\subset\UD}$ converging to~$\sigma$ non-tangentially is contained, except maybe for a finite number of terms, in~$V$.

\item\label{IT_V-sets(4)}
    If $0<r \leq \frac{1}{3},$ then both $\varphi $ and its  Koenigs map $h_{\varphi}$ are univalent in $V_{\varphi}(r ).$

\item\label{IT_V-sets(5)}
     If $\varphi $ is of zero hyperbolic step and $0<r \leq \frac{1}{3}$, then $V_{\varphi}(r )$ is $\varphi$-absorbing.
\end{ourlist}
\end{theorem}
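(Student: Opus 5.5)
The plan is to work throughout with the function $u(z):=\phd_\UD(z,\varphi(z))$, so that $V_\varphi(r)=\{u<r\}$, and to prove the five items in the order \ref{IT_V-sets(1)}, \ref{IT_V-sets(3)}, \ref{IT_V-sets(5)}, \ref{IT_V-sets(4)}, \ref{IT_V-sets(2)}. The last of these is the genuinely delicate one.

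\emph{Invariance, isogonality, absorption.} Item \ref{IT_V-sets(1)} is immediate from the Schwarz\,--\,Pick Lemma: $u(\varphi(z))=\phd_\UD(\varphi(z),\varphi(\varphi(z)))\le \phd_\UD(z,\varphi(z))=u(z)$.

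For \ref{IT_V-sets(3)}, transfer to $\UH$ by a Cayley map sending $\tau$ to $\infty$, and let $F$ denote the conjugate of $\varphi$. Then
$$u=\frac{|F(w)-w|}{|F(w)-\bar w|}.$$
Parabolicity means that the angular derivative of $F$ at $\infty$ equals $1$, so $F(w)/w\to1$, i.e.\ $F(w)-w=o(|w|)$ as $w\to\infty$ non-tangentially. In a Stolz angle $|F(w)-\bar w|\ge \Im F(w)+\Im w\ge \Im w\asymp|w|$. Hence $u\to0$ non-tangentially, which gives isogonality for every $r>0$.

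For \ref{IT_V-sets(5)}, zero hyperbolic step means $\hd_\UD(z_n,z_{n+1})\to0$ along every orbit. Thus each orbit eventually enters $V_\varphi(r)$, and together with \ref{IT_V-sets(1)} this gives absorption once connectedness from \ref{IT_V-sets(2)} is known.

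\emph{Univalence.} First I would show that $\varphi$ is injective on $V_\varphi(r)$ for $r\le1/3$. Suppose $z_1\neq z_2$ in $V_\varphi(r)$ with $\varphi(z_1)=\varphi(z_2)=:p$. Then $\phd_\UD(z_j,p)<1/3$ for $j=1,2$, so both points lie in $\phdisk(p,1/3)$. I would apply a Landau-type univalence estimate for self-maps of $\UD$ with a point $z$ satisfying $\phd_\UD(z,\varphi(z))<1/3$. After normalizing by automorphisms so that $z_1=0$, the bound $|\varphi(0)|<1/3$ gives a lower bound for $|\varphi'(0)|$ via Schwarz\,--\,Pick. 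A standard argument with the Blaschke factor $(\varphi-p)/(1-\bar p\varphi)$ vanishing at both $z_1,z_2$ should then produce a contradiction; the constant $1/3$ is exactly where this estimate closes.

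For $h_\varphi$: if $h_\varphi(z_1)=h_\varphi(z_2)$, then by (HM3) together with Abel's equation there is an $n$ with $\varphi^{\circ n}(z_1)=\varphi^{\circ n}(z_2)$. Indeed, orbits enter an absorbing domain on which $h_\varphi$ is injective, and $h_\varphi(\varphi^{\circ n}(z_j))=h_\varphi(z_j)+n$. Since $V_\varphi(r)$ is $\varphi$-invariant and $\varphi$ is injective there, descending induction gives $z_1=z_2$.

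\emph{Simple connectivity (main obstacle).} Openness is clear. For connectedness I would argue as follows. Each component $C$ of $V_\varphi(r)$ is mapped into some component. Points near $\tau$ in a fixed Stolz angle all lie in one component $C_0$ by \ref{IT_V-sets(3)}. For $z\in C$, the orbit stays in $V_\varphi(r)$, and consecutive points are joined by the hyperbolic geodesic segment, on which $u$ should stay $<r$; I would verify this by a convexity estimate. The orbit's iterates tend to $\tau$, and I hope to steer them into the Stolz region. This last point fails in general for positive step, where orbits may approach tangentially, so here I would rather show directly that $C$ reaches $\tau$ through the chain of geodesic segments.

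For simple connectivity, I would try to prove a maximum principle: if a Jordan curve $\gamma\subset V_\varphi(r)$, then $u<r$ inside $\gamma$. The first attempt is to show that $\log u$, or $u$ written in the half-plane model, is subharmonic off the fixed points. Writing
$$u^2=1-\frac{(1-|z|^2)(1-|\varphi(z)|^2)}{|1-\bar z\varphi(z)|^2},$$
I would compute its Laplacian using $|\varphi'|(1-|z|^2)\le 1-|\varphi|^2$. I expect this subharmonicity computation, or a replacement argument via the holomorphic function $(\varphi(z)-z)$ suitably normalized, to be the hardest step.
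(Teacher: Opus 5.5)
The paper does not prove this theorem; it quotes it from Contreras--D\'{\i}az-Madrigal--Pommerenke, so your sketch has to stand on its own. Several parts are correct: items (1) and (3), the absorption argument in (5), and the reduction of the univalence of $h_\varphi$ to that of $\varphi$ (via (HM3), Abel's equation and invariance). Two steps, however, fail as proposed.

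\textbf{Injectivity of $\varphi$ on $V_\varphi(1/3)$.} Your argument uses only the two inequalities $\rho^*(z_j,\varphi(z_j))<1/3$ together with Schwarz--Pick, which incidentally gives no lower bound for $|\varphi'(0)|$. No argument of that kind can succeed. Take the elliptic map $\varphi(z)=(z^2-a^2)/(1-a^2z^2)$ with $0<a\le 1/5$. The whole segment $[-a,a]$ lies in $V_\varphi(1/3)$, yet $\varphi(a)=\varphi(-a)=0$.

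The missing ingredient is parabolicity, used through Julia's lemma. Move $\tau$ to $\infty$ in $\UH$ and write $F=\id+p$. Then $\Im F(w)\ge \Im w$, so $p$ maps $\UH$ into $\UH$ (unless $p$ is a real constant, which is trivial). Since $\rho^*_{\UH}(w,F(w))=|p(w)|/|p(w)+2i\Im w|$, the condition $\rho^*<1/3$ gives $3|p(w)|<|p(w)|+2\Im w$, i.e. $|p(w)|<\Im w$. Suppose $F(w_1)=F(w_2)$ with $w_1\neq w_2$ in $V$. Then $p(w_1)-p(w_2)=w_2-w_1\neq 0$, and Schwarz--Pick for $p$ gives
\[
|w_1-\overline{w_2}|\le|p(w_1)-\overline{p(w_2)}|\le|p(w_1)|+|p(w_2)|<\Im w_1+\Im w_2\le|w_1-\overline{w_2}|,
\]
a contradiction. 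This is exactly where the constant $1/3$ enters.

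\textbf{Simple connectivity.} Neither $u$ nor $\log u$ is subharmonic. For the parabolic automorphism $F(w)=w+1$ one has $u=(1+4y^2)^{-1/2}$. Both $\partial_y^2u=(32y^2-4)(1+4y^2)^{-5/2}$ and $\partial_y^2\log u=-4(1-4y^2)(1+4y^2)^{-2}$ are negative for $y<1/3$. What is subharmonic for every $\varphi$ is $\log\frac{u^2}{1-u^2}=\log\frac{|\varphi(z)-z|^2}{(1-|z|^2)(1-|\varphi(z)|^2)}$. However, a maximum principle for it only shows that each component is simply connected.

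Connectedness is not established either. Knowing that a component $C$ accumulates at $\tau$ along a chain of geodesic segments does not force $C$ to be the component containing the Stolz angle, since different components could both accumulate at $\tau$ (and, as you note, orbits may converge tangentially).

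A route that settles both issues is monotonicity along geodesic rays ending at $\tau$. In the half-plane picture,
\[
\frac{u^2}{1-u^2}=\frac{|p|^2}{4y\,(y+\Im p)}=\frac{1}{4\,y\,\Im(-1/p)}\cdot\frac{\Im p/y}{1+\Im p/y}.
\]
By the Nevanlinna representation, for any Pick function $q$ the function $y\mapsto y\,\Im q(x+iy)$ is nondecreasing and $y\mapsto \Im q(x+iy)/y$ is nonincreasing. Applying this to $q=-1/p$ and $q=p$ gives $V_\varphi(r)+it\subset V_\varphi(r)$ for all $t\ge 0$. Item (3) places a truncated Stolz angle inside $V_\varphi(r)$. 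Hence every point is joined to it by a vertical ray, and every loop can be pushed upward into a convex subsector. So $V_\varphi(r)$ is a simply connected domain.
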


\begin{lemma}[\protect{\cite[Lemma 2.10]{CDG-zero-h.step}}] \label{Le:V-sets}
	Let $\varphi \in \Hol(\D)$ be parabolic. If $\psi\in  \Zn(\varphi)$, then $V_{\varphi}(r)$ is $\psi$-invariant for all ${r\in(0,1)}$.
\end{lemma}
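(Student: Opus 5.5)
The plan is to deduce $\psi$-invariance of $V_\varphi(r)$ from the Schwarz--Pick lemma, using only the commutation relation. Fix $r\in(0,1)$ and $z\in V_\varphi(r)$, so that $\phd_\UD(z,\varphi(z))<r$. We must show that $\psi(z)\in V_\varphi(r)$, i.e. that $\phd_\UD\big(\psi(z),\varphi(\psi(z))\big)<r$.

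The computation is a single chain. Since $\psi\circ\varphi=\varphi\circ\psi$, we have $\varphi(\psi(z))=\psi(\varphi(z))$. The Schwarz--Pick lemma says that every $\psi\in\Hol(\UD)$ does not increase the pseudohyperbolic distance. Hence
\[
\phd_\UD\big(\psi(z),\varphi(\psi(z))\big)=\phd_\UD\big(\psi(z),\psi(\varphi(z))\big)\le \phd_\UD\big(z,\varphi(z)\big)<r .
\]
Therefore $\psi(V_\varphi(r))\subset V_\varphi(r)$.

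The case $\psi=\id_\UD$ is trivial. No dynamical information about $\varphi$ is used: parabolicity only guarantees that $V_\varphi(r)$ is the domain described in Theorem~\ref{Thm:V-sets}, and it is nonempty by the isogonality in Theorem~\ref{Thm:V-sets}\ref{IT_V-sets(3)}. If the statement's notion of invariance also asks that the image be a domain, nothing more is needed: $\psi(V_\varphi(r))$ is a connected subset of the domain $V_\varphi(r)$, and that subset inclusion is all that invariance requires.

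There is no real obstacle. The only point to check is that the commutation identity is applied in the right order, so that both points being compared are images under $\psi$; this is what makes Schwarz--Pick applicable. The same argument shows more generally that $\phd_\UD(z,\varphi(z))$ is non-increasing along the orbits of every element of $\Zn(\varphi)$.
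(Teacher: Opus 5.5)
Your proof is correct and is essentially the standard argument. Commutation gives $\varphi(\psi(z))=\psi(\varphi(z))$, and the Schwarz--Pick contraction $\phd_\UD\big(\psi(z),\psi(\varphi(z))\big)\le\phd_\UD\big(z,\varphi(z)\big)<r$ finishes it; the paper does not reprove the lemma but quotes it from \cite{CDG-zero-h.step}, and parabolicity only enters, as you note, to guarantee via Theorem~\ref{Thm:V-sets} that $V_\varphi(r)$ is a nonempty domain.
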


We recall the following uniqueness result for the Koenigs function of parabolic self-maps of zero hyperbolic step.

\begin{theorem} [\protect{\cite[Theorem 3.1]{CDP-Abel}}]\label{Thm:uniqueness}
Let $\varphi \in \Hol(\D)$ be a parabolic self-map of
zero hyperbolic step with Koenigs function  ${h_{\varphi} :\mathbb{D}\rightarrow \mathbb{C}}$.
Let $z_{0}\in \mathbb{D}$ such that ${h'_{\varphi}(z_{0})\neq 0}$. Further, let
${h_{1}: \mathbb{D}\rightarrow \mathbb{C}}$ be a holomorphic function satisfying the Abel
functional equation ${h_{1} \circ \varphi =h_{1} +1}$. Then the following conditions are equivalent:
\begin{romlist}
 \item $h_{1}=h_{\varphi}+c$ for some constant~${c\in\C}$;
 \item  there exist ${r>0}$ and ${N\in \mathbb{N}}$ such that for all~$n>N$, $h_{1}$ is
univalent in the hyperbolic disc of
radius~${r}$ centered at $\varphi^{\circ n}(z_{0})$.
\end{romlist}
\end{theorem}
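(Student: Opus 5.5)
We use the factorization of Remark~\ref{RM_factorization} together with the absorbing sets of Theorem~\ref{Thm:V-sets}. The implication (i)$\Rightarrow$(ii) is a Schwarz\,--\,Pick estimate. For (ii)$\Rightarrow$(i) we write $h_1=f\circ h_\varphi$ and show that $f$ is univalent on Euclidean discs of unbounded radius, which forces $f$ to be a translation. Throughout, write $z_n:=\varphi^{\circ n}(z_0)$ and $V:=V_\varphi(1/3)$. By Theorem~\ref{Thm:V-sets}, $V$ is $\varphi$-absorbing (since the step is zero) and $h_\varphi$ is univalent on~$V$.

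\emph{Step 1: the discs around $z_n$ lie in~$V$.} Since $\varphi$ has zero hyperbolic step, $\hd_\UD(z_n,z_{n+1})\to0$. Let $w$ satisfy $\hd_\UD(w,z_n)<r$. By the Schwarz\,--\,Pick Lemma, $\hd_\UD(\varphi(w),z_{n+1})\le\hd_\UD(w,z_n)<r$, and hence
$$\hd_\UD(w,\varphi(w))<2r+\hd_\UD(z_n,z_{n+1}).$$
Fix $r_0>0$ such that a hyperbolic distance smaller than $3r_0$ corresponds to a pseudohyperbolic distance smaller than $1/3$. Then for all large $n$ the hyperbolic disc $D_n:=\{w:\hd_\UD(w,z_n)<r_0\}$ is contained in~$V$.

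\emph{Step 2: (i)$\Rightarrow$(ii).} If $h_1=h_\varphi+c$, then $h_1$ is univalent on $V$, hence on each $D_n$ with $n$ large by Step~1. This gives (ii) with $r=r_0$.

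\emph{Step 3: reduction of (ii)$\Rightarrow$(i).} By Remark~\ref{RM_factorization} with $c=1$, and since $S_\varphi=\C$, we have $h_1=f\circ h_\varphi$ for an entire function $f$ with $f(w+1)=f(w)+1$. It suffices to show that $f$ is univalent on all of~$\C$. Then $f$ is affine, $f(w)=aw+b$, and the functional equation forces $a=1$, giving (i). Shrinking $r$ to $\min(r,r_0)$, we may assume that both $h_1$ and $h_\varphi$ are univalent on $D_n$ for $n>N$. Then $f$ is univalent on $h_\varphi(D_n)$. Put $\Omega:=h_\varphi(V)$, a simply connected domain. Since $h_\varphi|_V$ is a conformal map onto~$\Omega$ and $\hd_V\ge\hd_\UD$ on $V$, the set $h_\varphi(D_n)$ contains the hyperbolic disc in $\Omega$ of radius $r$ centered at $w_n:=h_\varphi(z_n)=w_0+n$. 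By the relation $f(w+n)=f(w)+n$, it follows that $f$ is univalent on the set $E_n$, the hyperbolic disc of radius $r$ in $\Omega_n:=\Omega-n$ centered at~$w_0$.

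\emph{Step 4 (the main obstacle): $E_n$ exhausts~$\C$.} Since $\varphi(V)\subset V$ gives $\Omega+1\subset\Omega$, the domains $\Omega_n$ increase with~$n$. Since $V$ is absorbing and (HM2) holds with $S_\varphi=\C$, we get $\bigcup_n\Omega_n=\C$. By monotonicity and the standard convergence of hyperbolic densities for increasing exhaustions, the densities $\lambda_{\Omega_n}$ decrease and tend to $0$ locally uniformly on~$\C$. Concretely, for each $R>0$ the disc $B(w_0,2R)$ is eventually contained in $\Omega_n$. Comparing with the density of a large disc contained in $\Omega_n$, for instance via Koebe's $1/4$ theorem applied to a Riemann map of $\Omega_n$, one gets $\lambda_{\Omega_n}\to0$ uniformly on $B(w_0,R)$. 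Hence the Euclidean disc $B(w_0,R)$ lies in $E_n$ for all large~$n$. Therefore $f$ is univalent on every disc $B(w_0,R)$, hence on~$\C$, which completes Step~3. The delicate point is this uniform lower bound on the Euclidean size of hyperbolic discs of fixed radius in the exhausting domains~$\Omega_n$.
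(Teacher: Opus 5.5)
The paper does not prove this statement. It is quoted from \cite[Theorem~3.1]{CDP-Abel} without proof, so there is no in-paper argument to compare with.

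Judged on its own, your proof is correct. It uses exactly the tools the paper employs elsewhere:
\begin{itemize}
\item the factorization $h_1=f\circ h_\varphi$ from Remark~\ref{RM_factorization};
\item the sets $V_\varphi(1/3)$ from Theorem~\ref{Thm:V-sets};
\item the exhaustion $\bigcup_n\big(h_\varphi(V)-n\big)=S_\varphi=\C$, which is precisely Lemma~\ref{LM_absorbs-well} with $c=1$.
\end{itemize}
Transporting univalence of $h_1$ near the orbit into univalence of $f$ on translated sets is also the mechanism used in Remark~\ref{RM_for-all-non-elliptic}.

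I checked each step. Step~1 correctly places the hyperbolic discs $D_n$ inside $V$ for large $n$. The inclusion $\phdisk^{V}(z_n,r)\subset D_n$ follows from $\rho_V\ge\rho_\UD$, and conformal invariance then gives $h_\varphi(D_n)\supset\phdisk^{\Omega}(w_0+n,r)$. Translating by $-n$ is legitimate because $f(w+n)=f(w)+n$. Finally, an injective entire function with $f(w+1)=f(w)+1$ is $w\mapsto w+b$.

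Two small comments.

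First, you never use the hypothesis $h_\varphi'(z_0)\neq0$, and this is fine. Your Step~1 shows that $h_\varphi$ is automatically univalent on $D_n$ for all large $n$, since these discs lie in $V_\varphi(1/3)$ and Theorem~\ref{Thm:V-sets}\ref{IT_V-sets(4)} applies. So your argument shows that hypothesis is not needed for either implication.

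Second, Step~4 is not actually delicate, and Koebe's $1/4$ theorem is the wrong tool there. It yields a \emph{lower} bound for the hyperbolic density, whereas you need an upper bound. Monotonicity of the hyperbolic metric under inclusion suffices. Once $B(w_0,R')\subset\Omega_n$, you have $\rho_{\Omega_n}(w_0,w)\le\rho_{B(w_0,R')}(w_0,w)=\rho_\UD\big(0,(w-w_0)/R'\big)\le\rho_\UD(0,R/R')$ for $|w-w_0|\le R$. This is smaller than $r$ once $R'$ is large. Since the compact disc $\overline{B(w_0,R')}$ lies in $\Omega_n$ for all large $n$ (the $\Omega_n$ increase and exhaust $\C$), you get $B(w_0,R)\subset E_n$ for all large $n$, as required.
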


Finally, we will make use of the following result concerning special cases of simultaneous linearization.
\begin{proposition}[\protect{\cite[Proposition 3.6]{CDG-zero-h.step}}]  \label{Prop:firstpropertiescoefficient}
Let $\varphi\in\Hol(\D)$ be parabolic with Koenigs function~$h_\varphi$ and Denjoy\,--\,Wolff point ${\tau\in\partial\UD}$.  Suppose $\psi\in\Hol(\UD)$ is another holomorphic self-map with the same Denjoy\,--\,Wolff point and such that ${h_\varphi\circ\psi}={h_\varphi+c}$ for some constant~${c\in\R}$. The following statements hold.
\begin{ourlist}
		\item\label{firstpropereties1}  $c=0$ if and only if $\psi=\id_\D$.
		\item\label{firstpropereties2}
           Let $m,n\in\N$. Then $c=\frac{m}{n}$ if and only if $\psi^{\circ n}=\varphi^{\circ m}$.
		\item\label{firstpropereties3}
           If $c<0$, then $\varphi$ as well as $\psi$ are parabolic automorphisms. Moreover, given ${m,n\in\N}$, ${c=-\frac{m}{n}}$ if and only if ${\psi^{\circ n}=\varphi^{\circ -m}}$.	
	\end{ourlist}		
\end{proposition}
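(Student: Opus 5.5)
The plan is to reduce all three statements to a \emph{cancellation property} of the iterates of~$\varphi$, obtained from the injectivity of~$h_\varphi$ on an absorbing domain. Let $V\subset\UD$ be a $\varphi$-absorbing domain on which $h_\varphi$ is injective; it exists by (HM3) for the canonical model. The basic observation is the following. Let $f,g\in\Hol(\UD)$ and $k\in\Natural_0$ satisfy $h_\varphi\circ f=h_\varphi\circ g+k$. Fix $z\in\UD$. For all large~$N$ both $\varphi^{\circ N}(f(z))$ and $\varphi^{\circ(N+k)}(g(z))$ lie in~$V$, and Abel's equation gives them the same image under~$h_\varphi$. Hence they coincide. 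Since $z$ was arbitrary, the identity principle (applied on the open sets where a fixed~$N$ works) yields
$$
\varphi^{\circ N}\circ f=\varphi^{\circ (N+k)}\circ g \quad\text{for some (hence all larger) } N .
$$
I apply this with $(f,g,k)=(\psi,\id_\UD,0)$ in~\ref{firstpropereties1}, and with $(f,g,k)=(\psi^{\circ n},\id_\UD,m)$ in~\ref{firstpropereties2}, using $h_\varphi\circ\psi^{\circ n}=h_\varphi+nc$. The converse implications are immediate. If $\psi=\id_\UD$, then $c=0$ because $h_\varphi$ is nonconstant. If $\psi^{\circ n}=\varphi^{\circ m}$, then $h_\varphi+nc=h_\varphi+m$.

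The remaining step, and the one I expect to be the main obstacle, is to cancel $\varphi^{\circ N}$: from $\varphi^{\circ N}\circ F=\varphi^{\circ N}\circ G$ with $F=\psi^{\circ n}$ and $G=\varphi^{\circ m}$ (or $G=\id_\UD$), deduce $F=G$. The hypothesis that $\psi$ shares the Denjoy\,--\,Wolff point~$\tau$ must enter here, since a non-trivial deck-type map could otherwise permute fibres of~$\varphi^{\circ N}$.

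My first attempt is to exploit the boundary behaviour at~$\tau$. By Theorem~\ref{Thm:V-sets}, the set $V_\varphi(1/3)$ is isogonal at~$\tau$ and $\varphi$ is univalent there, so $\varphi^{\circ N}$ is injective on a suitable $\varphi$-invariant region $W$ near~$\tau$ containing the tails of nontangential approach. The iterates $F^{\circ j}$ and $G^{\circ j}$ converge to~$\tau$. I would show that for suitable points $w$ (e.g. far along an orbit of $G$, using $\varphi^{\circ N}\circ F^{\circ j}=\varphi^{\circ N}\circ G^{\circ j}$, which follows by induction in the commuting case $G=\varphi^{\circ m}$) both $F(w)$ and $G(w)$ lie in~$W$. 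Injectivity of~$\varphi^{\circ N}$ on~$W$ then gives $F=G$ on an open set, hence on~$\UD$ by the identity principle.

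If this localisation turns out to be delicate, a fallback is to use Proposition~\ref{positivo2}-type estimates, or the Julia\,--\,Wolff lemma for~$\psi$ at~$\tau$. These show that $\rho_\UD(\psi(z),z)$ stays bounded along a nontangential sequence, which keeps $\psi(z)$ in the univalence region of~$\varphi^{\circ N}$.

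For~\ref{firstpropereties3}, assume $c<0$ and choose $n,m\in\N$ with $nc\le -m<0$ (exactly $nc=-m$ in the rational case). The relation $h_\varphi\circ\psi^{\circ n}\circ\varphi^{\circ m}=h_\varphi+nc+m$ together with~\ref{firstpropereties1}--\ref{firstpropereties2} gives, when $c=-m/n$, the identity $\psi^{\circ n}\circ\varphi^{\circ m}=\id_\UD$. Hence $\varphi$ is injective. From $h_\varphi\circ\varphi^{\circ m}\circ\psi^{\circ n}=h_\varphi$ and~\ref{firstpropereties1} we also get $\varphi^{\circ m}\circ\psi^{\circ n}=\id_\UD$. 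Thus $\varphi^{\circ m}$ is a bijection, so $\varphi$ and $\psi$ are automorphisms; they are parabolic because $\varphi$ is. This yields $\psi^{\circ n}=\varphi^{\circ -m}$.

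For irrational $c<0$, I would first show that $\varphi$ is surjective. Approximate $-1$ by values $kc+l$ with $k,l\in\Natural_0$; the maps $\psi^{\circ k}\circ\varphi^{\circ l}$ then form a normal family whose limit $g$ satisfies $h_\varphi\circ g=h_\varphi-1$, which is nonconstant by Remark~\ref{Re:almostasemigroup}. The cancellation argument then gives $\varphi\circ g=\id_\UD$, and the same reasoning as above concludes that $\varphi$ and $\psi$ are automorphisms.
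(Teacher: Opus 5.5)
Parts~\ref{firstpropereties1}, \ref{firstpropereties2} and the case $c=-m/n$ of~\ref{firstpropereties3} work, provided you use your fallback for the localization step. That fallback is: a self-map with Denjoy\,--\,Wolff point~$\tau$, or a composition of such maps, has finite angular derivative at~$\tau$. Hence it sends the radius $\{r\tau\}$ to a curve tending to~$\tau$ non-tangentially, and so eventually into any domain isogonal at~$\tau$. Your first variant (``far along an orbit of~$G$'') relies on $\psi^{\circ n}$ commuting with $\varphi^{\circ m}$, which you have not established. The detour through an absorbing domain and the cancellation of $\varphi^{\circ N}$ is also unnecessary. By Theorem~\ref{Thm:V-sets}\+\ref{IT_V-sets(4)}, $h_\varphi$ itself is univalent on $V_\varphi(1/3)$, which is isogonal at~$\tau$. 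So $h_\varphi\circ F=h_\varphi\circ G$ gives $F=G$ directly, as in Remark~\ref{Re:real-coefficient2}.

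The genuine gap is the case $c<0$, $c\notin\mathbb Q$. Montel's theorem only gives a subsequential limit $g\colon\UD\to\overline{\UD}$ of $F_j:=\psi^{\circ k_j}\circ\varphi^{\circ l_j}$. Before you can write $h_\varphi\circ g=h_\varphi-1$, you must rule out that $g$ is a unimodular constant; it would necessarily be~$\tau$, since the $F_j$ preserve horocycles at~$\tau$. Nothing in your argument rules this out.
\begin{itemize}
\item The information $h_\varphi(F_j(z))\to h_\varphi(z)-1$ is compatible with $F_j(z)\to\tau$ tangentially. Pommerenke's estimate (Remark~\ref{Re:Pommerenke}) only controls non-tangential approach.
\item Remark~\ref{Re:almostasemigroup} concerns translates $K+t$ with $t\to+\infty$, so it says nothing about the backward shift by~$-1$.
\item Your subsequent cancellation for $\varphi\circ g$ would also need the boundary behaviour of~$g$ at~$\tau$, which you do not check.
\end{itemize}

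The missing ingredient is a domain on which $h_\varphi$ is injective and which is invariant under \emph{both} $\varphi$ and~$\psi$. Here is one way to get it.
\begin{itemize}
\item Since $h_\varphi\circ(\psi\circ\varphi)=h_\varphi+1+c=h_\varphi\circ(\varphi\circ\psi)$, your localization argument gives $\psi\in\Zn(\varphi)$.
\item Then $W:=V_\varphi(1/3)$ is $\psi$-invariant by Lemma~\ref{Le:V-sets}. Hence $h_\varphi(W)+s\subset h_\varphi(W)$ for every $s$ in the dense semigroup $\{kc+l:k,l\in\Natural_0\}$.
\item Because $h_\varphi(W)$ is open, this forces $h_\varphi(W)+\R=h_\varphi(W)$.
\item On $W$ one has $F_j=(h_\varphi|_W)^{-1}\circ(h_\varphi+s_j)$, which converges to $g_0:=(h_\varphi|_W)^{-1}\circ(h_\varphi-1)$, a map with values in~$W$. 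So any limit $g$ is a genuine self-map of~$\UD$ extending~$g_0$.
\item The identities $\varphi\circ g=g\circ\varphi=\id$ hold on~$W$ by construction, hence on~$\UD$ by the identity principle.
\end{itemize}
Thus $\varphi$ is an automorphism, and $h_\varphi$ maps $\UD$ conformally onto $S_\varphi=\pm\UH$. Then $\psi=h_\varphi^{-1}\circ(h_\varphi+c)$ is an automorphism as well, and it is parabolic.
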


\section{When the self-map $\varphi$ is of zero hyperbolic step}\label{S_when-zero}

As noted in the introduction, the paper \cite{CDG-zero-h.step} is dedicated to the study of the centralizer of parabolic functions having zero hyperbolic step. To aid in the readability of this paper, we summarize below some of the results and notations already introduced in that work.

\begin{theorem}[\protect{\cite[Theorem 4.1]{CDG-zero-h.step}}] \label{Prop:pseuso-semigroup}
Let $\varphi\in \Hol(\D)$ be a parabolic self-map of zero hyperbolic step with Koenigs function~$h_\varphi$. Then for any ${\psi\in \Hol(\D)\setminus\{\id_\UD\}}$, the following two conditions are equivalent:
\begin{romlist}
\item\label{IT-EQUIV_commute} $\psi\in \Zn(\varphi)$;
\item\label{IT-EQUIV_SimLin} $\varphi$ and $\psi $ have the same Denjoy\,--\,Wolff point and
$\,h_\varphi\circ \psi=h_\varphi+c\,$ for some constant $c\in\C$.
\end{romlist}
\end{theorem}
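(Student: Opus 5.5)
The plan is to handle the two implications separately; (ii)$\Rightarrow$(i) is elementary and (i)$\Rightarrow$(ii) rests on the uniqueness of the Koenigs function, Theorem~\ref{Thm:uniqueness}.

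\emph{(ii)$\Rightarrow$(i).} Suppose $h_\varphi\circ\psi=h_\varphi+c$. Then
\[
h_\varphi\circ(\varphi\circ\psi)=h_\varphi\circ\psi+1=h_\varphi+c+1,\qquad
h_\varphi\circ(\psi\circ\varphi)=h_\varphi\circ\varphi+c=h_\varphi+1+c,
\]
so $h_\varphi\circ\varphi\circ\psi=h_\varphi\circ\psi\circ\varphi$. By Theorem~\ref{Thm:V-sets}\ref{IT_V-sets(4)},\ref{IT_V-sets(5)}, the function $h_\varphi$ is injective on the $\varphi$-absorbing set $V:=V_\varphi(1/3)$. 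Fix $z\in\UD$. Since $\varphi$ and $\psi$ share the Denjoy--Wolff point $\tau$, we would like both $\varphi(\psi(z))$ and $\psi(\varphi(z))$ to lie in $V$. This is not immediate, so we first replace $z$ by $z_n:=\varphi^{\circ n}(z)$. For large $n$ we have $z_n\in V$, and $V$ is $\psi$-invariant by Lemma~\ref{Le:V-sets}; its proof only uses the relation $h_\varphi\circ\psi=h_\varphi+c$ together with the Schwarz--Pick lemma, but one may also argue directly. Then $\psi(z_n)\in V$ and $\varphi(\psi(z_n))\in V$, and likewise $\psi(\varphi(z_n))=\psi(z_{n+1})\in V$. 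Injectivity of $h_\varphi$ on $V$ gives $\varphi\circ\psi=\psi\circ\varphi$ on the open set $\varphi^{\circ n}(W)$ for a small disc $W$. We must make sure this set is open: $\varphi$ is non-constant, so $\varphi^{\circ n}(W)$ is open. The identity principle then yields commutation on all of $\UD$. If the $\psi$-invariance of $V$ is not available a priori, an alternative is to use the fact that both compositions converge to $\tau$ and that $\rho_\UD(\psi(z_n),z_n)$ stays bounded. Together with the zero hyperbolic step, this puts all relevant points in $V_\varphi(1/3)$ eventually.

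\emph{(i)$\Rightarrow$(ii).} By Theorem~\ref{TH_Behan-and-Ko}, $\psi$ has the same Denjoy--Wolff point. Set $h_1:=h_\varphi\circ\psi$. Then $h_1\circ\varphi=h_\varphi\circ\varphi\circ\psi=h_1+1$, so $h_1$ solves Abel's equation. By Theorem~\ref{Thm:uniqueness}, it suffices to show that $h_1$ is univalent in hyperbolic discs of a fixed radius $r$ centred at $\varphi^{\circ n}(z_0)$ for all large $n$. Then $h_1=h_\varphi+c$, as required.

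This univalence step is the main obstacle. The plan is as follows. By Lemma~\ref{Le:V-sets}, $\psi(V_\varphi(r'))\subset V_\varphi(r')$, and $h_\varphi$ is univalent on $V_\varphi(1/3)$. So it suffices to show that $\psi$ is univalent on small hyperbolic discs around $z_n=\varphi^{\circ n}(z_0)$ and maps them into $V_\varphi(1/3)$. For the latter, zero hyperbolic step gives $\rho^*_\UD(z_n,z_{n+1})\to0$, so small hyperbolic discs around $z_n$ lie eventually in $V_\varphi(r')$, which $\psi$ preserves. For univalence of $\psi$ near $z_n$, we use Proposition~\ref{positivo2}: it shows that $\psi$ is asymptotically a "translation" in Koenigs coordinates along the orbit. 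Normal-family limits of $h_\varphi\circ\psi\circ T_n$, where $T_n$ are disc automorphisms sending $0$ to $z_n$, are non-constant (they are affine-like, since the difference quotient tends to $1$). Hurwitz's theorem then gives injectivity on a fixed hyperbolic disc for large $n$.
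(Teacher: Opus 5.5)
Both implications contain a genuine gap.

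\emph{(ii)$\Rightarrow$(i).} The step ``$V=V_\varphi(1/3)$ is $\psi$-invariant by Lemma~\ref{Le:V-sets}'' is circular. That lemma assumes $\psi\in\Zn(\varphi)$, and its natural proof,
$\rho^*_\UD\big(\psi(z),\varphi(\psi(z))\big)=\rho^*_\UD\big(\psi(z),\psi(\varphi(z))\big)\le\rho^*_\UD\big(z,\varphi(z)\big)$,
uses exactly the commutation you are trying to prove. Your claim that only $h_\varphi\circ\psi=h_\varphi+c$ is needed is false. Your fallback, boundedness of $\rho_\UD(\psi(z_n),z_n)$, also holds only because $\psi(z_n)=\varphi^{\circ n}(\psi(z))$, which is again commutation.

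Your argument never actually uses the common Denjoy--Wolff point, and that hypothesis is indispensable. Take $\varphi(z)=\sqrt{(1+z^2)/(3-z^2)}$ (principal branch). It is parabolic of zero hyperbolic step with $h_\varphi(z)=2z^2/(1-z^2)$; the absorbing domain $\UD\cap\{\Re z>0\}$ gives the model. Then $\psi(z)=-z$ satisfies $h_\varphi\circ\psi=h_\varphi$. Yet $\psi$ does not preserve $V_\varphi(1/3)$, and $\varphi\circ\psi=\varphi\neq-\varphi=\psi\circ\varphi$.

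The repair is the device the paper uses in the proof of Corollary~\ref{CR_injective-on-abelian}:
\begin{itemize}
\item $V$ is isogonal at $\tau$ by Theorem~\ref{Thm:V-sets}\ref{IT_V-sets(3)}.
\item Both $\varphi\circ\psi$ and $\psi\circ\varphi$ fix $\tau$ with finite angular derivative. Hence $\varphi(\psi(r\tau))$ and $\psi(\varphi(r\tau))$ tend to $\tau$ non-tangentially, so they lie in $V$ for $r\in[r_0,1)$.
\item Injectivity of $h_\varphi$ on $V$ then gives commutation on that segment, and the Identity Principle finishes.
\end{itemize}

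\emph{(i)$\Rightarrow$(ii).} The reduction to Theorem~\ref{Thm:uniqueness} is fine, and so is the inclusion of small hyperbolic discs about $z_n$ in $V_\varphi(1/3)$. What is missing is univalence of $\psi$ on $\phdisk(z_n,r)$, which you yourself flag as the main obstacle. The proposed argument does not supply it, for four reasons:
\begin{itemize}
\item Proposition~\ref{positivo2} gives an angular limit, but orbits of a zero-step parabolic map may converge to $\tau$ tangentially. Theorem~\ref{Thm:uniqueness} requires discs centred on an orbit, so you cannot switch to a non-tangential sequence.
\item Even along a non-tangential sequence, a pointwise limit of that quotient says nothing about injectivity on a disc.
\item The family $h_\varphi\circ\psi\circ T_n$ is unbounded and not normalized, so normality is not established.
\item Hurwitz's theorem yields injectivity only when the limit is univalent, not merely non-constant.
\end{itemize}

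Here is a short way to close this direction that bypasses Theorem~\ref{Thm:uniqueness}.
\begin{itemize}
\item By Remark~\ref{RM_morphism-properties} with $S_\varphi=\C$, $h_\varphi\circ\psi=g_\psi\circ h_\varphi$, where $g_\psi$ is entire and $g_\psi(w+1)=g_\psi(w)+1$.
\item Set $\Omega:=h_\varphi(V_\varphi(1/3))$. Lemma~\ref{Le:V-sets} is now legitimately available and gives $g_\psi(\Omega-n)\subset\Omega-n$ for all $n$.
\item The domains $\Omega-n$ are simply connected, different from $\C$, and increase to $\C$, by Theorem~\ref{Thm:V-sets}\ref{IT_V-sets(5)} and Lemma~\ref{LM_absorbs-well}.
\item Let $\delta_n:=\mathrm{dist}(\cdot,\partial(\Omega-n))$. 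Schwarz--Pick in $\Omega-n$ and the Koebe estimate $1/(4\delta_n)\le\lambda_{\Omega-n}\le 1/\delta_n$ give $|g_\psi'(w)|\le 4\,\delta_n(g_\psi(w))/\delta_n(w)$.
\item As $n\to\infty$ this bound tends to $4$, since $\delta_n(w)\to\infty$ while $\delta_n(g_\psi(w))\le\delta_n(w)+|g_\psi(w)-w|$.
\item By Liouville, $g_\psi'$ is constant. Then $g_\psi(w+1)=g_\psi(w)+1$ forces $g_\psi(w)=w+c$, i.e.\ $h_\varphi\circ\psi=h_\varphi+c$.
\end{itemize}
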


\begin{definition} \label{simul-linea-coefficient-zero}
Let $\varphi \in \Hol(\D)$ be a parabolic self-map of zero hyperbolic step  and let $\psi\in\Zn(\varphi)$. We will denote by~$c_{\varphi,\psi}$ the constant ${c\in\C}$ defined in a unique way by the relation ${h_\varphi\circ\psi=h_\varphi+c}$ in Theorem~\ref{Prop:pseuso-semigroup}\+\ref{IT-EQUIV_SimLin}. This constant will be called the {\sl simultaneous linearization coefficient} of~$\psi$ w.r.t.~$\varphi$.
\end{definition}

The relation $h_\varphi\circ \psi=h_\varphi+c$ for self-maps commuting with a parabolic self-map of zero hyperbolic step~$\varphi$ implies that the semigroup $\Zn(\varphi)$ is \textit{abelian}, that is, ${\psi_1\circ\psi_2=\psi_2\circ\psi_1}$ for any $\psi_1,\psi_2\in\Zn(\varphi)$; see \cite[proof of Theorem~4.5\+(4)]{CDG-zero-h.step}. Moreover, $\Zn(\varphi)$ is isomorphic to a topologically closed subsemigroup of $[\C,+]$:

\begin{theorem}[\protect{\cite[Theorem 4.8]{CDG-zero-h.step}}] \label{Thm:homeomorphism}
Let $\varphi\in\Hol(\UD)$ be parabolic of zero hyperbolic step. Then the map
$$
  \Zn(\varphi)\ni\psi \mapsto c_{\varphi,\psi}\in\C
$$
is a homeomorphism onto a closed subset $\mathcal A_\varphi\subset\C$. Moreover,
\begin{equation}
 c_{\varphi,\psi_1\circ\psi_2}=c_{\varphi,\psi_1}+c_{\varphi,\psi_2}\quad\text{for any~$~\psi_1,\psi_2\in\Zn(\varphi)$}
\end{equation}
and, as a consequence, $[\mathcal A_\varphi,+]$ is an additive semigroup containing~$\N_0$.
\end{theorem}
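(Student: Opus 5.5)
The plan is to get the algebraic statements directly from the functional equation of Theorem~\ref{Prop:pseuso-semigroup}. I would then prove injectivity using the univalence of $h_\varphi$ on the sets $V_\varphi(r)$, and handle continuity, continuity of the inverse and closedness with a normal-family argument.

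\emph{Additivity and $\N_0\subset\mathcal A_\varphi$.} For $\psi_1,\psi_2\in\Zn(\varphi)$ we have $h_\varphi\circ\psi_1\circ\psi_2=h_\varphi\circ\psi_2+c_{\varphi,\psi_1}=h_\varphi+c_{\varphi,\psi_1}+c_{\varphi,\psi_2}$. Since $\Zn(\varphi)$ is closed under composition, uniqueness of the coefficient in Definition~\ref{simul-linea-coefficient-zero} gives the additivity formula. Moreover $c_{\varphi,\id_\UD}=0$ and $c_{\varphi,\varphi^{\circ n}}=n$ by Abel's equation, so $[\mathcal A_\varphi,+]$ is a semigroup containing $\N_0$.

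\emph{Injectivity.} Suppose $c_{\varphi,\psi_1}=c_{\varphi,\psi_2}$, so that $h_\varphi\circ\psi_1=h_\varphi\circ\psi_2$. Take $V:=V_\varphi(1/3)$. By Lemma~\ref{Le:V-sets} it is invariant under $\psi_1$ and $\psi_2$, and by Theorem~\ref{Thm:V-sets}\ref{IT_V-sets(4)} the map $h_\varphi$ is univalent on $V$. Hence $\psi_1=\psi_2$ on the open set $V$, and therefore on $\UD$ by the identity principle.

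\emph{Continuity of $\psi\mapsto c_{\varphi,\psi}$.} If $\psi_n\to\psi$ locally uniformly in $\Zn(\varphi)$, then $c_{\varphi,\psi_n}=h_\varphi(\psi_n(z_0))-h_\varphi(z_0)\to h_\varphi(\psi(z_0))-h_\varphi(z_0)=c_{\varphi,\psi}$.

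\emph{Closedness of $\mathcal A_\varphi$ and continuity of the inverse.} This is the main step. Let $\psi_n\in\Zn(\varphi)$ with $c_n:=c_{\varphi,\psi_n}\to c\in\C$. I will show that every subsequence of $(\psi_n)$ has a further subsequence converging to some $\psi\in\Zn(\varphi)$ with $c_{\varphi,\psi}=c$. Injectivity then forces all these limits to coincide, which gives both $c\in\mathcal A_\varphi$ and $\psi_n\to\psi$.

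By Montel's theorem a subsequence converges locally uniformly, either to some $\psi\in\Hol(\UD)$ or to a unimodular constant $\sigma$. The obstacle is to exclude the constant case. Fix $z_0\in V$. By Remark~\ref{Re:almostasemigroup}, applied to the absorbing set $V$ and a small closed disc around $h_\varphi(z_0)+c$, there is $m\in\N$ such that the closed disc $\overline{B}$ of radius $\delta$ centred at $w:=h_\varphi(z_0)+c+m$ lies in $h_\varphi(V)$. Let $g$ be the inverse of $h_\varphi|_V$, which is continuous on $\overline B$. Put $F_n:=\varphi^{\circ m}\circ\psi_n=\psi_n\circ\varphi^{\circ m}$. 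Then $h_\varphi(F_n(z_0))=h_\varphi(z_0)+c_n+m\in B$ for large $n$. I expect, though this step must be checked, that $F_n(z_0)\in V$ once $z_0\in V$, since $V$ is invariant under $\varphi$ and $\psi_n$; then $F_n(z_0)=g(h_\varphi(z_0)+c_n+m)\to g(w)\in\UD$.

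On the other hand, if $\psi_n\to\sigma$, then $F_n(z_0)=\psi_n(\varphi^{\circ m}(z_0))\to\sigma\in\partial\UD$, a contradiction. Hence the limit is some $\psi\in\Hol(\UD)$. Passing to the limit in $\psi_n\circ\varphi=\varphi\circ\psi_n$ gives $\psi\in\Zn(\varphi)$, and passing to the limit in $h_\varphi\circ\psi_n=h_\varphi+c_n$ gives $h_\varphi\circ\psi=h_\varphi+c$, so $c_{\varphi,\psi}=c$. The degenerate case $\psi=\id_\UD$ is consistent, with $c=0$.
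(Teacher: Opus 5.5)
Your proof is correct. The step you flag as needing a check is immediate. We have $\varphi(V)\subset V$ by Theorem~\ref{Thm:V-sets}\ref{IT_V-sets(1)} and $\psi_n(V)\subset V$ by Lemma~\ref{Le:V-sets}, so $F_n(z_0)=\varphi^{\circ m}(\psi_n(z_0))\in V$.

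To apply Remark~\ref{Re:almostasemigroup} you also need $V=V_\varphi(1/3)$ to be $\varphi$-absorbing, which you use without citing. This is Theorem~\ref{Thm:V-sets}\ref{IT_V-sets(5)}. It is one of the places where zero hyperbolic step enters; the others are $S_\varphi=\C$ and the existence of $c_{\varphi,\psi}$ from Theorem~\ref{Prop:pseuso-semigroup}. As a minor simplification, you do not need the closed disc $\overline B$: $w\in h_\varphi(V)$ and the openness of $h_\varphi(V)$ already give continuity of $g$ at $w$.

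The paper gives no proof of this statement; it is quoted from \cite{CDG-zero-h.step}. The remark at the start of the proof of Proposition~\ref{Prop:Coefwidetilde} suggests that the original argument passes through the univalent conjugate pair $\widetilde\varphi,\widetilde\psi$ of~\eqref{EQ_def-with-tildes}. That argument yields, as a by-product, the invariance $c_{\widetilde\varphi,\widetilde\psi}=c_{\varphi,\psi}$, which the paper later invokes.

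Your route avoids any such reduction and works directly with $h_\varphi$ on $V_\varphi(1/3)$. Injectivity is the same univalence argument the paper uses in Remark~\ref{Re:real-coefficient2}. The topological part is a normal-families argument in which the absorbing property of $V_\varphi(1/3)$ rules out a unimodular constant limit. Combined with injectivity and the subsequence principle, this gives closedness of $\mathcal A_\varphi$ and continuity of the inverse at once. The argument is self-contained, using only the facts collected in Section~\ref{S_prelim}, but it does not produce the conjugation-invariance of the coefficient.
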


The first statement in the following theorem provides a direct way to calculate the simultaneous linearization coefficient~$c_{\varphi,\psi}$, that is, without making use of the Koenigs function of~$\varphi$. The second statement explains better the choice of the term ``simultaneous linearization coefficient''.

\begin{theorem}[\protect{\cite[Theorem 5.1]{CDG-zero-h.step}}] \label{TH_SLC(2formulas)}
Let $\varphi\in\Hol(\UD)$ be a parabolic self-map of zero hyperbolic step with Denjoy\,--\,Wolff point $\tau\in\partial\D$ and let $\psi\in\Zn(\varphi)$. The following two statements hold.
\begin{statlist}
 \item\label{IT_SLC1}
   The following angular limit exists and equals the simultaneous linearization coefficient $c_{\varphi,\psi}$ of~$\psi$ \,w.\,r.\,t.~$\varphi$:
   \begin{equation}\label{EQ_SLC}
     \anglim_{z\to\tau}\frac{\psi(z)-z}{\varphi(z)-z}~=~c_{\varphi,\psi}.
   \end{equation}
  \item\label{IT_SLC2}
    Suppose that
	\begin{equation}\label{EQ_SLsystem}
	h\circ \varphi=h+c_1,\qquad h\circ \psi=h+c_2,
	\end{equation}
	for some holomorphic function $h:\UD\to\C$ and  some ${c_1,c_2\in\C}$ with $|c_1|^2+|c_2|^2\neq0$.\\ Then ${c_1\neq0}$ and ${c_2/c_1=c_{\varphi,\psi}}$.
\end{statlist}
\end{theorem}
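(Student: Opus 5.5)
The plan is to prove \ref{IT_SLC1} by comparing Proposition~\ref{positivo2} for $\psi$ with the same proposition for~$\varphi$. For \ref{IT_SLC2} I would reduce, via Remark~\ref{RM_factorization}, to a functional equation for an entire function, which is where zero hyperbolic step ($S_\varphi=\C$) enters.

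\emph{Statement \ref{IT_SLC1}.} If $\psi=\id_\UD$, then both sides of~\eqref{EQ_SLC} vanish, since $c_{\varphi,\psi}=0$ by Proposition~\ref{Prop:firstpropertiescoefficient}\ref{firstpropereties1}. Assume $\psi\ne\id_\UD$ and write $c:=c_{\varphi,\psi}$.
\begin{itemize}
\item First, $c\neq0$: if $c=0$, then $c$ is real and Proposition~\ref{Prop:firstpropertiescoefficient}\ref{firstpropereties1} gives $\psi=\id_\UD$.
\item Apply Proposition~\ref{positivo2} to $\psi$ and to $\varphi\in\Zn(\varphi)$. Using $h_\varphi\circ\psi-h_\varphi=c$ and $h_\varphi\circ\varphi-h_\varphi=1$, this gives the angular limits
$$
\frac{c}{h_\varphi'(z)(\psi(z)-z)}\to1,\qquad \frac{1}{h_\varphi'(z)(\varphi(z)-z)}\to1 .
$$
The existence of these limits implicitly says that near~$\tau$, within Stolz angles, $h_\varphi'\neq0$, $\psi(z)\neq z$ and $\varphi(z)\ne z$.
\item Dividing the second expression by the first yields $(\psi(z)-z)/\big(c(\varphi(z)-z)\big)\to1$, i.e.~\eqref{EQ_SLC}.
\end{itemize}
This step is routine.

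\emph{Statement \ref{IT_SLC2}.} Remark~\ref{RM_factorization} gives $h=f\circ h_\varphi$ with $f$ holomorphic on $S_\varphi=\C$ and $f(w+1)=f(w)+c_1$. Write $c:=c_{\varphi,\psi}$. From $h_\varphi\circ\psi=h_\varphi+c$ we get $f(w+c)=f(w)+c_2$ for $w\in h_\varphi(\UD)$. Since $\C=\bigcup_n\big(h_\varphi(\UD)-n\big)$, the identity theorem extends this to all $w\in\C$. Hence the entire function $f'$ is invariant under translations by both $1$ and~$c$. I would split into three cases according to~$c$.
\begin{itemize}
\item \emph{$c\notin\R$.} Then $f'$ is an entire elliptic function, hence constant, so $f(w)=aw+b$. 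Therefore $c_1=a$ and $c_2=ac$. The hypothesis $|c_1|^2+|c_2|^2\ne0$ forces $a\ne0$, giving $c_1\ne0$ and $c_2/c_1=c$.
\item \emph{$c\in\R\setminus\Q$.} The periods $1$ and $c$ generate a dense subgroup of~$\R$, so the continuous function $f'$ is constant along every horizontal line. Then $f'$ is an entire function depending only on $\Im w$, hence constant, and we conclude as in the previous case.
\item \emph{$c=\pm m/n\in\Q$.} By Proposition~\ref{Prop:firstpropertiescoefficient}, $\psi^{\circ n}=\varphi^{\circ \pm m}$ (with $\varphi$ an automorphism in the negative case). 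Composing with $h$ gives $nc_2=\pm m c_1$. If $c_1=0$ this would force $c_2=0$, which is excluded, so $c_1\neq0$ and $c_2/c_1=c$. The case $c=0$ is $\psi=\id_\UD$, where $c_2=0$ and $c_1\ne0$ trivially.
\end{itemize}

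The main point needing care is the extension of $f(w+c)=f(w)+c_2$ from $h_\varphi(\UD)$ to all of~$\C$. This is exactly where $S_\varphi=\C$ is used, through (HM2) and analytic continuation. The case analysis on the arithmetic nature of $c$ is then elementary.
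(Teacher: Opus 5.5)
Your proof is correct. The paper only quotes this result from \cite{CDG-zero-h.step}, but your argument is the one it uses for the positive-step analogues: for \ref{IT_SLC1}, the division of the two limits from Proposition~\ref{positivo2}, as in the proof of Theorem~\ref{Formula c_varphi_psi}; for \ref{IT_SLC2}, the factorization $h=f\circ h_\varphi$ followed by the periodicity/density/Liouville argument of Proposition~\ref{PR_beta-uniq}. The differences are harmless: you obtain $c_2=c_{\varphi,\psi}c_1$ directly, so $c_1\neq0$ follows at once without the role-swapping of Corollary~\ref{Thm:StrongUniqueness}, and in the rational case, instead of invoking Proposition~\ref{Prop:firstpropertiescoefficient}, the computation $f(w)+p\,c_1=f(w+p)=f(w+q\,c_{\varphi,\psi})=f(w)+q\,c_2$ with $c_{\varphi,\psi}=p/q$ would suffice.
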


\section{Simultaneous linearization}\label{S_simmultaneous-lin}
\subsection{Statement of main results}
For non-elliptic holomorphic self-mappings of~$\UD$, the notion of simultaneous linearization  can be introduced as follows.
\begin{definition}
Let $\varphi_1,\ldots,\varphi_n$ be holomorphic self-maps of~$\UD$ such that:
\begin{itemize}
\item[(a)] each of $\varphi_k$'s, ${k=1,\ldots,n}$, is either non-elliptic, or the identity map~$\id_\UD$;
\item[(b)] at least one of these self-maps is different from~$\id_\UD$.
\end{itemize}
We say that the self-maps $\varphi_1,\ldots,\varphi_n$ admit \textsl{simultaneous linearization} if there exists $h\in\Hol(\UD,\C)$ and $c=(c_1,\ldots,c_n)\in\C^n\setminus\{(0,\ldots,0)\}$ satisfying the following system of functional equations:
\begin{equation}\label{EQ_AbelSystem-n}
h\circ\varphi_k=h+c_k,\quad k=1,\dots,n.
\end{equation}
\end{definition}
Throughout this paper, we mostly will need the case of $n=2$ in the above definition. At the same time, this concept can be naturally extended to any (possibly infinite) family~$\mathcal F\subset\Hol(\UD)$ of non-elliptic self-mappings: we say that such a family $\mathcal F$ admits simultaneous linearization if there exist a function ${h\in\Hol(\UD,\C)}$ and a map $c:\mathcal F\to\C$ not vanishing identically such that
$$
{h\circ\psi=h+c(\psi)}\qquad\text{for every~$\,{\psi\in\mathcal F}$.}
$$

One of the main results of this section, stated below, relates commutativity of two non-elliptic self-mappings with their simultaneous linearizability.

Recall that a domain $V\subset\UD$ is said to be \textsl{isogonal} at a point~${\sigma\in\UC}$ if every sequence ${(z_n)\subset\UD}$ converging to~$\sigma$ non-tangentially is contained, except maybe for a finite number of terms, in~$V$.
\begin{theorem}\label{TH_SL-vs-COMM}
Two parabolic self-mappings $\varphi_1,\varphi_2\in\Hol(\UD)$ commute  if and only if the following two conditions hold:
\begin{romlist}
\item\label{IT_SL-vs-COMM-tau} $\varphi_1$ and $\varphi_2$ have the same Denjoy\,--\,Wolff point~$\tau$;
\item\label{IT_SL-vs-COMM-h}  there exists a domain~$V\subset\UD$ isogonal at~$\tau$ and ${(c_1,c_2)\in\C^2\setminus\{(0,0)\}}$ such that the simultaneous linearization problem~\eqref{EQ_AbelSystem-n} for the two self-mappings $\varphi_1$, $\varphi_2$ admits a solution~$h\in\Hol(\UD,\C)$ univalent in~$V$.
\end{romlist}
\end{theorem}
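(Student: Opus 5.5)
The plan is to prove the two implications separately. The ``if'' part is a short argument with angular derivatives. The ``only if'' part reduces, via the canonical model of~$\varphi_1$, to self-maps of a half-plane commuting with the translation $w\mapsto w+1$.

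\textbf{Sufficiency.} Suppose \ref{IT_SL-vs-COMM-tau} and \ref{IT_SL-vs-COMM-h} hold. Since $h$ is univalent on $V$, it is non-constant. Put $F_1:=\varphi_1\circ\varphi_2$ and $F_2:=\varphi_2\circ\varphi_1$. Then
$$
h\circ F_1=h+c_1+c_2=h\circ F_2 .
$$
Both $\varphi_k$ are parabolic with Denjoy\,--\,Wolff point~$\tau$, so $\varphi_k'(\tau)=1$ in the angular sense. By the chain rule for angular derivatives, $F_1'(\tau)=F_2'(\tau)=1$. The Julia\,--\,Carath\'eodory theorem then gives $(\tau-F_j(z))/(\tau-z)\to1$ as $z\to\tau$ non-tangentially. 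Hence the images under $F_1,F_2$ of a radial segment ending at~$\tau$ stay eventually inside a fixed Stolz angle at~$\tau$. Because $V$ is isogonal at~$\tau$, there is a segment $\{t\tau:t_0<t<1\}$ that is mapped into $V$ by both $F_1$ and~$F_2$. Injectivity of $h|_V$ gives $F_1=F_2$ on this segment, and the identity principle gives $F_1\equiv F_2$.

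\textbf{Necessity.} Assume $\varphi_1\circ\varphi_2=\varphi_2\circ\varphi_1$. Theorem~\ref{TH_Behan-and-Ko}(2) gives a common Denjoy\,--\,Wolff point~$\tau$.

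\emph{Zero-step case.} Suppose $\varphi_1$ is of zero hyperbolic step. Theorem~\ref{Prop:pseuso-semigroup} gives $h_{\varphi_1}\circ\varphi_2=h_{\varphi_1}+c$. We take $h:=h_{\varphi_1}$, $(c_1,c_2)=(1,c)$ and $V:=V_{\varphi_1}(1/3)$. This $V$ is isogonal at~$\tau$ and $h$ is univalent on it, by Theorem~\ref{Thm:V-sets}\,\ref{IT_V-sets(3)},\,\ref{IT_V-sets(4)}. The case where $\varphi_2$ has zero step is symmetric.

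\emph{Both steps positive.} By Remark~\ref{RM_normalization} we may assume $S_{\varphi_1}=\UH$. Since $h_{\varphi_1}\circ\varphi_2\circ\varphi_1=h_{\varphi_1}\circ\varphi_2+1$, Remark~\ref{RM_factorization} gives
$$
h_{\varphi_1}\circ\varphi_2=f\circ h_{\varphi_1}
$$
for some $f\in\Hol(\UH)$ with $f(w+1)=f(w)+1$. If $f$ is a translation $w\mapsto w+b$, we take $h=h_{\varphi_1}$ and argue exactly as in the zero-step case. Otherwise $f$ is a non-trivial self-map of~$\UH$ commuting with the parabolic automorphism $w\mapsto w+1$, which is itself of positive step. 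Theorem~\ref{TH_Behan-and-Ko} and the results of Appendix~\hyperlink{AppendixA}{A} are then used to show that $f$ is parabolic with Denjoy\,--\,Wolff point~$\infty$ and that $f$ admits a Koenigs-type function $k$ on~$\UH$ with
$$
k\circ f=k+1,\qquad k(w+1)=k(w)+c .
$$
This is the structural fact about maps commuting with a translation that I would expect Appendix~\hyperlink{AppendixA}{A} to supply. We then set $h:=k\circ h_{\varphi_1}$, which solves~\eqref{EQ_AbelSystem-n} with $(c_1,c_2)=(c,1)\neq(0,0)$.

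For univalence, $k$ is injective on an $f$-absorbing region $W\subset\UH$, which can be taken to contain a half-plane $\{\Im w>A\}$. We let $V$ be the component of $V_{\varphi_1}(1/3)\cap h_{\varphi_1}^{-1}(W)$ adjacent to~$\tau$. Then $h$ is univalent on $V$, and $V$ is isogonal at~$\tau$ by Theorem~\ref{Thm:V-sets}\,\ref{IT_V-sets(3)} together with Pommerenke's result $\angle\lim_{z\to\tau}|\Im h_{\varphi_1}(z)|=+\infty$ (Remark~\ref{Re:Pommerenke}).

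\textbf{Main obstacle.} The hard step is the analysis of a non-translation $f\in\Hol(\UH)$ commuting with $w\mapsto w+1$. One must show that $f$ has a linearizing function $k$ which simultaneously conjugates the translation to a translation and is univalent near~$\infty$; this is delicate when $f$ is itself of positive hyperbolic step. My first approach is to write $f(w)=w+g(e^{2\pi i w})$ with $g$ holomorphic in~$\UD^*$. Since $f(\UH)\subset\UH$, the function $g$ extends holomorphically to~$0$ with $\Im g(0)\ge0$. If $\Im g(0)>0$, then $f$ is close to the translation by $g(0)$ near~$\infty$, and $k$ is obtained as the normalized limit of $f^{\circ n}-n\,g(0)$, which should be univalent on a half-plane high up. The boundary case $\Im g(0)=0$ would need separate treatment, presumably using Appendix~\hyperlink{AppendixA}{A}.
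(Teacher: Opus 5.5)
Your proposal is correct, and the necessity half follows the paper's own route. Your $f$ is the semimodel morphism $g_{\varphi_2}$ of Definition~\ref{DF_g-psi}. Propositions~\ref{Prop:com-with-autoprevio} and~\ref{Prop:com-with-auto} (items (3), (6a), (6c)) show that a non-translation $f$ has the form $w+g(e^{2\pi i w})$ with $g\in\Hol(\UD,\UH)$; your $g$ is the paper's $F$. They also show that $k:=h_f$ satisfies $k(w+1)=k(w)+1/g(0)$ and is univalent on a half-plane $\{\Im w>A\}$. So your $h=k\circ h_{\varphi_1}$ equals $g(0)^{-1}\beta\circ h_{\varphi_1}$ with $\beta$ from Proposition~\ref{PR_beta-exists}. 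Your choice of $V$ is Proposition~\ref{PR_univ-in-isogonal-dom}: take the union of the truncated Stolz angles lying in $V_{\varphi_1}(1/3)\cap h_{\varphi_1}^{-1}(\{\Im w>A\})$, which are connected through a common radius.

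The sufficiency half takes a different and more direct route. You push a radial segment into $V$ under both compositions, using full isogonality. The paper instead proves the stronger Addendum~\ref{ADD_1}, in which $h$ is univalent only on hyperbolic discs $\phdisk(z_n,r)$ along one non-tangential sequence. That version requires non-tangential points $\zeta_n$ with $(\varphi_1\circ\varphi_2)(\zeta_n)=z_n$, together with Lemma~\ref{LM_close-to-linear} to place $(\varphi_2\circ\varphi_1)(\zeta_n)$ in the same disc. Your argument suffices for the theorem as stated. The paper's version is what Appendix~B actually uses, since there univalence of $h$ is known only on such discs.

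The difficulties in your ``Main obstacle'' paragraph do not occur. The bound $\Im g\ge0$ comes from Julia's inequality $\Im f(w)\ge\Im w$ at the Denjoy--Wolff point $\infty$, not merely from $f(\UH)\subset\UH$. If $g$ is non-constant, the minimum principle for the harmonic function $\Im g$ on $\UD$ gives $\Im g>0$ everywhere, in particular $\Im g(0)>0$. So the boundary case is empty, and $f$ is always of zero hyperbolic step.

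Your own construction then works as well. Define $\beta(w):=\lim_{n\to\infty}\big(f^{\circ n}(w)-n\,g(0)\big)$. It converges because the increments $g\big(e^{2\pi i f^{\circ n}(w)}\big)$ differ from $g(0)$ by $O\big(e^{-2\pi\Im f^{\circ n}(w)}\big)$, while $\Im f^{\circ n}(w)$ grows linearly. It satisfies $\beta\circ f=\beta+g(0)$ and $\beta(w+1)=\beta(w)+1$. Moreover $\beta(w)-w\to0$ and $\beta'(w)\to1$ as $\Im w\to+\infty$, so $\beta$ is univalent on a high half-plane.

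Since $g(0)\notin\R$, this $\beta$ is exactly $\beta_{\varphi_1,\varphi_2}$. Your route avoids the paper's passage in Appendix~A through the Koenigs function of the associated elliptic map of $\UD$ and the uniqueness of holomorphic models.
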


An analogue of this theorem for hyperbolic self-maps holds even without assuming univalence of~$h$ in an isogonal domain~$V$ (see Remark~\ref{RM_for-all-non-elliptic}). We do not know whether such a univalence assumption can be completely avoided in the parabolic case. Nevertheless, it can be considerably weakened, and in fact, we do not have to assume that ${(c_1,c_2)\neq(0,0)}$ when deducing commutativity from conditions \ref{IT_SL-vs-COMM-tau} and~\ref{IT_SL-vs-COMM-h}; see Addendum~\ref{ADD_1}.

For parabolic self-maps of \textit{zero hyperbolic step}, Theorem~\ref{TH_SL-vs-COMM} was essentially established in~\cite{CDG-zero-h.step}. The proof of commutativity, given that conditions~\ref{IT_SL-vs-COMM-tau} and~\ref{IT_SL-vs-COMM-h} are satisfied, does not differ much from the proof of \cite[Proposition~3.3]{CDG-zero-h.step} (regardless of the hyperbolic step). Recall also that commuting self-maps have the same Denjoy\,--\,Wolff point (see Theorem~\ref{TH_Behan-and-Ko}). Therefore, our main task will be to see that if ${\varphi_1\circ\varphi_2=\varphi_2\circ\varphi_1}$, where $\varphi_1$ is parabolic of \textit{positive} hyperbolic step, then $\varphi_1$ and~$\varphi_2$ admit simultaneous linearization, with $h$ being univalent in some domain isogonal at~$\tau$.
Comparing with Theorem~\ref{Prop:pseuso-semigroup}, note that in this case, it is in general not true that the Koenigs map of~$\varphi_1$ solves the simultaneous linearization problem~\eqref{EQ_AbelSystem-n}. Note, however, that every solution to~\eqref{EQ_AbelSystem-n} factorizes via the Koenigs map, see Remark~\ref{RM_factorization}. As a consequence, the following result not only implies, but is in fact \textit{equivalent} to simultaneous linearizability in the parabolic-positive case.

\begin{proposition}\label{PR_beta-exists}
  Let $\varphi\in\Hol(\UD)$ be a parabolic self-map of positive hyperbolic step with canonical model $(S_\varphi,h_\varphi,z\mapsto z+1)$, and let $\psi\in\Zn(\varphi)$. Then there exists $\beta\in\Hol(S_\varphi,\C)$ and a number ${c\in\overline{S_\varphi}}$ such that the following three conditions hold:
  \begin{subequations}\label{EQ_beta(sys)}
	\begin{align}
		\label{EQ_beta(a)} &\beta(w+1)=\beta(w)+1\quad \text{for all $w\in S_\varphi$};\\
		\label{EQ_beta(b)} &\beta \circ h_{\varphi}\circ\varphi = \beta \circ h_{\varphi} + 1;\\
		\label{EQ_beta(c)} &\beta \circ h_{\varphi}\circ\psi = \beta \circ h _{\varphi}+ c.
	\end{align}
   \end{subequations} 	
\end{proposition}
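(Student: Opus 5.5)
The plan is to reduce the statement to a linearization problem for a holomorphic self-map of the base space that commutes with the unit translation. Throughout, by Remark~\ref{RM_normalization}, I assume $S_\varphi=\UH$.

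\textbf{Step 1: factorizing $h_\varphi\circ\psi$.} Since $\psi\circ\varphi=\varphi\circ\psi$, we have
$$(h_\varphi\circ\psi)\circ\varphi=h_\varphi\circ\varphi\circ\psi=h_\varphi\circ\psi+1.$$
So $h_\varphi\circ\psi$ solves Abel's equation with constant $1$. By Remark~\ref{RM_factorization} there is a holomorphic $g$ with $h_\varphi\circ\psi=g\circ h_\varphi$ and $g(w+1)=g(w)+1$. Its target is $S[h_\varphi\circ\psi,1]=\bigcup_n(h_\varphi(\psi(\UD))-n)\subset\UH$, so $g\in\Hol(\UH)$ commutes with the parabolic automorphism $T(w)=w+1$.

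It then suffices to find $\beta\in\Hol(\UH,\C)$ and $c\in\overline{\UH}$ satisfying two conditions: $\beta(w+1)=\beta(w)+1$ and $\beta\circ g=\beta+c$. Indeed, \eqref{EQ_beta(a)} is the first condition; \eqref{EQ_beta(b)} follows from it together with Abel's equation; and \eqref{EQ_beta(c)} follows from $\beta\circ h_\varphi\circ\psi=\beta\circ g\circ h_\varphi=\beta\circ h_\varphi+c$.

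\textbf{Step 2: passing to the punctured disc.} Let $q=e^{2\pi i w}$, which maps $\UH$ onto $\UD^*$. Since $g(w)-w$ is $1$-periodic, $g(w)=w+F(q)$ with $F$ holomorphic in $\UD^*$. Hence $g$ descends to
$$G(q):=e^{2\pi i g(w)}=q\,e^{2\pi i F(q)},$$
a holomorphic self-map of $\UD^*$. Being bounded, $G$ extends to $\UD$ with $G(0)=0$.

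The quotient $G(q)/q=e^{2\pi iF(q)}$ is holomorphic on $\UD$ and nonvanishing on $\UD^*$, and it has a logarithm there. Therefore its winding number around small circles is $0$, so it cannot vanish at $0$ either. Thus $F$ extends holomorphically to $\UD$, and $G'(0)=e^{2\pi i c}$ with $c:=F(0)$, which is the limit of $g(w)-w$ as $\Im w\to+\infty$. Schwarz's lemma gives $|G'(0)|\le1$, i.e. $c\in\overline{\UH}$.

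\textbf{Step 3: constructing $\beta$.} There are two cases.

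\emph{Case $|G'(0)|=1$.} Then $G$ is a rotation, $F\equiv c\in\Real$, and $g(w)=w+c$. We take $\beta=\id$.

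\emph{Case $0<|G'(0)|<1$.} Here $G$ is elliptic with an attracting fixed point at $0$. Let $k$ be its Koenigs function, so $k\circ G=G'(0)\,k$ and $k(q)=q+O(q^2)$. Define
$$\beta(w):=w+\frac1{2\pi i}\log\frac{k(q)}{q}.$$
This is well defined provided $k$ has no zeros in $\UD^*$. To see this, recall $k=\lim G^{\circ n}/G'(0)^n$. If $k(q_0)=0$, then $G^{\circ n}(q_0)\to0$ at the rate $o(|G'(0)|^n)$. Since $G(\UD^*)\subset\UD^*$, no iterate of $q_0$ ever equals $0$. The orbit eventually enters a neighbourhood of $0$ where $k$ is univalent, and there $k$ vanishes only at $0$. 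So $k(G^{\circ n}(q_0))=G'(0)^nk(q_0)=0$ would force $G^{\circ n}(q_0)=0$, a contradiction. Hence $k(q)/q$ is holomorphic and zero-free on the simply connected disc $\UD$, and it equals $1$ at $0$. Taking the branch of the logarithm vanishing at $0$ makes $\beta$ holomorphic on $\UH$.

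By construction $\beta(w+1)=\beta(w)+1$. Moreover, $e^{2\pi i\beta(w)}=k(q)$, so
$$e^{2\pi i\beta(g(w))}=k(G(q))=e^{2\pi i c}\,e^{2\pi i\beta(w)}.$$
Thus $\beta\circ g-\beta-c$ is a continuous, integer-valued function, hence constant. As $\Im w\to+\infty$ we have $\beta(w)-w\to0$ and $g(w)-w\to c$, so this constant is $0$, giving $\beta\circ g=\beta+c$.

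\textbf{Main obstacle.} I expect the delicate points to be the non-vanishing arguments: excluding $G'(0)=0$ (handled by the winding-number argument in Step~2) and showing $k$ has no zeros in $\UD^*$ (Step~3). Both are needed to take a global logarithm, so that $\beta$ is single-valued and holomorphic on all of $\UH$ and not merely near $\Im w=+\infty$.
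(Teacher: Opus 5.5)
Your proof is correct, but it follows a different route from the paper's. Both arguments start from the same reduction: $h_\varphi\circ\psi=g\circ h_\varphi$ with $g\in\Hol(\UH)$ commuting with $w\mapsto w+1$ (the paper's semimodel morphism $g_\psi$). From there the paper quotes the representation $g(w)=w+F(e^{2\pi i w})$ with $F\in\Hol(\UD,\UH)$ from Abate's monograph. It then shows in Appendix~A that, in the non-automorphism case, $g$ is a parabolic self-map of $\UH$ of zero hyperbolic step, and sets $\beta:=F(0)\,h_{g}$, a multiple of the Abel--Koenigs function of $g$ itself. The key identity $\beta(w+1)=\beta(w)+1$, Proposition~\ref{Prop:com-with-auto}(6c), then needs its own argument: uniqueness of holomorphic models, univalence of $h_g$ in a half-plane, and the limit of $h_g'$ as $\Im w\to+\infty$.

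You instead stay on the quotient disc and define $\beta$ through $e^{2\pi i\beta(w)}=k(e^{2\pi i w})$, where $k$ is the Schr\"oder--Koenigs function of the descended elliptic map $G$. With this definition, $\beta(w+1)=\beta(w)+1$ is automatic, and $\beta\circ g=\beta+c$ follows from Schr\"oder's equation plus a continuity argument. In effect you take the paper's relation (6b) as the definition. Your $\beta$ differs from $F(0)h_g$ by an additive constant and already satisfies the normalization \eqref{EQ_beta-normalization}.

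Your route is shorter and more elementary: it needs no hyperbolic-step analysis of $g$ and no model uniqueness. The paper's route pays off later, because identifying $\beta$ with $F(0)h_{g_\psi}$ is what drives two subsequent results. It gives the uniqueness in Proposition~\ref{PR_beta-uniq}, since every solution factors through $h_{g_\psi}$. It also drives Case~3 of Theorem~\ref{TH_abelian-part}, where Theorem~\ref{Prop:pseuso-semigroup} is applied to the commuting zero-step maps $g_1$, $g_2$.

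One step should be tightened. Boundedness of $G$ only gives a holomorphic extension to $\UD$; it does not give $G(0)=0$. Your winding-number argument settles this too if you phrase it via orders. The function $G(q)/q$ is meromorphic at $0$ and has a single-valued logarithm $2\pi iF$ on a punctured neighbourhood, so its order at $0$ is zero. That rules out both a simple pole (i.e. $G(0)\neq0$) and a zero (i.e. $G'(0)=0$). Alternatively, for $g\neq\id_\UH$, Theorem~\ref{TH_Behan-and-Ko} (transferred to $\UH$) makes $g$ parabolic with Denjoy--Wolff point $\infty$. Julia's lemma then gives $\Im g(w)\ge\Im w$, hence $|G(q)|\le|q|$.
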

\begin{remark}\label{RM_second-follows-from-first}
It is easy to see that equality~\eqref{EQ_beta(a)} is equivalent to~\eqref{EQ_beta(b)}. Indeed, if \eqref{EQ_beta(a)} holds, then for any~${z\in\UD}$, we have
$
  {\beta \circ h_{\varphi}\circ\varphi(z)=\beta \big(h_{\varphi}(z)+1\big)=\beta\big(h_{\varphi}(z)\big)+1.}
$
Conversely, if \eqref{EQ_beta(b)} holds, then
$
  {\beta\big(h_\varphi(z)+1\big)=\beta\big(h_\varphi(\varphi(z))\big)=\beta\big(h_\varphi(z)\big)+1}
$
for any~${z\in\UD}$ and hence \eqref{EQ_beta(a)} follows by the Identity Principle.
\end{remark}

In general, solutions to~\eqref{EQ_beta(sys)} can be essentially non-unique. However, as we will prove, this may happen only in a rather particular situation. More precisely, we establish the following uniqueness result for simultaneous linearization of commuting parabolic self-maps.
\begin{proposition}\label{PR_beta-uniq}
  Under the hypothesis of Proposition~\ref{PR_beta-exists}, the following two statements hold.
  \begin{statlist}
   \item\label{IT_c-uniq}  There exists a unique constant $c=c_{\varphi,\psi}\in\C$ for which the system~\eqref{EQ_beta(sys)} admits a solution~$\beta\in\Hol(S_\varphi,\C)$.
   \item\label{IT_beta-uniq} If $c_{\varphi,\psi}\not\in\mathbb{Q}$, then any two solutions to the system~\eqref{EQ_beta(sys)} belonging to ${\Hol(S_\varphi,\C)}$ differ by a constant.
  \end{statlist}
\end{proposition}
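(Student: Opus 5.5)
The plan is to push the whole problem down to a self-map of the punctured disc, where $1$-periodicity turns the functional equations into Schröder equations. By Remark~\ref{RM_normalization} we may assume $S_\varphi=\UH$.

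\emph{The map induced by $\psi$.} Since $\psi$ commutes with $\varphi$, the map $h_\varphi\circ\psi$ satisfies $(h_\varphi\circ\psi)\circ\varphi=h_\varphi\circ\psi+1$. Remark~\ref{RM_factorization} (via Lemma~\ref{Le:morphism}) then gives a holomorphic $f:\UH\to\UH$ with $h_\varphi\circ\psi=f\circ h_\varphi$ and $f(w+1)=f(w)+1$.

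\emph{Reduction to equations on $\UH$.} By Remark~\ref{RM_second-follows-from-first} and the Identity Principle ($h_\varphi(\UD)$ is open), a pair $(\beta,c)$ solves \eqref{EQ_beta(sys)} if and only if
\begin{equation*}
\beta(w+1)=\beta(w)+1 \quad\text{and}\quad \beta\circ f=\beta+c \quad\text{on }\UH.
\end{equation*}

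\emph{Passing to the punctured disc.} Put $q=e^{2\pi i w}$. Then $f(w)=w+P(q)$ and $\beta(w)=w+G(q)$ with $P,G$ holomorphic in $\UD^*$. Here $G$ may a priori have an essential singularity at $0$.

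The map $f$ descends to $F(q):=q\,e^{2\pi i P(q)}$, a holomorphic self-map of $\UD^*$. It extends to $\UD$ with $F(0)=0$. Since $F$ has no zeros in $\UD^*$ and $P$ is single valued, $F$ has a simple zero at $0$, so $\lambda:=F'(0)\neq0$.

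Likewise $Q(q):=e^{2\pi i\beta(w)}=q\,e^{2\pi iG(q)}$ is a nonvanishing holomorphic function on $\UD^*$ with winding number $1$ around~$0$. The equation $\beta\circ f=\beta+c$ becomes the Schröder-type equation $Q\circ F=\mu Q$ with $\mu=e^{2\pi ic}$.

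\emph{Case $|\lambda|<1$.} Let $K$ be the Koenigs function of $F$ at $0$, normalised by $K'(0)=1$. The function $R:=Q\circ K^{-1}$ is holomorphic in a punctured neighbourhood of $0$ and satisfies $R(\lambda z)=\mu R(z)$. Comparing Laurent coefficients gives $a_m(\lambda^m-\mu)=0$ for every $m$.

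Since $|\lambda|<1$, at most one $m$ survives, and the winding number forces $m=1$. Hence $Q=aK$ near $0$ and $\mu=\lambda$. It follows that $e^{2\pi iG}=aK(q)/q$ is holomorphic and nonvanishing at $0$, so $G$ extends holomorphically to $0$ and is bounded near $0$.

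\emph{Case $|\lambda|=1$.} By Schwarz's lemma $F$ is a rotation $q\mapsto\lambda q$, i.e.\ $f(w)=w+\text{const}$. The same Laurent argument leaves the coefficients $a_m$ with $\lambda^m=\mu$.

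If $\lambda$ is not a root of unity, only one $m$ survives, and the winding number again gives $m=1$. Then $Q=aq$, so $G$ is constant.

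\emph{Proof of~\ref{IT_c-uniq}.} Fix $w_0\in\UH$, set $q_0=e^{2\pi iw_0}$, and iterate: $nc=\beta(f^{\circ n}(w_0))-\beta(w_0)$. Hence
\begin{equation*}
c=\lim_{n\to\infty}\frac{f^{\circ n}(w_0)-w_0}{n},
\end{equation*}
provided $G(F^{\circ n}(q_0))/n\to0$. This holds in every case:
\begin{itemize}
\item when $F$ is a rotation, $F^{\circ n}(q_0)$ stays on a circle inside $\UD^*$, where $G$ is bounded;
\item when $|\lambda|<1$, $G$ is bounded near $0$ by the case analysis above.
\end{itemize}
The right-hand side involves only $f$, so $c=c_{\varphi,\psi}$ does not depend on the chosen solution $\beta$. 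Existence of at least one solution is Proposition~\ref{PR_beta-exists}.

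\emph{Proof of~\ref{IT_beta-uniq}.} Suppose $c\notin\Q$.
\begin{itemize}
\item If $|\lambda|<1$, then $Q=aK$ with $K$ determined by $F$. Two solutions differ only in $a$, so $G$, and hence $\beta$, changes by an additive constant.
\item If $F$ is a rotation, then $\mu=\lambda=e^{2\pi ic}$ is not a root of unity because $c$ is irrational. Thus $G$ is constant and again $\beta$ is unique up to an additive constant.
\end{itemize}

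\emph{Expected obstacle.} The main difficulty is the possibly essential singularity of $G$ at $q=0$. It is handled through the winding-number and Laurent-coefficient argument applied in Koenigs coordinates of $F$. This also uses the extension $F(0)=0$ with $F'(0)\neq0$, which must be justified carefully from $f(\UH)\subset\UH$ and the absence of zeros of $F$ in $\UD^*$.
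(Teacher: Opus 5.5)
Your proof is correct, but it takes a genuinely different route from the paper's.

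The paper stays in $\UH$ and splits according to Proposition~\ref{Prop:com-with-autoprevio}:
\begin{itemize}
\item If the semimodel morphism $g_\psi$ (your $f$) is a translation $w\mapsto w+\theta$, then $\beta'$ is both $1$- and $\theta$-periodic. This gives $\beta'\equiv 1$ when $\theta\notin\Q$, and $c=\theta$ by a simple arithmetic argument when $\theta\in\Q$.
\item Otherwise $g_\psi$ is parabolic of zero hyperbolic step, and $\beta$ factors through its Koenigs function as an entire function of $h_{g_\psi}$. The identity $h_{g_\psi}(w+1)=h_{g_\psi}(w)+1/c_0$ from Proposition~\ref{Prop:com-with-auto}(6c) makes the derivative of that entire function doubly periodic with non-real period ratio, hence constant. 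So $c=c_0$ and $\beta=c_0h_{g_\psi}+\mathrm{const}$.
\end{itemize}

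You instead descend via $q=e^{2\pi i w}$ to the self-map $F$ of $\UD$ fixing $0$. You convert the problem into the Schr\"oder equation $Q\circ F=e^{2\pi i c}Q$ and solve it with the elliptic Koenigs function and a Laurent/winding-number argument. You then recover $c$ itself, not just modulo $\Z$, as the translation number $\lim_n (g_\psi^{\circ n}(w_0)-w_0)/n$.

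Your route is more elementary and self-contained. It needs neither the holomorphic model of $g_\psi$ nor Proposition~\ref{Prop:com-with-auto}(6b)--(6c); your identity $e^{2\pi i\beta(w)}=aK(e^{2\pi i w})$ is essentially~\eqref{EQ_Koenigs-functions-relation}, obtained without passing through $h_{g_\psi}$. It also settles the rational-rotation case of~(A) by the same limit formula.

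The paper's route, in exchange, identifies $\beta$ with $c_0h_{g_\psi}$ and $c_{\varphi,\psi}$ with $\lim_{\Im w\to+\infty}(g_\psi(w)-w)$ directly. It reuses both at once for Proposition~\ref{PR_beta-properties} and Theorem~\ref{Formula c_varphi_psi}. Your approach yields these too with little extra work:
\begin{itemize}
\item holomorphic extension of $G$ to $q=0$ gives the limit of $\beta(w)-w$;
\item univalence of $K$ near $0$ gives univalence of $\beta$ on a half-plane.
\end{itemize}

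Two small steps deserve an explicit line:
\begin{itemize}
\item In the rotation case of~(B), $\mu=\lambda$ comes from~(A), which gives $c=\theta$; it does not come from the Laurent argument.
\item In~(A) with $|\lambda|<1$, boundedness of $G$ along the orbit uses that $G$ extends holomorphically to $\UD$, while $|F^{\circ n}(q_0)|\le|q_0|$ by Schwarz's lemma.
\end{itemize}
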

Note that the uniqueness statement analogous to assertion~\ref{IT_c-uniq} of the above proposition also holds in the case of zero hyperbolic step, see Theorem~\ref{TH_SLC(2formulas)}\+\ref{IT_SLC2}. Therefore, the notion of the simultaneous linearization coefficient defined for parabolic self-maps~$\varphi$ of zero hyperbolic step using the Koenigs map~$h_\varphi$, see Definition~\ref{simul-linea-coefficient-zero}, can be extended to the case of positive hyperbolic step in the following way.

\begin{definition}[simultaneous linearization coefficient]\label{DF_SimLinCoeff}
Let $\varphi\in\Hol(\UD)$ be a parabolic self-map of positive hyperbolic step and let $\psi\in\Zn(\varphi)$. The unique value $c_{\varphi,\psi}$ of the constant~$c$ for which~\eqref{EQ_beta(sys)} admits a solution, see Proposition~\ref{PR_beta-uniq}\+\ref{IT_c-uniq}, will be called the \textsl{simultaneous linearization coefficient} of~$\psi$ w.r.t.\,$\varphi$.
\end{definition}

It turns out that the function~$\beta$ which we construct in the proof of  Proposition~\ref{PR_beta-exists} has some remarkable properties. More precisely:

\begin{proposition}\label{PR_beta-properties}
Under the hypothesis of Proposition~\ref{PR_beta-exists}, every function $\beta\in\Hol(S_\varphi,\C)$ satisfying  for some~${c\in\C\setminus\mathbb Q}$ the system~\eqref{EQ_beta(sys)} has the following properties:
\begin{ourlist}
\item\label{IT_beta-uni} $\beta$ is univalent in~$\big\{w\in S_\varphi\colon|\Im w|>A\big\}$ for ${A>0}$ large enough;
\item\label{IT_beta-limit} $\beta(w)-w$ has finite limit as~${|\Im w|\to+\infty}$, ${w\in S_\varphi}$.
\end{ourlist}
\end{proposition}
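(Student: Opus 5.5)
Throughout, normalize $S_\varphi=\UH$ (Remark~\ref{RM_normalization}). The plan is to pass to the punctured disc via $q=e^{2\pi i w}$. By \eqref{EQ_beta(a)}, $g(w):=\beta(w)-w$ is $1$-periodic, so $g(w)=G(e^{2\pi i w})$ for some $G\in\Hol(\UD^*,\C)$. Both assertions will follow once we show that $G$ has a removable singularity at $0$. Indeed, assertion~\ref{IT_beta-limit} then holds with limit $G(0)$. Moreover, $\beta'(w)-1=2\pi i\,q\,G'(q)\to0$ uniformly as $\Im w\to+\infty$. So for $A$ large we have $|\beta'-1|<1/2$ on the convex half-plane $\{\Im w>A\}$, and integrating $\beta'$ along segments gives $\Re\big[(\beta(w_1)-\beta(w_2))/(w_1-w_2)\big]>1/2$. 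This yields assertion~\ref{IT_beta-uni}.

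To get an equation for $G$, transport $\psi$ to the model. Since $h_\varphi\circ\psi\circ\varphi=h_\varphi\circ\psi+1$, Remark~\ref{RM_factorization} gives $h_\varphi\circ\psi=F\circ h_\varphi$ with $F\in\Hol(\UH,\UH)$ satisfying $F(w+1)=F(w)+1$. Then \eqref{EQ_beta(c)} and the Identity Principle (as $h_\varphi(\UD)$ is open) give $\beta\circ F=\beta+c$ on $\UH$. Now set $Q(q):=e^{2\pi i F(w)}$, which is a well-defined holomorphic self-map of $\UD^*\to\UD$, and $B(q):=e^{2\pi i\beta(w)}=q\,e^{2\pi i G(q)}$. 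The function $B$ is holomorphic and zero-free on $\UD^*$, and the relation $\beta\circ F=\beta+c$ becomes
\[
B\circ Q=\mu B,\qquad \mu:=e^{2\pi i c}\neq0 .
\]

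The key observation is a winding argument. \emph{If $B$ extends holomorphically to $0$, then $G$ is removable.} To see this, note that $B(q)/q=e^{2\pi i G(q)}$ has winding number $0$ around small circles, because $G$ is single-valued. Hence $B$ has winding $1$, so $B$ has a simple zero at $0$. Then $e^{2\pi iG}=B/q$ extends holomorphically and without zeros, so $G=\frac1{2\pi i}\log(B/q)$ extends. The same trick applied to $Q(q)=q\,e^{2\pi i p(q)}$, where $p:=F-\id$ is periodic, shows the following. Since $Q$ is bounded, it extends to $0$. If $Q(0)=0$, then $Q'(0)\neq0$, as $e^{2\pi ip}$ cannot vanish at $0$ because it has winding $0$.

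It remains to show that $B$ extends, splitting into cases according to $Q$.

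\textbf{Case $Q(0)=a\neq0$.} Here $B(q)=\mu^{-1}B(Q(q))$ is holomorphic near $q=0$, and we are done.

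\textbf{Case $Q(0)=0$, $|Q'(0)|=1$.} By Schwarz, $Q$ is a rotation, so $F(w)=w+b$ with $b\in\Real$. Comparing Fourier coefficients in $g(w+b)-g(w)=c-b$ gives $c=b$ and $a_n(e^{2\pi i n b}-1)=0$ for $n\neq0$. Since $c=b\notin\mathbb Q$, $g$ is constant.

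\textbf{Case $Q(0)=0$, $0<|Q'(0)|<1$.} This is the case I expect to be the main obstacle. I would argue as follows. The function $Q$ is a local biholomorphism at $0$, and its iterates $Q^{\circ n}$ contract a fundamental annulus $R$ onto annuli that exhaust a punctured neighbourhood of $0$; the $n$-th one lies at distance $\asymp|Q'(0)|^n$. Iterating $G\circ Q-G=c-b-p$, where $b:=\lim_{q\to 0}p(q)$ (this limit exists since $Q'(0)=e^{2\pi ib}$) and $p(Q^{\circ k}(q))-b=O(|Q'(0)|^k)$, we obtain
\[
G(Q^{\circ n}(q))=G(q)+n(c-b)+O(1)\qquad\text{uniformly for } q\in R.
\]
Hence $|G(q)|=O(\log(1/|q|))$ near $0$, so $G$ is removable by the Riemann theorem (and a posteriori $c=b$).

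Care is needed to make the covering of the punctured neighbourhood by the images $Q^{\circ n}(R)$ rigorous. A cleaner alternative is to conjugate $Q$ to its linearization $\lambda q$ by Koenigs' theorem, and then compare Laurent coefficients of $B$ in the linearizing coordinate: a zero-free solution of $B\circ Q=\mu B$ on $\UD^*$ must be a multiple of a power of the Koenigs function.
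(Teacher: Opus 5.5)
Your proof is correct, but it follows a different route from the paper's.

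\textbf{The paper's route.} The paper works with the semimodel morphism $g_\psi$ (your $F$) and the dichotomy of Proposition~\ref{Prop:com-with-autoprevio}.
\begin{itemize}
\item When $g_\psi(w)=w+\theta$, it shows that $\beta'$ is both $1$- and $\theta$-periodic, hence constant by density. This is your rotation case, which you settle by comparing Fourier coefficients instead.
\item Otherwise $g_\psi$ is parabolic of zero hyperbolic step, and the paper factors $\beta=f\circ h_{g_\psi}$ through the Koenigs map of $g_\psi$, using the universal property of holomorphic models (Remark~\ref{RM_factorization}). The entire function $f'$ then has the non-collinear periods $1$ and $1/c_0$, so $\beta=c_0h_{g_\psi}+\mathrm{const}$. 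Univalence and the limit are then read off from Proposition~\ref{Prop:com-with-auto}\,(5), (6a), (6d).
\end{itemize}

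\textbf{How the two compare.} The appendix facts the paper invokes are proved by essentially your computations:
\begin{itemize}
\item your $Q$ is the map $f$ of Proposition~\ref{Prop:com-with-auto}\,(4);
\item the Taylor-coefficient comparison in (6b) is your Koenigs/Laurent argument;
\item (6d) is your winding trick.
\end{itemize}
By running this disc-side analysis directly on $\beta$, you bypass the model factorization and the elliptic-function step. This makes your argument more self-contained, and your univalence proof via Noshiro--Warschawski is also more elementary.

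In return, the paper's detour proves Proposition~\ref{PR_beta-uniq} at the same time. It also identifies $\beta$ explicitly as $c_0h_{g_\psi}$ up to an additive constant, with $c=\lim_{\Im w\to+\infty}(g_\psi(w)-w)$. Both facts are reused later, e.g.\ in \eqref{EQ_SLC-via-g_psi} and in the proof of Theorem~\ref{TH_abelian-part}. Your Laurent version actually yields essentially the same information, since it forces $B$ to be a constant multiple of the Koenigs function of $Q$.

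\textbf{Two small points.}
\begin{itemize}
\item The case $Q(0)\neq0$ is vacuous. Your own winding argument already shows that $Q$ has a simple zero at $0$.
\item The iterated relation should read $G\circ Q-G=c-p$, with $p$ viewed as a function of $q$, not $c-b-p$. As written, summing would give $n(c-2b)+O(1)$ instead of the $n(c-b)+O(1)$ you correctly state.
\end{itemize}
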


Note that~\ref{IT_beta-uni} and~\ref{IT_beta-limit} may fail for \textit{some} of the solutions\footnote{The solution set of~\eqref{EQ_beta(sys)} in the case ${c_{\varphi,\psi}\in\mathbb Q}$ is described in Remark~\ref{RM_rational-SLC}.} to~\eqref{EQ_beta(sys)} if ${c_{\varphi,\psi}\in\mathbb Q}$. At the same time, if the system~\eqref{EQ_beta(sys)} admits for a given~${c\in\Real}$ at least one solution belonging to~${\Hol(S_\varphi,\C)}$, then ${\beta:=\id_{S_\varphi}}$ is necessarily among the solutions; see Remark~\ref{RM_identity-is-a-solution2}.
This allows us to select, with the help of the normalization
\begin{equation}\label{EQ_beta-normalization}
 \lim_{\substack{w\in S_\varphi,\\|\Im w|\to+\infty}}\!\!\! \big(\beta(w)-w\big)=0,
\end{equation}
a sort of ``canonical'' solution~$\beta_{\varphi,\psi}$, which we will call the simultaneous linearization function:
\begin{definition}\label{DF_SimLinFunc}
Let $\varphi$ and $\psi$ be as in Definition~\ref{DF_SimLinCoeff}. By the \textsl{simultaneous linearization function}~$\beta_{\varphi,\psi}$ associated with $\varphi$ and $\psi$ we mean the unique holomorphic function ${\beta:S_\varphi\to\C}$ satisfying \eqref{EQ_beta(sys)} and~\eqref{EQ_beta-normalization} if ${c_{\varphi,\psi}\not\in\mathbb Q}$, see Proposition~\ref{PR_beta-uniq}\+\ref{IT_beta-uniq}, or the identity map~$\beta:=\id_{S_\varphi}$ if~${c_{\varphi,\psi}\in\mathbb Q}$.
\end{definition}

Proposition~\ref{PR_beta-uniq} allows us to extend Theorem~\ref{TH_SLC(2formulas)}\+\ref{IT_SLC2} to the case of parabolic self-maps with positive hyperbolic step.

\begin{corollary}\label{Thm:StrongUniqueness} Let $\varphi,\psi\in\Hol(\D)$ be two commuting  parabolic self-maps. Suppose that a holomorphic function ${h:\UD\to\C}$ satisfies the identities
	\begin{equation}\label{EQ_h-system}
	h\circ \varphi=h+c_1\quad\text{and}\quad h\circ \psi=h+c_2
	\end{equation}
	for some $(c_1,c_2)\in\C^2\setminus\{(0,0)\}$. Then ${c_1\neq0}$ and $c_2/c_1=c_{\varphi,\psi}$. Moreover, if $\varphi$ is of positive hyperbolic step and ${c_{\varphi,\psi}\notin\Q}$, then there exists $c_0\in\C$ such that
	\begin{equation}\label{EQ_StrongUniqueness}
	h=c_1\beta_{\varphi,\psi}\circ h_\varphi+c_0.
	\end{equation}	
\end{corollary}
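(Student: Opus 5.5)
The plan is to separate the two hyperbolic-step cases. If $\varphi$ is of zero hyperbolic step, everything follows from Theorem~\ref{TH_SLC(2formulas)}\+\ref{IT_SLC2}. That result gives $c_1\neq0$ and $c_2/c_1=c_{\varphi,\psi}$ directly; the ``moreover'' part does not apply in this case. So assume from now on that $\varphi$ is of positive hyperbolic step, with canonical model $(S_\varphi,h_\varphi,z\mapsto z+1)$.

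\textbf{Step 1: $c_1\neq0$.} Suppose, to the contrary, that $c_1=0$. Then $h\circ\varphi=h$. By Remark~\ref{RM_factorization} applied with $c=0$, we get $h=f\circ h_\varphi$ with $f(w+1)=f(w)$ on $S_\varphi$, and $f$ takes values in $h(\UD)$. Since $h_\psi$, and hence $h_\varphi\circ\psi$, still has to satisfy $h\circ\psi=h+c_2$ with $c_2\neq0$, we get $f(h_\varphi(\psi(z)))=f(h_\varphi(z))+c_2$. By Proposition~\ref{PR_beta-exists}, $\beta\circ h_\varphi\circ\psi=\beta\circ h_\varphi+c$, and $\beta(w)-w$ tends to a limit (Proposition~\ref{PR_beta-properties}, when $c\notin\Q$).

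The shortest route I would try is the following. Iterate $\psi$ along an orbit tending non-tangentially to $\tau$, so that $h(\psi^{\circ n}(z))=h(z)+nc_2$ is unbounded. On the other hand, $h(\psi^{\circ n}(z))=f(h_\varphi(\psi^{\circ n}(z)))$. The periodic function $f$ factors as $F(e^{2\pi i w})$, where $F$ is holomorphic on a punctured disc or annulus; for $S_\varphi=\UH$ it is a punctured disc. Pommerenke's Remark~\ref{Re:Pommerenke} gives $\Im h_\varphi\to+\infty$ along non-tangential approach. Hence $e^{2\pi i h_\varphi}\to0$, and since $h$ is holomorphic and $\varphi$-invariant near the puncture, we need a boundedness argument to show that $F$ extends across $0$. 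Then $h(\psi^{\circ n}(z))$ stays bounded, which is a contradiction. The boundedness near the puncture is the delicate point. I would obtain it by comparing with a bounded region of $h(V)$, using Lemma~\ref{Le:V-sets} and the Schwarz--Pick lemma to keep $\psi^{\circ n}(z)$ within bounded hyperbolic distance of a $\varphi$-orbit.

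\textbf{Step 2: $c_2/c_1=c_{\varphi,\psi}$.} Replace $h$ by $h/c_1$, so that $h\circ\varphi=h+1$ and $h\circ\psi=h+c_2/c_1$. By Remark~\ref{RM_factorization}, $h=\beta\circ h_\varphi$ for some holomorphic $\beta$ on $S_\varphi$ with $\beta(w+1)=\beta(w)+1$. Thus $\beta$ solves~\eqref{EQ_beta(a)} and~\eqref{EQ_beta(b)}, and it satisfies~\eqref{EQ_beta(c)} with $c=c_2/c_1$. Proposition~\ref{PR_beta-uniq}\+\ref{IT_c-uniq} then forces $c_2/c_1=c_{\varphi,\psi}$.

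\textbf{Step 3: the representation~\eqref{EQ_StrongUniqueness}.} Now let $c_{\varphi,\psi}\notin\Q$. Both $\beta$ and $\beta_{\varphi,\psi}$ solve~\eqref{EQ_beta(sys)}, so Proposition~\ref{PR_beta-uniq}\+\ref{IT_beta-uniq} gives $\beta=\beta_{\varphi,\psi}+c_0'$. Multiplying by $c_1$ yields~\eqref{EQ_StrongUniqueness} with $c_0=c_1c_0'$.

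The main obstacle is Step~1. Steps~2 and~3 are formal consequences of Propositions~\ref{PR_beta-exists} and~\ref{PR_beta-uniq}.
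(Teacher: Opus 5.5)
Your reduction of the zero-step case to Theorem~\ref{TH_SLC(2formulas)}\+\ref{IT_SLC2}, and your Steps~2 and~3, are exactly the paper's argument for the case $c_1\neq0$. The gap is Step~1, which as sketched does not work.

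First, an orbit of $\psi$ converging non-tangentially to $\tau$ need not exist. A parabolic self-map with such an orbit has zero hyperbolic step, and $\psi$ may be of positive hyperbolic step. So Remark~\ref{Re:Pommerenke} cannot be applied along $\psi$-orbits.

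Second, and more seriously, the whole content of the step is left unproved: you need the $1$-periodic $f(w)=F(e^{2\pi i w})$ to stay bounded as $\Im w\to+\infty$, i.e.\ $F$ must have a removable singularity at~$0$. The route you suggest cannot give this. Take $S_\varphi=\UH$.
\begin{itemize}
\item If $g_\psi$ is a real translation, then $\Im h_\varphi(\psi^{\circ n}(z))=\Im h_\varphi(z)$. Periodicity alone then bounds $f$ along the orbit, but you do not separate out this case.
\item Otherwise, $\Im h_\varphi(\psi^{\circ n}(z))\to+\infty$ by Proposition~\ref{Prop:com-with-auto}(3). Since $\Im h_\varphi$ is constant along every $\varphi$-orbit, the Schwarz--Pick lemma for $h_\varphi\colon\UD\to\UH$ shows that $\psi^{\circ n}(z)$ leaves every bounded hyperbolic neighbourhood of any fixed $\varphi$-orbit.
\end{itemize}
So the proposed comparison with a $\varphi$-orbit fails in exactly the case where it is needed. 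Nothing in your sketch provides a bounded region of $h(V)$ to compare with either.

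The missing observation is that no analysis is needed: the case $c_1=0$ reduces algebraically to the case $c_1\neq0$, which you have already handled. The paper argues by symmetry. If $c_1=0$, then $c_2\neq0$. The already proved part, with the roles of $\varphi$ and $\psi$ interchanged, then applies: use Theorem~\ref{TH_SLC(2formulas)}\+\ref{IT_SLC2} if $\psi$ has zero hyperbolic step, and your Step~2 otherwise. It yields $c_1=c_{\psi,\varphi}\,c_2$. Moreover $c_{\psi,\varphi}\neq0$ because $\varphi\neq\id_\UD$, by Remark~\ref{Re:real-coefficient2} (positive step) or Proposition~\ref{Prop:firstpropertiescoefficient}\+\ref{firstpropereties1} (zero step). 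Hence $c_1\neq0$, a contradiction.

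An equally short alternative keeps $\varphi$ as the base map and uses linearity. Put $\tilde h:=h+\beta_{\varphi,\psi}\circ h_\varphi$. It satisfies $\tilde h\circ\varphi=\tilde h+1$ and $\tilde h\circ\psi=\tilde h+c_{\varphi,\psi}+c_2$. Your Step~2 then forces $c_2=0$, contradicting $(c_1,c_2)\neq(0,0)$.
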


Note that a relation similar to~\eqref{EQ_StrongUniqueness} holds also when $\varphi$ is of zero hyperbolic step and ${c_{\varphi,\psi}\notin\Q}$; see \cite[Proposition~5.2]{CDG-zero-h.step}.

\begin{remark}\label{RM_StrongUniqueness}
Corollary~\ref{Thm:StrongUniqueness} connects simultaneous linearization we study in this paper with the ideas developed in~\cite{Simultaneous}.
Indeed, for a parabolic $\varphi\in\Hol(\UD)$ and $\psi\in\Zn(\varphi)$, formula~\eqref{EQ_StrongUniqueness} means that every holomorphic function ${h:\UD\to\C}$ satisfying system~\eqref{EQ_h-system} factorizes via our ``canonical'' solution ${\beta_{\varphi,\psi}\circ h_\varphi}$ to the simultaneous linearization problem. Moreover, this conclusion holds even if ${(c_1,c_2)=(0,0)}$ or $c_{\varphi,\psi}\in\mathbb{Q}$. To see this, note that every solution to~\eqref{EQ_h-system} can be written as $h=h_0+c_1 \beta_{\varphi,\psi}\circ h_\varphi$, where $h_0\in\Hol(\UD,\C)$ satisfies the homogeneous version of~\eqref{EQ_h-system}, i.e.
\begin{equation}\label{EQ_homo}
  {h_0\circ\varphi}\,=\,{h_0\circ\psi}\,=\,h_0.
\end{equation}
Using a modification of the argument in the proof of Corollary~\ref{Thm:StrongUniqueness}, it is not difficult to show that if ${c_{\varphi,\psi}\not\in\mathbb Q}$, then all holomorphic solutions to~\eqref{EQ_homo} are constant. Similarly, if ${c_{\varphi,\psi}=p/q}$ for some relatively prime ${p\in\mathbb Z}$ and~$q\in\Natural$, then holomorphic solutions to~\eqref{EQ_homo} are of the form $h_0={f\circ h_\varphi}={f\circ\beta_{\varphi,\psi}\circ h_\varphi}$, where $f$ is a $1/q$-periodic holomorphic function in~$S_\varphi$.
\end{remark}

\smallskip
The existence of simultaneous linearization for commuting parabolic self-mappings, if considered on its own, is not a new result. Indeed, for a pair of such self-mappings $\varphi$ and~$\psi$, the result by Cowen \cite[Theorem~3.1]{Cowen-comm} yields\footnote{Cowen's result is stated using the notion of \textsl{pseudo-iteration semigroup}. We briefly explain its relation to simultaneous linearization in~\cite[Section~3]{CDG-zero-h.step}.} that the Koenigs map~$h_{\varphi\circ\psi}$ of the composition ${\varphi\circ\psi}$ solves the simultaneous linearization problem.
Although  $\beta_{\varphi,\psi}\circ h_\varphi\,$ \textit{a posteriori}  turns out to coincide, up to a suitable linear transformation with~$h_{\varphi\circ\psi}$, see Corollary~\ref{CR_Cowen}, our construction seems to be quite different from that by Cowen. Besides the uniqueness of the simultaneous linearization provided by Proposition~\ref{PR_beta-uniq}, our approach allows us to characterize ``abelian'' subsets of the centralizer $\Zn(\varphi)$. Namely, we will prove the following theorem.
\begin{theorem}\label{TH_abelian-part0}
Let $\varphi\in\Hol(\UD)$ be a parabolic self-map of positive hyperbolic step, and let $\Delta\subset\Zn(\varphi)$ be a non-empty set. Then the following two conditions are equivalent:
\begin{equilist}
  \item\label{IT_abelian}
     $\psi_1\circ\psi_2=\psi_2\circ\psi_1$ for any $\psi_1,\psi_2\in\Delta$;
  \item\label{IT_comm-sim-lin}
   the family $\mathcal F:=\Delta\cup\{\varphi\}$ admits simultaneous linearization.
\end{equilist}
\end{theorem}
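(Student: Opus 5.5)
The two implications are of very different depth, so I treat them separately.

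\emph{Direction (b)$\Rightarrow$(a).} Suppose $h\in\Hol(\UD,\C)$ and $c:\mathcal F\to\C$, not identically zero, satisfy $h\circ\psi=h+c(\psi)$ for all $\psi\in\mathcal F$. Each $\psi\in\Delta$ commutes with $\varphi$. Corollary~\ref{Thm:StrongUniqueness} applied to the pair $\varphi,\psi$ gives $c(\varphi)\neq0$ whenever $(c(\varphi),c(\psi))\neq(0,0)$. If instead $c(\varphi)=c(\psi)=0$ for this $\psi$, then $c$ is nonzero on some other element $\psi'\in\Delta$, and the same corollary applied to $\varphi,\psi'$ again gives $c(\varphi)\ne0$. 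So after normalizing we may assume $c(\varphi)=1$.

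Now take $\psi_1,\psi_2\in\Delta$ and set $g:=\psi_1\circ\psi_2$, $\tilde g:=\psi_2\circ\psi_1$. Both satisfy $h\circ g=h\circ\tilde g=h+c(\psi_1)+c(\psi_2)$. Both also commute with $\varphi$, hence with every iterate of $\varphi$. To conclude $g=\tilde g$, I would use the factorization of Remark~\ref{RM_factorization}, $h=f\circ h_\varphi$ with $f(w+1)=f(w)+1$. I then argue as in the proof of the ``if'' part of Theorem~\ref{TH_SL-vs-COMM} (that is, \cite[Proposition~3.3]{CDG-zero-h.step}).
\begin{itemize}
\item[(i)] The function $f$ cannot be constant, and in fact $f$ is locally univalent far out, because $f(w+n)=f(w)+n$.
\item[(ii)] On a $\varphi$-absorbing set where $h_\varphi$ is injective, take a point $z$ of a deep orbit $\varphi^{\circ n}(z_0)$. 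The images $g(z)$ and $\tilde g(z)$ lie at bounded hyperbolic distance from $z$, since $g$ and $\tilde g$ commute with $\varphi$ and orbits stay close.
\item[(iii)] Equality of $h$-values together with local injectivity of $h$ on pseudohyperbolic discs of fixed radius forces $g(\varphi^{\circ n}(z_0))=\tilde g(\varphi^{\circ n}(z_0))$ for large $n$.
\item[(iv)] The Identity Principle then gives $g=\tilde g$, once we know the points accumulate in~$\UD$. Since orbits escape, I would instead use $\varphi^{\circ n}\circ g=g\circ\varphi^{\circ n}$ and injectivity on an absorbing set: this yields $\varphi^{\circ n}\circ g=\varphi^{\circ n}\circ\tilde g$ on an open set, hence everywhere.
\end{itemize}
This step is delicate only in that $h$ may fail to be locally univalent. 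The cleaner route is to apply Proposition~\ref{PR_beta-properties} to $\beta=f$ when some $c(\psi)\notin\Q$, and to reduce the rational case via Proposition~\ref{Prop:firstpropertiescoefficient}.

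\emph{Direction (a)$\Rightarrow$(b).} Here I would construct a common $\beta$. If every $\psi\in\Delta$ has $c_{\varphi,\psi}\in\Q$, then $\beta=\id$ works by Definition~\ref{DF_SimLinFunc} and Remark~\ref{RM_identity-is-a-solution2}, so $h=h_\varphi$ linearizes $\mathcal F$. Otherwise fix $\psi_0\in\Delta$ with $c_{\varphi,\psi_0}\notin\Q$ and put $h:=\beta_{\varphi,\psi_0}\circ h_\varphi$.

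For any other $\psi\in\Delta$, the family $\{\varphi,\psi_0,\psi\}$ is abelian by assumption. So $\psi$ commutes with $\psi_0$, and $h\circ\psi$ satisfies
\[
(h\circ\psi)\circ\varphi=h\circ\psi+1,\qquad (h\circ\psi)\circ\psi_0=h\circ\psi+c_{\varphi,\psi_0}.
\]
Corollary~\ref{Thm:StrongUniqueness} applied to the pair $\varphi,\psi_0$ then gives $h\circ\psi=h+c(\psi)$ for some constant $c(\psi)$. This is exactly (b), with $c(\varphi)=1$.

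The main obstacle is the case where $h\circ\psi$ might be constant ($c_1=c_2=0$ in the corollary). Strictly, one should also allow the possibility $h\circ\psi\equiv\text{const}$. I would exclude it by noting that $\psi$ is non-constant and $h$ is univalent on a large region by Proposition~\ref{PR_beta-properties}\ref{IT_beta-uni}, and that $\psi$ maps an isogonal $V_\varphi(r)$ into itself by Lemma~\ref{Le:V-sets}.
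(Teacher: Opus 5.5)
Your proof of (a)$\Rightarrow$(b) is correct, and it is shorter than the paper's. The paper fixes $\psi_0$ with $\beta_{\varphi,\psi_0}\neq\id$ and, for each $\psi\in\Delta$, invokes Theorem~\ref{TH_abelian-part}. Its proof analyses the commuting semimodel morphisms and yields either $\beta_{\varphi,\psi}=\beta_{\varphi,\psi_0}$, or a rational $c_{\varphi,\psi}$ compatible with $\beta_{\varphi,\psi_0}$. You instead observe that $H:=h\circ\psi$ solves the system for the pair $\varphi,\psi_0$ with $(c_1,c_2)=(1,c_{\varphi,\psi_0})$. Corollary~\ref{Thm:StrongUniqueness} then gives $H=h+\mathrm{const}$ directly, bypassing Theorem~\ref{TH_abelian-part}. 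The ``main obstacle'' you mention does not exist: $H\circ\varphi=H+1$ already rules out a constant $H$, and $c_1=1\neq0$.

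The gap is in (b)$\Rightarrow$(a). Your argument needs $h$ to be injective on discs $\phdisk(z_n,r)$ along some $z_n\to\tau$ non-tangentially, which is the hypothesis of Addendum~\ref{ADD_1}. Step~(i) does not give this. The identity $f(w+1)=f(w)+1$ only makes $f'$ periodic; for example, $f'(w)=2-\exp(e^{-2\pi i w})$ has zeros with $\Im w\to+\infty$. Moreover, local univalence would not suffice: when $\varphi$ is a parabolic automorphism, $h_\varphi$ maps discs of fixed hyperbolic radius near $\tau$ onto Euclidean discs of unbounded radius.

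Your fallback through Proposition~\ref{PR_beta-properties} does work when some $c(\psi)$, $\psi\in\Delta$, is not rational. If all $c(\psi)$ are rational, however, the given $h$ can genuinely fail the hypothesis. Take $\varphi$ a parabolic automorphism with $S_\varphi=\UH$, $\psi$ with $h_\varphi\circ\psi=h_\varphi+1/q$, and $h=f\circ h_\varphi$ with $f(w)=w+e^{-2\pi i q w}$. This $h$ linearizes $\{\varphi,\psi\}$, but it identifies $w$ with a point near $w+1/q$ whenever $\Im w$ is large. Hence it is injective on no such $\phdisk(z_n,r)$. Proposition~\ref{Prop:firstpropertiescoefficient} cannot fill this in, since relations $\psi^{\circ n}=\varphi^{\circ m}$ say nothing about $\psi_1\circ\psi_2$ versus $\psi_2\circ\psi_1$.

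The missing step is to discard $h$ in the all-rational case. After normalizing $c(\varphi)=1$, Corollary~\ref{Thm:StrongUniqueness} gives $c(\psi)=c_{\varphi,\psi}\in\R$. Hence $h_\varphi\circ\psi=h_\varphi+c_{\varphi,\psi}$ by Remark~\ref{RM_real-SLC}(b). The map $h_\varphi$ is univalent on $V_\varphi(1/3)$, and by Lemma~\ref{Le:V-sets} this set is invariant under $\psi_1\circ\psi_2$ and $\psi_2\circ\psi_1$. So $h_\varphi\circ\psi_1\circ\psi_2=h_\varphi\circ\psi_2\circ\psi_1$ forces commutation.

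With this repair, your route (univalence of the linearizer plus Theorem~\ref{TH_SL-vs-COMM}) is a valid and somewhat simpler alternative to the paper's case analysis via Theorem~\ref{TH_abelian-part}. Replace steps (ii)--(iv) by a citation of Addendum~\ref{ADD_1}. As written, ``bounded hyperbolic distance'' is too weak; what is needed is Lemma~\ref{LM_close-to-linear}. Also, no accumulation of orbit points is required, because equality on an open set already suffices.
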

\begin{remark}
Note that the above theorem holds trivially when~$\varphi$ is parabolic of zero hyperbolic step, because in such a case, conditions \ref{IT_abelian} and~\ref{IT_comm-sim-lin} hold for any ${\Delta\subset\Zn(\varphi)}$, see Section~\ref{S_when-zero}.
\end{remark}
For the case of positive hyperbolic step, Theorem~\ref{TH_abelian-part0} can be deduced from a more technical Theorem~\ref{TH_abelian-part} (see page~\pageref{TH_abelian-part}).

\begin{remark}\label{RM_comparing-two-theorems}
It is worth emphasizing some principal differences between Theorems~\ref{TH_SL-vs-COMM} and~\ref{TH_abelian-part0}. Both theorems relate commutativity with simultaneous linearization. However, note that ``pairwise'' simultaneous linearizability, i.e. simultaneous linearizability for any pair of elements, does not \textit{a priori} imply simultaneous linearization for the whole family. Moreover, in Theorem~~\ref{TH_abelian-part0}, in contrast to Theorem~\ref{TH_SL-vs-COMM}, we do not need any univalence condition on~$h$. At the same time, the latter theorem applies to arbitrary pairs of parabolic self-mappings, while the former is restricted to families of self-mappings commuting with a fixed parabolic self-map~$\varphi$ of positive hyperbolic~step.
\end{remark}

The last result we include in this section concerns the injectivity of the map
$$
\mathfrak S_\varphi:\Zn(\varphi)\to \Complex;\,\psi \mapsto c_{\varphi,\psi}.
$$
If the parabolic self-map $\varphi$ is of positive hyperbolic step, then in general this map is not injective; see e.g. \cite[Section~8 and Remark~8.8]{CDG-zero-h.step} for relevant examples. However, restrictions of this map to any ``abelian'' subset of the centralizer, i.e. to any subset ${\Delta\subset\Zn(\varphi)}$ satisfying condition~\ref{IT_abelian} in Theorem~\ref{TH_abelian-part0}, is injective, as we show in the following corollary of Theorem~\ref{TH_abelian-part}.
\begin{corollary}\label{CR_injective-on-abelian} 	
Let $\varphi\in\Hol(\D)$ be a parabolic self-map and let ${\psi_1,\psi_2\in\Zn(\varphi)}$.
If ${\psi_1\circ\psi_2}={\psi_2\circ\psi_1}$ and ${c_{\varphi,\psi_1}=c_{\varphi,\psi_2}}$, then ${\psi_1=\psi_2}$.
\end{corollary}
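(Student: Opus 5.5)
We split according to the hyperbolic step of~$\varphi$, and we first simultaneously linearize the triple $\varphi,\psi_1,\psi_2$; injectivity then follows from the uniqueness results already available.

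\emph{Zero hyperbolic step.} By Theorem~\ref{Prop:pseuso-semigroup}, $h_\varphi\circ\psi_j=h_\varphi+c_{\varphi,\psi_j}$ for $j=1,2$. By Theorem~\ref{Thm:homeomorphism}, the map $\psi\mapsto c_{\varphi,\psi}$ is a homeomorphism onto its image, in particular injective. Hence $c_{\varphi,\psi_1}=c_{\varphi,\psi_2}$ gives $\psi_1=\psi_2$ directly; commutativity is not needed here.

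\emph{Positive hyperbolic step.} Take $\Delta:=\{\psi_1,\psi_2\}$, which is abelian by hypothesis. By Theorem~\ref{TH_abelian-part0} (more precisely its technical form, Theorem~\ref{TH_abelian-part}), there is $h\in\Hol(\UD,\C)$ and a non-vanishing map $c$ with
$$h\circ\varphi=h+c(\varphi),\qquad h\circ\psi_j=h+c(\psi_j),\quad j=1,2.$$
The pairs $(\varphi,\psi_j)$ are commuting parabolic self-maps (Theorem~\ref{TH_Behan-and-Ko}), unless $\psi_j=\id_\UD$; that trivial case is handled by Proposition~\ref{Prop:firstpropertiescoefficient}\,\ref{firstpropereties1} applied to the canonical solution. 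By Corollary~\ref{Thm:StrongUniqueness}, $c(\varphi)\neq0$ and $c(\psi_j)=c(\varphi)\,c_{\varphi,\psi_j}$. Normalize so that $c(\varphi)=1$; then $c(\psi_1)=c(\psi_2)=:c$.

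It remains to show that $\psi_1=\psi_2$, and I expect this to be the main obstacle. The equality $h\circ\psi_1=h\circ\psi_2$ does not by itself force $\psi_1=\psi_2$ unless $h$ is injective somewhere relevant. My plan is to use that Theorem~\ref{TH_abelian-part} gives $h=\beta\circ h_\varphi$ with $\beta$ univalent near $|\Im w|=\infty$, as in Proposition~\ref{PR_beta-properties}. When $c\in\mathbb Q$ one should be able to take $\beta=\id$, since by Remark~\ref{RM_identity-is-a-solution2} the identity is among the solutions.

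Fix a point $z$ deep in the isogonal domain $V_\varphi(r)$ with $r\le 1/3$. There $h_\varphi$ is univalent by Theorem~\ref{Thm:V-sets}\,\ref{IT_V-sets(4)}, and by Lemma~\ref{Le:V-sets} the set $V_\varphi(r)$ is $\psi_j$-invariant. By Remark~\ref{Re:Pommerenke}, $|\Im h_\varphi|$ is large on non-tangential approach regions, so $\beta$ is injective on the relevant values. Then $h(\psi_1(z))=h(\psi_2(z))$ gives $h_\varphi(\psi_1(z))=h_\varphi(\psi_2(z))$, and hence $\psi_1(z)=\psi_2(z)$ on a set with an accumulation point in~$\UD$. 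The identity principle then yields $\psi_1=\psi_2$.

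The delicate point is ensuring that both $\psi_j(z)$ land in a region where $\beta\circ h_\varphi$ is injective. I would handle this using the angular limit of Proposition~\ref{positivo2}, which shows that $\psi_j(z)$ stays within bounded hyperbolic distance of~$z$. Alternatively, one can replace $z$ by $\varphi^{\circ n}(z)$ for large $n$, using the commutation $\psi_j\circ\varphi^{\circ n}=\varphi^{\circ n}\circ\psi_j$ to push both points into the injectivity region.
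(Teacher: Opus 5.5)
Your strategy matches the paper's. In the positive-step case you build one function $\beta\circ h_\varphi$ with $\beta\circ h_\varphi\circ\psi_1=\beta\circ h_\varphi+c=\beta\circ h_\varphi\circ\psi_2$, use its univalence near~$\tau$, and finish with the Identity Principle. The paper reads off $\beta_{\varphi,\psi_1}=\beta_{\varphi,\psi_2}$ directly from Theorem~\ref{TH_abelian-part}, noting that in cases (b) and (c) both coefficients are real, so both functions equal $\id_{S_\varphi}$. Your detour through Theorem~\ref{TH_abelian-part0} and Corollary~\ref{Thm:StrongUniqueness}, switching to $h_\varphi$ when $c\in\Real$, also works. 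So does your zero-step argument via Theorem~\ref{Thm:homeomorphism}.

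The step you flag as delicate, however, is not established by either argument you offer.
\begin{itemize}
\item Proposition~\ref{positivo2} only gives the angular limit of the ratio $\bigl(h_\varphi(\psi(z))-h_\varphi(z)\bigr)/\bigl(h_\varphi'(z)(\psi(z)-z)\bigr)$. It yields no bound on $\rho_\UD(z,\psi_j(z))$.
\item The fallback of replacing $z$ by $\varphi^{\circ n}(z)$ fails. By Abel's equation, $\Im h_\varphi(\varphi^{\circ n}(z))=\Im h_\varphi(z)$ for all $n$. So iterating $\varphi$ cannot move a point into $\{|\Im h_\varphi|>A\}$, which is where $\beta$ is known to be injective. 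Consistently with Remark~\ref{Re:Pommerenke}, $\varphi$-orbits tend to~$\tau$ tangentially.
\end{itemize}

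What you need is that $\psi_j$ maps non-tangential approach to~$\tau$ into non-tangential approach. Since $\psi_j$ fixes $\tau$ with angular derivative~$1$, Lemma~\ref{LM_close-to-linear} with $f_1:=\psi_j$, $f_2:=\id_\UD$ gives $\anglim_{z\to\tau}\rho_\UD(\psi_j(z),z)=0$. Hence $\psi_j$ maps a truncated Stolz angle into a slightly larger one, where Remark~\ref{Re:Pommerenke} and the isogonality of $V_\varphi(1/3)$ apply. The paper uses the same fact in the form ``$\psi_j(r\tau)\to\tau$ orthogonally as $r\to1^-$''. A terminal segment of the radius is then mapped by both $\psi_j$ into the isogonal domain of Proposition~\ref{PR_univ-in-isogonal-dom}, on which $\beta\circ h_\varphi$ is univalent.

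Another quick fix works when $S_\varphi=\UH$. By Remark~\ref{RM_morphism-properties} and Proposition~\ref{Prop:com-with-autoprevio}, $\Im h_\varphi\circ\psi_j=\Im g_{\psi_j}\circ h_\varphi\ge\Im h_\varphi$. Together with Lemma~\ref{Le:V-sets}, this makes the open set $V_\varphi(1/3)\cap h_\varphi^{-1}(\{\Im w>A\})$ invariant under $\psi_j$.

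With this repaired, your argument is complete.
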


\medskip

\subsection{Auxiliary statements and proofs}
We start with a couple of simple but useful observations.
\begin{remark}\label{Re:another}
	Let $\varphi$ be a self-map of a domain~$\Omega$, $\psi$ another self-map of~$\Omega$ commuting with~$\varphi$, and let $h:\Omega\to \C$ be a solution to Abel's equation for $\varphi$, that is, ${h\circ \varphi=h+1}$. Then the function $h_1:=h\circ \psi$ is another solution to Abel's equation for $\varphi$. Indeed,
	$$
	h_{1}\circ \varphi=h\circ \psi\circ \varphi=h\circ \varphi\circ \psi=h\circ \psi+1=h_{1}+1.
	$$
\end{remark}

Using the above remark, one can directly check the following result.
\begin{lemma}\label{LM_NSemimodel}
Let $\varphi\in\Hol(\D)$ be a non-elliptic self-map with canonical holomorphic model $\mathcal M_{\varphi}:={\big(S_\varphi,h_\varphi,z\mapsto z+1\big)}$ and let ${\psi\in\Zn(\varphi)}$. Then
\begin{equation}\label{EQ_NSemimodel1}
       S_{\varphi}^{\psi}:=\bigcup_{n\geq0}\left(h_\varphi(\psi(\D))-n\right)
\end{equation}
	is a subdomain of the domain~$S$. Moreover, the triple
\begin{equation}\label{EQ_NSemimodel2}
\mathcal M_\varphi^\psi:=\big(S_\varphi^\psi,h_\varphi\circ\psi,z\mapsto z+1\big)
\end{equation}
 is a semimodel of $\varphi$. 	
\end{lemma}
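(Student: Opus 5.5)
We have to prove Lemma~\ref{LM_NSemimodel}: $S_\varphi^\psi$ is a subdomain of $S_\varphi$, and $\mathcal M_\varphi^\psi$ satisfies (HM1) and (HM2).

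\emph{$S_\varphi^\psi$ is a domain contained in $S_\varphi$.} Since $\psi(\UD)\subset\UD$, we have $h_\varphi(\psi(\UD))\subset h_\varphi(\UD)\subset S_\varphi$. The set $S_\varphi$ is a horizontal strip or half-plane $S_I$, or all of $\C$, so it is invariant under $w\mapsto w-n$ for $n\ge0$. Hence every set $h_\varphi(\psi(\UD))-n$ lies in $S_\varphi$, and so does their union. Each set $h_\varphi(\psi(\UD))-n$ is open by the open mapping theorem; here $h_\varphi\circ\psi$ is non-constant, because it satisfies Abel's equation by Remark~\ref{Re:another}. Each such set is also connected, being a continuous image of $\UD$. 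So $S_\varphi^\psi$ is open.

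For connectedness, the key step is that these sets form an increasing chain. By Remark~\ref{Re:another}, $(h_\varphi\circ\psi)\circ\varphi=h_\varphi\circ\psi+1$, so for every $z\in\UD$,
\[
h_\varphi(\psi(z))-n=h_\varphi(\psi(\varphi(z)))-(n+1).
\]
This gives $h_\varphi(\psi(\UD))-n\subset h_\varphi(\psi(\UD))-(n+1)$. An increasing union of connected open sets is connected, so $S_\varphi^\psi$ is a domain.

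\emph{Semimodel axioms.} The map $\alpha(w)=w+1$ is an automorphism of $S_\varphi^\psi$. Indeed, the chain property shows that $S_\varphi^\psi$ is invariant under $w\mapsto w-1$. Conversely, if $w=h_\varphi(\psi(z))-n$ with $n\ge1$, then $w+1=h_\varphi(\psi(z))-(n-1)$ lies in $S_\varphi^\psi$. If $n=0$, then $w+1=h_\varphi(\psi(\varphi(z)))$ again lies in $S_\varphi^\psi$. Hence $S_\varphi^\psi+1=S_\varphi^\psi$.

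Condition (HM1) is exactly Remark~\ref{Re:another}. Condition (HM2) holds by the definition of $S_\varphi^\psi$, since $(\alpha^{\circ n})^{-1}\bigl(h_\varphi(\psi(\UD))\bigr)=h_\varphi(\psi(\UD))-n$.

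There is no real obstacle in this argument. The only point requiring care is that $S_\varphi^\psi$ is invariant under translation by $+1$, and that is handled by the chain property above.
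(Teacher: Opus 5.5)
Your proof is correct and is exactly the direct verification the paper intends: the paper only says the lemma follows by a direct check using Remark~\ref{Re:another}, and you carry out that check, namely non-constancy and the nested chain coming from the Abel equation for $h_\varphi\circ\psi$, invariance of $S_\varphi^\psi$ under $w\mapsto w\pm1$, and (HM1)--(HM2). One minor point: invariance under $w\mapsto w-1$ follows immediately from the definition of the union and does not need the chain property.
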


\begin{definition}\label{DF_g-psi} We will call the semimodel~$\mathcal M_\varphi^\psi$ defined by~\eqref{EQ_NSemimodel1}--\eqref{EQ_NSemimodel2} the \textsl{natural semimodel of $\varphi$ associated with $\psi$}. Moreover, the unique morphism of semimodels ${g_\psi: S_\varphi \to S_\varphi^\psi}$ (see \cite[Lemma 3.5.8]{Abate2})  will be referred to as the \textsl{semimodel morphism associated with~$\psi$}.
\end{definition}

\begin{remark}\label{RM_morphism-properties}
Let $\varphi$ and $\psi$ be as in Lemma~\ref{LM_NSemimodel}. It is known (see e.g. \cite[Definition~3.5.5 and Remark~3.5.6]{Abate2}) that the semimodel morphism~$g_\psi$ associated with~$\psi$ has the following properties:
\begin{align*}
    & g_\psi\in\Hol(S_\varphi), \hphantom{\quad\text{and}} g_\psi(S_\varphi)=S_\varphi^\psi, &&\\
	& g_\psi\circ h_\varphi=h_\varphi\circ \psi, \quad\text{and}\quad g_\psi(w+1)=g_\psi(w)+1~
                                                                   \text{~for all $~w\in S_\varphi$}.&&
\end{align*}
\end{remark}

Now we are ready to prove Proposition~\ref{PR_beta-exists} asserting that commuting parabolic self-maps $\varphi$ and~$\psi$, with $\varphi$ being of positive hyperbolic step, admit simultaneous linearization of the form ${h=\beta\circ h_\varphi}$.
\begin{proof}[\proofof{Proposition~\ref{PR_beta-exists}}]\label{PG_beta-exists}
 We may assume that the base space of the canonical model of $\varphi$ is the upper half-plane $\H$, i.e. ${S_\varphi=\H}$.  The same argument, with obvious modifications, applies also for the other case, i.e. for the case ${S_\varphi=-\H}$.
	
Suppose first that the semimodel morphism~$g_\psi$ associated with~$\psi\in\Zn(\varphi)$ is an automorphism of~$\UH$. By the last equality in Remark~\ref{RM_morphism-properties} and by~\cite[Corollary~1.6.21\+(iii)]{Abate2}, in this case we have that ${g_\psi=\id_\H+\theta}$ for some constant ${\theta\in\R}$. It follows that $\beta:=\id_\UH$ satisfies~\eqref{EQ_beta(sys)} with $c:=\theta$.

Now suppose that $g_\psi\not\in\mathsf{Aut}(\H)$. Then according to Propositions~\ref{Prop:com-with-autoprevio} and~\ref{Prop:com-with-auto}, $g_\psi$ is a parabolic self-map of $\H$ of zero hyperbolic step  with Denjoy\,--\,Wolff point at~$\infty$. Moreover, there exists $F_\psi\in\Hol(\D,\H)$ such that
	$$
	g_\psi(w)=w+F_\psi(e^{2\pi i w}), \quad w\in\H.
	$$
	Consider the canonical holomorphic model $\big(\C,h_{g_\psi},w\mapsto w+1\big)$ of~$g_\psi$. 	Then $\beta:={F_\psi(0)h_{g_\psi}}\in{\Hol(\H,\C)}$ satisfies~\eqref{EQ_beta(sys)} with ${c:=F_\psi(0)}$. Indeed, identities \eqref{EQ_beta(a)} and~\eqref{EQ_beta(b)} hold by Proposition~\ref{Prop:com-with-auto}\+(6\+c) and Remark~\ref{RM_second-follows-from-first}.
Moreover, recalling that $h_{\varphi}\circ \psi=g_\psi\circ h_\varphi$  (see Remark~\ref{RM_morphism-properties}) and using Abel's equation ${h_{g_\psi}\circ g_\psi}={h_{g_\psi}+1}$, we obtain
$$
 F_\psi(0)\, h_{g_\psi}\circ h_{\varphi}\circ \psi(z)=F_\psi(0)\, h_{g_\psi}\circ g_{\psi}\big( h_{\varphi}(z)\big)=F_\psi(0) \, h_{g_\psi} \big( h_{\varphi}(z)\big)+F_\psi(0)\quad\text{for all~$\,z\in\UD$},
$$
that is, $\beta$ satisfies also~\eqref{EQ_beta(c)} with~$c:=F_\psi(0)$, as desired.
\end{proof}

\begin{proof}[\proofof{Propositions~\ref{PR_beta-uniq} and~\ref{PR_beta-properties}}]
As before, we will suppose that the base space~$S_\varphi$ of the canonical model for~$\varphi$ coincides with~$\UH$. The argument can be easily adapted to give the proof in the case ${S_\varphi=-\UH}$.

Suppose that some $\beta\in\Hol(\UH,\C)$ and ${c\in\C}$ satisfy~\eqref{EQ_beta(sys)}. As in the proof of~Proposition~\ref{PR_beta-exists}, we can write ${h_\varphi\circ\psi= g_\psi\circ h_\varphi}$. At the same time, by~\eqref{EQ_beta(c)} we have ${\beta\circ h_\varphi\circ\psi=\beta\circ h_\varphi+c}$. Since the function $h_\varphi$ is not constant, it follows with the help of the Identity Principle that
\begin{equation}\label{EQ_beta-g-psi}
\beta\circ g_\psi=\beta+c.
\end{equation}

The remaining  part of the argument depends on whether $g_\psi$ is an automorphism of~$\UH$ or not.

\StepC{1}{$g_\psi\in\mathsf{Aut}(\UH)$}\label{PG_irrational} Recall from the proof of Proposition~\ref{PR_beta-exists} that in this case, ${g_\psi(w)=w+\theta}$ for all ${w\in\UH}$ and  some constant ${\theta\in\Real}$. By~\eqref{EQ_beta(a)} and~\eqref{EQ_beta-g-psi}, we have
\begin{equation}\label{EQ_when-automor}
\beta(w+1)=\beta(w)+1\quad\text{and}\quad\beta(w+\theta)=\beta(w)+c\quad\text{for all~$~w\in\UH$}.
\end{equation}
As a consequence, $\beta'$ is $1$-periodic and at the same time $\theta$-periodic. If ${\theta\not\in\mathbb Q}$,  then the set $\{m+n\theta:\,  {m,n\in \Z}\}$ is dense in $\R$ and by continuity, it follows that $\beta'$ is constant on each straight line in~$\UH$ parallel to~$\Real$. Since $\beta'$ is holomorphic, it further follows that~$\beta'$ is constant in~$\UH$. Taking into account~\eqref{EQ_when-automor} again, we conclude that~${\beta'\equiv1}$ and hence, ${c=\theta}$. This immediately implies assertions~\ref{IT_c-uniq}, \ref{IT_beta-uniq}, \ref{IT_beta-uni} and~\ref{IT_beta-limit} in the case when ${g_\psi(w)=w+\theta}$ with~${\theta\in\Real\setminus\mathbb Q}$.

Suppose now that $\theta$ is a rational number, i.e. $\theta=p/q$ for some $p\in\mathbb Z$ and ${q\in\Natural}$. Then from~\eqref{EQ_when-automor}  it follows that for any~${w\in\UH}$,
$$
\beta(w)+p=\beta(w+p)=\beta(w+q\theta)=\beta(w)+qc.
$$
Therefore, we have $p=qc$, i.e. the only value of~$c$ for which \eqref{EQ_beta(sys)} admits solutions is again ${c=\theta}$. This proves assertion~\ref{IT_c-uniq}, while~\ref{IT_beta-uniq} and the whole Proposition~\ref{PR_beta-properties} hold trivially in this case, because~${c\in\mathbb Q}$.

\StepC{2}{$g_\psi\not\in\mathsf{Aut}(\UH)$}\label{PAGE_Case2} In the proof of Proposition~\ref{PR_beta-exists} we have seen that in this case, $g_\psi$ is a parabolic self-map of~$\UH$ with the canonical model of the form ${\big(\C,h_{g_\psi},w\mapsto w+1\big)}$. According to Lemma~\ref{Le:morphism}, identity~\eqref{EQ_beta-g-psi} implies that $\beta$ factorizes via~$h_{g_\psi}$, see Remark~\ref{RM_factorization}, i.e. ${\beta=f\circ h_{g_\psi}}$ for an entire function ${f\in\Hol(\C)}$ satisfying
\begin{equation}\label{EQ_f-prop1}
f(\zeta+1)=f(\zeta)+c,\quad \zeta\in\C.
\end{equation}

Moreover, according to Propositions~\ref{Prop:com-with-autoprevio} and~\ref{Prop:com-with-auto},
$h_{g_\psi}(w+1)=h_{g_\psi}(w)+1/c_0$ for all ${w\in\UH}$, where $c_0:={\lim\limits_{\Im w\to+\infty}\big(g_\psi(w)-w\big)\in\UH}$. In combination with relation~\eqref{EQ_beta(a)}, this yields
$$
f\big(h_{g_\psi}(w)\big)+1=\beta(w)+1=\beta(w+1)=f\big(h_{g_\psi}(w+1)\big)=f\big(h_{g_\psi}(w)+1/c_0\big)
$$
for all ${w\in\UH}$. With the help of the Identity Principle, we see that
\begin{equation}\label{EQ_f-prop2}
f(\zeta+1/c_0)=f(\zeta)+1,\quad \zeta\in\C.
\end{equation}

According to \eqref{EQ_f-prop1} and~\eqref{EQ_f-prop2}, the entire function~$f'$ is doubly periodic with periods ${\omega_1=1}$ and ${\omega_2=1/c_0}$. Since $\omega_2/\omega_1\notin \R$  and since $f'$ has no poles,  this function must be constant; see e.g. \cite[Chapter~1, \S4]{Akh}. Using~\eqref{EQ_f-prop2}, we then conclude that ${f'/c_0\equiv1}$. Similarly, by~\eqref{EQ_f-prop1} we have~${f'\equiv c}$. It follows that the only value of~$c$ for which~\eqref{EQ_beta(sys)} admits a holomorphic solution ${\beta\in\Hol(\UH,\C)}$ is ${c=c_0}$ and that any two solutions belonging to ${\Hol(\UH,\C)}$ differ by a constant, i.e. assertions~\ref{IT_c-uniq} and~\ref{IT_beta-uniq} hold.

It remains to prove properties~\ref{IT_beta-uni} and~\ref{IT_beta-limit}. To this end, we appeal to  Propositions~\ref{Prop:com-with-autoprevio} and~\ref{Prop:com-with-auto}. In view of the fact that ${\beta=f\circ h_{\psi_g}}$, Proposition~\ref{Prop:com-with-auto}\+(6\+a) implies property~\ref{IT_beta-uni}, while~\ref{IT_beta-limit} follows from Proposition~\ref{Prop:com-with-auto}\+(1)~and~(6\+d).
\end{proof}

It is now worth making some further (simple but nevertheless useful) observations.

\begin{remark}\label{RM_identity-is-a-solution}
Let $\varphi\in\Hol(\UD)$ be a parabolic self-map of positive hyperbolic step and let ${\psi\in\Zn(\varphi)}$. Then ${\beta:=\id_{S_\varphi}}$ is a solution to the system~\eqref{EQ_beta(sys)} for some fixed~${c\in\Complex}$ if and only if the semimodel morphism associated with~$\psi$ is given by $g_{\psi}(w)={w+c}$ for all ${w\in S_\varphi}$.
Indeed, the identity map satisfies the first two equations in~\eqref{EQ_beta(sys)}, while the third equation~\eqref{EQ_beta(c)}  is equivalent, in view of the identity ${h_\varphi\circ \psi}={g_\psi\circ h_\varphi}$ (see Remark~\ref{RM_morphism-properties}), to
$$
  \beta\circ g_\psi\circ h_\varphi=\beta\circ h_\varphi+c,
$$
and the desired conclusion follows immediately, thanks to the Identity Principle.
\end{remark}

\begin{remark}\label{RM_identity-is-a-solution2}
It is evident from the proofs of Propositions~\ref{PR_beta-exists}, \ref{PR_beta-uniq} and~\ref{PR_beta-properties}, given above, that if the simultaneous linearization coefficient~$c_{\varphi,\psi}$ is real, then $g_\psi(w)={w+c_{\varphi,\psi}}$ for all~${w\in S_\varphi}$, and hence, in view of the above remark, we have that ${\beta:=\id_{S_\varphi}}$ satisfies the system~\eqref{EQ_beta(sys)} with~${c:=c_{\varphi,\psi}}$.
\end{remark}

\begin{remark}\label{RM_real-SLC}
The previous remark means, in other words, that if $c_{\varphi,\psi}\in\Real$, then:
\begin{itemize}
\item[(a)] the simultaneous linearization function $\beta_{\varphi,\psi}$ coincides with~$\id_{S_\varphi}$, and
\item[(b)] the Koenigs map~$h_\varphi$ of~$\varphi$ solves the simultaneous linearization problem~\eqref{EQ_AbelSystem-n} for the two self-maps ${\varphi_1:=\varphi}$, ${\varphi_2:=\psi}$ and the constant vector $c:={\big(1,\,c_{\varphi,\psi}\big)}$.
\end{itemize}
\end{remark}

\begin{remark}\label{RM_rational-SLC}
If furthermore, $c_{\varphi,\psi}\in\mathbb Q$, i.e. if $c_{\varphi,\psi}=p/q\in\mathbb Q$ with some relatively prime ${p\in\mathbb Z}$ and ${q\in\mathbb N}$, then $\beta\in\Hol(S_\varphi,\C)$ solves~\eqref{EQ_beta(sys)} with ${c:=c_{\varphi,\psi}}$ if and only if it satisfies the identity
\begin{equation}\label{EQ_identity-rational}
\beta(w+1/q)=\beta(w)+1/q \quad\text{for all~$\,w\in S_\varphi$}.
\end{equation}
Indeed, if this identity holds, then we also have ${\beta(w+1)=\beta(w)+1}$ and ${\beta(w+c_{\varphi,\psi})}={\beta(w)+c_{\varphi,\psi}}$ for all ${w\in S_\varphi}$. In combination with assertion~(b) of the previous remark, this implies~\eqref{EQ_beta(sys)}.

Conversely, if \eqref{EQ_beta(sys)} holds with ${c:=c_{\varphi,\psi}}$, then using again Remark~\ref{RM_real-SLC}\+(b) and the Identity Principle, from~\eqref{EQ_beta(c)} we deduce ${\beta(w+p/q)}={\beta(w)+p/q}$ for all ${w\in S_\varphi}$. Note also that ${\beta(w+1)}={\beta(w)+1}$ for all ${w\in S_\varphi}$ by~\eqref{EQ_beta(a)}. Since $p$ and~$q$ are relatively prime,  these two identities imply\footnote{Indeed, since $p$ and $q$ are relatively prime, by B\'ezout's Identity, see e.g. \cite[Theorem~2-3]{Burton},  there exists $m,n\in \Z$ such that $mp+nq=1$. Then
 $$
 \beta(w+1/q)=\beta(w+mp/q+n)= \beta\big(w\big)+mp/q+n=\beta(w)+1/q.
 $$
}~\eqref{EQ_identity-rational}.
\end{remark}

\begin{remark}\label{Re:real-coefficient2} Note that ${c_{\varphi,\id_\UD}=0}$, because  ${\beta:=\id_{S_\varphi}}$ satisfies~\eqref{EQ_beta(sys)} for ${c:=0}$ and ${\psi:=\id_\D}$. Moreover, ${c_{\varphi,\psi}=0}$ if and only if~${\psi=\id_\UD}$. Indeed, suppose that $c_{\varphi,\psi}=0$. Then by Remark~\ref{RM_real-SLC}\+(b),  ${h_\varphi=h_\varphi\circ\psi}$. According to Theorem~\ref{Thm:V-sets} and Lemma~\ref{Le:V-sets}, $h_\varphi$ is univalent in $V:=V_\varphi(1/3)$ and $\psi(V)\subset V$. Therefore, $\psi|_V=\id_V$ and so, by the Identity Principle, ${\psi=\id_\D}$.
\end{remark}

Now we start preparation for the proof of Theorem~\ref{TH_SL-vs-COMM}.
\begin{proposition}\label{PR_univ-in-isogonal-dom}
   Let $\varphi\in\Hol (\UD)$ be a parabolic self-map of positive hyperbolic step with Denjoy\,--\,Wolff point~$\tau\in\UC$ and let ${\psi\in\Zn(\varphi)}$. Then there exists a domain ${V\subset\UD}$ isogonal at~$\tau$ such that the function $h:={\beta_{\varphi,\psi}\circ h_\varphi}$ is univalent in~$V$.
\end{proposition}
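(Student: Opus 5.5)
The plan is to split according to whether $c_{\varphi,\psi}$ is rational or not, i.e. according to the form of $\beta:=\beta_{\varphi,\psi}$ in Definition~\ref{DF_SimLinFunc}. Throughout, we assume that $S_\varphi=\UH$.

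\emph{The rational case.} If $c_{\varphi,\psi}\in\mathbb Q$, then $\beta=\id_{S_\varphi}$ and $h=h_\varphi$. Here we simply take $V:=V_\varphi(1/3)$. By Theorem~\ref{Thm:V-sets}\+\ref{IT_V-sets(3)} this domain is isogonal at~$\tau$, and by Theorem~\ref{Thm:V-sets}\+\ref{IT_V-sets(4)} the function $h_\varphi$ is univalent in it. So this case needs no further work.

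\emph{The irrational case.} If $c_{\varphi,\psi}\notin\mathbb Q$, Proposition~\ref{PR_beta-properties}\+\ref{IT_beta-uni} provides $A>0$ such that $\beta$ is univalent in $\{w\in\UH:\Im w>A\}$. We then set
$$
V:=V_\varphi(1/3)\cap h_\varphi^{-1}\big(\{w:\Im w>A\}\big),
$$
or more precisely the connected component of this open set that contains the tail of a fixed radius ending at~$\tau$. On $V_\varphi(1/3)$ the map $h_\varphi$ is injective, and it sends $V$ into the half-plane where $\beta$ is injective. Hence $h=\beta\circ h_\varphi$ is injective on~$V$.

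\emph{Isogonality.} This is the main point to check. By Remark~\ref{Re:Pommerenke}, $\Im h_\varphi(z)\to+\infty$ as $z\to\tau$ non-tangentially. So for each Stolz angle $\Gamma$ at~$\tau$ there is a neighbourhood $U$ of $\tau$ such that $\Gamma\cap U\subset h_\varphi^{-1}(\{\Im w>A\})$. Combining this with the isogonality of $V_\varphi(1/3)$, every non-tangential sequence eventually lies in the intersection. The obstacle is that the intersection must contain a single \emph{domain} into which all such tails fall. To handle this, I would observe that, after shrinking, the truncated Stolz angles $\Gamma\cap U$ are connected, and that the union over increasing openings of these truncated angles can be chosen as an increasing union of connected sets sharing a common radial segment. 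Each truncated angle lies entirely in the open set, so the whole union belongs to one connected component, and I take $V$ to be that component. Every non-tangential sequence eventually lies in some truncated angle of this union, which gives isogonality of~$V$.

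If a cleaner statement is preferred, one can instead define $V$ directly as such a union of truncated Stolz angles, each of which has been shown to lie in the good set. This makes $V$ a domain isogonal at~$\tau$ by construction.

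The only technical care needed is the uniformity in the Stolz angle. Pommerenke's angular limit holds separately in each angle, so the truncation radius depends on the opening, and this is why the union over openings is used rather than a single truncation.
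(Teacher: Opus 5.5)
Your proposal is correct and follows essentially the paper's proof: intersect $V_\varphi(1/3)$ with the $h_\varphi$-preimage of the half-plane where $\beta_{\varphi,\psi}$ is univalent (given by Proposition~\ref{PR_beta-properties}, or trivially when $\beta_{\varphi,\psi}=\id_{S_\varphi}$), and then use Pommerenke's angular limit together with the isogonality of $V_\varphi(1/3)$ to place every truncated Stolz angle at~$\tau$ inside this set. Your explicit rational/irrational split and your union-of-truncated-Stolz-angles construction of a single domain just spell out details that the paper leaves implicit.
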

\begin{proof}
By Theorem~\ref{Thm:V-sets}, the Koenigs function~$h_{\varphi}$ is univalent in the domain $V_1:={V_{\varphi}(1/3)}$ and $V_1$~is isogonal at~$\tau$. Moreover, recalling the definition of the simultaneous linearization function~$\beta_{\varphi,\psi}$ and appealing to Proposition~\ref{PR_beta-properties},  we see that for a suitable ${A>0}$, the function~$\beta_{\varphi,\psi}$ is univalent in the horizontal half-plane $\Pi_A:={\{w\in S_{\varphi}\colon|\Im w|>A\}}$. It follows that $h$ is univalent in $V_2:={V_1\cap h_{\varphi}^{-1}(\Pi_A)}$. We have to show that $V_2$ contains a domain isogonal at~$\tau$. To this end it suffices to fix an arbitrary ${\theta\in(0,\pi/2)}$ and check that there exists ${r>0}$ such that the truncated Stolz angle $A_{r,\theta}:={\{z\in\UD\colon |\Arg(1-\overline\tau z)|<\theta,~|z-\tau|<r\}}$ is contained in~$V_2$. The latter is indeed the case because $V_1$ is isogonal at~$\tau$ and because ${\angle\lim_{z\to\tau} |\Im h_{\varphi}(z)|=+\infty}$; see Remark~\ref{Re:Pommerenke}.
\end{proof}
\begin{lemma}\label{LM_close-to-linear}
Let $f_1,f_2\in\Hol(\UD)$ and let~${\sigma\in\UC}$. Suppose that $f_1(\sigma)=f_2(\sigma)=\sigma$ in the sense of non-tangential limits and that the angular derivatives\footnote{The existence of angular derivative, finite or infinite, follows from the Julia\,--\,Wolff\,--\,Carathéodory Theorem, see e.g. \cite[Sect.\,2.3]{Abate2}.} satisfy~${f'_1(\sigma)=f'_2(\sigma)\neq\infty}$. Then
\begin{equation}\label{EQ_almost-linear-hyperbolic}
 \anglim_{z\to\sigma}\rho_\UD\big(f_1(z),f_2(z)\big)=0.
\end{equation}
\end{lemma}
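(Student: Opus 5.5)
Transfer the problem to the upper half-plane by a Cayley map $C:\UD\to\UH$ with $C(\sigma)=\infty$, and put $F_k:=C\circ f_k\circ C^{-1}$. Then $F_k\in\Hol(\UH)$ fixes $\infty$ non-tangentially. The Julia\,--\,Wolff\,--\,Carathéodory theorem at~$\infty$ gives
$$
\anglim_{w\to\infty}\frac{F_k(w)}{w}=\lambda:=\frac1{f_k'(\sigma)}\in(0,+\infty).
$$
Here we use that the angular derivatives are equal and finite; they are positive because $f_k$ is a self-map with a boundary fixed point of finite angular derivative.

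Since $\rho_\UD$ and $\rho_\UH$ correspond under $C$, and non-tangential approach to~$\sigma$ corresponds to $w\to\infty$ inside sectors $\{\varepsilon<\Arg w<\pi-\varepsilon\}$, it suffices to show that $\rho_\UH\big(F_1(w),F_2(w)\big)\to0$ as $w\to\infty$ in every such sector. Recall that
$$
\tanh\rho_\UH(a,b)=\Big|\frac{a-b}{a-\bar b}\Big|.
$$
Hence it is enough to prove $|F_1(w)-F_2(w)|=o(|w|)$ and $\liminf |F_1(w)-\overline{F_2(w)}|/|w|>0$.

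The first estimate holds because $F_k(w)=\lambda w+o(|w|)$ in the sector for both $k$. For the second, note that
$$
F_1(w)-\overline{F_2(w)}=\lambda(w-\bar w)+o(|w|)=2i\lambda\Im w+o(|w|).
$$
In the sector, $\Im w\ge|w|\sin\varepsilon$, so $|F_1(w)-\overline{F_2(w)}|\ge 2\lambda\sin\varepsilon\,|w|(1+o(1))$. The quotient therefore tends to $0$, and so does $\rho_\UH\big(F_1(w),F_2(w)\big)$, which gives~\eqref{EQ_almost-linear-hyperbolic}.

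The only delicate point is the input from Julia\,--\,Wolff\,--\,Carathéodory. It must provide the full angular limit $F_k(w)/w\to\lambda$ uniformly in each sector, not just the real part of the quotient. This is the standard form of the theorem in the half-plane: $\anglim\,F(w)/w=\anglim\,F'(w)$ equals the inverse of the angular derivative at the disc boundary point. The other thing to verify is that the normalization of $C$ does not introduce a complex factor. If $C(z)=i(\sigma+z)/(\sigma-z)$, then $C\circ f_k\circ C^{-1}$ has real positive dilation coefficient $1/f_k'(\sigma)$, so no such factor appears.
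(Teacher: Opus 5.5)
Your proof is correct and takes essentially the same route as the paper: a Cayley transform sending $\sigma$ to $\infty$, the Julia\,--\,Wolff\,--\,Carath\'eodory theorem giving $F_k(w)/w\to\lambda>0$ non-tangentially, and then an estimate of the hyperbolic distance inside sectors at $\infty$. The only difference is in that last estimate. The paper works in the right half-plane, rescales by the automorphism $\zeta\mapsto\zeta/(a|\zeta|)$, and then uses the triangle inequality together with the local comparability of $\rho$ and the Euclidean metric near the compact set $\{\zeta/|\zeta|\}$. You instead apply the explicit formula $\tanh\rho_{\UH}(a,b)=|(a-b)/(a-\bar b)|$ directly; this is equally valid, and any normalization constant in $\rho$ does not affect the conclusion.
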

\begin{proof}
 Using the Cayley transform $\zeta=(1+\overline\sigma z)/(1-\overline\sigma z)$ we can replace $f_1$, $f_2$ by two holomorphic self-maps $F_1$, $F_2$ of the right half-plane~$\HR$ such that
 $$
   \anglim_{\zeta\to\infty}F_1(\zeta)=\anglim_{\zeta\to\infty}F_2(\zeta)=\infty\quad\text{and}\quad
     a:=\anglim_{\zeta\to\infty}\frac{F_1(\zeta)}{\zeta}=\anglim_{\zeta\to\infty}\frac{F_2(\zeta)}{\zeta}>0.
 $$
We have
\begin{equation}\label{EQ_euclid-termsP}
  \anglim_{\zeta\to\infty}\left(\tfrac{F_k(\zeta)}{a|\zeta|}-\tfrac{\zeta}{|\zeta|}\right)=
  \anglim_{\zeta\to\infty}\left[\tfrac{\zeta}{|\zeta|}\Big(\tfrac{F_k(\zeta)}{a\zeta}-1\Big)\right]=0
     \qquad\text{for~$\,k=1,2$.}
\end{equation}

Fix some arbitrary $\theta\in(0,\pi/2)$ and denote $A_\theta:={\{\zeta\in\HR: |\arg\zeta|\le\theta\}}$.  Obviously, the set ${\{\zeta/|\zeta|:\zeta\in A_\theta\}}$ is a compact subset of~$\HR$.
Since the hyperbolic distance is invariant w.r.t. automorphisms and since it is locally equivalent to the euclidean distance, from~\eqref{EQ_euclid-termsP} it follows that
$$
   \rho_{\HR}\big(F_1(\zeta),F_2(\zeta)\big)
           =
   \rho_{\HR}\big(\tfrac{F_1(\zeta)}{a|\zeta|},\,\tfrac{F_2(\zeta)}{a|\zeta|}\big)
          \le
   \rho_{\HR}\big(\tfrac{F_1(\zeta)}{a|\zeta|},\,\tfrac{\zeta}{|\zeta|}\big)
          +
    \rho_{\HR}\big(\tfrac{F_2(\zeta)}{a|\zeta|},\,\tfrac{\zeta}{|\zeta|}\big)
         ~\to~0
$$
as $A_\theta\ni\zeta\to+\infty$. Taking into account arbitrariness of~$\theta$ and returning back to~$\UD$, the desired conclusion~\eqref{EQ_almost-linear-hyperbolic} follows easily.
\end{proof}

Now we are ready to prove Theorem~\ref{TH_SL-vs-COMM}. Note that one of the implications can be proved under a weaker condition concerning the solution~$h$ of the simultaneous linearization problem. Namely, we have the following addendum to Theorem~\ref{TH_SL-vs-COMM}.

\begin{addendum}\label{ADD_1}
  Condition~\ref{IT_SL-vs-COMM-h} in Theorem~\ref{TH_SL-vs-COMM} can be replaced by the following weaker condition:
  \begin{itemize}
  \item[{\rm(ii${}'$\hspace{.05em})}] there exists ${(c_1,c_2)\in\C^2}$ and ${h\in\Hol(\UD,\C)}$ satisfying the simultaneous linearization problem~\eqref{EQ_AbelSystem-n} for~$\varphi_1$,~$\varphi_2$ and having the following property: there is ${r>0}$ and a sequence ${(z_n)\subset\UD}$ converging non-tangentially to~$\tau$ such that $h$ is univalent in each of the hyperbolic discs~$\phdisk(z_n,r)$, ${n\in\Natural}$.
  \end{itemize}
\end{addendum}
Note that in contrast to condition~\ref{IT_SL-vs-COMM-h} of Theorem~\ref{TH_SL-vs-COMM}, the case ${(c_1,c_2)=(0,0)}$ in the above addendum is \textit{not} excluded.

\begin{proof}[\proofof{Theorem~\ref{TH_SL-vs-COMM} and Addendum~\ref{ADD_1}}]
First assume that $\varphi_1$ and $\varphi_2$ commute. Then condition~\ref{IT_SL-vs-COMM-tau} holds by~Theorem~\ref{TH_Behan-and-Ko}.
Furthermore, if one of these two parabolic self-maps is of zero hyperbolic step, then by Theorem~\ref{Prop:pseuso-semigroup}, its Koenigs map solves the simultaneous linearization  problem~\eqref{EQ_AbelSystem-n}. Hence, in this case, condition~\ref{IT_SL-vs-COMM-h} holds in view of Theorem~\ref{Thm:V-sets}. Suppose now that the self-maps $\varphi_1$, $\varphi_2$ are of positive hyperbolic step. Then according to Proposition~\ref{PR_beta-exists}, see also Definitions~\ref{DF_SimLinCoeff} and~\ref{DF_SimLinFunc}, the function $h:={\beta_{\varphi_1,\varphi_2}\circ h_{\varphi_1}}$ satisfies~\eqref{EQ_AbelSystem-n} with  $c:=(1,\,c_{\varphi_1,\varphi_2})$, and moreover by Proposition~\ref{PR_univ-in-isogonal-dom}, $h$ is univalent in some domain~$V\subset\UD$ isogonal at~$\tau$. Thus, condition~\ref{IT_SL-vs-COMM-h} holds as well.

Now we prove the converse implication assuming the weaker condition~{\rm(ii${}'$\hspace{.05em})} from Addendum~\ref{ADD_1} instead of~\ref{IT_SL-vs-COMM-h}. From~\eqref{EQ_AbelSystem-n} it follows that
\begin{equation}\label{EQ_h-with-compositions}
h\circ\varphi_1\circ\varphi_2=h\circ\varphi_2+c_1=h+c_1+c_2=h\circ\varphi_1+c_2=h\circ \varphi_2\circ\varphi_1.
\end{equation}
Note that $f_1:=\varphi_1\circ\varphi_2$ and $f_2:=\varphi_2\circ\varphi_1$ are holomorphic self-maps with the Denjoy\,--\,Wolff point at~$\tau$, and ${f_1'(\tau)=f_2'(\tau)}$. In particular, it follows that there is a sequence~$(\zeta_n)\subset\UD$ converging non-tangentially to~$\tau$ such that ${f_1(\zeta_n)=z_n}$ for all ${n\in\Natural}$ large enough, where $(z_n)$ is the sequence from condition~{\rm(ii${}'$\hspace{.05em})}. (This follows, e.g., from \cite[Proposition~A.6]{GuKouMouRoth}.) Using Lemma~\ref{LM_close-to-linear}, we may conclude that there exist~${n_0\in\Natural}$ and $r>0$ such that both points ${f_1(\zeta_{n_0})}$ and ${f_2(\zeta_{n_0})}$ are contained in~${\phdisk(z_{n_0},r)}$. In view of univalence of~$h$ in~${\phdisk(z_{n_0},r)}$, equality~\eqref{EQ_h-with-compositions} then implies that $f_1$ coincides with~$f_2$ in some neighbourhood of~$\zeta_{n_0}$. Thanks to the Identity Principle, it further follows that ${f_1=f_2}$ in~$\UD$, i.e. $\varphi_1$ commutes with~$\varphi_2$ as desired.
\end{proof}
\begin{remark}\label{RM_for-all-non-elliptic}
We note that Theorem~\ref{TH_SL-vs-COMM} holds for all non-elliptic self-maps. Indeed, if $\varphi_1$ and $\varphi_2$ commute and one of these self-maps is hyperbolic, then similarly to the case of parabolic self-maps of zero hyperbolic step, the simultaneous linearization problem is solved by the Koenigs map. (This can be deduced from Cowen's result~\cite[Theorem~3.1]{Cowen-comm}; see also \cite[Proposition~4.3]{CDG-Centralizer} for the univalent case.) It remains to notice that for the hyperbolic case, the absorbing domain~${V\subset\UD}$ in condition~(HM3) of Definition~\ref{DF_holomorphic-model} is isogonal at the Denjoy\,--\,Wolff point~$\tau$ because there are orbits converging to~$\tau$ at any slope in~${(-\pi/2,\pi/2)}$, see e.g. \cite[Property~(2)\,(b)]{BracciPC}. As for the converse implication, note that the argument given in the above proof of Theorem~\ref{TH_SL-vs-COMM} is valid for any pair non-elliptic self-maps $\varphi_1$,~$\varphi_2$. Moreover, if one of these self-maps of hyperbolic, then the requirement that~$h$ is univalent on the hyperbolic discs ${\phdisk(z_n,r)}$, see condition~{\rm(ii${}'$\hspace{.05em})} in Addendum~\ref{ADD_1}, holds automatically for any non-constant holomorphic function ${h:\UD\to\C}$ satisfying Abel's equation ${h\circ\varphi_1\circ\varphi_2=h+c}$ for some constant ${c\in\Complex}$, provided that ${r>0}$ is small enough and $(z_n)$ is a properly chosen orbit of~$\varphi_1\circ \varphi_2$. Indeed, according to Remark~\ref{RM_factorization}, we have $h={f\circ h_{\varphi_1\circ \varphi_2}}$, where $f$~is a holomorphic function in the strip~$S_{\varphi_1\circ \varphi_2}$ satisfying $f(w+1)=f(w)+c$ for all ${w\in S_{\varphi_1\circ \varphi_2}}$. Let $V$ be the $\varphi_1\circ\varphi_2$\,-\,absorbing domain from condition~(HM3) and choose $z_0\in V$ so that $f'\big(h_{\varphi_1\circ \varphi_2}(z_0)\big)\neq0$. Then $f$ is univalent in the euclidian discs of some fixed radius centered at the points ${h_{\varphi_1\circ \varphi_2}(z_0)+n}$, ${n\in\Natural}$. It is then not difficult to see that $h$ is univalent in ${\phdisk(z_n,r)}$, $z_n:=(\varphi_1\circ\varphi_2)^{\circ n}$, for all ${n\in\Natural}$ and some ${r>0}$ small enough. It remains to recall that $(z_n)$ converges to~$\tau$ non-tangentially because $\varphi_1\circ\varphi_2$ is hyperbolic, see e.g. \cite[Proposition~4.3.2]{Abate2}.
\end{remark}

Now we turn to the proof of the results concerning the  commutativity problem for elements of the centralizer~$\Zn(\varphi)$.
The proof of Theorem~\ref{TH_abelian-part0}, given at the end of this section, is based on the following more technical result.
\begin{theorem}\label{TH_abelian-part}
Let $\varphi\in\Hol(\UD)$ be a parabolic self-map of positive hyperbolic step and let ${\psi_1,\psi_2\in\Zn(\varphi)}$. Then $\psi_1\circ\psi_2=\psi_2\circ\psi_1$ if and only if (at least) one of the following three conditions holds:
\begin{equilist}
  \item\label{IT_abelian-betas}
     $\beta_{\varphi,\psi_1}=\beta_{\varphi,\psi_2}$;\smallskip

  \item\label{IT_abelian-rational}
     $c_{\varphi,\psi_1}=p/q\in\mathbb Q$, where ${p\in\mathbb Z}$ and ${q\in\mathbb N}$ are relatively prime, and $\beta_{\varphi,\psi_2}$ satisfies identity~\eqref{EQ_identity-rational}, i.e.
     $
       \beta_{\varphi,\psi_2}(w+1/q)=\beta_{\varphi,\psi_2}(w)+1/q;
     $\smallskip

  \item\label{IT_abelian-viceversa} condition~(b) holds with $\psi_1$ and $\psi_2$ interchanged.
\end{equilist}
\end{theorem}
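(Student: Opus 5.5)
The plan is to work entirely at the level of the semimodel morphisms $g_{\psi_1},g_{\psi_2}\in\Hol(S_\varphi)$ from Definition~\ref{DF_g-psi}, assuming as before that $S_\varphi=\UH$. Since $g_{\psi}\circ h_\varphi=h_\varphi\circ\psi$ and $h_\varphi$ is univalent on the $\psi_j$-invariant domain $V_\varphi(1/3)$ (Theorem~\ref{Thm:V-sets}, Lemma~\ref{Le:V-sets}), the first step is to show that $\psi_1\circ\psi_2=\psi_2\circ\psi_1$ if and only if $g_{\psi_1}\circ g_{\psi_2}=g_{\psi_2}\circ g_{\psi_1}$. One direction is immediate from $h_\varphi\circ\psi_1\circ\psi_2=g_{\psi_1}\circ g_{\psi_2}\circ h_\varphi$ and the Identity Principle. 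For the converse, commutation of the $g$'s gives $h_\varphi\circ\psi_1\circ\psi_2=h_\varphi\circ\psi_2\circ\psi_1$, and univalence of $h_\varphi$ on $V_\varphi(1/3)$, which is invariant under both compositions, yields equality there and hence everywhere. This reduces the theorem to a statement about self-maps of $\UH$ commuting with $w\mapsto w+1$.

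Next we split into cases according to Propositions~\ref{Prop:com-with-autoprevio} and~\ref{Prop:com-with-auto}, as in the proof of Proposition~\ref{PR_beta-uniq}. If $c_{\varphi,\psi_1}=p/q\in\mathbb Q$, then $g_{\psi_1}(w)=w+p/q$ by Remark~\ref{RM_identity-is-a-solution2}. In this case $g_{\psi_2}$ commutes with $g_{\psi_1}$ if and only if $g_{\psi_2}(w+p/q)=g_{\psi_2}(w)+p/q$. Because $g_{\psi_2}(w+1)=g_{\psi_2}(w)+1$, Bézout's identity shows this is equivalent to $g_{\psi_2}(w+1/q)=g_{\psi_2}(w)+1/q$. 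We then translate this into condition~\ref{IT_abelian-rational} for $\beta_{\varphi,\psi_2}$:
\begin{itemize}
\item if $c_{\varphi,\psi_2}\in\mathbb Q$, we have $\beta_{\varphi,\psi_2}=\id$ and $g_{\psi_2}$ is a translation, so both conditions hold trivially;
\item if $c_{\varphi,\psi_2}\notin\mathbb Q$, we use $\beta_{\varphi,\psi_2}\circ g_{\psi_2}=\beta_{\varphi,\psi_2}+c_{\varphi,\psi_2}$ together with uniqueness up to constants (Proposition~\ref{PR_beta-uniq}\ref{IT_beta-uniq}).
\end{itemize}
For the second case, if $g_{\psi_2}$ commutes with translation by $1/q$, then $\beta_{\varphi,\psi_2}(\cdot+1/q)-1/q$ is another normalized solution, hence equals $\beta_{\varphi,\psi_2}$. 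Conversely, \eqref{EQ_identity-rational} for $\beta_{\varphi,\psi_2}$ gives $\beta_{\varphi,\psi_2}\circ g_{\psi_2}(w+1/q)=\beta_{\varphi,\psi_2}(g_{\psi_2}(w)+1/q)$. The univalence of $\beta_{\varphi,\psi_2}$ for large $\Im w$ (Proposition~\ref{PR_beta-properties}), combined with $\Im g_{\psi_2}(w)\to\infty$, then gives the commutation near $\infty$, and hence everywhere. Case~\ref{IT_abelian-viceversa} is symmetric.

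There remain two cases. If both coefficients are irrational, we must show that commutation holds if and only if $\beta_{\varphi,\psi_1}=\beta_{\varphi,\psi_2}$. For the direction ``$\Leftarrow$'', the common $\beta$ conjugates both $g_{\psi_j}$ to translations on the region where $\beta$ is univalent, where $\Im w>A$ and which is invariant under both maps. Translations commute, so $g_{\psi_1}\circ g_{\psi_2}=g_{\psi_2}\circ g_{\psi_1}$ there, and hence on all of $\UH$. For ``$\Rightarrow$'', Remark~\ref{Re:another} shows that $\beta_{\varphi,\psi_1}\circ g_{\psi_2}$ satisfies \eqref{EQ_beta(a)} and solves $(\cdot)\circ g_{\psi_1}=(\cdot)+c_{\varphi,\psi_1}$. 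By uniqueness up to constants it equals $\beta_{\varphi,\psi_1}+k$. Thus $\beta_{\varphi,\psi_1}$ linearizes $g_{\psi_2}$, so by Proposition~\ref{PR_beta-uniq}\ref{IT_c-uniq},\ref{IT_beta-uniq} it is also a solution for $\psi_2$, differing from $\beta_{\varphi,\psi_2}$ by a constant. The normalization~\eqref{EQ_beta-normalization} then forces equality. The remaining case, where both coefficients are rational, is covered by~\ref{IT_abelian-betas} (both $\beta$'s equal $\id$, and translations commute) and is consistent with~\ref{IT_abelian-rational}.

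The main obstacle I expect is the ``$\Leftarrow$'' direction for irrational coefficients, specifically making the conjugation argument rigorous. This needs invariance of $\{\Im w>A\}$ under $g_{\psi_j}$, which I would derive from Proposition~\ref{Prop:com-with-auto} ($g_{\psi_j}(w)-w\to c_0\in\UH$). It also needs the fact that when $c_{\varphi,\psi_j}\notin\Real$, as in Case~2, the coefficient is irrational automatically, so the dichotomy by rationality is exhaustive.
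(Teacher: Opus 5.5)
Your proof is correct, but it is organized differently from the paper's.

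\textbf{The paper's route.} The paper gets the ``if'' direction from Theorem~\ref{TH_SL-vs-COMM}. In each of cases (a)--(c), one of the functions $\beta_{\varphi,\psi_j}\circ h_\varphi$ linearizes both $\psi_1$ and $\psi_2$ and is univalent in a domain isogonal at~$\tau$ (Proposition~\ref{PR_univ-in-isogonal-dom}). The boundary argument behind Theorem~\ref{TH_SL-vs-COMM} (Lemma~\ref{LM_close-to-linear}, Pommerenke's angular limit) then gives commutation. For the ``only if'' direction the paper splits according to whether $c_{\varphi,\psi_1}$ or $c_{\varphi,\psi_2}$ is real. When both are non-real it invokes the zero-step theory: $g_{\psi_1},g_{\psi_2}$ are then commuting parabolic self-maps of $\UH$ of zero hyperbolic step, so $h_{g_{\psi_1}}$ linearizes $g_{\psi_2}$ by Theorem~\ref{Prop:pseuso-semigroup}.

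\textbf{Your route.} You move the whole problem to the model space at the outset. You show that $\psi_1\circ\psi_2=\psi_2\circ\psi_1$ if and only if $g_{\psi_1}\circ g_{\psi_2}=g_{\psi_2}\circ g_{\psi_1}$, using only univalence of $h_\varphi$ on the $\psi_j$-invariant domain $V_\varphi(1/3)$. After that:
\begin{itemize}
\item univalence of $\beta$ on a $g_{\psi_j}$-invariant half-plane $\Pi_A$ replaces all angular analysis;
\item Remark~\ref{Re:another}, together with the uniqueness in Proposition~\ref{PR_beta-uniq} applied to $\beta_{\varphi,\psi_1}\circ g_{\psi_2}$, replaces Theorem~\ref{Prop:pseuso-semigroup}.
\end{itemize}

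\textbf{What the rationality split buys.} Splitting by rationality rather than realness lets you treat real irrational and non-real coefficients uniformly, and it yields an exact equivalence in each case. It also makes explicit the passage from $g_{\psi_2}(w+1/q)=g_{\psi_2}(w)+1/q$ to \eqref{EQ_identity-rational} for $\beta_{\varphi,\psi_2}$, via uniqueness and the normalization \eqref{EQ_beta-normalization}; the paper only sketches this step. Finally, it automatically covers $g_{\psi_2}(w)=w+b$ with $\Im b>0$ when $c_{\varphi,\psi_1}$ is real irrational. The paper's Case~1 asserts $b\in\Real$ there, although its conclusion $\beta_{\varphi,\psi_1}=\beta_{\varphi,\psi_2}=\id_{S_\varphi}$ still holds.

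\textbf{Trade-off.} The paper's route is shorter given the machinery already built and showcases Theorem~\ref{TH_SL-vs-COMM}. Yours is more elementary and self-contained.

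\textbf{One implicit step.} When $c_{\varphi,\psi_1}\in\mathbb Q$, you should note that (a) and (c) each imply (b). Under (a), $\beta_{\varphi,\psi_2}=\beta_{\varphi,\psi_1}=\id_{S_\varphi}$. Under (c), $c_{\varphi,\psi_2}\in\mathbb Q$, so $\beta_{\varphi,\psi_2}=\id_{S_\varphi}$. Hence your equivalence ``commutation $\Leftrightarrow$ (b)'' in that case does yield the full statement.
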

\begin{proof}
Taking into account that $c_{\varphi,\id_\UD}=0$ (see Remark~\ref{Re:real-coefficient2}) and that ${\beta_{\varphi,\psi_k}(w+1)}={\beta_{\varphi,\psi_k}(w)+1}$, ${k=1,2}$, for any ${w\in S_\varphi}$, it is not difficult to see that if $\id_\UD\in\{\psi_1,\psi_2\}$, then at least one of the conditions \ref{IT_abelian-rational} or~\ref{IT_abelian-viceversa} holds. Hence, Theorem~\ref{TH_abelian-part} is trivially true in this case, and so we may assume that both $\psi_1$ and $\psi_2$ are different from~$\id_\UD$.

Suppose first that one of the conditions \ref{IT_abelian-betas}, \ref{IT_abelian-rational}, or~\ref{IT_abelian-viceversa} holds.  Recalling the definition of the simultaneous linearization function (Definition~\ref{DF_SimLinFunc}) and using Remarks~\ref{RM_identity-is-a-solution2}\,--\,\ref{RM_rational-SLC}, it is not difficult to see that the simultaneous linearization problem~\eqref{EQ_AbelSystem-n} for $\psi_1$ and~$\psi_2$ with $c:={\big(c_{\varphi,\psi_1},\,c_{\varphi,\psi_2}\big)}$ is solved by $h:={\beta_{\varphi,\psi_1}\circ h_{\varphi}}$ or by $h:={\beta_{\varphi,\psi_2}\circ h_{\varphi}}$: if~\ref{IT_abelian-betas} holds, then $h:={\beta_{\varphi,\psi_1}\circ h_{\varphi}}={\beta_{\varphi,\psi_2}\circ h_{\varphi}}$ solves~\eqref{EQ_AbelSystem-n}, and if \ref{IT_abelian-rational} or~\ref{IT_abelian-viceversa} holds, then we can take $h:={\beta_{\varphi,\psi_2}\circ h_{\varphi}}$ or $h:={\beta_{\varphi,\psi_1}\circ h_{\varphi}}$, respectively. In each of these cases, by Proposition~\ref{PR_univ-in-isogonal-dom}, $h$~is~univalent in some domain ${V\subset\UD}$ isogonal at the Denjoy\,--\,Wolff point~$\tau$ of the self-map~$\varphi$. Since $\tau$ is also the Denjoy\,--\,Wolff point of~$\psi_1$ and~$\psi_2$, we may conclude with the help of Theorem~\ref{TH_SL-vs-COMM} that $\psi_1$ commutes with~$\psi_2$.

Suppose now that $\psi_1\circ\psi_2=\psi_2\circ\psi_1$. To simplify the notation, for ${k=1,2}$ we denote by~$g_k$ the semimodel morphism~$g_{\psi_k}$ associated with~$\psi_k$ (as an element of~$\Zn(\varphi)$). Note that $g_1$ and~$g_2$ commute. Indeed,
$$
  g_1\circ g_2\circ h_\varphi=g_1\circ h_\varphi\circ\psi_2=h_\varphi\circ\psi_1\circ\psi_2=
  h_\varphi\circ\psi_2\circ\psi_1=g_2\circ h_\varphi\circ\psi_1=g_2\circ g_1\circ h_\varphi,
$$
and it remains to recall again the Identity Principle.

Consider the following three cases.

\StepC{1}{\textit{$c_{\varphi,\psi_1}$ is real}} By Remark~\ref{RM_identity-is-a-solution2}, $g_1(w)={w+c_{\varphi,\psi_1}}$ for all~$w\in S_\varphi$. Taking into account that $g_2$ commutes with~$g_1$ and recalling Remark~\ref{RM_morphism-properties}, we have
\begin{equation}\label{EQ_g2-periodic}
  g_2(w+c_{\varphi,\psi_1})=g_2(w)+c_{\varphi,\psi_1}
\quad\text{and}\quad
  g_2(w+1)=g_2(w)+1
   \quad\text{for all~$\,w\in S_\varphi$.}
\end{equation}

If $c_{\varphi,\psi_1}$ is irrational, then arguing as in the proof of Propositions~\ref{PR_beta-uniq} and~\ref{PR_beta-properties}, see Case~1 on page~\pageref{PG_irrational}, we may conclude that  $g_2(w)={w+c}$ for all~${w\in S_\varphi}$ and some constant~${c\in\Real}$. In view of Remark~\ref{RM_identity-is-a-solution} it follows that ${c_{\varphi,\psi_2}=c\in\Real}$, and in this case, we have  $${\beta_{\varphi,\psi_1}=\beta_{\varphi,\psi_2}=\id_{S_\varphi}}$$ by  Remark~\ref{RM_real-SLC}, i.e. condition~\ref{IT_abelian-betas} holds.

If $c_{\varphi,\psi_1}$ is rational, namely, if ${c_{\varphi,\psi_1}=p/q}$ for some relatively prime ${p\in\mathbb Z}$ and ${q\in\mathbb N}$, then in the same way as in Remark~\ref{RM_rational-SLC}, from~\eqref{EQ_g2-periodic} we deduce that
$$
\beta_{\varphi,\psi_2}(w+1/q)=\beta_{\varphi,\psi_2}(w)+1/q \quad\text{for all~$\,w\in S_{\varphi}$},
$$
i.e. condition~\ref{IT_abelian-rational} holds.

\StepC{2}{\textit{$c_{\varphi,\psi_2}$ is real}} This case reduces to the previous one by swapping $\psi_1$ with~$\psi_2$.

\StepC{3}{\textit{both $c_{\varphi,\psi_1}$ and $c_{\varphi,\psi_2}$ are non-real}}
Recall that by Remark~\ref{RM_morphism-properties}, $g_1$ and $g_2$ are holomorphic self-mappings of~$\UH$ commuting with ${w\mapsto w+1}$. According to Remark~\ref{RM_identity-is-a-solution},  these self-mappings are not of the form ${w\mapsto w+\theta}$, where ${\theta\in\Real}$. Appealing to Propositions~\ref{Prop:com-with-autoprevio} and~\ref{Prop:com-with-auto}, we therefore conclude that $g_1$ and~$g_2$ are parabolic of zero hyperbolic step. Recalling that these self-mappings commute, by Theorem~\ref{Prop:pseuso-semigroup} we have
\begin{equation}\label{EQ_linear-by-g_1}
h_{g_1}\circ g_2=h_{g_1}+c_0
\end{equation}
for a certain constant $c_0\in\Complex$.

As we saw in the proof of Proposition~\ref{PR_beta-exists}, the function $\beta:=c_{\varphi,\psi_1}h_{g_1}$ satisfies~\eqref{EQ_beta(sys)} for ${\psi:=\psi_1}$ and ${c:=c_{\varphi,\psi_1}}$. Moreover, using~\eqref{EQ_linear-by-g_1}, we obtain
$$
  \beta\circ h_\varphi\circ\psi_2=\beta\circ g_2\circ h_\varphi=\beta\circ h_\varphi+c_{\varphi,\psi_1}c_0,
$$
i.e. $\beta$ satisfies~\eqref{EQ_beta(sys)} also for ${\psi:=\psi_2}$ and ${c:=c_{\varphi,\psi_1}c_0}$.
This easily implies condition~\ref{IT_abelian-betas}. Indeed, by  Proposition~\ref{PR_beta-uniq} and Definition~\ref{DF_SimLinFunc},
$$
  \beta_{\varphi,\psi_1}~=~\beta_{\varphi,\psi_2}~=~\beta~-\!\!\!\!\!\lim_{\substack{w\in S_\varphi,\\|\Im w|\to+\infty}}\!\!\!\!\! \big(\beta(w)-w\big). \eqno\qedhere
$$
\end{proof}

Using the theorem we have just proved, we can now  deduce Corollary~\ref{CR_injective-on-abelian}.

\begin{proof}[\proofof{Corollary~\ref{CR_injective-on-abelian}}]
If $\varphi$ is of zero hyperbolic step, then having ${c_{\varphi,\psi_1}=c_{\varphi,\psi_2}}$ implies that ${\psi_1=\psi_2}$ by Theorem~\ref{Prop:pseuso-semigroup} and Proposition~\ref{Prop:firstpropertiescoefficient}\+\ref{firstpropereties2}.
For this reason, in the rest of the proof we assume that $\varphi$ is of \textit{positive} hyperbolic step.

 We claim that ${\beta:=\beta_{\varphi,\psi_1}=\beta_{\varphi,\psi_2}}$. Indeed, since $\psi_1,\psi_2\in\Zn(\varphi)$ commute with each other, by Theorem~\ref{TH_abelian-part}, these self-mappings satisfy at least one of the conditions~\ref{IT_abelian-betas}, \ref{IT_abelian-rational}, or \ref{IT_abelian-viceversa}. If conditions~\ref{IT_abelian-betas} holds, then we are done. If one of the other two conditions holds, then recalling that by the hypothesis ${c_{\varphi,\psi_1}=c_{\varphi,\psi_2}}$, we have that both $c_{\varphi,\psi_1}$ and $c_{\varphi,\psi_2}$ are real, and as a consequence, by Remark~\ref{RM_real-SLC}, ${\beta_{\varphi,\psi_1}=\beta_{\varphi,\psi_2}=\id_{S_\varphi}}$.

Therefore, by Definition~\ref{DF_SimLinFunc} and Remark~\ref{RM_real-SLC}\+(b), we have
\begin{equation}\label{EQ_psi-psi}
  \beta\circ h_\varphi\circ\psi_1=\beta\circ h_\varphi+c_{\varphi,\psi_1} =\beta\circ h_\varphi+c_{\varphi,\psi_2}=\beta\circ h_\varphi\circ\psi_2.
\end{equation}

By Proposition~\ref{PR_univ-in-isogonal-dom}, there exists a domain ${V\subset\UD}$ isogonal at the Denjoy\,--\,Wolff point~$\tau$ of the self-map~$\varphi$ and such that $\beta\circ h_\varphi$ is univalent in~$V$.

By Theorem~\ref{TH_Behan-and-Ko}, $\tau$ is also the Denjoy\,--\,Wolff point for~$\psi_1$ and~$\psi_2$. In particular, both $\psi_1(r\tau)$ and $\psi_2(r\tau)$ tend to~$\tau$ orthogonally to~$\UC$  as ${(0,1)\ni r\to1}$. Therefore, there exists ${r_0\in(0,1)}$ such that
$$
 \psi_k(E)\subset V,~k=1,2,\quad\text{where $\,E:=\big\{r\tau:r\in[r_0,1)\big\}$}.
$$
Taking into account~\eqref{EQ_psi-psi}, we immediately obtain $\psi_1|_E=\psi_2|_E$, and the desired conclusion that $\psi_1$ and $\psi_2$ coincide (in the whole disc~$\UD$) follows by the Identity Principle.	
\end{proof}

To conclude Section~\ref{S_simmultaneous-lin}, it remains to prove Corollary~\ref{Thm:StrongUniqueness} and Theorem~\ref{TH_abelian-part0}.

\begin{proof}[\proofof{Corollary~\ref{Thm:StrongUniqueness}}] First we will prove the corollary under the additional assumption that ${c_1\neq0}$. After that, we will see that  the case~${c_1=0}$, in fact, cannot occur.

So let $c_1\neq0$.  We may suppose that $\varphi$ is of positive hyperbolic step, because for the case of zero hyperbolic step the statement of the corollary is already known, see Theorem~\ref{TH_SLC(2formulas)}\+\ref{IT_SLC2}. Moreover, replacing $h$ with $c_1^{-1}h$, we may suppose that~${c_1=1}$. Then by Remark~\ref{RM_factorization}, $h$~factorizes via the Koenigs map of~$\varphi$. More precisely,
$$
  h=f\circ h_\varphi\quad\text{with some $f\in\Hol(S_\varphi,\C)$ satisfying ${f(w+1)=f(w)+1}$ for all~${w\in S_\varphi}$.}
$$
By Proposition~\ref{PR_beta-uniq}, it follows that ${c_2=c_{\varphi,\psi}}$ and that the function~$f$, in case ${c_{\varphi,\psi}\in\C\setminus\mathbb Q}$, may differ from~$\beta_{\varphi,\psi}$ only by an additive constant.

The above argument gives the proof under the additional assumption that~${c_1\neq0}$.
To see that this assumption always holds, suppose on the contrary that ${c_1=0}$. Then by the hypothesis, ${c_2\neq0}$. Therefore, by what we have already proved, ${c_1\,=\,c_{\psi,\varphi}\,c_2}$. By the hypothesis, $\varphi$  and~$\psi$ are parabolic self-maps. In particular, both of them are different from~$\id_\UD$. Recalling the definitions of the simultaneous linearization coefficient (Definitions~\ref{simul-linea-coefficient-zero} and~\ref{DF_SimLinCoeff}) and appealing to Remark~\ref{Re:real-coefficient2}  in case~$\psi$ is of positive hyperbolic step, or to Proposition~\ref{Prop:firstpropertiescoefficient}\+\ref{firstpropereties1} in case~$\psi$ is of zero hyperbolic step, we easily conclude that~${c_{\psi,\varphi}\neq0}$. Thus, ${c_1\neq0}$ as required.
\end{proof}

\begin{proof}[\proofof{Theorem~\ref{TH_abelian-part0}}]\label{PROOF_abelian-part0}
Let us first assume that the family ${\Delta\cup\{\varphi\}}$ admits simultaneous linearization.
This means that there is~${h\in\Hol(\UD,\C)}$ and a map $c:\Delta\cup\{\varphi\}\to\Complex$ not vanishing identically such that
\begin{equation}\label{EQ_Delta-linearization}
   h\circ\varphi=h+c(\varphi)\quad\text{and}\quad
   h\circ\psi=h+c(\psi)~\text{~for any~$\psi\in\Delta$.}
\end{equation}

We have to show that any two elements $\psi_1,\psi_2\in\Delta$ commute. Since~$\id_\UD$ commutes with any self-map of~$\UD$, we may suppose that ${\id_\UD\not\in\Delta}$. Then all self-maps~$\psi\in\Delta$ are parabolic~ \cite[Corollary~4.1]{Cowen-comm}.

Using~\eqref{EQ_Delta-linearization} and appealing to Corollary~\ref{Thm:StrongUniqueness}, we see that ${c(\varphi)\neq0}$. Hence, replacing~$h$ by~$c(\varphi)^{-1}h$, we may suppose that  ${c(\varphi)=1}$. Then, again by Corollary~\ref{Thm:StrongUniqueness}, ${c(\psi)=c_{\varphi,\psi}}$ for all ${\psi\in\Delta}$, and moreover, if ${c_{\varphi,\psi}\in\C\setminus\mathbb Q}$, then
$\beta_{\varphi,\psi}\circ h_\varphi$ coincides with~$h$ up to an additive constant. Using the Identity Principle and taking into account that $\beta_{\varphi,\psi}$ satisfies (by the very definition) normalization~\eqref{EQ_beta-normalization}, we see that
$
  {\beta_{\varphi,\psi_1}=\beta_{\varphi,\psi_2}}
$
for any $\psi_1,\psi_2\in\Delta$ with $c_{\varphi,\psi_k}\in\C\setminus\mathbb Q$, ${k=1,2}$. By Theorem~\ref{TH_abelian-part}, it follows that any two such self-mappings $\psi_1$ and~$\psi_2$ commute. It remains to show that $\psi_1,\psi_2\in\Delta$ commute in the case~${c_{\varphi,\psi_1}\in\mathbb Q}$ as well.

If ${c_{\varphi,\psi_2}}$ is also a rational real number, then by Remark~\ref{RM_real-SLC}\+(a), $\beta_{\varphi,\psi_1}=\beta_{\varphi,\psi_2}=\id_{S_\varphi}$, and commutativity follows again by~Theorem~\ref{TH_abelian-part}. So we may suppose that ${c_{\varphi,\psi_2}\in\C\setminus\mathbb Q}$.

Since $h$ solves Abel's equation for~$\varphi$, that is, $h\circ\varphi=h+1$, by Remark~\ref{RM_factorization} we have ${h=\beta\circ h_\varphi}$ for a suitable ${\beta\in\Hol(S_\varphi,\C)}$. Using~\eqref{EQ_Delta-linearization} and Remark~\ref{RM_real-SLC}\+(b), we obtain:
\begin{align*}
  &\beta\circ\big(h_\varphi+1\big)=\beta\circ h_\varphi\circ \varphi=h\circ\varphi=h+1=\beta\circ h_\varphi\,+\,1,\\
  &\beta\circ\big(h_\varphi+c_{\varphi,\psi_1}\big)=\beta\circ h_\varphi\circ \psi_1=h\circ\psi_1=h+c(\psi_1)=\beta\circ h_\varphi\,+\,c(\psi_1)=\beta\circ h_\varphi\,+\,c_{\varphi,\psi_1}.
\end{align*}
Writing $c_{\varphi,\psi_1}=p/q$, where $p\in\mathbb Z$ and $q\in\Natural$ are relatively prime, and arguing in the same way as in Remark~\ref{RM_rational-SLC}, from the above identities we obtain
\begin{equation}\label{EQ_beta_1/q}
\beta(w+1/q)=\beta(w)+1/q\quad\text{for all~$\,w\in S_\varphi$}.
\end{equation}
At the same time, according to~\eqref{EQ_Delta-linearization} the function~$\beta$ satisfies the system~\eqref{EQ_beta(sys)} for~${\psi:=\psi_2}$ and~${c:=c(\psi_2)=c_{\varphi,\psi_2}}$. Hence, by Proposition~\ref{PR_beta-uniq}, $\beta$ and~$\beta_{\varphi,\psi_2}$ coincide up to an additive constant. It follows that the equality~\eqref{EQ_beta_1/q} holds also with~$\beta$ replaced by~$\beta_{\varphi,\psi_2}$, and hence, the pair of the self-maps $\psi_1$ and~$\psi_2$ satisfies condition~\ref{IT_abelian-rational} in Theorem~\ref{TH_abelian-part}. Thus, $\psi_1$ and~$\psi_2$ commute, as desired.

\medskip
To prove the converse implication, suppose now that any two elements of $\Delta$ commute. We have to show that the family ${\Delta\cup\{\varphi\}}$ admits simultaneous linearization. Recall that for any given ${\psi\in\Zn(\varphi)}$, the function ${\beta:=\beta_{\varphi,\psi}}$ solves the system~\eqref{EQ_beta(sys)} for ${c:=c_{\varphi,\psi}}$.
In particular, if ${\beta_{\varphi,\psi}=\id_{S_\varphi}}$ for all~${\psi\in\Delta}$, then our task is trivial, because in this case, ${h_\varphi\circ\varphi=h_\varphi+1}$ and ${h_\varphi\circ\psi=h_\varphi+c_{\varphi,\psi}}$ for any~${\psi\in\Delta}$. Therefore, we may suppose that there exists ${\psi_0\in\Delta}$ such that ${\beta_{\varphi,\psi_0}\neq\id_{S_\varphi}}$.

Let us show that ${h:=\beta_{\varphi,\psi_0}\circ h_\varphi}$ linearizes each element of~${\Delta\cup\{\varphi\}}$. First of all, $h\circ\varphi=h+1$ by~\eqref{EQ_beta(b)}. Further, fix an arbitrary ${\psi\in\Delta}$. Since $\psi$ and~$\psi_0$ commute, we may apply Theorem~\ref{TH_abelian-part} to conclude that (at least) one of the following two statements hold:
\begin{romlist}
\item\label{IT_(i)} $\beta_{\varphi,\psi}=\beta_{\varphi,\psi_0}$, or \smallskip

\item\label{IT_(ii)} $c_{\varphi,\psi}\in\mathbb Q$ and $\beta_{\varphi,\psi_0}(w+c_{\varphi,\psi})=\beta_{\varphi,\psi_0}(w)+c_{\varphi,\psi}$ for all $w\in S_\varphi$.
\end{romlist}
If~\ref{IT_(i)} takes place, then $h\circ\psi=h+c_{\varphi,\psi}$ by~\eqref{EQ_beta(c)}. If~\ref{IT_(ii)} takes place, then ${\beta_{\varphi,\psi}=\id_{S_\varphi}}$ by Remark~\ref{RM_real-SLC} and hence, ${h_\varphi\circ\psi=h_\varphi+c_{\varphi,\psi}}$. This in turn implies that for all~${z\in\UD}$,
$$
 h\big(\psi(z)\big) = \beta_{\varphi,\psi_0}\big(h_\varphi(\psi(z))\big) =
 \beta_{\varphi,\psi_0}\big(h_\varphi(z)+c_{\varphi,\psi}\big) =
 \beta_{\varphi,\psi_0}\big(h_\varphi(z)\big)+c_{\varphi,\psi} = h(z)+c_{\varphi,\psi},
$$
i.e. $h\circ\psi=h+c_{\varphi,\psi}$ holds in case~\ref{IT_(ii)} as well. This completes the proof.
\end{proof}

\section{Analysis of the simultaneous linearization coefficient}\label{S_SLC}
In this section, we establish several identities for the simultaneous linearization coefficient of commuting parabolic self-mappings. Part of these results extend what we already obtained for univalent self-mappings~\cite{CDG-Centralizer} and for parabolic self-maps of zero hyperbolic step~\cite{CDG-zero-h.step}.
\begin{theorem}\label{Formula c_varphi_psi} Let $\varphi\in\Hol(\UD)$ be parabolic with  Denjoy\,--\,Wolff point~$\tau\in\partial\D$ and Koenigs function~$h_{\varphi}$. Let ${\psi\in\Zn(\varphi)}$. Then
	\begin{align}
		\label{Eq:Formula c_varphi_psi1} c_{\varphi,\psi}&=\angle\lim_{z\to\tau} (h_\varphi\circ\psi(z)-h_\varphi(z))\\
		\label{Eq:Formula c_varphi_psi2} &=\angle \lim_{z\to\tau}h_\varphi^\prime(z)(\psi(z)-z)\\
		\label{Eq:Formula c_varphi_psi3} &=\anglim_{z\to\tau}\frac{\psi(z)-z}{\varphi(z)-z}.
	\end{align}
\end{theorem}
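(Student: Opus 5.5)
The case of zero hyperbolic step is already settled: \eqref{Eq:Formula c_varphi_psi3} is Theorem~\ref{TH_SLC(2formulas)}\+\ref{IT_SLC1}, and \eqref{Eq:Formula c_varphi_psi1} follows from $h_\varphi\circ\psi=h_\varphi+c_{\varphi,\psi}$ (Theorem~\ref{Prop:pseuso-semigroup}). So the real work is the case of positive hyperbolic step. We assume $S_\varphi=\UH$ and that $\psi\neq\id_\UD$, since for $\psi=\id_\UD$ everything is trivial by Remark~\ref{Re:real-coefficient2}.

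Once \eqref{Eq:Formula c_varphi_psi1} is established, the remaining two formulas follow by short general arguments. Proposition~\ref{positivo2} gives
\[
\frac{h_\varphi\circ\psi-h_\varphi}{h_\varphi'\,(\psi-\mathrm{id})}\to1
\]
angularly. Together with \eqref{Eq:Formula c_varphi_psi1}, this yields \eqref{Eq:Formula c_varphi_psi2}. For \eqref{Eq:Formula c_varphi_psi3}, we apply \eqref{Eq:Formula c_varphi_psi2} twice: once to $\psi$, and once to $\psi:=\varphi\in\Zn(\varphi)$, using $c_{\varphi,\varphi}=1$ (the identity $\beta=\id$ solves \eqref{EQ_beta(sys)} with $c=1$). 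This gives $h_\varphi'(z)(\varphi(z)-z)\to1$, and \eqref{Eq:Formula c_varphi_psi3} is obtained by dividing the two limits.

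To prove \eqref{Eq:Formula c_varphi_psi1}, we use $h_\varphi\circ\psi=g_\psi\circ h_\varphi$ from Remark~\ref{RM_morphism-properties}. This rewrites
\[
h_\varphi(\psi(z))-h_\varphi(z)=g_\psi(w)-w,\qquad w:=h_\varphi(z).
\]
By Remark~\ref{Re:Pommerenke}, $\Im w\to+\infty$ as $z\to\tau$ non-tangentially. We then split into the same two cases as in the proof of Propositions~\ref{PR_beta-uniq} and~\ref{PR_beta-properties}.
\begin{itemize}
\item[(a)] If $g_\psi\in\mathsf{Aut}(\UH)$, then $g_\psi(w)=w+\theta$ with $\theta=c_{\varphi,\psi}$, and the difference $g_\psi(w)-w$ is constantly $c_{\varphi,\psi}$.
\item[(b)] Otherwise, the proof of Proposition~\ref{PR_beta-uniq} (Case~2) shows that $c_{\varphi,\psi}=c_0=\lim_{\Im w\to+\infty}(g_\psi(w)-w)$. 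This limit comes from the representation $g_\psi(w)=w+F_\psi(e^{2\pi i w})$ with $c_0=F_\psi(0)$, so that $g_\psi(w)-w=F_\psi(e^{2\pi iw})\to F_\psi(0)$ as $\Im w\to+\infty$.
\end{itemize}
In both cases the angular limit in \eqref{Eq:Formula c_varphi_psi1} equals $c_{\varphi,\psi}$.

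The main point requiring care is that the limit in case~(b) is uniform in $\Re w$. This holds because $|e^{2\pi iw}|=e^{-2\pi\Im w}\to0$ regardless of $\Re w$, so Pommerenke's angular result $|\Im h_\varphi|\to\infty$ is exactly what is needed, with no control on $\Re h_\varphi$ required. For $S_\varphi=-\UH$ we conjugate by $z\mapsto\overline{\varphi(\bar z)}$ as in Remark~\ref{RM_normalization}.
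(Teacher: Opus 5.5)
Your proposal is correct and follows essentially the same route as the paper. You establish \eqref{Eq:Formula c_varphi_psi1} from $h_\varphi\circ\psi=g_\psi\circ h_\varphi$, the limit $\lim_{\Im w\to+\infty}(g_\psi(w)-w)=c_{\varphi,\psi}$ from Case~2 of the proof of Proposition~\ref{PR_beta-uniq}, and Pommerenke's result, and then obtain the other two formulas from Proposition~\ref{positivo2} together with $c_{\varphi,\varphi}=1$. The only cosmetic difference is that you split cases by whether $g_\psi$ is an automorphism rather than by whether $c_{\varphi,\psi}$ is real; these are equivalent by Remarks~\ref{RM_identity-is-a-solution} and~\ref{RM_identity-is-a-solution2}.
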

\noindent It is worth mentioning here that in Section~\ref{S_realSLC}, we will also obtain an expression for~$c_{\varphi,\psi}$ in terms of the Koenigs functions of~$\varphi$ and~$\psi$; see~\eqref{EQ_another-formular-SLC} in Remark~\ref{RM_another-formular-SLC}.\smallskip

Before proving the theorem above, it is worth stating two corollaries.
The first of them gives yet another formula for the simultaneous linearization coefficient provided that~$\varphi$ has finite non-vanishing second angular derivative at the Denjoy\,--\,Wolff point.
Recall that ${\varphi\in\Hol(\UD)}$ is said to belong to the class~$C_A^2(\tau)$ if there exist ${a_0,a_1,a_2\in\Complex}$ such that
$$
 \anglim_{z\to\tau}\frac{\varphi(z)-\sum\limits_{k=0}^{2}a_k(z-\tau)^k}{(z-\tau)^2}=0.
$$
If this condition holds, then clearly, ${\anglim_{z\to\tau}\varphi(z)=a_0}$ and ${\varphi'(\tau)=a_1}$. The number~$2a_2$ is called the \dff{second angular derivative at~$\tau$} and as usual, we denote it by~$\varphi''_A(\tau)$.
\begin{corollary}\label{CR_second-der}
Let $\varphi\in\Hol(\UD)$ be  parabolic  with  Denjoy\,--\,Wolff point $\tau\in\partial\D$ and Koenigs function~$h_{\varphi}$.
Let $\psi\in\Zn(\varphi)\setminus\{\id_\D\}$. Then:
 \begin{ourlist}
	\item\label{IT_second-der1}
      $\varphi\in C^2_A(\tau)$ if and only if $\psi\in C^2_A(\tau)$. In this case, $\varphi^{\prime\prime}_A(\tau) c_{\varphi,\psi}=\psi^{\prime\prime}_A(\tau).$
	\item\label{IT_second-der2}
        If $\varphi\in C^2_A(\tau)$ with $\varphi^{\prime\prime}_A(\tau)\neq0$,  then $\tau$ is a simple pole of $h_\varphi$ in the angular sense, that is,
		\begin{equation*}
            \mathop{\mathrm{Res}_A}(h_\varphi,\tau):=\angle \lim_{z\to\tau}h_\varphi(z)(z-\tau)
        \end{equation*}
		is finite and non-zero. Moreover, in this situation,
		\begin{equation*}
		     c_{\varphi,\psi}=-\tfrac12\mathop{\mathrm{Res}_A}(h_\varphi,\tau) \psi^{\prime\prime}_A(\tau).
		\end{equation*}
 \end{ourlist}
\end{corollary}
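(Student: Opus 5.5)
The corollary should follow from formula~\eqref{Eq:Formula c_varphi_psi3} of Theorem~\ref{Formula c_varphi_psi}, which I take as proved: $c_{\varphi,\psi}=\anglim_{z\to\tau}(\psi(z)-z)/(\varphi(z)-z)$. I also use two facts. First, $c_{\varphi,\psi}\neq0$ because $\psi\neq\id_\UD$; this is Remark~\ref{Re:real-coefficient2} in positive step and Proposition~\ref{Prop:firstpropertiescoefficient}\+\ref{firstpropereties1} in zero step. Second, $\varphi$ and $\psi$ are both parabolic with Denjoy\,--\,Wolff point~$\tau$ (Theorem~\ref{TH_Behan-and-Ko}), so $\varphi(\tau)=\psi(\tau)=\tau$ and $\varphi'(\tau)=\psi'(\tau)=1$ in the angular sense.

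\textbf{Part \ref{IT_second-der1}.} Put $f\in\{\varphi,\psi\}$. Then $f\in C^2_A(\tau)$ holds if and only if $\anglim_{z\to\tau}(f(z)-z)/(z-\tau)^2$ exists and is finite. Indeed, $a_0=\tau$ and $a_1=1$ are forced, so $f(z)-z-a_2(z-\tau)^2=o((z-\tau)^2)$. In that case the limit equals $f''_A(\tau)/2$.

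Suppose $\varphi\in C^2_A(\tau)$. Write
\[
\frac{\psi(z)-z}{(z-\tau)^2}=\frac{\psi(z)-z}{\varphi(z)-z}\cdot\frac{\varphi(z)-z}{(z-\tau)^2}.
\]
As $z\to\tau$ non-tangentially, the right side tends to $c_{\varphi,\psi}\,\varphi''_A(\tau)/2$. Hence $\psi\in C^2_A(\tau)$ and $\psi''_A(\tau)=c_{\varphi,\psi}\varphi''_A(\tau)$. Here $\varphi(z)\neq z$ in~$\UD$, since $\varphi$ has no interior fixed point, so the quotient is defined.

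For the converse, I divide instead by $(\psi(z)-z)/(\varphi(z)-z)$. Its angular limit $c_{\varphi,\psi}$ is non-zero, so the same factorization gives $\varphi\in C^2_A(\tau)$.

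\textbf{Part \ref{IT_second-der2}.} Here I use formula~\eqref{Eq:Formula c_varphi_psi2} with $\psi:=\varphi$, which gives $\anglim h_\varphi'(z)(\varphi(z)-z)=c_{\varphi,\varphi}=1$. Since $\varphi(z)-z\sim\tfrac12\varphi''_A(\tau)(z-\tau)^2$ with $\varphi''_A(\tau)\neq0$, it follows that
\[
h_\varphi'(z)(z-\tau)^2\to \frac{2}{\varphi''_A(\tau)}
\]
non-tangentially.

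The main obstacle is to pass from this angular information on $h_\varphi'$ to $h_\varphi(z)(z-\tau)$. I plan to integrate $h_\varphi'$ along the radius $z=r\tau$ and conclude $h_\varphi(r\tau)(r\tau-\tau)\to-2/\varphi''_A(\tau)$ as $r\to1$. A cleaner alternative is Abel's equation: $h_\varphi(\varphi(z))-h_\varphi(z)=1$ in a Stolz angle, with $\varphi(z)-z\approx a_2(z-\tau)^2$ and $a_2=\varphi''_A(\tau)/2$. This suggests $h_\varphi\approx -1/(a_2(z-\tau))$, consistent with the derivative asymptotic.

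To get full angular convergence rather than radial convergence, I will use a Lindelöf-type argument. I write $h_\varphi(z)=h_\varphi(z_0)+\int_{z_0}^{z}h_\varphi'$ along paths that stay in a slightly larger Stolz angle. By the derivative asymptotic, the integrand is $\tfrac{2}{\varphi''_A(\tau)}(\zeta-\tau)^{-2}(1+o(1))$. Integrating gives
\[
h_\varphi(z)=-\frac{2}{\varphi''_A(\tau)(z-\tau)}+o\big(|z-\tau|^{-1}\big)
\]
uniformly in the smaller angle, using path length $O(|z-\tau|)$ for the last portion. Hence $\mathop{\mathrm{Res}_A}(h_\varphi,\tau)=-2/\varphi''_A(\tau)$, which is finite and non-zero. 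Combining with part~\ref{IT_second-der1} gives $-\tfrac12\mathop{\mathrm{Res}_A}(h_\varphi,\tau)\,\psi''_A(\tau)=\psi''_A(\tau)/\varphi''_A(\tau)=c_{\varphi,\psi}$.
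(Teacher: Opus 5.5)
Your proposal is correct and essentially matches the paper's proof: part (1) uses the same factorization of $(\psi(z)-z)/(z-\tau)^2$ through formula \eqref{Eq:Formula c_varphi_psi3} together with $c_{\varphi,\psi}\neq0$, and part (2) likewise goes through $\anglim h_\varphi'(z)(z-\tau)^2$ obtained from formula \eqref{Eq:Formula c_varphi_psi2}. The differences are minor: you apply \eqref{Eq:Formula c_varphi_psi2} to $\varphi$ itself (using $c_{\varphi,\varphi}=1$) rather than to $\psi$, and you prove the passage from $h_\varphi'$ to $h_\varphi(z)(z-\tau)$ directly by integrating along radial-plus-circular paths in a Stolz angle, whereas the paper cites the proof of \cite[Theorem~6.2]{CDP} for that step.
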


In the second corollary of Theorem~\ref{Formula c_varphi_psi}, we establish two (elementary but) fundamental properties of the simultaneous linearization coefficient.
\begin{corollary}\label{CR_Constante c_varphi_psi}  Let $\varphi\in\Hol(\UD)$ be parabolic.
	\begin{ourlist}
		\item\label{IT_SimCoeff-additive} Assume $\psi_1,\psi_2\in \Zn(\varphi)$. Then
		\begin{equation}\label{EQ_SimCoeff-additive}
		c_{\varphi,\psi_1\circ\psi_2}\,=\,c_{\varphi,\psi_2\circ\psi_1}\,=\,
                                          c_{\varphi,\psi_1}\,+\,c_{\varphi,\psi_2}.
		\end{equation}
		\item\label{IT_SimCoeff-multiplicative} Let $\psi_1,\psi_2,\psi_3\in\Hol(\D)$ be parabolic such that every two of these self-maps commute. Then
	\begin{equation}\label{EQ_SimCoeff-multiplicative}
		c_{\psi_1,\psi_3}\,=\,c_{\psi_1,\psi_2}\,\,c_{\psi_2,\psi_3}.
	\end{equation}
		In particular,
for any $\psi\in \Zn(\varphi)\setminus\{\id_\UD\}$, we have
	\begin{equation}\label{EQ_SimCoeff-1}
		c_{\psi,\varphi}\,\,c_{\varphi,\psi}\,=\,1.
	\end{equation}
	\end{ourlist}	
\end{corollary}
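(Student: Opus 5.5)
The plan is to derive both assertions from the angular-limit formulas of Theorem~\ref{Formula c_varphi_psi}. The sum in \ref{IT_SimCoeff-additive} will come from the difference formula~\eqref{Eq:Formula c_varphi_psi1}, and the product in \ref{IT_SimCoeff-multiplicative} from the ratio formula~\eqref{Eq:Formula c_varphi_psi3}. Throughout, $\tau$ denotes the common Denjoy\,--\,Wolff point; by Theorem~\ref{TH_Behan-and-Ko} all maps commuting with a parabolic map share it.

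\emph{Proof of \ref{IT_SimCoeff-additive}.} Note first that $\psi_1\circ\psi_2$ and $\psi_2\circ\psi_1$ belong to $\Zn(\varphi)$, so their coefficients are defined. If one of $\psi_1,\psi_2$ is $\id_\UD$, the claim is trivial because $c_{\varphi,\id_\UD}=0$ (Remark~\ref{Re:real-coefficient2}, or Proposition~\ref{Prop:firstpropertiescoefficient} in the zero-step case). Otherwise $\psi_2$ is parabolic with Denjoy\,--\,Wolff point $\tau$ (Theorem~\ref{TH_Behan-and-Ko}). For $z\in\UD$ write
\[
h_\varphi(\psi_1(\psi_2(z)))-h_\varphi(z)=\big[h_\varphi(\psi_1(w))-h_\varphi(w)\big]_{w=\psi_2(z)}+\big[h_\varphi(\psi_2(z))-h_\varphi(z)\big].
\]
The second bracket tends to $c_{\varphi,\psi_2}$ as $z\to\tau$ non-tangentially, by~\eqref{Eq:Formula c_varphi_psi1}.

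For the first bracket I need that $\psi_2$ maps every Stolz angle at $\tau$, near $\tau$, into some Stolz angle at $\tau$. This follows from the Julia\,--\,Wolff\,--\,Carathéodory theorem, since $\psi_2$ has finite angular derivative $\psi_2'(\tau)=1$ at $\tau$. In the half-plane picture used in the proof of Lemma~\ref{LM_close-to-linear}, $F(\zeta)/\zeta\to1$ non-tangentially, so $\arg F(\zeta)$ stays bounded away from $\pm\pi/2$. Hence $w=\psi_2(z)\to\tau$ non-tangentially, and the first bracket tends to $c_{\varphi,\psi_1}$.

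Applying~\eqref{Eq:Formula c_varphi_psi1} to $\psi_1\circ\psi_2$ then gives $c_{\varphi,\psi_1\circ\psi_2}=c_{\varphi,\psi_1}+c_{\varphi,\psi_2}$. Interchanging the roles of $\psi_1$ and $\psi_2$ gives the same value for $\psi_2\circ\psi_1$. I expect this Stolz-angle preservation to be the only delicate point, and it is standard.

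\emph{Proof of \ref{IT_SimCoeff-multiplicative}.} The three maps pairwise commute and are parabolic, so $\psi_2,\psi_3\in\Zn(\psi_1)$ and $\psi_3\in\Zn(\psi_2)$, and all share the Denjoy\,--\,Wolff point $\tau$. Since each $\psi_k$ is parabolic, it has no fixed point in $\UD$, so $\psi_k(z)-z\neq0$ for every $z\in\UD$. Hence the factorization
\[
\frac{\psi_3(z)-z}{\psi_1(z)-z}=\frac{\psi_3(z)-z}{\psi_2(z)-z}\cdot\frac{\psi_2(z)-z}{\psi_1(z)-z}
\]
is legitimate everywhere in $\UD$. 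Taking angular limits at $\tau$ and using~\eqref{Eq:Formula c_varphi_psi3} three times (with base maps $\psi_1$, $\psi_2$ and $\psi_1$ respectively) yields~\eqref{EQ_SimCoeff-multiplicative}.

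\emph{Proof of~\eqref{EQ_SimCoeff-1}.} Let $\psi\in\Zn(\varphi)\setminus\{\id_\UD\}$; then $\psi$ is parabolic by Theorem~\ref{TH_Behan-and-Ko}. Apply~\eqref{EQ_SimCoeff-multiplicative} with $\psi_1=\psi_3=\varphi$ and $\psi_2=\psi$. It remains to note that $c_{\varphi,\varphi}=1$, which is immediate from~\eqref{Eq:Formula c_varphi_psi3} because the quotient is identically $1$. Alternatively it follows from Abel's equation, since $\beta=\id_{S_\varphi}$ solves~\eqref{EQ_beta(sys)} with $c=1$.
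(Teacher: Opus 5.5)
Your proposal is correct and follows the paper's own proof essentially step for step: the additive formula comes from the difference formula \eqref{Eq:Formula c_varphi_psi1} together with the fact that $\psi_2$ maps non-tangential approach to $\tau$ into non-tangential approach, and the multiplicative formula comes from the ratio factorization combined with \eqref{Eq:Formula c_varphi_psi3}. For \eqref{EQ_SimCoeff-1} you specialize with $\psi_1=\psi_3=\varphi$ and $c_{\varphi,\varphi}=1$, whereas the paper takes $\psi_1=\psi_3=\psi$ and uses $c_{\psi,\psi}=1$; the two are symmetric, and your justifications of the nonvanishing denominators and of Stolz-angle preservation spell out points the paper leaves implicit.
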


\begin{remark}
Suppose $\varphi,\psi\in\Hol(\UD)$ are commuting parabolic self-maps. Suppose ${c_{\varphi,\psi}={m}/{n}}$ for some ${m,n\in\Natural}$. Then by~\eqref{EQ_SimCoeff-additive}, we have $${c_{\varphi,\psi^{\circ n}}=m=c_{\varphi,\varphi^{\circ m}}}.$$ Applying Corollary~\ref{CR_injective-on-abelian}
to~${\psi_1:=\psi^{\circ n}}$ and ${\psi_2:=\varphi^{\circ m}}$, we then conclude that
${\psi^{\circ n}=\varphi^{\circ m}}$. This generalizes Theorem~\ref{Prop:firstpropertiescoefficient}\+\ref{firstpropereties2}.
\end{remark}
\begin{remark}
Combining the previous remark with Corollary~\ref{CR_second-der}\+\ref{IT_second-der1},
 we obtain the following rigidity result: if ${\varphi,\psi\in C_A^2(\tau)}$ are two commuting parabolic self-maps with Denjoy\,--\,Wolff point at~$\tau$ and if ${m\varphi''_A(\tau)=n\psi''_A(\tau)\neq0}$ for some ${m,n\in\Natural}$, then ${\varphi^{\circ m}=\psi^{\circ n}}$. This improves~\cite[Theorem~2.4\,(6)]{BTV}.
\end{remark}

Now we pass to the proofs of the above statements, which will be followed by one more statement of a bit technical nature used in Section~\ref{S_realSLC}.

\begin{proof}[\proofof{Theorem~\ref{Formula c_varphi_psi}}] If $\varphi$ is of zero hyperbolic step, the result follows from \cite[Theorem~5.1 and its proof]{CDG-zero-h.step}. Therefore, beyond this point, we assume that $\varphi$ is parabolic of positive hyperbolic step.

We recall that $\psi=\id_\D$ if and only if $c_{\varphi,\psi}=0$ (see Remark~\ref{Re:real-coefficient2}). Hence, we assume that $\psi\neq \id_\D$.  We also may assume the base space of the canonical model of~$\varphi$ is $S_\varphi=\H$. (The proof for the other case, that is for ${S_\varphi=-\H}$ is similar.)

We firstly prove \eqref{Eq:Formula c_varphi_psi1} and then we will deduce the others two equalities from the first one.	
If~${c_{\varphi, \psi}\in \R}$, then~\eqref{Eq:Formula c_varphi_psi1} holds trivially, because by Remark~\ref{RM_real-SLC}\+(b), we have ${h_\varphi\circ\psi-h_\varphi=c_{\varphi,\psi}}$.

Suppose now that $c_{\varphi, \psi}\notin \R$. In the proof of Propositions~\ref{PR_beta-uniq} and~\ref{PR_beta-properties} (see Case~2 on page~\pageref{PAGE_Case2}) we found out  that in this case,
\begin{equation}\label{EQ_SLC-via-g_psi}
  c_{\varphi, \psi}=\lim_{\Im w\to+\infty}\big(g_\psi(w)-w\big),
\end{equation}
where $g_\psi$ stands for the semimodel homomorphism associated with~$\psi$.
Recalling that
$$
 \anglim_{z\to\tau}\Im h_\varphi(z)=+\infty \quad\text{and}\quad {h_\varphi\circ\psi}={g_\psi\circ h_\varphi},
$$
see Remarks~\ref{Re:Pommerenke} and~\ref{RM_morphism-properties}, we easily deduce~\eqref{Eq:Formula c_varphi_psi1} from~\eqref{EQ_SLC-via-g_psi}:
	$$
	\anglim_{z\to\tau}\big(h_\varphi(\psi(z))-h_\varphi(z)\big)
  =\anglim_{z\to\tau}\big(g_\psi(h_\varphi(z))-h_\varphi(z)\big)=c_{\varphi,\psi}.
	$$

\smallskip
In turn,~\eqref{Eq:Formula c_varphi_psi1} implies~\eqref{Eq:Formula c_varphi_psi2} because by \cite[Proposition~5.6]{CDG-zero-h.step},
$$
\anglim_{z\to\tau}\frac{h_\varphi(\psi(z))-h_\varphi(z)}{h'_\varphi(z)(\psi(z)-z)}=1.
$$	

Finally, to obtain~\eqref{Eq:Formula c_varphi_psi3}, we make a trivial observation that  ${\varphi\in\Zn(\varphi)}$ and ${c_{\varphi,\varphi}=1}$.  Together with~\eqref{Eq:Formula c_varphi_psi2}, this gives  $$\anglim_{z\to\tau}h'_\varphi(z)(\psi(z)-z)=c_{\varphi,\psi} \quad\text{and}\quad \anglim_{z\to\tau}h'_\varphi(z)(\varphi(z)-z)=1.$$ As a consequence, the angular limit in~\eqref{Eq:Formula c_varphi_psi3} exists and equals
$$
  \anglim_{z\to\tau}\frac{\psi(z)-z}{\varphi(z)-z}=
\frac{\anglim_{z\to\tau}h'_\varphi(z)(\psi(z)-z)}{\anglim_{z\to\tau}h'_\varphi(z)(\varphi(z)-z)}
                                                  =c_{\varphi,\psi}. \eqno\qedhere
$$
\end{proof}

\begin{proof}[\proofof{Corollary~\ref{CR_second-der}}] We follow the argument in the proof of~\cite[Corollary~7.6]{CDG-Centralizer}. Since $\varphi$ is parabolic with Denjoy\,--\,Wolff point~$\tau$, we have that ${\varphi\in C_A^2(\tau)}$ if and only if the limit
$$
a:=\anglim_{z\to\tau} \frac{\varphi(z)-z}{(z-\tau)^2}
$$
exists finitely, and if it does then $\varphi''_A(\tau)=2a$. In view of Theorem~\ref{TH_Behan-and-Ko}, the same holds also for~$\psi$.

\StepP{\ref{IT_second-der1}} The result follows at once if we observe that according to Theorem~\ref{Formula c_varphi_psi}, the left hand side in the identity
	$$
	\frac{\psi(z)-z}{\varphi(z)-z}=\frac{(\psi(z)-z)/(z-\tau)^2}{(\varphi(z)-z)/(z-\tau)^2},\quad z\in \D,
	$$
has finite angular limit at~$\tau$ equal to~$c_{\varphi,\psi}$. This quantity is different from zero, because ${\psi\neq\id_\UD}$ by the hypothesis; see Remark~\ref{Re:real-coefficient2} and \cite[Theorem~4.8]{CDG-zero-h.step}.
	
\StepP{\ref{IT_second-der2}} By the hypothesis, $\varphi\in C^2_A(\tau)$ and $\varphi^{\prime\prime}_A(\tau)\neq0$.
Hence, by what we have proved above, also ${\psi\in C_A^2(\tau)}$ and ${\psi''_A(\tau)\neq0}$. Taking formula~\eqref{Eq:Formula c_varphi_psi2} into account, we then obtain
\begin{equation}\label{EQ_L1}
L_1:=\anglim_{z\to\tau}h'_\varphi(z)(z-\tau)^2=
   \anglim_{z\to\tau}\frac{h'_\varphi(z)\big(\psi(z)-z\big)}{\big(\psi(z)-z\big)/(z-\tau)^2}=
   \frac{2c_{\varphi,\psi}}{\psi''_A(\tau)}.
\end{equation}
It remains to observe that existence of finite limit in the l.h.s. of~\eqref{EQ_L1} implies the existence of
$$
	L_2:=\angle\lim_{z\to\tau}h_\varphi(z)(z-\tau),
$$
and that $L_1=-L_2$; see, e.g., the proof of~\cite[Theorem~6.2]{CDP}.
\end{proof}

\begin{proof}[\proofof{Corollary~\ref{CR_Constante c_varphi_psi}}] Let $\tau\in \partial \D$ be the Denjoy\,--\,Wolff point of~$\varphi$.
	
\StepP{\ref{IT_SimCoeff-additive}} Clearly, we may assume that none of $\psi_1$ and $\psi_2$ is the identity. Then by Theorem~\ref{TH_Behan-and-Ko}, both $\psi_1$ and $\psi_2$ are parabolic and have~$\tau$ as their Denjoy\,--\,Wolff point. In particular, given any sequence ${(z_n)\subset\D}$ converging non-tangentially to~$\tau$, the conformality of $\psi_{2}$ at~$\tau$ guarantees that  $(\psi_2(z_n))$ also converges non-tangentially to~$\tau$. Therefore, by Theorem~\ref{Formula c_varphi_psi},
	$$
	c_{\varphi,\psi_1}=\lim_{n\to\infty} \big(h_\varphi\circ\psi_1(\psi_2(z_n))-h_\varphi(\psi_2(z_n)\big),\quad c_{\varphi,\psi_2}=\lim_{n\to\infty}\big(h_\varphi\circ\psi_2(z_n)-h_\varphi(z_n)\big).
	$$
	Summing these equalities, we obtain
	$$
	c_{\varphi,\psi_1}+c_{\varphi,\psi_2}=\lim_{n\to\infty} \big(h_\varphi\circ\psi_1(\psi_2(z_n))-h_\varphi(z_n)\big)=\lim_{n\to\infty} \big(h_\varphi\circ\psi_1\circ\psi_2(z_n)-h_\varphi(z_n)\big)=c_{\varphi,\psi_1\circ\psi_2}.
	$$
	Switching the roles of $\psi_1$ and $\psi_2$, we also obtain that  $	c_{\varphi,\psi_1}+c_{\varphi,\psi_2}=c_{\varphi,\psi_2\circ\psi_1}.$

\StepP{\ref{IT_SimCoeff-multiplicative}} Note that $\psi_1$, $\psi_2$, and~$\psi_3$ have the same Denjoy\,--\,Wolff point, see Theorem~\ref{TH_Behan-and-Ko}. Therefore, equality~\eqref{EQ_SimCoeff-multiplicative} follows at once from Theorem~\ref{Formula c_varphi_psi} if we observe that
    $$ \frac{\psi_3(z)-z}{\psi_1(z)-z}=\frac{\psi_2(z)-z}{\psi_1(z)-z}\cdot\frac{\psi_3(z)-z}{\psi_2(z)-z}
             \quad\text{for all $z\in\D$.}
	$$

Now if $\psi\in \Zn(\varphi)\setminus\{\id_\D\}$, then by Theorem~\ref{TH_Behan-and-Ko}, $\psi$~is parabolic. Applying~\eqref{EQ_SimCoeff-multiplicative} for ${\psi_1=\psi_3:=\psi}$ and~${\psi_2:=\varphi}$, we immediately obtain~\eqref{EQ_SimCoeff-1}.
\end{proof}

\label{PG_tildes}We conclude the section with one statement of a bit technical kind, but the question it answers is quite natural. Let $\varphi\in\Hol(\D)$ be parabolic and let ${\psi\in\Zn(\varphi)}$. Denote ${V:=V_\varphi(1/3)}$. Then $V$ is a simply connected domain and moreover, ${\varphi(V)\subset V}$ and ${\psi(V)\subset V}$; see Theorem~\ref{Thm:V-sets} and Lemma~\ref{Le:V-sets}.

Consider any conformal map~$f$ of~$\D$ onto ${V:=V_\varphi(1/3)}$.
It is easy to see that the self-maps
\begin{equation}\label{EQ_def-with-tildes}
 \widetilde \varphi:=f^{-1}\circ \varphi\circ f \qquad\text{and}\qquad \widetilde \psi:=f^{-1}\circ \psi\circ f
\end{equation}
commute. Note also that $\widetilde \varphi$ is parabolic by \cite[Proposition~2.11]{CDG-zero-h.step}.

Furthermore, it follows from Theorem~\ref{Thm:V-sets}\+\ref{IT_V-sets(4)} that~$\widetilde \varphi$ is univalent in~$\UD$.
For this reason, in some cases, it turns out to be convenient to replace the original self-mappings $\varphi$ and~$\psi$ with the new commuting pair $\widetilde \varphi$ and~$\widetilde \psi$. Below we show that the simultaneous linearization coefficient does not change under this transformation.
\begin{proposition}\label{Prop:Coefwidetilde}
In the above assumptions and notation, we have
\begin{equation}\label{EQ_Coefwidetilde}
c_{\widetilde\varphi,\widetilde\psi}=c_{\varphi,\psi}.
\end{equation}
\end{proposition}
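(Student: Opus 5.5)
Since $f$ maps $\UD$ conformally onto $V=V_\varphi(1/3)$, the Koenigs function $h_\varphi$ is univalent on $V$, and $V$ is absorbing only in the zero-step case, so I would not try to relate the canonical models directly. Instead I would use the intrinsic characterization of the coefficient given by Corollary~\ref{Thm:StrongUniqueness} (in the positive-step case) and Theorem~\ref{TH_SLC(2formulas)}\+\ref{IT_SLC2} (in the zero-step case): if $\widetilde\varphi,\widetilde\psi$ commute, are parabolic, and some holomorphic $h:\UD\to\C$ satisfies $h\circ\widetilde\varphi=h+c_1$ and $h\circ\widetilde\psi=h+c_2$ with $(c_1,c_2)\neq(0,0)$, then $c_2/c_1=c_{\widetilde\varphi,\widetilde\psi}$. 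The trivial case $\psi=\id_\UD$ gives $\widetilde\psi=\id_\UD$, where both coefficients vanish by Remark~\ref{Re:real-coefficient2} and Proposition~\ref{Prop:firstpropertiescoefficient}. So I would assume $\psi\neq\id_\UD$; then $\psi$ is parabolic by Theorem~\ref{TH_Behan-and-Ko}.

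The steps are as follows.

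\textbf{Step 1: pull back a linearizer.} Take a linearizer for the pair $(\varphi,\psi)$ on $\UD$: either $h=h_\varphi$ in the zero-step case, or $h=\beta_{\varphi,\psi}\circ h_\varphi$ in the positive-step case. In both cases $h\circ\varphi=h+1$ and $h\circ\psi=h+c_{\varphi,\psi}$. Set $\widetilde h:=h\circ f$. Because $\varphi(V)\subset V$ and $\psi(V)\subset V$, the compositions $f^{-1}\circ\varphi\circ f$ and $f^{-1}\circ\psi\circ f$ are well defined, and
\[
\widetilde h\circ\widetilde\varphi=h\circ\varphi\circ f=\widetilde h+1,\qquad \widetilde h\circ\widetilde\psi=\widetilde h+c_{\varphi,\psi}.
\]

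\textbf{Step 2: check the hypotheses on $\widetilde\psi$.} We already know $\widetilde\varphi$ is parabolic. I must also check that $\widetilde\psi\neq\id_\UD$, which holds because $\psi\neq\id_\UD$ and $V$ is open, together with Theorem~\ref{TH_Behan-and-Ko}. Theorem~\ref{TH_Behan-and-Ko} then makes $\widetilde\psi$ parabolic, since it commutes with the parabolic self-map $\widetilde\varphi$ and differs from the identity.

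\textbf{Step 3: conclude.} Apply Corollary~\ref{Thm:StrongUniqueness} to the commuting parabolic pair $(\widetilde\varphi,\widetilde\psi)$ with $(c_1,c_2)=(1,c_{\varphi,\psi})$. This yields $c_{\widetilde\varphi,\widetilde\psi}=c_{\varphi,\psi}$. The corollary as stated covers both hyperbolic steps of $\widetilde\varphi$: the zero-step case reduces to Theorem~\ref{TH_SLC(2formulas)}, so it does not matter whether the hyperbolic step is preserved under conjugation by $f$.

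\textbf{Main obstacle.} The only delicate point is that the corollary needs a genuine holomorphic linearizer on all of $\UD$ for the pair $(\widetilde\varphi,\widetilde\psi)$, and $\widetilde h$ is exactly such a function, defined on $\UD=f^{-1}(V)$. So the argument should go through cleanly. If one preferred to avoid Corollary~\ref{Thm:StrongUniqueness}, an alternative is formula~\eqref{Eq:Formula c_varphi_psi3}. One would then need to know that $f$ extends conformally at the preimage of $\tau$, so that non-tangential approach is preserved. That conformality issue is the harder route, which is why I would use the linearizer argument instead.
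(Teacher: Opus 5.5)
Your proposal is correct and follows essentially the same route as the paper. You pull back a linearizer $h$ of $(\varphi,\psi)$ to $\widetilde h=h\circ f$ and then apply Corollary~\ref{Thm:StrongUniqueness} to the pair $(\widetilde\varphi,\widetilde\psi)$. The differences are minor: you treat the zero-step case uniformly, with $h_\varphi$ as the linearizer, instead of citing \cite[Theorem~4.8]{CDG-zero-h.step}, and you explicitly check that $\widetilde\psi$ is parabolic and dispose of $\psi=\id_\UD$, which the paper leaves implicit.
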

\begin{proof}
In case of $\varphi$ having zero hyperbolic step, equality~\eqref{EQ_Coefwidetilde} was established in the proof of \cite[Theorem~4.8]{CDG-zero-h.step}. Therefore, we may suppose that~$\varphi$ is of positive hyperbolic step. Then by \cite[Proposition~2.11]{CDG-zero-h.step}, $\widetilde\varphi$ is a parabolic self-map of positive hyperbolic step as well.

As we proved in Section~\ref{S_simmultaneous-lin}, $\varphi$ and~$\psi$ admit simultaneous linearization. More precisely, according to Propositions~\ref{PR_beta-exists} and~\ref{PR_beta-uniq}\+\ref{IT_c-uniq}, there exists ${h\in\Hol(\UD,\C)}$ such that
$$
  h\circ\varphi=h+1 \qquad\text{and}\qquad h\circ\psi=h+c_{\varphi,\psi}.
$$
Using~\eqref{EQ_def-with-tildes}, we immediately obtain
$$
  \widetilde h\circ\widetilde\varphi=\widetilde h+1, \quad \widetilde h\circ\widetilde\psi=\widetilde h+c_{\varphi,\psi}, \quad\text{where~}\,\widetilde h:=h\circ f,
$$
and now equality~\eqref{EQ_Coefwidetilde} follows by Corollary~\ref{Thm:StrongUniqueness}.
\end{proof}

\section{Case of real simultaneous linearization coefficient}\label{S_realSLC}

\subsection{Statements of results}
In the special case $c_{\varphi,\psi}\in\Real$, the two commuting parabolic self-maps~$\varphi,\psi$ turn out to share many properties.
\begin{theorem}\label{TH_real-SLC=common-properties}
Let $\varphi,\psi\in\Hol(\UD)\setminus\{\id_\UD\}$ be two commuting parabolic self-maps.
As before, let ${(S_\varphi,h_\varphi,z\mapsto z+1)}$ and ${(S_\psi,h_\psi,z\mapsto z+1)}$ stand for the canonical holomorphic models for $\varphi$ and~$\psi$, respectively.
Suppose that the simultaneous linearization coefficient of~$\psi$ w.r.t.\,$\varphi$ is real, i.e. ${c_{\varphi,\psi}\in\Real}$. Then the following statements hold:
\begin{ourlist}
\item\label{IT_real-SLC=h.step} $\psi$ is of  positive hyperbolic step if and only if~so~is~$\varphi$;
\smallskip

\item\label{IT_real-SLC=shift} $\psi$ is of finite shift if and only if~so~is~$\varphi$;
\smallskip

\item\label{IT_real-SLC=Koenigs} $h_\psi=c_{\varphi,\psi}^{-1}h_\varphi$;
\smallskip

\item\label{IT_real-SLC=positive} if $c_{\varphi,\psi}>0$, then $S_\psi=S_\varphi$;
\smallskip

\item\label{IT_real-SLC=negative} if $c_{\varphi,\psi}<0$, then ${S_\psi=-S_\varphi}$ and both $\varphi$ and $\psi$ are parabolic automorphisms.
\end{ourlist}
\end{theorem}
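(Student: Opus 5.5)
The plan is to start from the relation $h_\varphi\circ\psi=h_\varphi+c$ with $c:=c_{\varphi,\psi}\in\Real$. In the positive-step case this is Remark~\ref{RM_real-SLC}\+(b); in the zero-step case it is Theorem~\ref{Prop:pseuso-semigroup}. Since $\psi\neq\id_\UD$, we have $c\neq0$ by Remark~\ref{Re:real-coefficient2} and Proposition~\ref{Prop:firstpropertiescoefficient}. Set $h:=c^{-1}h_\varphi$, so that $h\circ\psi=h+1$.

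The central step is to show that the triple $\big(c^{-1}S_\varphi,\,h,\,z\mapsto z+1\big)$ is a holomorphic \emph{model} for~$\psi$; statement~\ref{IT_real-SLC=Koenigs} then follows from the uniqueness Theorem~\ref{Thm:uniqness} together with the normalization in Remark~\ref{RM_normalization}.

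\emph{Condition (HM1).} This is exactly the relation $h\circ\psi=h+1$.

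\emph{Condition (HM3).} Take $V:=V_\varphi(1/3)$. It is $\psi$-invariant by Lemma~\ref{Le:V-sets}, and $h_\varphi$ is univalent on it by Theorem~\ref{Thm:V-sets}. It remains to show that $V$ is $\psi$-absorbing, or else to replace it by a suitable absorbing subdomain. For zero step, $V$ is $\varphi$-absorbing, and I would transfer this to~$\psi$. Since $c$ is real, $h_\varphi(\psi^{\circ n}(z))=h_\varphi(z)+nc$ moves horizontally. Using Remark~\ref{Re:almostasemigroup} (compacts translated far to the right lie in $h_\varphi(V)$) together with univalence, I would argue that $\psi^{\circ n}(z)$ eventually enters~$V$.

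\emph{Condition (HM2).} We need $c^{-1}S_\varphi=\bigcup_n\big(h(\UD)-n\big)$, that is, $S_\varphi=\bigcup_n\big(h_\varphi(\UD)-nc\big)$. Suppose first $c>0$. By Remark~\ref{Re:almostasemigroup}, every compact $K\subset S_\varphi$ satisfies $K+t\subset h_\varphi(\UD)$ for all large real~$t$, and taking $t=nc$ gives equality. This also yields~\ref{IT_real-SLC=positive}: $S_\psi=c^{-1}S_\varphi$, which equals $S_\varphi$ when $S_\varphi$ is $\pm\UH$ or~$\C$. Note also that the normalization $\Re h(0)=0$ (respectively $h(0)=0$) is preserved under multiplication by~$c^{-1}$.

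Suppose next $c<0$. Proposition~\ref{Prop:firstpropertiescoefficient}\+\ref{firstpropereties3}, which applies verbatim here since the hypothesis $h_\varphi\circ\psi=h_\varphi+c$ with $c\in\Real$ holds, shows that both $\varphi$ and~$\psi$ are parabolic automorphisms. Then $h_\varphi$ is a conformal map onto $S_\varphi$, so $c^{-1}h_\varphi$ is a conformal map onto $c^{-1}S_\varphi=-S_\varphi$ (for a half-plane). This gives the model directly and proves~\ref{IT_real-SLC=negative}.

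Statement~\ref{IT_real-SLC=h.step} follows from~\ref{IT_real-SLC=positive} and~\ref{IT_real-SLC=negative} combined with Theorem~\ref{Thm:model}: the base space is a half-plane or~$\C$ simultaneously for $\varphi$ and~$\psi$. For~\ref{IT_real-SLC=shift}, I would use whatever characterization of finite shift the paper adopts, presumably via the Koenigs function, such as the boundedness behaviour of $\Im h$ along orbits or the angular limit of $\Im h$. Since $h_\psi$ is a real multiple of $h_\varphi$, and the orbits of $\psi$ and~$\varphi$ are both carried by horizontal translations in the same coordinate, this property transfers directly between the two maps.

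The main obstacle I expect is the absorbing-domain part of (HM3) for~$\psi$ in the positive-step case, where $V_\varphi(1/3)$ is not known to be absorbing. Here I would instead take $V'$ to be the $\varphi$-absorbing domain from (HM3) for~$\varphi$, on which $h_\varphi$ is injective, and show that $\bigcup_k\varphi^{\circ k}$-preimages behave well under~$\psi$. Alternatively, I would bypass (HM3) by showing that the semimodel morphism from Lemma~\ref{Le:morphism}, a map $\beta$ between strips commuting with $z\mapsto z+1$, is injective, using univalence of $h_\varphi$ on the isogonal set~$V$.
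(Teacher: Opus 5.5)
Your overall strategy matches the paper's. For $c:=c_{\varphi,\psi}>0$ you exhibit $h_\varphi$, suitably rescaled, as the intertwining map of a holomorphic model of~$\psi$; (HM2) comes from Remark~\ref{Re:almostasemigroup} exactly as in Lemma~\ref{LM_absorbs-well}; and $c<0$ is handled via Proposition~\ref{Prop:firstpropertiescoefficient}\+\ref{firstpropereties3}.

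\textbf{Gap in (HM3).} The one non-formal point, (HM3), is never established.
\begin{itemize}
\item In the zero-step case your argument fails. From $h_\varphi(\psi^{\circ n}(z))=h_\varphi(z)+nc\in h_\varphi(V)$ you cannot infer $\psi^{\circ n}(z)\in V$, because $h_\varphi$ is injective only on~$V$, not on~$\UD$. The paper simply quotes \cite[Theorem~5.3\+(C)]{CDG-zero-h.step} here.
\item In the positive-step case, $V_\varphi(1/3)$ is in general not $\varphi$-absorbing: orbits with small $\Im h_\varphi$ never enter it. A $\varphi$-absorbing domain, on the other hand, is not known to be $\psi$-invariant.
\item Univalence of $h_\varphi$ on an isogonal set only makes the morphism $\beta\colon S_\psi\to S_\varphi$ of Lemma~\ref{Le:morphism} injective on $h_\psi(V)$. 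That does not spread to all of $S_\psi$ by itself.
\end{itemize}

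\textbf{How to close it.} The missing ingredient is the reverse relation.
\begin{enumerate}
\item By~\eqref{EQ_SimCoeff-1}, $c_{\psi,\varphi}=1/c\in\Real$. Hence $h_\psi\circ\varphi=h_\psi+1/c$, by Definition~\ref{simul-linea-coefficient-zero} if $\psi$ has zero step, and by Remark~\ref{RM_real-SLC}\+(b) with the roles of $\varphi,\psi$ swapped otherwise.
\item Lemma~\ref{LM_absorbs-well}, applied to each map, shows that $(S_\varphi,h_\varphi,z\mapsto z+c)$ is a semimodel of~$\psi$ and $(S_\psi,h_\psi,z\mapsto z+1/c)$ is a semimodel of~$\varphi$.
\item Lemma~\ref{Le:morphism} then gives $\beta\colon S_\psi\to S_\varphi$ and $\gamma\colon S_\varphi\to S_\psi$ with $h_\varphi=\beta\circ h_\psi$ and $h_\psi=\gamma\circ h_\varphi$. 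The Identity Principle yields $\beta\circ\gamma=\id_{S_\varphi}$ and $\gamma\circ\beta=\id_{S_\psi}$.
\item So $\gamma$ is a biholomorphism with $\gamma(w+1)=\gamma(w)+1/c$. It is therefore affine: between half-planes it is a M\"obius map conjugating two translations, so it fixes~$\infty$. Thus $\gamma(w)=w/c+b$, and the normalizations of Remark~\ref{RM_normalization} force $b=0$.
\end{enumerate}
This gives (3), (4) and (1) at once, whatever the hyperbolic step of~$\varphi$.

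\textbf{Gap in (2).} Statement (2) is also left unproved. Your sketch presupposes a characterization of finite shift in terms of $h_\varphi$ that you neither state nor prove. Knowing that $h_\psi$ is a real multiple of $h_\varphi$ does not by itself say anything about the behaviour of the orbits at~$\tau$.

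The paper argues as follows. A parabolic map of zero hyperbolic step is always of infinite shift, so by (1) only the positive-step case matters. There, finite shift is equivalent to membership in $C^2_A(\tau)$ \cite[Theorem~4.1]{CDP}. Corollary~\ref{CR_second-der}\+\ref{IT_second-der1} then gives $\varphi\in C^2_A(\tau)$ if and only if $\psi\in C^2_A(\tau)$.
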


The situation when $c_{\varphi,\psi}$ is non-real is more complicated. For example, if $\varphi$ has positive hyperbolic step, then self-mappings $\psi\in{\Zn(\varphi)\setminus\{\id_\UD\}}$ with ${\Im c_{\varphi,\psi}\neq0}$ can be both of zero and positive hyperbolic step. Therefore, at  first glance it seems surprising that the situation is different when we analyze the hyperbolic step of the composition~${\varphi\circ\psi}$.
\begin{theorem}\label{Thm:coefficient-not-real}
	Let $\varphi\in\Hol(\D)$ be parabolic of positive hyperbolic step and let $\psi\in\Zn(\varphi)$.
Suppose that ${\varphi\circ\psi\neq\id_\D}$. Then $\varphi\circ \psi$ is of positive hyperbolic step if and only if ${c_{\varphi,\psi}\in \R}$.
\end{theorem}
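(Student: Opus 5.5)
Set $\chi:=\varphi\circ\psi$. Then $\chi\in\Zn(\varphi)$, and by Theorem~\ref{TH_Behan-and-Ko} it is parabolic with Denjoy\,--\,Wolff point~$\tau$. By Corollary~\ref{CR_Constante c_varphi_psi}\ref{IT_SimCoeff-additive} we have $c:=c_{\varphi,\chi}=1+c_{\varphi,\psi}$. In particular, $c_{\varphi,\psi}\in\R$ if and only if $c\in\R$, and $c\neq0$ because $\chi\neq\id_\UD$ (Remark~\ref{Re:real-coefficient2}). So it suffices to prove the following: for $\chi\in\Zn(\varphi)\setminus\{\id_\UD\}$, $\chi$ is of positive hyperbolic step if and only if $c_{\varphi,\chi}\in\R$. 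We assume $S_\varphi=\UH$.

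For the direction ``$c\in\R\Rightarrow$ positive step'', Theorem~\ref{TH_real-SLC=common-properties}\ref{IT_real-SLC=h.step} applies directly, since $\chi$ and $\varphi$ are commuting parabolic self-maps. Alternatively, by Remark~\ref{RM_real-SLC}(b) we have $h_\varphi\circ\chi=h_\varphi+c$, so $c^{-1}h_\varphi$ solves Abel's equation for~$\chi$. By Remark~\ref{RM_factorization} we get $c^{-1}h_\varphi=f\circ h_\chi$, and the image $h_\varphi(\UD)\subset\UH$ then forces $S_\chi$ to be a half-plane, via Remark~\ref{RM_h.step=h(D)}.

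For the converse, suppose $c\notin\R$. The aim is to show that $\chi$ has zero hyperbolic step. From the proof of Proposition~\ref{PR_beta-uniq} (Case~2), the semimodel morphism $g:=g_\chi$ is a parabolic self-map of~$\UH$ of zero hyperbolic step, with Denjoy\,--\,Wolff point~$\infty$, satisfying $g(w)=w+F(e^{2\pi iw})$ with $F(0)=c\in\UH$. The function $h:=\beta_{\varphi,\chi}\circ h_\varphi$ satisfies $h\circ\chi=h+c$, so $c^{-1}h$ solves Abel's equation for~$\chi$. Moreover, by Proposition~\ref{PR_univ-in-isogonal-dom} it is univalent in a domain isogonal at~$\tau$. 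It follows that $\chi$ has a semimodel $(\C\text{ or a strip},\,c^{-1}h,\,z\mapsto z+1)$. The key step is to see that the orbits of $\chi$ have zero hyperbolic step. Take $z_0$ with $h_\varphi(z_0)=w_0$. Then $h_\varphi(\chi^{\circ n}(z_0))=g^{\circ n}(w_0)$, and $g^{\circ n}(w_0)\approx w_0+nc$ with imaginary part tending to $+\infty$, so these points move in a direction transversal to the real axis.

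This is where I expect the main obstacle to lie. I would argue via the model: the set $\Omega:=\bigcup_n\big(h(\UD)-nc\big)$ is the base space of a semimodel of~$\chi$, and by Lemma~\ref{Le:morphism} it is the image of $S_\chi$ under a map commuting with translations. I then want to show that $\Omega=\C$ and that $h$ is essentially the Koenigs map of~$\chi$. Proposition~\ref{PR_beta-properties} gives $\beta(w)=w+b+o(1)$ as $\Im w\to+\infty$, so $h(\UD)$ contains $h_\varphi(V)+b$ up to small perturbation, for an absorbing set $V$. By Remark~\ref{Re:almostasemigroup}, $h_\varphi(\UD)$ eventually contains $K+t$ for every compact $K\subset\UH$ and all large real~$t$. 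Hence $\Omega$ contains $\bigcup_n(K+t-nc)$, and since $c\notin\R$, the sets $\{t-nc\}$ sweep out the whole plane. Thus $\Omega=\C$.

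To conclude, I would show that the semimodel $(\C,c^{-1}h,z\mapsto z+1)$ is in fact a model of~$\chi$, i.e.\ that it satisfies (HM3). For this I would use univalence of $h$ on a $\chi$-absorbing domain, obtained from the isogonal domain $V$ together with the fact that orbits of $\chi$ converge non-tangentially. This non-tangential convergence holds because $h_\varphi(\chi^{\circ n}(z_0))$ has $\Im\to+\infty$ linearly, and I would deduce it by comparison with Remark~\ref{Re:Pommerenke}. Alternatively, I would bound directly $\hd_\UD\big(\chi^{\circ n}(z_0),\chi^{\circ n+1}(z_0)\big)\le\hd_{U}$ computed in the image domain: the steps are $g^{\circ n}(w_0)\mapsto g^{\circ n+1}(w_0)$ in $\UH$ of Euclidean size about $|c|$ at height about $n\Im c$, so the hyperbolic step in $\UH$ tends to~$0$. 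Pulling this back through the univalent branch of $h_\varphi$ on $V_\varphi(1/3)$, which is invariant under~$\chi$ by Lemma~\ref{Le:V-sets}, gives zero step of~$\chi$. This last route appears cleanest, and its delicate point is comparing $\hd_{h_\varphi(V)}$ with $\hd_\UH$ high up. I would handle that using Remark~\ref{Re:almostasemigroup}, which shows that $h_\varphi(V)$ contains large discs around $g^{\circ n}(w_0)$ relative to~$|c|$.
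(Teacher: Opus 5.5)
Your easy direction ($c_{\varphi,\psi}\in\R\Rightarrow$ positive step, via $c_{\varphi,\varphi\circ\psi}=1+c_{\varphi,\psi}$ and Theorem~\ref{TH_real-SLC=common-properties}\ref{IT_real-SLC=h.step}) is exactly the paper's argument. The converse, however, has a genuine gap, and it begins with your reduction. The statement you reduce to is false: it says that an arbitrary $\chi\in\Zn(\varphi)\setminus\{\id_\UD\}$ is of positive hyperbolic step if and only if $c_{\varphi,\chi}\in\R$. The paper remarks on this right before the theorem.

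Here is a counterexample. Let $h$ map $\UD$ conformally onto the quadrant $Q:=\{x+iy:x>0,\,y>0\}$, and set $\varphi:=h^{-1}\circ(h+1)$ and $\chi:=h^{-1}\circ(h+i)$. Then $\varphi$ is parabolic of positive hyperbolic step and $\chi\in\Zn(\varphi)$. By Corollary~\ref{Thm:StrongUniqueness}, $c_{\varphi,\chi}=i$. Yet $\chi$ is of positive hyperbolic step, since $h$ is univalent and $\bigcup_n(-iQ-n)=-\UH$.

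Both of your concrete steps break down on this example:
\begin{itemize}
\item The union $\bigcup_n\big(h(\UD)-ni\big)$ is just the right half-plane, not $\C$. Remark~\ref{Re:almostasemigroup} only gives absorption in the \emph{horizontal} direction, with a threshold $t_0(K)$ depending on~$K$. So it cannot tell you that $h_\varphi(\UD)$ reaches far out in the direction of~$c$.
\item The orbit $g^{\circ n}(w_0)=w_0+ni$ stays at distance $\Re w_0$ from $\partial Q$. So there are no large discs around it, and its hyperbolic steps in $Q$ do not tend to~$0$.
\item Remark~\ref{Re:Pommerenke} says that non-tangential approach forces $|\Im h_\varphi|\to+\infty$, not conversely. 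It therefore cannot yield non-tangential convergence of the orbits of~$\chi$.
\end{itemize}

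The missing idea lies in the one place where $\chi=\varphi\circ\psi$ really matters. A common linearizer satisfies $h\circ\varphi=h+1$ \emph{and} $h\circ\psi=h+c_{\varphi,\psi}$. Its image is therefore invariant under both translations. Once it contains a parallelogram with sides $1$ and $c_{\varphi,\psi}$, it contains a whole cone $\{p+s+tc_{\varphi,\psi}:s,t\ge0\}$.

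The translation vector $1+c_{\varphi,\psi}$ of $\chi$ points strictly inside this cone. Equivalently, $\mu:=1/(1+c_{\varphi,\psi})$ and $\mu c_{\varphi,\psi}$ have imaginary parts of opposite signs. That is why the backward translates along $1+c_{\varphi,\psi}$ exhaust~$\C$. In the quadrant example this works for $\varphi\circ\chi$ (vector $1+i$) but not for $\chi$ itself (vector $i$, which lies on the edge of the cone).

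Moreover, ``base space $=\C$'' gives zero hyperbolic step only for a genuine holomorphic model. You need (HM3), which your proposal does not establish.

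The paper secures both points as follows. It conjugates by a conformal map of $\UD$ onto $V_{\varphi\circ\psi}(1/3)$, which makes $\varphi$ and $\psi$ univalent without changing the coefficient (Proposition~\ref{Prop:Coefwidetilde} together with Corollary~\ref{CR_Constante c_varphi_psi}). It then applies the univalent-case Lemma~\ref{Le:coefficient-not-real}, where the linearizer is univalent on all of~$\UD$.
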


From the latter theorem we deduce, as a corollary, Cowen's \cite[Theorem~3.1]{Cowen-comm} stating that any two commuting parabolic self-mappings $\varphi$ and~$\psi$, different from automorphisms, belong to the pseudo-iteration semigroup of their composition~${\varphi\circ\psi}$.
In our terminology, this result means that $h_{\varphi\circ\psi}$ solves the simultaneous linearization problem for these two self-mappings.

\begin{corollary}\label{CR_Cowen}
Let $\varphi$ be a parabolic self-map of positive hyperbolic step and let $\psi\in\Zn(\varphi)$. Suppose that ${\varphi\circ\psi\neq\id_\UD}$. Then for a suitable constant~$c_0\in\C$, we have
\begin{equation}\label{EQ_Cowen}
\beta_{\varphi,\psi}\circ h_\varphi\,=\,c_{\varphi,\varphi\circ\psi}\,h_{\varphi\circ\psi}\,+\,c_0.
\end{equation}
\end{corollary}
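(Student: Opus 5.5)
The idea is to apply the uniqueness part of the simultaneous linearization theory, Corollary~\ref{Thm:StrongUniqueness}, to the commuting pair $\varphi$, $\chi:=\varphi\circ\psi$. Let $h:=\beta_{\varphi,\psi}\circ h_\varphi$. It satisfies $h\circ\varphi=h+1$ and $h\circ\psi=h+c_{\varphi,\psi}$, so $h\circ\chi=h+1+c_{\varphi,\psi}$. By Corollary~\ref{CR_Constante c_varphi_psi}\+\ref{IT_SimCoeff-additive},
$$
1+c_{\varphi,\psi}=c_{\varphi,\varphi}+c_{\varphi,\psi}=c_{\varphi,\chi}.
$$
Since $\chi\ne\id_\UD$ and $\chi$ commutes with $\varphi$, Theorem~\ref{TH_Behan-and-Ko} shows that $\chi$ is parabolic. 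Hence $h$ solves Abel's equation for $\chi$ with constant $c_{\varphi,\chi}$, which is non-zero by Remark~\ref{Re:real-coefficient2}.

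The plan is to prove that any such solution is an affine function of $h_\chi$, namely $h=c_{\varphi,\chi}h_\chi+c_0$. We apply Corollary~\ref{Thm:StrongUniqueness} to the pair $(\chi,\varphi)$, i.e. with $\chi$ in the role of ``$\varphi$''. The function $\tilde h:=h_\chi$ satisfies $\tilde h\circ\chi=\tilde h+1$, and $\tilde h\circ\varphi=\tilde h+c_{\chi,\varphi}$ holds whenever $h_\chi$ itself linearizes $\varphi$.

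We split according to the hyperbolic step of $\chi$.

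\emph{Case $c_{\varphi,\psi}\in\Real$.} By Theorem~\ref{Thm:coefficient-not-real}, $\chi$ has positive hyperbolic step, and $c_{\varphi,\chi}=1+c_{\varphi,\psi}\in\Real$. Theorem~\ref{TH_real-SLC=common-properties}\+\ref{IT_real-SLC=Koenigs}, applied to the commuting pair $\varphi,\chi$, gives $h_\chi=c_{\varphi,\chi}^{-1}h_\varphi$. Also $\beta_{\varphi,\psi}=\id$ by Remark~\ref{RM_real-SLC}. Together these give \eqref{EQ_Cowen} with $c_0=0$.

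\emph{Case $c_{\varphi,\psi}\notin\Real$.} By Theorem~\ref{Thm:coefficient-not-real}, $\chi$ has zero hyperbolic step. Theorem~\ref{Prop:pseuso-semigroup} then gives $h_\chi\circ\varphi=h_\chi+c_{\chi,\varphi}$, so $h_\chi$ linearizes $\varphi$ and the pair $(\chi,\varphi)$ fits the setting above. We need the analogue of the ``moreover'' part of Corollary~\ref{Thm:StrongUniqueness} for a zero-step base map, which is \cite[Proposition~5.2]{CDG-zero-h.step}. That result requires $c_{\chi,\varphi}\notin\Q$. This holds because $c_{\chi,\varphi}=1/c_{\varphi,\chi}$ by \eqref{EQ_SimCoeff-1}, and $c_{\varphi,\chi}=1+c_{\varphi,\psi}$ is non-real.

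To avoid relying on \cite{CDG-zero-h.step}, one can argue directly. By Remark~\ref{RM_factorization}, $h=F\circ h_\chi$ with $F$ entire and $F(\zeta+1)=F(\zeta)+c_{\varphi,\chi}$. From $h\circ\varphi=h+1$ and $h_\chi\circ\varphi=h_\chi+c_{\chi,\varphi}$, the Identity Principle gives $F(\zeta+c_{\chi,\varphi})=F(\zeta)+1$; here one uses that $h_\chi(\UD)$ has non-empty interior, since $h_\chi$ is non-constant. Thus $F'$ is entire with periods $1$ and $c_{\chi,\varphi}$. These periods are $\Real$-linearly independent because $c_{\chi,\varphi}\notin\Real$, so $F'$ is constant, as in Case~2 of the proof of Proposition~\ref{PR_beta-uniq}. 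The first period relation forces $F'\equiv c_{\varphi,\chi}$, which yields \eqref{EQ_Cowen}.

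The main obstacle is Theorem~\ref{Thm:coefficient-not-real}, which is stated just before this corollary and is assumed here. Once the step of $\chi$ is known, the dichotomy and the double-periodicity argument are routine. The point to check is that the periods $1$ and $c_{\chi,\varphi}$ are genuinely non-collinear in the non-real case, and this follows from the multiplicativity relation \eqref{EQ_SimCoeff-1}.
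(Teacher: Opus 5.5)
Your proof is correct. The real case is the same as the paper's: Remark~\ref{RM_real-SLC}(a) together with Theorem~\ref{TH_real-SLC=common-properties}\,\ref{IT_real-SLC=Koenigs} applied to the pair $\varphi,\chi$. Your appeal to Theorem~\ref{Thm:coefficient-not-real} there is superfluous.

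In the non-real case you take a genuinely different route. The paper keeps the positive-step map $\varphi$ as the base map, where the ``moreover'' clause of Corollary~\ref{Thm:StrongUniqueness} is available because $c_{\varphi,\psi}\notin\Q$. It applies that corollary to $h_{\varphi\circ\psi}$, which linearizes both $\varphi$ and $\psi$ by Theorem~\ref{Prop:pseuso-semigroup}. This gives $h_{\varphi\circ\psi}=c_{\varphi\circ\psi,\varphi}\,\beta_{\varphi,\psi}\circ h_\varphi+\mathrm{const}$, and \eqref{EQ_SimCoeff-1} then inverts the constant.

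You go in the opposite direction. You factor $\beta_{\varphi,\psi}\circ h_\varphi$ through the zero-step Koenigs map $h_\chi$ via Remark~\ref{RM_factorization}, so the factor $F$ is entire because $S_\chi=\C$. The two translation relations then make $F'$ doubly periodic with non-collinear periods, hence constant by Liouville.

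The trade-off is as follows. The paper's argument is a one-liner because the real work (the semimodel morphism $g_\psi$, Appendix~A, Proposition~\ref{PR_beta-uniq}) is already packaged into Corollary~\ref{Thm:StrongUniqueness}. Yours is self-contained at this step: it essentially reproves the zero-step uniqueness of \cite[Proposition~5.2]{CDG-zero-h.step}. It also yields the coefficient $c_{\varphi,\varphi\circ\psi}$ directly from $F(\zeta+1)=F(\zeta)+c_{\varphi,\chi}$ rather than by inversion, though you still use \eqref{EQ_SimCoeff-1} to get non-collinearity of the periods.

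One caveat on presentation. Your preliminary plan of applying Corollary~\ref{Thm:StrongUniqueness} with $\chi$ as the base map would not work as stated, since its ``moreover'' part requires the base map to have positive hyperbolic step. It is your direct double-periodicity argument that actually carries the proof.
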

It is worth recalling here what has been already mentioned in Section~\ref{S_simmultaneous-lin}. Namely, in view of Proposition~\ref{PR_beta-uniq}, it is completely natural that our solution to the simultaneous linearization problem is essentially the same as the one given in~\cite[Theorem~3.1]{Cowen}. However, Cowen's result does not say anything on how this solution changes if one element of the centralizer of~$\varphi$ is replaced by another. As a consequence, his result does not extend to families consisting of more than two pairwise commuting self-mappings. Our approach, which is quite different from~\cite{Cowen-comm}, allows us to overcome this limitation, see e.g. Theorem~\ref{TH_abelian-part0}.

\begin{remark}\label{RM_for-zero-step-also}
In view of \cite[Theorem~5.3\+(B)]{CDG-zero-h.step}, the above corollary extends to \emph{all} commuting pairs of parabolic self-mappings if we conventionally set ${\beta_{\varphi,\psi}:=\id_\C}$ for all ${\psi\in\Zn(\varphi)}$ in case of $\varphi$ having zero hyperbolic step.
\end{remark}
\begin{remark}
If under the hypothesis of Corollary~\ref{CR_Cowen}, the simultaneous linearization coefficient~$c_{\varphi,\psi}$ is real, then $\beta_{\varphi,\psi}=\id_{S_\varphi}$; see Remark~\ref{RM_real-SLC}. As a result,  identity~\eqref{EQ_Cowen} in case of ${c_{\varphi,\psi}\in\Real}$ means simply that $${h_\varphi\,=\,c_{\varphi,\varphi\circ\psi}\,h_{\varphi\circ\psi}},$$
where the constant $c_0$ vanishes due to the normalization of the Koenigs map chosen in Definition~\ref{DF_canonical}.
\end{remark}
\begin{remark}\label{RM_another-formular-SLC}
Moreover, in the general case, using Remark~\ref{Re:Pommerenke} and Proposition~\ref{PR_beta-properties}\+\ref{IT_beta-limit},  one can easily deduce from~\eqref{EQ_Cowen} that ${c_{\varphi,\varphi\circ\psi}\,h_{\varphi\circ\psi}-h_\varphi}$ has a finite angular limit at the Denjoy\,--\,Wolff point~$\tau$ of the self-maps~$\varphi$ and~$\psi$. In view of Remark~\ref{RM_for-zero-step-also} and
Corollary~\ref{CR_Constante c_varphi_psi}\+\ref{IT_SimCoeff-multiplicative}, it further follows that
$$
 \anglim_{z\to\tau}\big(h_\varphi(z)\,-\,c_{\varphi,\psi}\,h_\psi(z)\big)
$$
exists finitely for any pair of commuting parabolic self-maps\footnote{Note that the expression under the limit sign vanishes identically if both $\varphi$ and~$\psi$ are of zero hyperbolic step, see \cite[Proposition~5.3\+(A)]{CDG-zero-h.step}.}. Taking into account that $h_\psi(z)$ has infinite angular limit as~$z\to+\infty$, we have yet another formula for the simultaneous linearization coefficient. Namely,
\begin{equation}\label{EQ_another-formular-SLC}
 c_{\varphi,\psi}=\anglim_{z\to\tau}\frac{\,h_\varphi(z)\,}{\,h_\psi(z)\,}.
\end{equation}
\end{remark}

\subsection{Proofs}
Now we proceed to the proofs. We will need the following auxiliary statement.
\begin{lemma}\label{LM_absorbs-well}
Let $\varphi\in\Hol(\UD)$ be a non-elliptic self-map and let ${V\subset\UD}$ be some $\varphi$-absorbing domain.
Then for any number ${c>0}$, we have
\begin{equation}\label{EQ_absorbs-well}
  S(V,c):=\bigcup_{n\in\Natural}\big(h_\varphi(V)-nc\big)~=~S_\varphi.
\end{equation}
\end{lemma}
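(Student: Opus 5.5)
The inclusion $S(V,c)\subset S_\varphi$ is immediate. Indeed, $h_\varphi(V)\subset h_\varphi(\UD)\subset S_\varphi$. Moreover, $S_\varphi$ is either a horizontal strip, a horizontal half-plane or the whole plane $\C$ (Theorem~\ref{Thm:model}), so it is invariant under all real translations, and in particular under $w\mapsto w-nc$. So the whole content of the lemma is the reverse inclusion $S_\varphi\subset S(V,c)$. I would prove it pointwise, using the ``semigroup-like'' absorption property recorded in Remark~\ref{Re:almostasemigroup}.

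Fix an arbitrary $w_0\in S_\varphi$ and take the compact set $K:=\{w_0\}$. By the last part of Remark~\ref{Re:almostasemigroup}, applied to the $\varphi$-absorbing open set $V$, there exists $t_0=t_0(K,V)\ge0$ such that $w_0+t\in h_\varphi(V)$ for every real $t\ge t_0$. Since $c>0$, I can choose $n\in\Natural$ with $nc\ge t_0$. Then $w_0+nc\in h_\varphi(V)$, that is, $w_0\in h_\varphi(V)-nc\subset S(V,c)$. Because $w_0$ was arbitrary, this gives $S_\varphi\subset S(V,c)$, and combined with the first inclusion it yields \eqref{EQ_absorbs-well}.

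The only step needing care is the continuous-time absorption $K+t\subset h_\varphi(V)$ for all large real $t$, rather than only for integer translates. That is precisely the content of Remark~\ref{Re:almostasemigroup}, which I take as given. If I had to justify it directly, I would argue as follows.
\begin{itemize}
\item Enlarge $K$ to $\mtilde K=\{w+s: w\in K,\ s\in[0,2]\}$.
\item Cover $\mtilde K$ by finitely many relatively compact open subsets of $S_\varphi$. By (HM2), the open sets $h_\varphi(V)-m$, $m\in\Natural$, exhaust $S_\varphi$, since $V$ is absorbing and $h_\varphi\circ\varphi^{\circ m}=h_\varphi+m$. They also increase with $m$, because $\varphi(V)\subset V$ gives $h_\varphi(V)+1\subset h_\varphi(V)$.
\item Compactness then gives a single $N$ with $\mtilde K+n\subset h_\varphi(V)$ for all $n\ge N$.
\item Any real $t\ge N$ can be written as $t=n+s$ with $n\ge N$ an integer and $s\in[0,1)$, so $K+t\subset\mtilde K+n\subset h_\varphi(V)$.
\end{itemize}
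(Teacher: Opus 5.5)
Your proof is correct and is essentially the paper's own argument. The inclusion $S(V,c)\subset S_\varphi$ follows from $h_\varphi(V)\subset S_\varphi$, and for a fixed $w_0\in S_\varphi$ the real-time absorption in Remark~\ref{Re:almostasemigroup}, applied with $K=\{w_0\}$, gives $w_0+nc\in h_\varphi(V)$ for all large $n$. Your additional details are accurate and make explicit what the paper leaves implicit: the invariance of $S_\varphi$ under real translations, and the compactness justification of that remark via the increasing open cover $h_\varphi(V)-m$.
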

\begin{proof}
Since $h_\varphi(V)\subset S_\varphi$, we have $S(V,c)\subset S_\varphi$. Therefore, to prove that equality~\eqref{EQ_absorbs-well} holds, it suffices to fix an arbitrary ${w_0\in S_\varphi}$ and show that ${w_0+nc\in h_\varphi(V)}$ for a suitable ${n\in\Natural}$. To this end, it is in turn sufficient to find ${n_0\in\Natural}$ such that
$$
  \{w_0+t:t\ge n_0\}\subset h_\varphi(V).
$$
Having noticed this we are done, because the latter statement is a special case (for $K:=\{w_0\}$) of what we saw in Remark~\ref{Re:almostasemigroup}.
\end{proof}

Now we pass to the proof of Theorem~\ref{TH_real-SLC=common-properties}.
First we prove one of the implications for the special case of univalent self-mappings.
\begin{lemma}\label{Le:coefficient-not-real}
Let $\varphi,\psi\in\Hol(\D)$ be commuting univalent self-mappings. Suppose that $\varphi$ is parabolic and that ${\Im c_{\varphi,\psi}\neq~0}$. Then ${\varphi\circ \psi}$ is of zero hyperbolic step.
\end{lemma}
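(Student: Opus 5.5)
Assume $\varphi\circ\psi\neq\id_\UD$ and argue by contradiction: suppose $\varphi\circ\psi$ has positive hyperbolic step. Note first that if $\varphi$ has zero hyperbolic step, then $c_{\varphi,\psi}$ is the coefficient with $h_\varphi\circ\psi=h_\varphi+c_{\varphi,\psi}$, and $h_\varphi\circ(\varphi\circ\psi)=h_\varphi+1+c_{\varphi,\psi}$. So the pair can be compared with $\chi:=\varphi\circ\psi$ directly. In general, the plan is to pass to the commuting pair $(\varphi,\chi)$ and use Corollary~\ref{CR_Constante c_varphi_psi}: we have $c_{\varphi,\chi}=1+c_{\varphi,\psi}$ and $c_{\chi,\varphi}=1/(1+c_{\varphi,\psi})$. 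Both are non-real because $\Im c_{\varphi,\psi}\neq0$.

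Now apply the positive-step theory with $\chi$ as the base map; $\varphi\in\Zn(\chi)$. Since $c_{\chi,\varphi}\notin\Real$, the proof of Propositions~\ref{PR_beta-uniq} and~\ref{PR_beta-properties} (Case~2) shows that the semimodel morphism $g_\varphi$ of $\chi$ associated with $\varphi$ is a non-automorphic self-map of $S_\chi=\pm\UH$ commuting with $w\mapsto w+1$. Moreover $c_{\chi,\varphi}=\lim_{|\Im w|\to\infty}(g_\varphi(w)-w)$, which lies in $S_\chi$ and in particular is non-real. Here univalence enters. Because $\varphi$ and $\psi$ are univalent, so is $\chi$, and $h_\chi$ is then univalent on all of $\UD$ (for univalent maps the Koenigs function is injective). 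Hence $g_\varphi$ is the conjugate $h_\chi\circ\varphi\circ h_\chi^{-1}$ on $\Omega:=h_\chi(\UD)$, extended to $S_\chi$. Since $\varphi$ is injective, $g_\varphi$ is injective on $\Omega$, and by the translation invariance $g_\varphi(w+n)=g_\varphi(w)+n$ it is injective on $\bigcup_n(\Omega-n)=S_\chi$. The same holds for $g_\psi$, and $g_\varphi\circ g_\psi=g_\chi=w\mapsto w+1$.

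So $g_\varphi$ and $g_\psi$ are injective self-maps of $\UH$ (say) whose composition is the translation by $1$. Hence $g_\varphi(g_\psi(\UH))=\UH+1$, so $g_\varphi$ maps $g_\psi(\UH)\subset\UH$ bijectively onto $\UH+1$. The contradiction should come from the asymptotics $g_\varphi(w)=w+c_{\chi,\varphi}+o(1)$ and $g_\psi(w)=w+c_{\chi,\psi}+o(1)$ as $\Im w\to+\infty$, where $c_{\chi,\varphi}+c_{\chi,\psi}=1$ with non-real summands. Since $c_{\chi,\varphi}\in\UH$, the constant $c_{\chi,\psi}=1-c_{\chi,\varphi}$ has negative imaginary part. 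But $g_\psi(\UH)\subset\UH$ forces $\Im(g_\psi(w)-w)$ to have a non-negative limit, by the Julia-type property used in Appendix~A: a self-map of $\UH$ commuting with translation has $\Im g(w)\ge\Im w$. This is the contradiction. Hence, if $\Im c_{\varphi,\psi}\neq0$, then $\varphi\circ\psi$ has zero hyperbolic step.

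The main obstacle is this last step: checking that Proposition~\ref{Prop:com-with-auto} in Appendix~A indeed gives $\lim(g(w)-w)\in\overline{\UH}$ for every self-map $g$ of $\UH$ commuting with $w\mapsto w+1$, including $g_\psi$. One also has to handle $g_\psi$ possibly being a real translation. In that case $c_{\chi,\psi}\in\Real$, and then $c_{\chi,\varphi}=1-c_{\chi,\psi}$ would be real, which contradicts what was established above. If the sign argument turns out to be delicate, the fallback is an area/injectivity argument on $\Omega$.
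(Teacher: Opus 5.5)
Your argument is correct, but it follows a different route from the paper, and it never actually uses univalence.

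\textbf{The paper's route.} It works directly with the univalent pair. It takes the univalent simultaneous linearizer $h_{\varphi,\psi}$ from \cite[Theorem~4.1]{CDG-Centralizer} and uses \cite[Theorem~9.1]{CDG-Centralizer} to place a parallelogram in $\bigcup_n(h_{\varphi,\psi}(\UD)-n)$. It then pushes this parallelogram by the translations $w\mapsto w+1$ and $w\mapsto w+c_{\varphi,\psi}$ to obtain a cone inside $h_{\varphi,\psi}(\UD)$. After rescaling by $\mu=1/(1+c_{\varphi,\psi})$, it shows $\bigcup_n(\mu h_{\varphi,\psi}(\UD)-n)=\C$, which means $\varphi\circ\psi$ has zero hyperbolic step.

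\textbf{Your route.} You argue by contradiction through the positive-step theory for $\chi=\varphi\circ\psi$. The contradiction needs only three facts.
\begin{itemize}
\item $c_{\chi,\varphi},c_{\chi,\psi}\in\overline{S_\chi}$. This is part of the statement of Proposition~\ref{PR_beta-exists}, so the ``main obstacle'' you flag is already settled there (equivalently, by Propositions~\ref{Prop:com-with-autoprevio} and~\ref{Prop:com-with-auto}(1)).
\item $c_{\chi,\varphi}+c_{\chi,\psi}=c_{\chi,\chi}=1$. You assert this without justification; it is Corollary~\ref{CR_Constante c_varphi_psi}\ref{IT_SimCoeff-additive}, and you should cite it.
\item $c_{\chi,\varphi}=1/(1+c_{\varphi,\psi})\notin\Real$.
\end{itemize}
Two numbers in the same closed half-plane with real sum are both real, and that is the contradiction. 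So the injectivity of $g_\varphi$ and $g_\psi$, the bijection onto $\UH+1$, and the preliminary remark about $\varphi$ of zero step can all be deleted.

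\textbf{What each route buys.} Your argument proves the nontrivial implication of Theorem~\ref{Thm:coefficient-not-real} for arbitrary, not necessarily univalent, commuting pairs. The conjugation onto $V_{\varphi\circ\psi}(1/3)$ and Proposition~\ref{Prop:Coefwidetilde} therefore become unnecessary there. There is no circularity, since everything you invoke is proved before, and independently of, this lemma. The paper's route instead gives a direct geometric description of the base space of $\varphi\circ\psi$, at the cost of importing the univalent theory of~\cite{CDG-Centralizer}.
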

\begin{proof} Since $\varphi$ and~$\psi$ are univalent, according to \cite[Theorem~4.1]{CDG-Centralizer}  there is a univalent function $h_{\varphi,\psi}:\D\to \C$ such that
	\begin{align}
			\label{EQ_uni1} h_{\varphi,\psi}\circ \varphi&=h_{\varphi,\psi}+1  \quad\text{and}\\
			\label{EQ_uni2} h_{\varphi,\psi}\circ \psi&=h_{\varphi,\psi}+c_{\varphi,\psi}.
	\end{align}
Write $\Omega:=h_{\varphi,\psi}(\D)$ and $S:=\cup_{n\in\Natural}(\Omega-n)$. Since $\varphi$ is parabolic, by \cite[Theorem~9.1\+(D)]{CDG-Centralizer}, $S$~contains a half-plane. Thus, there is ${p\in \C}$ such that the parallelogram
$$
   K:=\big\{p+s+tc_{\varphi,\psi}:s,t\in[0,1]\big\}
$$
is contained in~$S$. This means that the sets $\Omega-n$, ${n\in\Natural}$, form an open cover of the compact set~$K$. Since by~\eqref{EQ_uni1}, ${\Omega+1\subset\Omega}$, the sets $\Omega-n$, ${n\in\Natural}$, are nested. It follows that the exists ${n_0\in\Natural}$ such that
\begin{equation*}
E:=\big\{p+n_0+s+tc_{\varphi,\psi}\,:\,s\in[0,+\infty),~t\in[0,1]\big\}~=\,\bigcup_{n=n_0}^{+\infty} \big(K+n\big)
   ~\subset~
     \Omega.
\end{equation*}
Furthermore, by~\eqref{EQ_uni2}, we have ${\Omega+c_{\varphi,\psi}\subset\Omega}$. Hence,
\begin{equation}\label{EQ_uniA}
A:=\big\{p+n_0+s+tc_{\varphi,\psi}:s,t\in[0,+\infty)\big\}~=\,\bigcup_{k=0}^{+\infty}\big(E+kc_{\varphi,\psi}\big)
   ~\subset~
     \Omega.
\end{equation}

Since $\Im c_{\varphi,\psi}\neq0$, we have ${c_{\varphi,\psi}+1\neq0}$. Denote $\mu:={1/(c_{\varphi,\psi}+1)}$ and $h:={\mu h_{\varphi,\psi}}$.
Combining~\eqref{EQ_uni1} with~\eqref{EQ_uni2}, we obtain
$$
  h\circ(\varphi\circ\psi)=h+1.
$$
Thanks to~\eqref{EQ_uniA}, we have
\begin{multline*}
  \bigcup_{n\in\Natural}\big(h(\UD)-n\big)~\supset~\bigcup_{n\in\Natural}\big(\mu A-n\big)~=\\
  =~B:=\,\big\{q-n+s\mu+t\mu c_{\varphi,\psi}:s,t\in[0,+\infty),~n\in\Natural\big\}, \quad q:=\mu(p+n_0).
\end{multline*}
Note that $\mu\,+\,\mu c_{\varphi,\psi}\,=\,1$ and that $\Im\mu\neq0$. As a consequence, $\Im\mu\cdot\Im(\mu c_{\varphi,\psi})<0$. It is an elementary exercise to check that the latter inequality implies that ${B=\C}$. Thanks to \cite[Theorem~9.1\+(A)]{CDG-Centralizer}, it follows that ${\varphi\circ\psi}$ is parabolic of zero hyperbolic step, as desired.
\end{proof}

\begin{proof}[\proofof{Theorem~\ref{TH_real-SLC=common-properties}}] We consider two cases, and start with the simpler one.

\StepC1{$\varphi$ is of zero hyperbolic step} Then assertion~\ref{IT_real-SLC=Koenigs} holds by \cite[Theorem~5.3\+(C)]{CDG-zero-h.step}. In view of Remark~\ref{RM_h.step=h(D)}, it follows that $\psi$ is of zero hyperbolic step as well, i.e. assertion~\ref{IT_real-SLC=h.step} holds. Moreover, every parabolic self-map of zero hyperbolic step is of infinite shift and the base space of its canonical holomorphic model is~$\C$. This proves assertions \ref{IT_real-SLC=shift} and~\ref{IT_real-SLC=positive}. Finally,  assertion~\ref{IT_real-SLC=negative} holds trivially, because $c_{\varphi,\psi}$ cannot be a negative real number when $\varphi$ is of zero hyperbolic step. (This follows easily from Theorem~\ref{Prop:pseuso-semigroup} and Proposition~\ref{Prop:firstpropertiescoefficient}\+\ref{firstpropereties3} if we recall that all parabolic automorphisms are of positive hyperbolic step.)

\StepC2{$\varphi$ is of positive hyperbolic step} Recall that since ${c_{\varphi,\psi}\in\Real}$,  we have
\begin{equation}\label{EQ_again}
h_\varphi\circ\psi=h_\varphi+c_{\varphi,\psi},
\end{equation}
see Remark~\ref{RM_real-SLC}. Therefore, if ${c_{\varphi,\psi}<0}$ then by Proposition~\ref{Prop:firstpropertiescoefficient}, the self-maps $\varphi$ and~$\psi$ are parabolic automorphisms. In this case, the proof is elementary and hence omitted.

Note that ${\psi\neq\id_\UD}$ by the hypothesis. Hence, ${c_{\varphi,\psi}\neq0}$ again by Proposition~\ref{Prop:firstpropertiescoefficient}. So from now on, we suppose that ${c_{\varphi,\psi}>0}$. Let ${V\subset\UD}$ be a $\varphi$-absorbing domain in which $h_\varphi$ is univalent; concerning the existence of such a domain, see Sect.\,\ref{Sec:modelos}. Further, let ${c:=c_{\varphi,\psi}}$. Then applying Lemma~\ref{LM_absorbs-well} and using identity~\eqref{EQ_again}, it is not difficult to see that ${\big(S_\varphi,h_\varphi,z\mapsto z+c_{\varphi,\psi}\big)}$ is a holomorphic model for~$\psi$. In view of Theorem~\ref{Thm:model} and Definition~\ref{DF_canonical}, it follows that $\psi$ is of positive hyperbolic step, that ${h_\psi=c_{\varphi,\psi}^{-1}\,h_\varphi}$, and that  $S_\psi={c_{\varphi,\psi}^{-1}\,S_\varphi=S_\varphi}$.

It remains to see that $\psi$ and~$\varphi$ are either both of finite shift or both of infinite shift.  It is known that a parabolic self-map of positive hyperbolic step has finite shift if and only if the self-map is of class~$C_A^2(\tau)$, where $\tau$ stands for its Denjoy\,--\,Wolff point, see \cite[Theorem~4.1]{CDP}. Thus, the desired conclusion follows from Corollary~\ref{CR_second-der}\+\ref{IT_second-der1}.
\end{proof}

\begin{proof}[\proofof{Theorem~\ref{Thm:coefficient-not-real}}]
One implication is simple. Namely, if $c_{\varphi,\psi}\in\Real$, then by Corollary~\ref{CR_Constante c_varphi_psi}\+\ref{IT_SimCoeff-additive} also $c_{\varphi,\varphi\circ\psi}$ is real, and hence, ${\varphi\circ\psi}$ is of positive hyperbolic step by Theorem~\ref{TH_real-SLC=common-properties}\+\ref{IT_real-SLC=h.step} applied with~$\psi$ replaced by~${\varphi\circ\psi}$.

Assume now that ${c_{\varphi,\psi}\not\in\Real}$.  We wish to show that ${\varphi\circ\psi}$ is of zero hyperbolic step.  Let ${V:=V_{\varphi\circ\psi}(1/3)}$. Clearly, the self-maps $\varphi$, $\psi$, and $\varphi\circ\psi$ are pairwise commuting. Applying Theorem~\ref{Thm:V-sets} and Lemma~\ref{Le:V-sets} to ${\varphi\circ\psi}$ in place of~$\varphi$, it is not difficult to see that  $\varphi$ and~$\psi$ are univalent in~$V$ and map~$V$ into itself. Note also that $V$ is a simply connected domain, see Theorem~\ref{Thm:V-sets}\+\ref{IT_V-sets(2)}. Let $f$ be a conformal map of~$\UD$ onto~$V$. Then
$$
 \widetilde\varphi:=f^{-1}\circ\varphi\circ f
          \quad\text{and}\quad
 \widetilde\psi   :=f^{-1}\circ\psi   \circ f
$$
are commuting univalent self-maps of~$\UD$. By~\cite[Proposition~2.11]{CDG-zero-h.step} applied to ${\varphi\circ\psi}$ in place of~$\varphi$, the self-map
$$
 \widetilde\varphi\circ \widetilde\psi = f^{-1}\circ \varphi\circ\psi \circ f
$$
is parabolic. Clearly, $\widetilde\varphi$ and $\widetilde\psi$ commute with~${\widetilde\varphi\circ \widetilde\psi}$ and both are different from~$\id_\UD$. It follows that these self-maps are parabolic, see Theorem~\ref{TH_Behan-and-Ko}. Therefore, we can use Corollary~\ref{CR_Constante c_varphi_psi} to conclude that
$$
   c_{\widetilde\varphi,\widetilde\psi}\,=\,c_{\widetilde\varphi\circ\widetilde\psi,\widetilde\psi}^{-1}\,-\,1
       \quad\text{and}\quad
   c_{\varphi,\psi}\,=\,c_{\varphi\circ\psi,\psi}^{-1}\,-\,1.
$$
Moreover, by Proposition~\ref{Prop:Coefwidetilde} applied to ${\varphi\circ\psi}$ in place of~$\varphi$, we have
$$
  c_{\widetilde\varphi\circ\widetilde\psi,\widetilde\psi}=c_{\varphi\circ\psi,\psi},
$$
and hence, $c_{\widetilde\varphi,\widetilde\psi}=c_{\varphi,\psi}\not\in\Real$. By Lemma~\ref{Le:coefficient-not-real}, ${\widetilde\varphi\circ \widetilde\psi}$ is of zero hyperbolic step, and it remains to apply again \cite[Proposition~2.11]{CDG-zero-h.step} to see that ${\varphi\circ\psi}$ is also of zero hyperbolic step, as desired.
\end{proof}

\begin{proof}[\proofof{Corollary~\ref{CR_Cowen}}]
If ${c_{\varphi,\psi}\in\Real}$, then~\eqref{EQ_Cowen} holds by Remark~\ref{RM_real-SLC}\+(a) and Theorem~\ref{TH_real-SLC=common-properties}\+\ref{IT_real-SLC=Koenigs}.

If ${c_{\varphi,\psi}\not\in\Real}$, then by Theorem~\ref{Thm:coefficient-not-real} the parabolic self-map ${\varphi\circ\psi}$ is of zero hyperbolic step. Since ${\varphi,\psi\in\Zn(\varphi\circ\psi)}$, by Theorem~\ref{Prop:pseuso-semigroup} applied with $\varphi$ replaced by~${\varphi\circ\psi}$, we have
\begin{equation}\label{EQ_Koenigs-of-comp-linearize-both}
  h_{\varphi\circ\psi}\circ\psi=h_{\varphi\circ\psi}+c_{\varphi\circ\psi,\psi}
    \quad\text{and}\quad
  h_{\varphi\circ\psi}\circ\varphi=h_{\varphi\circ\psi}+c_{\varphi\circ\psi,\varphi}.
\end{equation}
Recall that by the hypothesis, $\varphi$ is of positive hyperbolic step. Therefore, according to Corollary~\ref{Thm:StrongUniqueness}, equalities~\eqref{EQ_Koenigs-of-comp-linearize-both} imply that
$h_{\varphi\circ\psi}$ differs from ${c_{\varphi\circ\psi,\varphi}\,\beta_{\varphi,\psi}\circ h_\varphi}$ by an additive constant.
In view of equality~\eqref{EQ_SimCoeff-1} applied with~$\psi$ replaced by~${\varphi\circ\psi}$, this implies the desired identity~\eqref{EQ_Cowen}.
\end{proof}

\hypertarget{AppendixA}{}
\section{Appendix A: Self-mappings commuting with parabolic automorphisms}
Self-maps of~$\UD$ that commute with hyperbolic automorphisms are easily described: they are again hyperbolic automorphisms by Heins' Lemma; see Theorem~\ref{TH_Behan-and-Ko}. The situation is much more intricate for parabolic automorphisms. In this context, it is more convenient to formulate the statements in the upper half-plane. Self-maps of~$\UH$ commuting with parabolic automorphisms are described
in \cite[Proposition 2.6.11, page 152]{Abate2}:

\begin{proposition}\label{Prop:com-with-autoprevio}
	Let $g:\H\to \H$ be a holomorphic self-map such that ${g(w+1)=g(w)+1}$ for all ${w\in \H}$. Then one and only one of the following cases occurs:
	\begin{ourlist}
		\item\label{ComAutocaseI} either ${g(w)=w+\theta}$ for all~{$w\in \H$} and some~${\theta\in \R}$,
		\item\label{ComAutocaseII} or there exists ${F\in \Hol(\D,\H)}$ such that
		\begin{equation}\label{Eq:com-with-auto1}
			g(w)=w+F(e^{2\pi iw}), \quad w\in \H.
		\end{equation}
	\end{ourlist}
\end{proposition}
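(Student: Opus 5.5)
The natural approach is to pass to the punctured disc via the covering map $w\mapsto e^{2\pi i w}$, which maps $\UH$ onto $\UD^*$ and identifies $w$ with $w+1$. Define $G(w):=g(w)-w$. The hypothesis $g(w+1)=g(w)+1$ gives $G(w+1)=G(w)$, so $G$ is a $1$-periodic holomorphic function on $\UH$. It therefore factors as $G(w)=F(e^{2\pi i w})$ with $F$ holomorphic on $\UD^*$: on the simply connected pieces of $\UD^*$, set $F(\zeta):=G\big(\tfrac{1}{2\pi i}\log\zeta\big)$; periodicity makes this independent of the branch of the logarithm.

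The key step is to show that $\Im G\ge 0$ on $\UH$. Once this holds, $F$ maps $\UD^*$ into the closed upper half-plane. By the open mapping theorem, $F$ is then either a real constant or maps $\UD^*$ into the open half-plane $\UH$. In the latter case $F$ omits more than two values, so by Riemann's removable singularity theorem (or Picard/Casorati--Weierstrass) the singularity at $0$ is removable. The extended $F(0)$ lies in $\overline{\UH}$, and by the open mapping theorem again it lies in $\UH$ unless $F$ is constant. This yields the dichotomy: a real constant $\theta$ gives case~\ref{ComAutocaseI}, and otherwise $F\in\Hol(\UD,\UH)$ gives case~\ref{ComAutocaseII}. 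The two cases are mutually exclusive, since in case~\ref{ComAutocaseII} the function $g(w)-w$ takes non-real values.

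To prove $\Im g(w)\ge\Im w$, I would use the hyperbolic geometry of $\UH$ together with the commutation relation. By Schwarz--Pick,
$$\rho_\UH\big(g(w),g(w)+n\big)=\rho_\UH\big(g(w),g(w+n)\big)\le\rho_\UH(w,w+n)$$
for every $n\in\N$. For horizontal translates we have $\sinh\big(\rho_\UH(z,z+n)/2\big)=n/(2\Im z)$ with the standard normalization, so this distance is a strictly decreasing function of $\Im z$. The inequality therefore forces $\Im g(w)\ge\Im w$. This argument is short, and already $n=1$ suffices.

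The main obstacle is really bookkeeping: making sure the factorization $G=F\circ e^{2\pi i\,\cdot}$ is rigorous, and handling the boundary case where $F$ takes real values. The open mapping theorem disposes of the latter, since a nonconstant $F$ with $\Im F\ge0$ has $\Im F>0$ everywhere, including at $0$ after extension.
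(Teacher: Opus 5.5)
Your proof is correct, and it is more self-contained than the paper's treatment. The paper does not prove the dichotomy itself. It quotes \cite[Proposition~2.6.11]{Abate2}, which gives either $g(w)=w+\theta$ with $\theta\in\Real$ or $g(w)=w+F(e^{2\pi i w})$ with $F$ only in $\Hol(\UD^*,\UH)$. Its own contribution is the remark that the singularity of $F$ at $0$ is removable: Casorati--Weierstrass excludes an essential singularity, and the open mapping theorem excludes a pole, since otherwise $F(\UD^*)$ would contain a full neighbourhood of $\infty$.

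You derive everything directly:
\begin{itemize}
\item periodicity of $g-\id_\UH$;
\item descent of $g-\id_\UH$ through the covering $w\mapsto e^{2\pi i w}$;
\item the key positivity $\Im g(w)\ge\Im w$, from Schwarz--Pick applied to the pair $w,\,w+1$ together with the strict monotonicity of $\rho_\UH(z,z+1)$ in $\Im z$.
\end{itemize}
This shows that the dichotomy rests on nothing more than contraction of the hyperbolic metric along horizontal translates. No Denjoy--Wolff theory or Heins--Behan--Cowen type results are needed. The paper's route gains brevity by delegating this step to the monograph. Your handling of the constant case (a non-real constant falls under the second alternative) and your proof that the two cases are mutually exclusive are both fine.

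One step should be tightened. Riemann's removable singularity theorem cannot be applied to $F$ directly, because $F$ need not be bounded near $0$. Picard and Casorati--Weierstrass only rule out an essential singularity, so a pole still has to be excluded. You can close this in either of two ways.
\begin{itemize}
\item Apply Riemann's theorem to $C\circ F$, where $C\colon\UH\to\UD$ is a Cayley map. Then $C\circ F$ extends holomorphically to $\UD$, and the maximum principle forces $C\circ F(0)\in\UD$. This gives $F(0)\in\UH$ at once.
\item Exclude the pole with the open mapping theorem, as the paper does.
\end{itemize}
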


\begin{remark} It is worth pointing out that the statement \cite[Proposition~2.6.11, page~152]{Abate2} is slightly different. Namely, in \cite{Abate2} it only appears that ${F\in \Hol(\D^*,\H)}$. Nevertheless, if the singularity of $F$ at zero is not removable, then by the Casorati\,--\,Weierstra\ss{} theorem, it is clear that it must be a pole. With the help of the Open Mapping Theorem, it is easy to see  that  in such a case $F(\UD^*)\supset{\{z\in\C:|z|>R\}}$ for some~${R>0}$. This contradiction shows that $F$ must have a removable singularity.
\end{remark}

Clearly, every self-map that appears both in case \ref{ComAutocaseI} and in case \ref{ComAutocaseII} of above proposition commutes with $w\mapsto w+1$ and, in the case \ref{ComAutocaseII}, $g$ is not an automorphism. This suggests that the self-maps appearing in both cases have different properties.
In the next result, we describe  and gather the properties of those self-maps~$g$ that appear in Proposition~\ref{Prop:com-with-autoprevio}\ref{ComAutocaseII}.
These self-maps play a significant role in our results.

\begin{proposition}\label{Prop:com-with-auto} Let $F\in\Hol(\D,\H)$ and let $g\in\Hol(\H)$ be given by
$$
  g(w)=w+F(e^{2\pi iw}),\quad {w\in\H}.
$$
  Then:
	\begin{ourlist}
		\item $\lim_{\Im w\to +\infty}\left(g(w)-w\right)=F(0)$. In particular, $\angle\lim_{w\to\infty}\left(g(w)-w\right)=F(0)$.\smallskip

		\item $g$ is parabolic with Denjoy\,--\,Wolff point $\infty$. In particular, $\Im g(w)\geq \Im w$, for all $w\in \H$.\smallskip

		\item For all ${w\in\H}$, the sequence $(g^{\circ n}(w))$ tends { to $\infty$ non-tangentially}. Thus, $g$ is of zero hyperbolic step and the sequence $\big(\Im g^{\circ n}(w)\big)$ tends to $+\infty$.
		\smallskip

		\item There exists a (unique) $f\in\Hol(\D)$ with ${f(0)=0}$ and ${0<|f'(0)|<1}$ such that
		$$
		e^{2\pi i g(w)}=f\left(e^{2\pi i w}\right)\quad \text{for all $~w\in\H$}.
		$$
		Moreover, $f^\prime(0)=e^{2\pi i F(0)}$.\smallskip

		\item There exists $A=A(g)>0$ such that $g$ is univalent in ${\Pi_{A}:=\{w\in\H:\ \Im w>A\}}$.\smallskip

		\item Let ${(\C,h_g,w\mapsto w+1)}$ be the canonical model of $g$ (see assertion~$(3)$) and let ${(\C,h_f,z\mapsto f^\prime(0)z)}$ be the canonical model of $f$ (see assertion~$(4)$). Then:\smallskip

		\begin{ourlist}
			\item[\rm(a)] $h_g$ is univalent in $\Pi_A$, where $\Pi_A$ is the same as in assertion~$(5)$.
			\item[\rm(b)] There exists $b\in\C^*$ such that, for all $w\in\H$,
			\begin{equation}\label{EQ_Koenigs-functions-relation}
			  e^{2\pi i F(0) h_{g}(w)}=bh_{f}(e^{2\pi iw}).
			\end{equation}
			\item[\rm(c)] $\,F(0) h_{g}(w+1)=F(0) h_{g}(w)+1\,$ for all $w\in \H$.
			\item[\rm(d)] There exists the limit $$\lim_{\Im w\to +\infty}(F(0)h_g(w)-w)\in\C.$$
			\item[\rm(e)] $\,\lim_{n\to+\infty} (g^{\circ n})'(w)= F(0) h'_{g}(w)\,$ for all $w\in\H$.
		\end{ourlist}	
	\end{ourlist}	 	
\end{proposition}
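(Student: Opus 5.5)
The plan is to pass to the $1$-periodic picture through the covering $w\mapsto e^{2\pi i w}$, which maps $\H$ onto $\D^*$, and to deduce everything from the elliptic self-map $f$ of $\D$ together with the estimate $\Im F>0$.

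\textbf{Assertions (1)--(3).} Assertion (1) is immediate: $e^{2\pi i w}\to0$ as $\Im w\to+\infty$, and $F$ is continuous at $0$. For (2), we have $\Im g(w)=\Im w+\Im F(e^{2\pi i w})>\Im w$, so $g$ has no fixed point in $\H$ and $\Im g\ge \Im w$. The non-tangential behaviour in (1), namely $g(w)=w+F(0)+o(1)$, gives angular derivative $1$ at $\infty$, so $g$ is parabolic with Denjoy--Wolff point $\infty$.

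For (3), write $w_n=g^{\circ n}(w)$. The sequence $\Im w_n$ increases. If it stayed bounded, then $w_n$ would have to escape horizontally with $\Im F(e^{2\pi i w_n})\to0$. But $e^{2\pi i w_n}$ stays in a compact annulus, on which $\Im F$ is bounded below, so this is impossible. Hence $\Im w_n\to+\infty$, and by (1) we get $w_{n+1}-w_n\to F(0)\in\H$. Therefore $\arg(w_n)\to\arg F(0)\in(0,\pi)$, which is non-tangential convergence. Zero hyperbolic step then follows because $\rho_\H(w_n,w_{n+1})\to0$: the Euclidean step $|w_{n+1}-w_n|$ stays bounded while $\Im w_n\to\infty$.

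\textbf{Assertions (4) and (5).} For (4), define $f(z)=z\,e^{2\pi i F(z)}$. We have $|f(z)|=|z|e^{-2\pi\Im F(z)}<|z|$, so $f(0)=0$ and $f'(0)=e^{2\pi iF(0)}$, with modulus $e^{-2\pi\Im F(0)}\in(0,1)$. Uniqueness holds because $e^{2\pi i w}$ covers $\D^*$.

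For (5), I would use $g'(w)=1+2\pi i\,e^{2\pi iw}F'(e^{2\pi iw})$, which tends to $1$ uniformly as $\Im w\to\infty$. On a half-plane $\Pi_A$ this gives $\Re g'>0$, and since $\Pi_A$ is convex, Noshiro--Warschawski yields univalence of $g$ on $\Pi_A$.

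\textbf{Assertion (6).} Let $h_f$ be the Koenigs function of $f$, so $h_f\circ f=f'(0)h_f$ and $h_f'(0)=1$. Since $g$ is parabolic of zero hyperbolic step by (3), $(\C,h_g,w\mapsto w+1)$ is its canonical model by Theorem~\ref{Thm:model}. I would first build a candidate for $h_g$ directly. Choose a branch
$$H(w):=\frac{1}{2\pi i F(0)}\log\!\big(h_f(e^{2\pi iw})\big)$$
on $\Pi_A$, where $h_f(z)=z(1+O(z))$ makes the logarithm well defined, namely $\log h_f(e^{2\pi i w})=2\pi i w+\log(1+O(e^{2\pi i w}))$.

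Then $H(g(w))=H(w)+1$ follows from $h_f(f(z))=e^{2\pi iF(0)}h_f(z)$ after tracking the branch; the branch can be fixed because both sides are close to $w$ and differ by an integer-multiple correction which is continuous, hence constant. One extends $H$ to all of $\H$ via $H:=H\circ g^{\circ n}-n$, using that $g^{\circ n}$ eventually maps into $\Pi_A$ by (3).

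Next:
\begin{itemize}
\item[(c)] comes from $H(w+1)=H(w)+1/F(0)$.
\item[(d)] holds because $F(0)H(w)-w\to0$.
\item[(a)] holds because $H$ is univalent on $\Pi_A$ (enlarging $A$ if needed), since $h_f$ is univalent near $0$ and the exponential is injective on strips.
\end{itemize}
Theorem~\ref{Thm:uniqueness}, or the uniqueness of models, then gives $h_g=H+\const$, and (b) follows by exponentiating.

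For (e), I would differentiate $h_g\circ g^{\circ n}=h_g+n$ to get
$$(g^{\circ n})'(w)=h_g'(w)/h_g'(g^{\circ n}(w)).$$
Since $h_g'(\zeta)\to1/F(0)$ as $\Im\zeta\to\infty$ by (d) and Cauchy estimates, the right-hand side tends to $F(0)h_g'(w)$.

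\textbf{Main obstacle.} The main obstacle is the branch bookkeeping in (6). The risk is showing that the logarithmic construction genuinely satisfies Abel's equation globally and agrees with the normalized $h_g$ up to a constant. The remedy I would use is to work only on $\Pi_A$, where all quantities are $O(e^{-2\pi\Im w})$-perturbations of the identity.
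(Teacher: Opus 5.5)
Your proof is correct, but it takes a more hands-on route than the paper in almost every item.

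\emph{Items (2)--(5).} For (2)--(3) the paper argues structurally. Since $g$ commutes with $w\mapsto w+1$, Theorem~\ref{TH_Behan-and-Ko} and Julia's lemma give parabolicity at $\infty$ and $\Im g\ge\Im w$. Each orbit is then trapped in an invariant sector $\Pi_B\cap S_{w_0}$, and zero hyperbolic step is quoted from \cite[Corollary~4.6.10]{Abate2}. You read everything off $\Im F>0$ instead: compactness of the annulus forces $\Im g^{\circ n}(w)\to+\infty$, then $g^{\circ n}(w)/n\to F(0)$, and you bound $\rho_\UH$ directly. Likewise, your explicit formula $f(z)=z\,e^{2\pi iF(z)}$ replaces the appeal to \cite[Lemma~9.3]{CDG-Centralizer}. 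Noshiro--Warschawski replaces the paper's periodicity argument for (5).

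\emph{Item (6).} This is where the routes really diverge. The paper starts from $h_g$ and proceeds in stages:
\begin{itemize}
\item model uniqueness shows $h_g(\cdot+1)-h_g$ is a constant $c$;
\item \cite[Lemma~9.3(II)]{CDG-Centralizer} produces $\sigma$, and $c=1/F(0)$ is read off from $\lim h_g'$;
\item (b) follows by showing $\sigma_0$ linearizes $f$ and using uniqueness of Schr\"oder solutions via Taylor coefficients;
\item (d) comes from a removable-singularity argument.
\end{itemize}
You go the opposite way, building $H=\frac{1}{2\pi iF(0)}\log h_f(e^{2\pi iw})$ out of $h_f$. Then (c) and (d) are built in, and (b) is immediate once $H$ is identified with $h_g$. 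The price is the branch bookkeeping and reliance on a uniqueness theorem for Abel's equation. The bookkeeping does work: near $z=0$ one has $\mathrm{Log}\frac{h_f(f(z))}{f(z)}=\mathrm{Log}\frac{h_f(z)}{z}+2\pi i\big(F(0)-F(z)\big)$.

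Three points to tighten:
\begin{enumerate}
\item To get $h_g=H+\mathrm{const}$, use Theorem~\ref{Thm:uniqueness}, transported to $\UH$. Since $\Im g^{\circ n}(w_0)\to+\infty$, hyperbolic discs of fixed radius centred at $g^{\circ n}(w_0)$ eventually lie in $\Pi_A$, where $H$ is univalent. The alternative you mention, uniqueness of models, would also require checking (HM2) for $H$, which you have not done.
\item Univalence of $H$ on $\Pi_A$ does not follow from ``injectivity of the exponential on strips'', because $\Pi_A$ is a half-plane. Argue instead: $H(w_1)=H(w_2)$ gives $h_f(e^{2\pi iw_1})=h_f(e^{2\pi iw_2})$, hence $w_2=w_1+k$ with $k\in\Z$. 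Then $H(w_1+k)=H(w_1)+k/F(0)$ forces $k=0$. This is the same trick the paper uses for (5).
\item Assertion (6a) is stated for the same $\Pi_A$ as in (5), so you need not enlarge $A$. Injectivity of $h_g$ on some absorbing half-plane, together with injectivity of $g$ on the $g$-invariant $\Pi_A$, already gives injectivity of $h_g$ on $\Pi_A$. This is what the paper does via \cite[Lemma~3.5.4(iv)]{Abate2}.
\end{enumerate}
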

\begin{proof}
	\noindent To prove~{(1)} simply note $\lim_{\Im w\to+\infty} e^{2\pi i w}=0$.
	
\StepP{(2)} Since $g$ commutes with the  parabolic automorphism $w\mapsto w+1$ with Denjoy\,--\,Wolff point $\infty$ and  since $g\neq\id_\H$, the self-map $g$ is itself parabolic with the same Denjoy\,--\,Wolff point, see Theorem~\ref{TH_Behan-and-Ko}. Moreover, according to Denjoy\,--\,Wolff Theorem, see e.g. \cite[Theorem~1.8.4]{BCD-Book}, it holds that
	\begin{equation}\label{Eq:com-with-auto3}
		\Im g(w)\geq \Im w \quad \text{ for all } w\in \H.
	\end{equation}
	
\StepP{(3)} Fix an arbitrary ${w_0\in\UH}$. The image of ${\Pi_{B}:=\{w\in\H:\Im w> B\}}$, $B:=\tfrac12\Im w_0$, w.r.t. the map ${w\mapsto F(e^{2\pi i w})}$ coincides with $F(r\UD)$, where ${r:=e^{-2\pi B}<1}$. Since ${F(\UD)\subset\UH}$, it follows that there exists~${\varepsilon>0}$ such that for any ${w\in\Pi_B}$,
\begin{equation}\label{EQ_g0}
 g_0(w):=F(e^{2\pi i w})\in S_0:=\{w\in\C:\varepsilon<\mathrm{Arg}(w)<\pi-\varepsilon \}\subset \UH.
\end{equation}
Denote $S_{w_0}:=\{w\in\C:\varepsilon<\mathrm{Arg}(w-\Re w_0)<\pi-\varepsilon\}$. Recalling that ${g(w)=w+g_0(w)}$ for all ${w\in\UH}$, from~\eqref{EQ_g0} we deduce that
$$
g\big(\Pi_B\cap S_{w_0}\big) ~\subset~ \big(\Pi_B\cap S_{w_0}\big)\,+\,S_0 ~\subset~\Pi_B\cap S_{w_0}.
$$
In particular, $\big(g^{\circ n}(w_0)\big)_{n\in\Natural}$ is contained in the Stolz angle~$S_{w_0}$. In combination with assertion~(2) and by arbitrariness of~$w_0$, this means that every orbit of~$g$ tends to~$\infty$ non-tangentially.
Being $g$ parabolic, this further implies that~$g$ is of zero hyperbolic step, see e.g. \cite[Corollary~4.6.10]{Abate2}.

\StepP{(4)} By \cite[Lemma~9.3\+(I)]{CDG-Centralizer}, applied to the map $w\mapsto e^{2\pi i g(w)}$, there exists a holomorphic function $f:\D^{*}\to \D $ such that
	$$
	  e^{2\pi i g(w)}=f(e^{2\pi iw}), \quad w\in \H.
	$$
	Since $g(\UH)\subset\UH$, the function~$f$ is bounded in~$\UD^*$. It follows that $f$ can be extended to a holomorphic function in~$\D$.
	In fact, with the help of assertion~(1) we find
  $$
	f(0)=\lim_{\Im w\to+\infty}f(e^{2\pi iw})=\lim_{\Im w\to +\infty}e^{2\pi i g(w)}=0.
  $$

It remains to calculate~$f'(0)$. Using again assertion~(1), we have
	$$
	f^\prime(0)=\lim_{\Im w\to +\infty}\frac{f(e^{2\pi iw})}{e^{2\pi iw}}=
            \exp\Big(2\pi i \lim_{\Im w\to +\infty} (g(w)-w)\Big)=\exp\big(2\pi i F(0)\big)\in \D^*,
	$$
as desired.
	
\StepP{(5)} As we proved above, $f(0)=0$ and $0<|f'(0)|<1$. It follows that there exists ${\varepsilon>0}$ such that $f$ is univalent in the disc $\varepsilon\UD$. Take $A>0$ such that $e^{2\pi i w}\in \varepsilon\UD$ for all ${w\in \Pi_{A}:=\{w\in \H: \Im w>A\}}$. For any $w_{1}, w_{2}\in \Pi_{A}$ such that $g(w_{1})=g(w_{2})$, we have
	$$
	f(e^{2\pi iw_{1}})=e^{2\pi i g(w_1)}=e^{2\pi i g(w_2)}=f(e^{2\pi iw_{2}}).
	$$ Thus $ e^{2\pi iw_{1}}=e^{2\pi iw_{2}}$ and there is $k\in \Z$ such that $w_{2}=w_{2}+k$. Since $g$ commutes with $w\mapsto w+1$ we have
	$$
	g(w_{1})=g(w_{2})=g(w_{1}+k)=g(w_{1})+k.
	$$ Therefore, $k=0$ and thus $w_{1}=w_{2}$, getting the univalence of $g$ in $\Pi_{A}$.
	
\StepP{\rm(6\+a)} By \eqref{Eq:com-with-auto3}, $\Pi_A$ is $g$-invariant and by assertion~(5), the function~$g$ is  univalent in~$\Pi_A$. Appealing to \cite[Lemma~3.5.4\+(iv)]{Abate2}, we deduce that $h_g$~is also univalent in~$\Pi_A$.

\StepP{{\rm(6\+b)} and {\rm(6\+c)}} For $w\in\UH$ denote $h_0(w):=h_g(w+1)$. Then for any~${w\in\UH}$,
$$
  h_0\big(g(w)\big)=h_g\big(g(w)+1\big)=h_g\big(g(w+1)\big)=h_g\big(w+1\big)+1=h_0\big(w\big)+1,
$$
i.e. $h_0$ solves Abel's equation for~$g$. Clearly, ${h_0(\UH)=h_g(\UH)}$. Furthermore, observe that $h_g$ and hence also $h_0$ are univalent in~$\Pi_A$ by assertion~(6\+a) and that $\Pi_A$ is $g$-absorbing by assertion~(3). Using these facts, it is not difficult to see that ${\big(\C,h_0,w\mapsto w+1\big)}$ is a holomorphic model for~$g$. Thanks to the uniqueness of the holomorphic model (see Theorem~\ref{Thm:uniqness}) it follows that ${h_0=h_g+c}$ for a suitable constant~$c\in\Complex$, which we are going to identify.

Note that ${c\neq0}$ because $h_g$ is univalent in~$\Pi_A$.
Consider the function
$$
  p(\zeta):=h_g(\zeta+iA)/c, \quad \zeta\in\UH.
$$
It is univalent in~$\UH$ and satisfies ${p(\zeta+1)=h_0(\zeta+iA)/c=h_g(\zeta+iA)/c+1=p(\zeta)+1}$ for all ${\zeta\in\UH}$. As a consequence, by \cite[Lemma~9.3\+(II)]{CDG-Centralizer}, there exists a univalent function ${\sigma:\UD\to\Complex}$ with ${\sigma(0)=0}$ and such that
\begin{equation}\label{EQ_p-sigma}
  e^{2\pi i p(\zeta)}=\sigma(e^{2\pi i \zeta}),\quad\zeta\in\UH.
\end{equation}
It follows that
$
  p'(\zeta)=e^{2\pi i \zeta}{\sigma'(e^{2\pi i \zeta})}/{\sigma(e^{2\pi i \zeta})}
$ for all~${\zeta\in\UH}$.
Hence,
\begin{equation}\label{EQ_Koenig-der-lim}
  \lim_{\Im w\to+\infty} h'_g(w)~=~ \lim_{\Im \zeta\to+\infty} cp'(\zeta)~=~\lim_{z\to0}\,\,c\,\frac{z\sigma'(z)}{\sigma(z)}~=~c.
\end{equation}

Combining~\eqref{EQ_Koenig-der-lim} with Abel's equation~$ h_g(g(w))-h_g(w)=1$ and recalling assertion~(1), it is not difficult to see that~${c=1/F(0)}$.
This proves assertion~{\rm(6\+c)}.

To prove~{\rm(6\+b)}, consider the function $\sigma_0(z):={\sigma(e^{2\pi A}z)}$ defined in~${D:= e^{-2\pi A}\UD}$.  With the help of~\eqref{EQ_p-sigma}, for ${w\in\Pi_A}$ we have
\begin{equation}\label{EQ_with-sigma0}
  \sigma_0(e^{2\pi i w}) = \sigma(e^{2\pi i (w-iA)}) = e^{2\pi i p(w-iA)}=e^{2\pi i F(0) h_g(w)}.
\end{equation}
Observe that $f(D)\subset D$ by the Schwarz Lemma. Hence, using equality~\eqref{EQ_with-sigma0} and recalling that ${e^{2\pi i F(0)}=f'(0)}$, for any $w\in\Pi_A$, we can write
$$
  \sigma_0\big(f(e^{2\pi i w})\big)=\sigma_0\big(e^{2\pi i g(w)}\big)=e^{2\pi i F(0) (h_g(w)+1)}
                                   =f'(0)e^{2\pi i F(0) h_g(w)}
                                   =f'(0)\sigma_0\big(e^{2\pi i w}\big).
$$
It follows that $\sigma_0\circ f|_D=f'(0)\sigma_0$. Note that $\sigma_0$ is univalent in~$D$. Hence,  we can express $f|_D$ as ${\sigma_0^{-1}\circ\big(f'(0)\sigma_0\big)}$. Moreover, the function ${\psi:=h_f\circ\sigma_0^{-1}}$ is well-defined and holomorphic in ${\sigma_0(D)\ni\sigma_0(0)=0}$. Therefore, for any ${z\in\sigma_0(D)}$, we have ${f(\sigma_0^{-1}(z))}={\sigma_0^{-1}(f'(0)z)}$ and
$$
  \psi\big(f'(0)z\big)=h_f\big(\sigma_0^{-1}(f'(0)z)\big)=h_f\big(f(\sigma_0^{-1}(z))\big)
                   =f'(0)h_f\big(\sigma_0^{-1}(z)\big)=f'(0)\psi\big(z\big).
$$
Comparing the Taylor expansion of $\psi\big(f'(0)z\big)$ in powers of~$z$ with that of $f'(0)\psi\big(z\big)$, we conclude that for the $n$-th Taylor coefficient of~$\psi$ at~${z=0}$ to be different from zero it is necessary that ${\big(f'(0)\big)^n=f'(0)}$. Since ${f'(0)\in\UD^*}$, it follows that ${\psi(z)=az}$ for all ${z\in\sigma_0(D)}$ and some constant~${a\in\C}$. Substituting $z:={\sigma_0(e^{2\pi i w})}$, where ${w\in\Pi_A}$, and recalling~\eqref{EQ_with-sigma0}, we obtain
$$
  h_f(e^{2\pi i w})\,=\,a\sigma_0(e^{2\pi i w})\,=\,a\,e^{2\pi i F(0) h_g(w)}.
$$
Since the function $h_f$ is not constant, we have ${a\neq0}$. Thus, with the help of the Identity Principle, we see that~\eqref{EQ_Koenigs-functions-relation} holds for ${b:=1/a}$ and for any~${w\in\UH}$, as desired.

\StepP{\rm(6\+d)} Denote $Q(w):=F(0)h_g(w)-w$. By assertion~(6\+c), ${Q(w+1)=Q(w)}$ for any ${w\in\UH}$. Therefore, by \cite[Lemma~9.3\+(I)]{CDG-Centralizer}, there exists a holomorphic function ${q:\UD^*\to\C}$ such that ${Q(w)=q(e^{2\pi i w})}$ for all ${w\in\UH}$. At the same time,
by~\eqref{EQ_with-sigma0} for any $w\in\Pi_A$, we have
$$
  e^{2\pi i Q(w)}=\sigma_0(e^{2\pi i w})/e^{2\pi i w}.
$$
It follows that $e^{2\pi i q(z)}=\sigma_0(z)/z$ for all~$z$ in some punctured neighbourhood of~$0$. If $q$ has a pole at~${z=0}$, then $e^{2\pi i q(z)}$ must have an essential singularity at~$z=0$. The same conclusion can be derived with the help of the Casorati\,--\,Weierstra\ss{} theorem when $q$ has an essential singularity at~${z=0}$. However, $\sigma_0(z)/z$ extends holomorphically to~${z=0}$ because ${\sigma_0(0)=0}$. Therefore, $q$ must have a removable singularity at~${z=0}$, which implies assertion~{\rm(6\+d)}.

\StepP{\rm(6\+e)} As we proved above,
\begin{equation}
   \lim_{\Im \zeta\to+\infty} h_g'(\zeta)~=~1/F(0).
\end{equation}	
Recall also that by assertion~(3),  $\lim_{n\to+\infty}\Im g^{\circ n}(w)=+\infty$ for any $w\in\UH$. Therefore, differentiating Abel's equation $h_g(g^{\circ n}(w))=h_g(w)+n$ w.r.t.~$w$ and passing to the limit as ${n\to+\infty}$, we obtain
$$
\lim_{n\to+\infty}(g^{\circ n})'(w)=\lim_{n\to+\infty}\frac{h_g'(w)}{\,h_g'(g^{\circ n}(w))\,}=F(0)h_g'(w),
$$
as desired.
\end{proof}

\begin{remark} Concerning assertion~(1) of Proposition~\ref{Prop:com-with-auto} and bearing in mind \cite[Propositions~2.1 and~6.4\+(2)]{CDP}, for each ${w\in\H}$, we have
	$$
	\lim_{n\to+\infty}\dfrac{g^{\circ n}(w)}{n}=F(0)
	$$
	and thus,
	$$
	\lim_{n\to+\infty}\dfrac{\Re g^{\circ n}(w)}{\Im g^{\circ n}(w)}=\dfrac{\Re F(0)}{\Im F(0)}.
	$$
	In other words, all orbits $\big(g^{\circ n}(w)\big)$ tend to~$\infty$ with the following common and definite slope $$\pi/2-\arctan\Big(\,\dfrac{\Re F(0)}{\Im F(0)}\+\Big)\in(0,\pi).$$
\end{remark}

\hypertarget{AppendixB}{}
\section{Appendix B: Two examples}
Here we construct two examples of parabolic self-maps of positive hyperbolic step illustrating two situations  that can be regarded as extreme points in the spectrum of complexity of the centralizers.
We start with an example exhibiting simplest possible centralizer structure.
Clearly, $$\{\varphi^{\circ n}:n=0,1,2,\ldots\}~\subset~\Zn(\varphi)$$ for any holomorphic self-map~$\varphi$.
It is not difficult to construct a hyperbolic self-map for which equality holds in the above inclusion, see~\cite[Example~8.3]{CDG-Centralizer}. The centralizer in such a case is isomorphic to~$\big[\mathbb N_0,\,+\,\big]$.
The example below shows that the same situation may occur also for parabolic self-maps of positive hyperbolic step.

\begin{example}
For simplicity, we first construct an example of a parabolic self-map~${\varphi\in\Hol(\UD)}$ of positive hyperbolic step which is embeddable in a one-parameter semigroup $(\phi_t)$ and such that $\Zn(\varphi)=\{\phi_t:t\ge0\}$; see e.g. \cite{CDG-Centralizer} for the definitions of a one-parameter semigroup and embeddability.
At the very end, we will show how to modify the construction in order to obtain a parabolic~$\varphi$ of positive hyperbolic step satisfying $\Zn(\varphi)={\{\varphi^{\circ n}:n=0,1,2,\ldots\}}$.

Let $\mathcal D_0:=\{x+iy:x>0,~0<y<\sqrt{x}\}$. Let $h$ be a conformal mapping of~$\UD$ onto~$\mathcal D_0$. It is not difficult to see that $\varphi:=h^{-1}\circ(h+1)$ is a (univalent) non-elliptic holomorphic self-map of~$\UD$ and that the Koenigs map of~$\varphi$ is given by ${h_\varphi(z)=h(z)-\Re h(0)}$. Similarly, it is not difficult to see that for each ${t\ge0}$, the formula $\phi_t:={h_\varphi^{-1}\circ (h_\varphi+t)}={h^{-1}\circ (h+t)}$ defines an element of~$\Zn(\varphi)$. Note also that according to the definition of a holomorphic semimodel, see Definition~\ref{DF_holomorphic-model}, the base space~$S_\varphi$ of the canonical model for~$\varphi$
coincides with
$$
  \bigcup_{n\in\Natural} h_\varphi(\UD)-n ~=~ \UH.
$$
It follows, see Theorem~\ref{Thm:model}, that $\varphi$ is parabolic of positive hyperbolic step.

We are going to check that $\Zn(\varphi)$ contains no elements other than $\phi_t$'s, ${t\ge0}$.
To this end, fix some $\psi\in\Zn(\varphi)$.  Consider the following three cases.

\StepC1{$c_{\varphi,\psi}=t$ for some~$t\ge0$} Then ${\beta_{\varphi,\psi}=\beta_{\varphi,\phi_t}=\id_\UH}$, see Remark~\ref{RM_real-SLC}\+(a). Hence, by Theorem~\ref{TH_abelian-part}, $\psi$ and $\phi_t$ commute. Moreover, since ${h_\varphi\circ\phi_t}={h_\varphi+t}$, by Corollary~\ref{Thm:StrongUniqueness} we have ${c_{\varphi,\phi_t}=t=c_{\varphi,\psi}}$. Thus, applying Corollary~\ref{CR_injective-on-abelian}, we can conclude that ${\psi=\phi_t}$.

\StepC2{$c_{\varphi,\psi}<0$} This situation, in fact, cannot occur. Indeed,  by Remark~\ref{RM_real-SLC}\+(b), $${h_\varphi\circ\psi}={h_\varphi+c_{\varphi,\psi}}.$$ In particular, it follows that ${\mathcal D_0+c_{\varphi,\psi}\subset\mathcal D_0}$, which can hold only when~$c_{\varphi,\psi}$ is non-negative.

\StepC3{$c_{\varphi,\psi}\not\in\Real$} Again we are going to show that this case cannot occur. In a sense, the main idea is similar to the one employed in the previous case, but a more complicated argument is required because now we have
${h_\varphi\circ\psi}={g_\psi\circ h_\varphi}$, where the semimodel morphism $g_\psi$ associated with~$\psi$, see Definition~\ref{DF_g-psi} and Remark~\ref{RM_morphism-properties}, is not necessarily of the form $\,g_\psi(w)={w+\const}~$
any more. However, by Proposition~\ref{Prop:com-with-autoprevio} and Remark~\ref{RM_identity-is-a-solution}, there is ${F\in\Hol(\UD,\UH)}$ such that
$$
  g_\psi(w)=w+F(e^{2\pi i w}),\quad\text{for all~$w\in\UH$}.
$$

The plan is to obtain a contradiction by constructing a sequence ${(w_n)\subset\C}$ with ${\Im w_n\to+\infty}$ and such that ${w_n\in\mathcal D_0}$ but ${g_{\psi}(w_n)\not\in\mathcal D_0}$ for ${n\in\Natural}$ large enough. A very helpful observation is that
\begin{equation}\label{EQ_lim=F(0)}
F(e^{2\pi i w_n})~\longrightarrow~ F(0)=c_{\varphi,\psi},\quad\text{as~$~n\to+\infty$},
\end{equation}
where the equality $F(0)=c_{\varphi,\psi}$ holds by combining~\eqref{EQ_SLC-via-g_psi} with Proposition~\ref{Prop:com-with-auto}\+(1).

Now for $n\in\Natural$, we set
$$
   w_n:=n-\Re c_{\varphi,\psi}+i\Big(\sqrt{n-\Re c_{\varphi,\psi}}\,-\,\tfrac1n\Big).
$$
Taking into account $\Im c_{\varphi,\psi}=\Im F(0)>0$, it is not difficult to see that for all ${n\in\Natural}$ large enough,
$$
  w_n\in\mathcal D_0 \quad\text{and}\quad w_n+c_{\varphi,\psi}\not\in\mathcal D_0.
$$
Moreover, taking into account that for each ${n\in\Natural}$, the domain $\mathcal D_0$ lies below the tangent to its boundary at the point~$n+i\sqrt{n}$, by a simple calculation we get that the euclidian distance of ${w_n+c_{\varphi,\psi}}$ to~$\mathcal D_0$ satisfies
\begin{eqnarray*}
  \dist\big(w_n+c_{\varphi,\psi},\,\mathcal D_0\big) &\ge&
      \Big(\Im (w_n+c_{\varphi,\psi})\,-\,\sqrt{\Re (w_n+c_{\varphi,\psi})}\,\Big)\,\cos\big(\arctan\tfrac1{2\sqrt{n}}\big)\\
      &=& \Big(\sqrt{n-\Re c_{\varphi,\psi}}-\tfrac1n+\Im c_{\varphi,\psi}-\sqrt{n}\Big)\,\cos\big(\arctan\tfrac1{2\sqrt{n}}\big)~\to~\Im c_{\varphi,\psi}>0
\end{eqnarray*}
as ${n\to+\infty}$. We are now done because, in view of~\eqref{EQ_lim=F(0)}, ${\dist\big(g_{\psi}(w_n),\,w_n+c_{\varphi,\psi}\big)\to0}$ as ${n\to+\infty}$. Hence,
it follows that
$
 g_\psi(w_n)
$
does not belong to~$\mathcal D_0$ either when ${n\in\Natural}$ is large enough.\medskip

Essentially the same argument applies to the domain
$$
 \mathcal D~:=~\mathcal D_0~\big\backslash\,\,\Big(\bigcup_{k\in\Natural}\,\,[n,\, n+2^{-n}i]\Big)
$$
on place of~$\mathcal D_0$, yielding an example of a parabolic self-map~$\varphi$ with positive hyperbolic step such that
$\Zn(\varphi)={\{\varphi^{\circ n}:n=0,1,2,\ldots\}}$. The main difference is that the argument of Case~1 applies now only to $c_{\varphi,\psi}\in{\Natural_0}$, while the argument of Case~2 covers all $c_{\varphi,\psi}\in{\Real\setminus\Natural_0}$.
Indeed, if $p\in (0,+\infty)\setminus \N$, we can take ${n\in \N_0}$ such that ${n<p<n+1}$.
 Then $I:={(1,2)+\frac{1}{2^{n+3}}i}\subset \Omega$, but $I+p\not\subset \Omega$.
The proof of the fact that $\Zn(\varphi)$ contains no self-maps~$\psi$ with ${c_{\varphi,\psi}\not\in\Real}$ is literally the same. \qed
\end{example}

In the above example the set
\begin{equation}\label{EQ_Beta}
\mathcal B(\varphi):=\big\{\beta_{\varphi,\psi}:\psi\in\Zn(\varphi)\big\}
\end{equation}
consists of the unique element $\id_{\UH}$, and the centralizer is abelian.

Some examples of parabolic self-maps (necessarily of positive hyperbolic step) having non-abelian centralizers were constructed in~\cite[Section~8]{CDG-Centralizer}. However, in all these examples, the self-map~$\varphi$ is univalent and moreover, we do not have much  control on the ``size'' of~$\Zn(\varphi)$ and~$\mathcal B(\varphi)$.

So our next task is to construct an example of a non-univalent parabolic self-map with very big set~$\mathcal B(\varphi)$. This set characterizes the size and complexity of the centralizer in the following sense.

Fix any $\beta\in\mathcal B(\varphi)\setminus\{\id_{S_\varphi}\}$ and choose ${\psi\in\Zn(\varphi)}$ such that the simultaneous linearization function $\beta_{\varphi,\psi}$ of~$\psi$ coincides with~$\beta$. Consider the subsemigroup of~$\Zn(\varphi)$ defined by
$$
\Zen_\beta(\varphi):=\Zn(\varphi)\cap\Zn(\psi) .
$$
The structure of~$\Zen_\beta(\varphi)$ is comparable to that of centralizers of parabolic self-maps having zero hyperbolic step. In fact,
\begin{equation}\label{EQ_max-abelian-part}
  \Zen_\beta(\varphi)=\Zn(\varphi\circ\psi).
\end{equation}
Indeed, $\Zen_\beta(\varphi)=\Zn(\varphi)\cap\Zn(\psi)\subset\Zn(\varphi\circ\psi)$. Moreover, since $\beta_{\varphi,\psi}=\beta\neq\id_{S_\varphi}$, the simultaneous linearization coefficient~$c_{\varphi,\psi}$ is not real (see Remark~\ref{RM_real-SLC}\+(a)). Hence, by Theorem~\ref{Thm:coefficient-not-real}, the self-map~${\varphi\circ\psi}$ is of zero hyperbolic step. As a result, $\Zn(\varphi\circ\psi)$ is an abelian semigroup (see \hbox{Section~\ref{S_when-zero}}). Taking into account that ${\{\varphi,\psi\}}\subset\Zn(\varphi\circ\psi)$, this implies the converse inclusion, and hence, equality~\eqref{EQ_max-abelian-part}. Moreover, by Theorem~\ref{Thm:homeomorphism},
$\Zn(\varphi\circ\psi)$ is isomorphic to some topologically closed subsemigroup of $\big[\C,+\big]$ containing~$\Natural_0$.

Note that $\Zen_\beta(\varphi)$ is maximal in the sense that any other abelian subsemigroup of  $\Zn(\varphi)$ containing $\psi$ is a subset of   $\Zen_\beta(\varphi)$. With the help of Theorem~\ref{TH_abelian-part} it is possible to show that to each ${\beta\in\mathcal B(\varphi)\setminus\{\id_{S_\varphi}\}}$ there corresponds exactly one maximal abelian subsemigroup constructed as explained above. Hence, the set of all maximal abelian subsemigroups is indexed, in a sense, by~$\mathcal B(\varphi)$.

Concerning the size of the set~$\mathcal B(\varphi)$, it is worth mentioning that the largest possible set $\mathcal{B}(\varphi)$ is attained for parabolic automorpisms. Indeed, suppose without loss of generality that ${S_\varphi=\UH}$. Following the proof of Proposition~\ref{PR_beta-exists} (see page~\pageref{PG_beta-exists}) and using Propositions~\ref{Prop:com-with-autoprevio} and~\ref{Prop:com-with-auto},  it is not difficult to see that
$$
  \mathcal B(\varphi)\subset\mathcal B(p_\UD),
$$
where $p_\UD$ is the parabolic automorphism of~$\UD$ defined by $p_\UD:={C_\UH^{-1}\circ p_\UH\circ C_\UH}$, where $p_\UH(w):={w+1}$ for all ${w\in\UH}$ and
\begin{equation}\label{EQ_Cayley}
C_\UH\,:\,\UD\to\UH;~\,z\,\mapsto\, i\,\frac{1+z}{1-z}
\end{equation}
is the Cayley map of~$\UD$ onto~$\UH$.\medskip

Now we are going to construct a non-univalent parabolic self-map~$\varphi$ of positive hyperbolic step such that
every $\beta\in\mathcal B(p_\UD)\setminus\{\id_\UH\}$  satisfying a mild additional condition belongs to~$\mathcal B(\varphi)$. So
let ${\beta\in\mathcal B(p_\UD)\setminus\{\id_\UH\}}$. By the very definition of the simultaneous linearization function, we have ${\beta(w+1)}={\beta(w)+1}$ for all ${w\in\UH}$. Taking also into account~\eqref{EQ_beta-normalization} and applying~\cite[Lemma~9.3\+(I)]{CDG-Centralizer} to the function ${\beta-\id_\UH}$, it is not difficult to see that
\begin{equation}\label{EQ_for-beta}
  \beta(w)=w+q(e^{2\pi i w}),\quad w\in\UH,
\end{equation}
for a suitable  non-constant function $q\in\Hol(\UD,\C)$ with $q(0)=0$.

\begin{example}
In order to simplify technical details of our construction, it is convenient to pass, with the help of the Cayley map~\eqref{EQ_Cayley}, from the unit disk~$\UD$ to the upper half-plane~$\UH$. All the relevant definitions such as notions of the canonical holomorphic model, centralizer, simultaneous linearization coefficient and simultaneous linearization function, extend in a natural way to elements of~$\Hol(\UH)$.
For ${z\in\UH}$, we let $$\Phi(z):=z+1-\frac1z.$$ Then $\Phi\in\Hol(\UH)$. Note that $\anglim_{z\to\infty}\Phi(z)/z=1$. Therefore, $\Phi$ is a parabolic self-map of~$\UH$ with the Denjoy\,--\,Wolff point at~$\infty$. Moreover, $\Phi$ is of positive hyperbolic step, which can be seen, e.g., by using \cite[Theorem 3.7]{hStep} with~$\nu:=\delta_0$, the Dirac delta measure concentrated at~$0$. It is not difficult to see\footnote{Indeed, according to a half-plane version of the Julia\,--\,Wolff\,--\,Carath\'eodory Theorem, see e.g. \cite[\S26]{Valiron}, $h_\Phi'$ has a finite angular limit at~$\infty$, which satisfies ${h_\Phi'(\infty):=\anglim_{w\to\infty}h_\Phi'(w)\ge0}$ when ${S_\Phi=\UH}$ and ${h_\Phi'(\infty)\le0}$ when ${S_\Phi=-\UH}$. Rewriting Abel's equation as ${\int_{w}^{\Phi(w)}h_\Phi'(\zeta)\,\mathrm{d}\zeta=1}$ and passing to the limit as~${w\to\infty}$ non-tangentially, we conclude that ${h_\Phi'(\infty)=1}$ and hence, ${S_\Phi=\UH}$.} that the base space~$S_\Phi$ of the canonical model for~$\Phi$ is~$\UH$.

We are not able to precisely calculate~$\Zn(\Phi)$, but the main feature of this example is that the set $\mathcal B(\Phi)$, see~\eqref{EQ_Beta},~--- and hence the centralizer of~$\Phi$~--- are ``almost as big as possible''.

More precisely, fix any non-constant $q\in\Hol(\UD,\C)$ with $q(0)=0$ and suppose that $\ell_0:={\inf_{z\in\UD}\Im q(z)>-\infty}$. Note that our assumptions on~$q$ imply that ${\ell_0<0}$. Further,
let $\beta$ be given by~\eqref{EQ_for-beta}. We are going to show that $\Zn(\Phi)$ contains an element~$\Psi$ such that ${\beta_{\Phi,\Psi}=\beta}$ and ${\Im c_{\Phi,\Psi}>0}$. To this end, denote ${h:=\beta\circ h_\Phi}$.
Our construction of~$\Psi$ is based on the following statement. As before, given ${a\in\Real}$, we will write~$\Pi_a$ for the half-plane ${\{w:\Im w>a\}}$.

\medskip
\noindent{\fontshape{it}\fontseries{bx}\selectfont Claim.} There exists a domain~$U_\beta\subset\UH$ with ${\big(\partial U_\beta\big)\cap\big(\partial\UH\big)}=\{\infty\}$ such that
$h$ is univalent in~$U_\beta$ and
$
   h(U_\beta)=\Pi_\ell
$
for some  ${\ell\ge 0}$.
\medskip

Assuming for a while the claim, fix $c\in\Pi_{\ell-\ell_0}$ and let us check that $\Psi:=\big(h|_{U_\beta}\big)^{-1} \circ (h+c)$ has the properties declared above. Note that ${\beta(\UH)\subset\Pi_{\ell_0}}$. It follows that ${\Psi\in\Hol(\UH)}$. Moreover,
\begin{equation*}
  h\circ\Phi=h+1 \quad\text{and}\quad h\circ\Psi=h+c.
\end{equation*}
In particular, the second equality above implies that $\Psi$ is not elliptic. Since ${\Psi(\UH)\subset U_\beta}$ and since $\infty$ is the only common boundary point of~$U_\beta$ and~$\UH$, this point is the Denjoy\,--\,Wolff point of~$\Psi$.

To prove that $\Psi\in\Zn(\Phi)$, it remains to check that $h$ satisfies condition~{\rm(ii${}'$\hspace{.05em})} in Addendum~\ref{ADD_1}. To this end, fix some ${r>0}$ and consider the hyperbolic discs~$\phdisk^\UH(iy,r)$, ${y>0}$.
Using Theorem~\ref{Thm:V-sets}, it is not difficult to see that $h_\Phi$ is univalent in some domain ${V\subset\UH}$ isogonal at~$\infty$. Note that ${\phdisk^\UH(iy,r)}={y\phdisk^\UH(i,r)}$ for any~${y>0}$ and that the closure of $\phdisk^\UH(i,r)$ is a compact subset of~$\UH$. It follows that $h_\Phi$ is univalent in $\phdisk^\UH(iy,r)$ for all ${y>0}$ large enough. Moreover, note that $\beta$  is univalent in~$\Pi_d$ for some ${d=d(\beta)\ge0}$. This follows at once by Noshiro\,--\,Warschawski Criterion if we observe that
$$
\lim_{\Im w\to+\infty}\beta'(w)-1 ~=~
   \lim_{\Im w\to+\infty}2\pi i e^{2\pi i w}q'(e^{2\pi i w})~=~0.
$$

According to Pommerenke's result (see Remark~\ref{Re:Pommerenke}), for all ${y>0}$ large enough, we have ${h_\Phi\big(\phdisk^\UH(iy,r)\big)}\subset\Pi_d$ and hence, $h$ is univalent in~$\phdisk^\UH(iy,r)$.  Thus, according to Addendum~\ref{ADD_1} to Theorem~\ref{TH_SL-vs-COMM}, $\Phi$ and $\Psi$ commute.

Now it remains to observe that $\beta(w)-w\to q(0)=0$ as ${\Im w\to+\infty}$. Thus, in view of Proposition~\ref{PR_beta-uniq}, we have ${c_{\Phi,\Psi}=c}$ and ${\beta_{\Phi,\Psi}=\beta}$, as desired.
\end{example}

In the proof of the claim we will need the following technical lemma.
\begin{lemma}\label{LM_JordDom-in-the strip}
Let $W\subset\C$ be a domain satisfying the following two conditions:
\begin{romlist}
 \item\label{IT_L-Jordan} $\partial W$ is a Jordan curve on the Riemann sphere~$\ComplexE:=\Complex\cup\{\infty\}$;
 \item\label{IT_L-Re} $\{\Re w:w\in W\}=\Real$.
\end{romlist}
Then $W$ is not contained in any strip of the form ${\{w:a<\Im w<b\}}$.
\end{lemma}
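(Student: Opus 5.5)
We argue by contradiction: suppose $W\subset\{w:a<\Im w<b\}$. By \ref{IT_L-Jordan}, $\Gamma:=\partial W$ (boundary taken in $\ComplexE$) is a Jordan curve on the sphere. The first step is to show that $\infty\in\Gamma$. Indeed, $W$ is unbounded, since by \ref{IT_L-Re} its real parts fill $\Real$, so $\infty\in\overline W$. Moreover $\infty\notin W$, as $W\subset\C$. Hence $\infty\in\partial W=\Gamma$. By the Jordan curve theorem, $W$ is one of the two components of $\ComplexE\setminus\Gamma$, and $\Gamma\setminus\{\infty\}$ is a Jordan arc in $\C$ tending to $\infty$ at both ends.

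The key step is to exploit the strip. The complement of the strip contains the half-planes $\{\Im w\ge b\}$ and $\{\Im w\le a\}$. These are connected, disjoint from $W$, and each is contained in $\ComplexE\setminus\overline W$ up to boundary points. So each of them lies in the other complementary component $W^*$ of $\Gamma$, possibly touching $\Gamma$; more precisely, the open half-planes $\{\Im w>b\}$ and $\{\Im w<a\}$ lie in $\ComplexE\setminus\overline W$. Now take any $x\in\Real$. By \ref{IT_L-Re} there is a point $w_x\in W$ with $\Re w_x=x$. The vertical line $\Re w=x$ then goes from the upper half-plane (outside $\overline W$), through $w_x\in W$, to the lower half-plane (outside $\overline W$). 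Therefore it meets $\Gamma$ at least twice, once above and once below $w_x$, and all of these intersection points lie in $\{a\le\Im w\le b\}$.

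To conclude, parametrize $\Gamma\setminus\{\infty\}$ as $\gamma:\Real\to\C$ with $\gamma(t)\to\infty$ as $t\to\pm\infty$. Since $\Gamma$ lies in the closed strip $\{a\le \Im w\le b\}$ (because $\Gamma\subset\overline W$), we get $\Re\gamma(t)\to\pm\infty$ at each end. Next we use that $W$ is a single component bounded by $\Gamma$, and that both the upper and the lower half-plane belong to the complement of $\overline W$. If $\Re\gamma$ tended to the same infinity at both ends, say $+\infty$, then $\Gamma$ together with $\infty$ would fail to separate the upper half-plane from the lower one near $\Re w\to-\infty$. Indeed, for $x\ll0$ the line $\Re w=x$ would miss $\Gamma$ entirely by compactness of $\gamma$ restricted to any bounded parameter set. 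This contradicts the previous paragraph. Hence $\Re\gamma(t)\to-\infty$ at one end and $+\infty$ at the other, so $\Gamma$ separates $\C$ into an "upper" and a "lower" unbounded part. But then the upper half-plane $\{\Im w>b\}$ and the lower one $\{\Im w<a\}$ lie in \emph{different} components of $\ComplexE\setminus\Gamma$. One of these components is $W$, which contradicts $W\cap\{\Im w>b\}=\emptyset=W\cap\{\Im w<a\}$. The main obstacle is making this separation statement rigorous: an arc running from $-\infty$ to $+\infty$ inside a strip separates points far above it from points far below it. We would prove it by computing the winding number, or parity of crossings, of a vertical segment from $x+i(b+1)$ to $x+i(a-1)$ with $\Gamma$ completed through $\infty$, via a large rectangle approximation.
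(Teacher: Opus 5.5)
Your argument is correct, but it takes a different route from the paper.

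\textbf{The paper's route.} The paper works inside $W$. It builds arcs $C_1,C_2\subset W$ ending at $\infty$, along which $\Re w\to-\infty$ and $\Re w\to+\infty$ respectively. It then maps $W$ conformally onto $\mathbb D$ and uses Carath\'eodory's theorem (this is where the Jordan hypothesis enters) to see that both image arcs land at the same point $f(\infty)$. Finally it quotes a classical crosscut lemma (Goluzin; Pommerenke, Cor.~2.8), which gives curves in $W$ arbitrarily close to $\infty$ joining $C_1$ to $C_2$. That is impossible in a strip, because a strip minus a large disc splits into a left piece and a right piece.

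\textbf{Your route.} You work with the boundary curve and use only plane topology. The Jordan curve theorem puts both open half-planes $\{\Im w>b\}$ and $\{\Im w<a\}$ in the complementary component $W^*$. Hypothesis (ii) forces every vertical line to meet $\Gamma$. This forces the two ends of $\Gamma\setminus\{\infty\}$ to escape to opposite horizontal infinities, and such a curve separates the two half-planes.

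\textbf{Comparison.} The paper's proof is shorter, but it rests on conformal-mapping machinery and leaves the construction of $C_1,C_2$ to the reader. Yours avoids both, at the price of the separation lemma.

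\textbf{Making the separation step rigorous.} Use a winding number rather than a parity count, since a merely continuous arc can meet a segment in an infinite, non-transversal set. Suppose a path $\sigma$ joined $p=x+i(b+1)$ to $q=x+i(a-1)$ in the complement of $\Gamma$. Take $R>\max_{w\in\sigma}|\Re w|$ and $B>\max\{b+1,\max_{w\in\sigma}\Im w\}$. Choose parameters $t_1,t_2$ with $\Re\gamma(t_1)=-R$ and $\Re\gamma(t_2)=R$. Close the sub-arc of $\gamma$ between them by the vertical segments from $\gamma(t_1)$ and $\gamma(t_2)$ up to height $B$, and the horizontal segment at height $B$.
\begin{itemize}
\item This loop misses $\sigma$, so $p$ and $q$ have equal index.
\item $q$ has index $0$, because the loop lies in $\{\Im w\ge a\}$.
\item Deform the sub-arc, with endpoints fixed, by the straight-line homotopy inside the convex closed strip, which avoids $p$. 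This turns the loop into a convex quadrilateral around $p$, so $p$ has index $\pm1$.
\end{itemize}
This contradiction proves the separation lemma.
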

\begin{proof}
In view of condition~\ref{IT_L-Re}, it is not difficult to show that there are two Jordan arcs $C_1$ and~$C_2$ having a common end-point at~$\infty$ and satisfying the following conditions:
$$
   {C_j\setminus\{\infty\}\subset W}~\text{~for~${j=1,2}$},\quad \Re w\to-\infty~\text{~as $\,C_1\ni w\to\infty$,} \quad\text{and~}~\Re w\to+\infty~\text{~as $\,C_2\ni w\to\infty$}.
$$

Let $f$ be a conformal mapping of~$W$ onto~$\UD$. According to Carath\'eodory's Theorem, $f$ extends to a homeomorphism of $\overline W$ onto $\overline\UD$. Therefore, the images of~$C_1$ and~$C_2$ under~$f$ have a common end-point~${f(\infty)\in\UC}$. It follows, see e.g. Theorem~1 in \cite[\S{}II.3]{Goluzin} (alternatively, see \cite[Corollary~2.8]{Pommerenke:BB}), that for any neigbhourhood~$\mathcal O(\infty)$ of~$\infty$ one can find a curve lying in $W\cap\mathcal O(\infty)$ that connects  $C_1$ to~$C_2$. This would not be possible if~$W$~were contained in a strip.
\end{proof}

\begin{proof}[\proofof{the Claim}]
First of all, we collect some elementary properties of the self-map~$\Phi$.\\ Denote
$$
   \Omega:=\{w\in\UH:|w-1|>2\},\qquad \Omega_1:=\{w\in\UH:\Re w<-2\},\qquad \widetilde\UD:=\{\zeta\in\UD:\Im\zeta<0\}.
$$
Then:
\begin{enumerate}[label={\rm (\hspace{.02em}\alph*\hspace{.035em})\,}, ref={\hbox{\rm (\hspace{.02em}\alph*\hspace{.035em})}}, left=.7em]
  \item \label{E_Julia}
   $\Phi(\Pi_a)\subset\Pi_a$ for any $a>0$;
  \medskip

  \item\label{E_uni}
   $\Phi$ and $h_\Phi$ are univalent in~$\Pi_1$;
  \medskip

  \item\label{E_slope}
    $|\Arg \,  \Phi'(z)|<\pi/2\,$ and also $\,| \Arg \, \big(\Phi^{\circ 2}\big)'(z)|<\pi/2\,$ for all ${z\in\Pi_2}$;
  \medskip

  \item\label{E_shift}
     $\Re \Phi(z)>\Re z+1/2$ for all~${z\in\UH}$ with $|z|>2$;
  \medskip

  \item\label{E_inverse}
    $\Phi^{-1}$ admits in ${z\in\Omega}$ the single-valued branch~$\Upsilon$ given by
    $$
      \Upsilon(w) \,:=\, w-1 + \frac2{(w-1)\big(1+Q(\frac{2}{w-1})\big)},
    $$
    where $Q$ is the branch of $\zeta\mapsto~\sqrt{1+\zeta^2}$ mapping $\widetilde\UD$ into $\{z:\Re z>0\}\setminus[1,+\infty)$.
  \medskip

  \item\label{E_invar}
    $\Upsilon(\Omega)\subset\UH$ and moreover, $\Re \Upsilon(w)<\Re w-1/2$ for all~${w\in\Omega_1}$;
  \medskip

  \item\label{E_invar1}
     $\Upsilon(\Omega_1)\subset \Omega_1$;
  \medskip

  \item\label{E_Im}
    $\,\Im \Upsilon(w)\,\ge\,\dfrac{\Im w}{\,1+\frac1{(\Re w)^2}\,}~$  for any $w\in\Omega_1$;
  \medskip

   \item\label{E_ID}
    $\Phi\circ\Upsilon=\id_\Omega$ and ${\Upsilon\circ\Phi|_{\Pi_2}=\id_{\Pi_2}}$.
  \medskip
\end{enumerate}

Since $\Phi$ is parabolic with Denjoy\,--\,Wolff point at~$\infty$, assertion~\ref{E_Julia} is precisely Julia's inequality; see e.g. \cite[Theorem~1.7.8]{BCD-Book}\footnote{Note that $\UH$ in~\cite{BCD-Book} stands for the right half-plane $\{w:\Re w>0\}$ rather than the upper half-plane.}.

The function $\Phi$ is univalent in~$\Pi_1$ by the Noshiro\,--\,Warschawski Criterion, because
$$
  \Re\Phi'(z)=1+\Re(1/z^2)\ge 1-|1/z^2| > 0\quad \text{for all $~z\in\Pi_1$.}
$$
By assertion~\ref{E_Julia} with ${a:=1}$, $\Pi_1$ is $\Phi$-invariant. By \cite[Lemma~3.5.4\+(iv)]{Abate2}, it follows that $h_\Phi$ is also univalent in~$\Pi_1$. This proves assertion~\ref{E_uni}.
Using again~\ref{E_Julia} but with ${a:=2}$, for all ${z\in\Pi_2}$, we have
\begin{eqnarray*}
  \Re\big(\Phi^{\circ 2}\big)'(z)~ &=& ~\Re\Big(1+\frac1{z^2}\Big)\Big(1+\frac1{\Phi(z)^2}\,\Big) ~=~
  \Re\bigg(1+\frac{1}{z^2}+\frac{1}{\Phi(z)^2}+\frac{1}{\big(z\Phi(z)\big)^2}\bigg)\\
  &\ge&~ 1\,-\,\bigg|\frac{1}{z^2}+\frac{1}{\Phi(z)^2}+\frac{1}{\big(z\Phi(z)\big)^2}\bigg| ~>~ \frac{1}{4}.
\end{eqnarray*}
This proves~\ref{E_slope}. The proof of~\ref{E_shift} is also elementary: if $|z|>2$, then
$$\Re\Phi(z) ~=~ \Re z+1-\Re(1/z) ~\ge~ \Re z+1-|1/z|~>~\Re z+1/2.$$

The fact that the expression for~$\Upsilon$ given in~\ref{E_inverse} is the inverse of~$\Phi$, considered as a rational function of degree two, can be checked by direct computation. The map $w\mapsto{\tfrac2{w-1}}$ transforms $\Omega$ onto~$\widetilde\UD$. In particular, choosing the branch of~$Q$ having positive real part in~$\widetilde\UD$, we obtain a single-valued branch~$\Upsilon$ of~$\Phi^{-1}$ in~$\Omega$.

 To prove~\ref{E_invar}, let $D:={\big(\C\setminus\Real\big)\cup(0,1)}$ and consider the map $V:D\to\UH$ defined by $V(z):={\sqrt{(z+1)/(z-1)}}$.
 It is again just a simple computation to check that $\Upsilon(w)^2={V\big(Q(\tfrac{2}{w-1})\big)^2}$ for all ${w\in\Omega}$. One of the branches of $\sqrt{(z+1)/(z-1)}$ in~$D$ takes values in~$\UH$, while the other one in~$-\UH$. Therefore, noticing that ${\Upsilon(1+iv)\in\UH}$ for any ${v>2}$ and recalling that $Q(\widetilde\UD)\subset D$, we may conclude that $\Upsilon(w)={V\big(Q(\tfrac{2}{w-1})\big)}$ for all ${w\in\Omega}$.
 It follows that ${\Upsilon(\Omega)\subset\UH}$.

 In order to complete the proof of~\ref{E_invar}, fix an arbitrary ${w\in\Omega_1}$ and write ${z:=\Upsilon(w)}$, $\zeta:={2/(w-1)}$. Then ${|\zeta|<1}$ and ${\Re Q(\zeta)>0}$. As a consequence,
$$
\big|z|~\ge~\big|w-1\big|\,-\,\left|\tfrac{\zeta}{1+Q(\zeta)}\right| ~>~ 2.
$$
Therefore, thanks to~\ref{E_shift}, ${\Re\Upsilon(w)=\Re z<\Re \Phi(z)-1/2=\Re w-1/2}$.

Assertion~\ref{E_invar1} follows at once from~\ref{E_invar}.
To prove~\ref{E_Im}, note that in the above notation, as we have just shown, ${\Re z <\Re w<0}$. Hence, we have
$$
   \Im w ~=~\bigg(1+\frac{1}{|z|^2}\bigg)\,\Im z
      ~<~ \bigg(1+\frac{1}{|\Re z|^2}\bigg)\,\Im z
      ~<~ \bigg(1+\frac{1}{|\Re w|^2}\bigg)\,\Im z,
$$
and the desired inequality follows immediately.

The first equality in~\ref{E_ID} holds simply because $\Upsilon$ is a branch of~$\Phi$ in~$\Omega$. To prove the second equality, first recall that by~\ref{E_Julia}, ${\Phi(\Pi_2)\subset\Pi_2\subset\Omega}$. Hence, the composition ${\Upsilon\circ\Phi|_{\Pi_2}}$ is well-defined holomorphic function in~$\Pi_2$. Next we check by a direct computation that ${\Upsilon\big(\Phi(iy)\big)=iy}$ for any~${y>2}$. By the Identity Principle it then follows that ${\Upsilon\big(\Phi(z)\big)=z}$ for any~${z\in\Pi_2}$, as desired.
\medskip

The rest of the proof is divided into several steps.

\Step1 We start by constructing a closed curve~$\Gamma$ passing through~$\infty$ and otherwise contained in~$\Pi_2$ and such that
\begin{equation}\label{EQ_h-of-gamma}
   h_\Phi\big(\Gamma\setminus\{\infty\}\big)~=~\bigcup_{k\in\mathbb Z}h_\Phi(\Gamma_0)+k
\end{equation}
for some Jordan arc~$\Gamma_0\subset\Pi_2$.

   $$ A:=\Big(1+\frac{1}{3^2}\Big)\prod_{n=0}^{+\infty}\bigg(1+\frac{1}{\big(3+\tfrac n2\big)^2}\bigg) \leqno{\text{Let}}$$ and define a two-sided sequence ${(z_n)_{n\in\mathbb Z}\subset\UH}$ by
   $$
     z_0:=-3+iA\,\in\,\Omega_1,\quad z_{n}:=\Phi^{\circ n}(z_0) ~\,~\text{~and~}~\,~
                                    z_{-n}:=\Upsilon^{\circ n}(z_0)\quad\text{for all~$~{n\in\Natural}$.}
   $$
   Note that $z_{-n}$ is well-defined thanks to assertion~\ref{E_invar1}. Moreover, applying~\ref{E_invar} repeatedly, we obtain $${|\Re z_{-n}|\ge3+\tfrac{n}{2}}\quad\text{for all $\,{n\in\Natural_0}$.}$$ Combining this with \ref{E_Im} and~\ref{E_Julia}, it is not difficult to see that ${(z_n)\subset\Pi_2}$.

   Now we join $z_n$'s by a continuous curve. Let $\Gamma_0:=[z_{-1},z_0]$, i.e. the straight line segment joining~$ z_0$ and~$z_{-1}$. Using~\ref{E_Im}, for any $w\in\Gamma_0$, we obtain
   $$
    \Im w ~\ge~ \min\big\{\Im z_0, \, \Im z_{-1}\big\} ~\ge~ \frac{\Im z_0}{\,1+\frac1{(\Re z_0)^2}\,} ~=~ \prod_{n=0}^{+\infty}\bigg(1+\frac{1}{\big(3+\tfrac n2\big)^2}\bigg).
   $$
   Similarly, according to~\ref{E_shift} and~\ref{E_invar}, we have
   $$
    {\Re\Phi^{\circ n}(w) > \Re w+\tfrac n2 \ge \Re z_{-1}+\tfrac n2} \quad\text{and}\quad
    {\Re \Upsilon^{\circ n}(w)<\Re w-\tfrac n2\le -3-\tfrac n2}
   $$
   for all ${n\in\Natural}$ and all~${w\in\Gamma_0}$.

   As a consequence, the union
   $$
     \Gamma:=\Big(\,\bigcup_{k\in\mathbb Z}\Gamma_k\,\Big)\cup\{\infty\},\qquad
     \Gamma_{-n}:=\Upsilon^{\circ n}(\Gamma_0),~~\Gamma_n:=\Phi^{\circ n}(\Gamma_0),~~n\in\Natural,
   $$
   is a closed curve passing through~$\infty$ and contained in~$\Pi_2\cup\{\infty\}$.

   To prove~\eqref{EQ_h-of-gamma}, recall that $h_\Phi$ satisfies Abel's equation ${h_\Phi\circ\Phi}={h_\Phi+1}$. Hence ${\bigcup_{k=0}^{+\infty}h_{\Phi}(\Gamma_k)}={\bigcup_{k=0}^{+\infty}\big(h_{\Phi}(\Gamma_0)+k\big)}$.
   Moreover, in view of~\ref{E_ID}, we have
   $$
     h_\Phi\circ\Upsilon+1=(h_\Phi+1)\circ\Upsilon=h_\Phi\circ\Phi\circ\Upsilon=h_\Phi\circ\id_\Omega=h_\Phi|_\Omega.
   $$
   It follows that ${\bigcup_{k=1}^{+\infty}h_{\Phi}(\Gamma_{-k})}={\bigcup_{k=1}^{+\infty}\big(h_{\Phi}(\Gamma_0)-k\big)}$.
   This implies~\eqref{EQ_h-of-gamma}.

\Step2 We are going to show that~$\Gamma$ is a Jordan curve on the Riemann sphere~$\ComplexE$, and that~$\Pi_2$ contains a domain~$U$ satisfying ${\partial U=\Gamma}$.\medskip

  Recall that $\Phi$ is injective in~$\Pi_1$ and that ${\Gamma\setminus\{\infty\}\subset\Pi_2\subset\Pi_1}$. Hence, $\Gamma_k$ is Jordan arc for each~${k\in\Natural}$. Note also that $\Upsilon$ is univalent in $\Omega_1$, because it is a branch of the inverse of a single-valued function. Taking into account that ${\Gamma_k\in\Omega_1}$ for all negative integers~$k$. As a result, ${\Gamma_k}$~is a Jordan arc for each~${k\in\mathbb Z}$.

  Moreover, by construction, for each ${k\in\mathbb Z}$ the arcs $\Gamma_k$ and $\Gamma_{k+1}$ have a common end-point at~$z_k$. To show that $\Gamma$ is a Jordan arc, we have to verify that there can be no other intersections of $\Gamma_k$ and~$\Gamma_m$ for ${k\neq m}$. Thanks to assertion~\ref{E_ID}, it is sufficient to consider the case ${m=0}$ and ${k\in\Natural}$. For $k=1$ and $k=2$, the desired conclusion follows from~\ref{E_slope}:
  $$
    \frac{\di}{\di t} \Re\frac{\Phi^{\circ k}\big(tz_0+(1-t)z_{-1}\big)}{z_0-z_{-1}}=\Re \big(\Phi^{\circ k}\big)'\big(tz_0+(1-t)z_{-1}\big)>0,\quad t\in[0,1],\quad k=1,2.
  $$
  So suppose now that $k>2$. Then $\Gamma_k\cap\Gamma_0=\emptyset$ because $\Re z\le \Re z_0=-3$ for any~${z\in\Gamma_0}$, while for all~${w\in\Gamma_k}$ we have $w=\Phi^{\circ k}(\zeta)$ for some ${\zeta\in\Gamma_0}$ and hence,
  $$
     \Re w > \Re \zeta+\tfrac{k}2 \ge \Re z_{-1} +\tfrac32 = \Re \Upsilon(z_0)+\tfrac32 > -3,
  $$
  where the last inequality holds because
\begin{equation*}
  \begin{split}
  \Re\Upsilon(z_0)&=\Re z_0-1+\Re \frac2{(z_0-1)\big(1+Q(\frac{2}{z_0-1})\big)}\\
  & >
         -4-\frac{\left|\frac{2}{z_0-1}\right|}{\left|1+Q(\frac{2}{z_0-1})\right|} >
         -4-\frac{1/2}{1+\Re Q(\frac{2}{z_0-1})} > -4-\tfrac12.
  \end{split}
  \end{equation*}

  In view of the Jordan Curve Theorem, it remains to see that one of the two connected components of ${\C\setminus\Gamma}$ is contained in~$\Pi_2$. Denote $L:=\{w:\Im w=2\}=\big(\partial\Pi_2\big)\setminus\{\infty\}$. Since $\Gamma\cap L=\emptyset$, the line~$L$ lies in one of the connected components of~${\C\setminus\Gamma}$. Denote the other connected component by~$U$.
  Note that ${U\cap L=\emptyset}$, which means that either ${U\subset\Pi_2}$, or ${U\cap\Pi_2=\emptyset}$. Since $\partial U=\Gamma\subset\Pi_2$, the latter alternative is not possible; so we have ${U\subset\Pi_2}$ as desired.

\Step3 Note that in view of~\ref{E_uni}, $h_\Phi$ is injective on~$\overline U\setminus\{\infty\}$. We are going to show that ${h_\Phi(U)\supset\Pi_b}$ for some~$b>0$.\medskip

   The key point on this step is to see that
   \begin{equation}\label{EQ_claim}
     {\partial h_\Phi(U)}=\widetilde\Gamma:={h_\Phi\big(\Gamma\setminus\{\infty\}\big)\cup\{\infty\}}.
   \end{equation}
   Once this fact is established, from~\eqref{EQ_h-of-gamma} we would immediately deduce that $$b:=\sup\big\{\Im w:w\in\partial h_\Phi(U)\big\}=\max\big\{\Im h_\Phi(z):z\in\Gamma_0\big\}<+\infty.$$
   This would imply that either ${\Pi_b\subset h_\Phi(U)}$, or ${\Pi_b\cap h_\Phi(U)}=\emptyset$. Note that in fact, the latter alternative cannot occur, because otherwise $h_\Phi(U)$ would be contained in the strip ${\{w:0<\Im w<b\}}$,  which is impossible by Lemma~\ref{LM_JordDom-in-the strip}.

   To prove~\eqref{EQ_claim}, notice that $\widetilde\Gamma$ is a Jordan curve on~$\ComplexE$. This follows from the injectivity of~$h_\Phi$ on~${\Gamma\setminus\{\infty\}}$, Step 2,  and from equality~\eqref{EQ_h-of-gamma}. Clearly, $h_\Phi(U)$ is entirely contained in one of the two connected components of~$\C\setminus\widetilde\Gamma$. Denote that component by~$\widetilde U$. Consider some conformal mappings $f_1$ and~$f_2$ of~$\UH$ onto $U$ and~$\widetilde U$, respectively. In view of Carath\'eodory's Theorem, we can normalize these mappings in such a way that ${f_1(\infty)=f_2(\infty)=\infty}$.  The univalent self-map of~$\UH$ defined by $\chi:={f_2^{-1}\circ h_\Phi\circ f_1}$ extends continuously to~$\Real$ and this extension maps~$\Real$ into itself. Then by the Schwarz Reflection Principle, $\chi$ extends to a conformal self-map of~$\Complex$. Thus, $\chi$ is a linear map of the form $\chi(\zeta)=a\zeta+b$ for all~${\zeta\in\C}$, where ${a>0}$ and ${b\in\Real}$. The required conclusion~\eqref{EQ_claim} follows now easily.

\Step4 Now we can finally construct the domain $U_\beta$ with the required properties.\medskip

  Recall that  $\beta$ is univalent in~$\Pi_d$ for some~$d\ge0$. Clearly, we may suppose that~${d>b}$. Then  by the previous step, ${\Pi_d\subset h_\Phi(U)}$.

  The key point here is to show that $\beta(\Pi_d)\supset\Pi_\ell$ for some ${\ell>0}$. Once this is proved, recalling that $h_\Phi$ is univalent in~$U$, we can define
  $$
     U_\beta=\big(h_\Phi|_U\big)^{-1}\big(\big(\beta|_{\Pi_d}\big)^{-1}(\Pi_\ell)\big).
  $$
  Then clearly, $h(U_\beta)=\Pi_\ell$ and $h$ is univalent in~$U_\beta$. Moreover, ${U_\beta\subset U}$. Since ${\overline{U}\setminus\{\infty\}\subset\UH}$, it follows that
 ${\big(\partial U_\beta\big)\cap\big(\partial\UH\big)}\subset\{\infty\}$. It remains to notice that in fact, ${\infty\in\partial U_\beta}$ because $h$ is not bounded in~$U_\beta$.

 To show that ${\beta(\Pi_d)\supset\Pi_\ell}$ for some ${\ell>0}$, notice that thanks to the representation~\eqref{EQ_for-beta}, we have
 $$M:={\sup_{w\in\Pi_d}\big|\beta(w)-w\big|}<+\infty.$$
 Set ${\ell:=d+2 M}$ and take now any ${w_0\in\Pi_\ell}$. Then the closure of ${E:=\{w:|w-w_0|<2 M\}}$ is contained in~$\Pi_d$. Applying Rouch\'e's Theorem to the functions $f(w):={w-w_0}$ and $g(w):={\beta(w)-w}$, we see that ${\beta(w)-w_0}={f(w)+g(w)}$ has the same number of zeroes in~$E$ as the function $f(w)$, i.e. exactly one zero.
 Since $w_0$ in this argument is arbitrary, we are done.
\end{proof}

\end{document}